\newtheorem{remark}{Remark}[section]
\newtheorem{proposition}{Proposition}[section]
\newtheorem{lemma}[proposition]{Lemma}
\newtheorem{theorem}[proposition]{Theorem}
\newcommand{\END}{\hfill$\Box$}
\numberwithin{equation}{section}
\newcommand{\R}{\mathbb{R}}
\newcommand{\C}{\mathbb{C}}
\newcommand{\Z}{\mathbb{Z}}
\newcommand{\ds}{\displaystyle}
\newcommand{\supp}{\mathop{\rm supp}}
\renewcommand{\ds}{\displaystyle}
\renewcommand{\r}{{\bf r}}
\renewcommand{\H}{{H}}
\newenvironment{proof}{\noindent{\it Proof}. }{\hfill\END\\[0.5ex]}
\title{Convergence analysis of a high-order Nystr\"{o}m
  integral-equation method for surface scattering problems}
\date{\today} \author{Oscar P. Bruno\footnote{Applied and
    Computational Mathematics, California Institute of Technology,
    Pasadena, CA 91125, USA -- {\tt bruno@acm.caltech.edu}},
  V\'{\i}ctor Dom\'{\i}nguez\footnote{ Dep. Ingenier\'{\i}a
    Matem\'{a}tica e Inform\'{a}tica, E.T.S.I.I.T., Universidad
    P\'{u}blica de Navarra. 31500 - Tudela, Spain -- {\tt
      victor.dominguez@unavarra.es}}\, \& Francisco-Javier
  Sayas\footnote{ Department of Mathematical Sciences, University of
    Delaware, Newark, DE 19716, USA -- {\tt
      fjsayas@math.udel.edu}}} 
\begin{document}
\maketitle

\thispagestyle{empty}

\begin{abstract}
  In this paper we present a convergence analysis for the Nystr\"om
  method proposed in [Jour. Comput. Phys. 169 pp. 2921-2934, 2001] for
  the solution of the combined boundary integral equation formulations
  of sound-soft acoustic scattering problems in three-dimensional
  space. This fast and efficient scheme combines FFT techniques and a
  polar change of variables that cancels out the kernel singularity.
  We establish the stability of the algorithms in the $L^2$ norm and
  we derive convergence estimates in both the $L^2$ and $L^\infty$
  norms. In particular, our analysis establishes theoretically the
  previously observed super-algebraic convergence of the method in
  cases in which the right-hand side is smooth.
\end{abstract}
{\bf Key words:} Acoustic Scattering, Boundary Integral Equations, Nystr\"om
methods, FFT\\
{\bf MSC}: 65N38, 65N35, 65T40

\section{Introduction}

In this paper we present a full convergence analysis of the high-order
Nystr\"{o}m method introduced by Bruno and Kunyansky in
\cite{377360,MR1875086} for boundary integral equations (BIE) related
to the scattering of time-harmonic acoustic waves by arbitrary
(smooth) surfaces in three-dimensional space.  In particular, our
analysis establishes theoretically the previously observed
super-algebraic convergence of the method for smooth right-hand sides.
To the best of the authors' knowledge, this proof constitutes one of
the few instances of analysis of a Nystr\"{o}m type method for a
boundary integral equation in three dimensions.

As is well known, Galerkin methods for BIE of the first kind have
enjoyed thorough theoretical analyses since their inception---on the
basis of ellipticity properties and discrete Fredholm
theory. Compactness arguments can also be used to establish
convergence of Galerkin methods for equations of the second kind.  Few
results exist, on the other hand, concerning convergence for
three-dimensional BIE collocation methods---in which finite-element
bases are used for approximation, but testing relies on point
sampling.  We refer to the excellent text-book~\cite[Chapter
9]{atkinson} for a brief introduction on this topic.

This state of affairs has led to the widespread perception that, being
even more ``discrete'' than collocation schemes, Nystr\"{o}m methods
for BIEs of the second kind with weakly singular kernels could not be
easily analyzed.  This paper will hopefully help dispell this view and
lend additional credibility to Nystr\"om methods---whose qualities can
be very attractive in practice~\cite{377360}.

One of the main difficulties in the design of three-dimensional
integral solvers concerns development of high-quality quadrature rules
for approximation of singular integral terms.  Wienert~\cite{Wienert}
constructed a singular integration rule on the basis of
spherical-harmonic transforms for surfaces for which a diffeomorphism
to the sphere can be constructed, (see also \cite[Section
3.6]{CoKr:1998}). A Galerkin version of this approach was introduced
and analyzed in~\cite{MR1922922} where, in particular, the
superalgebraic convergence of the method was
established. Reference~\cite{MR1618899} presents a collocation method,
with corresponding analysis, for the Laplace equation, which shares
the good convergence properties of the method by Wienert but which
again is limited to surfaces for which a smooth mapping from the
sphere is available. We emphasize that such limitations, which are
highly restrictive in practice, are not imposed by the Nystr\"om
method studied in this paper.

We thus consider the numerical method~\cite{377360} for the
second-kind combined field integral equation associated to the
problem of sound-soft scattering by a three dimensional obstacle
with smooth boundary $S$.  The method relies on a series of
geometric constructions: 1) Representation of the surface by means
of a set of $J$ overlapping smooth charts (parametrizations); 2) A
smooth partition of unity subordinated to these charts which
decomposes both the overall integral operators as well as the
solution of the BIE as the sum of contributions defined on the
parametrized patches; 3) A family of floating smooth cut-off
functions $\eta_\delta: S \times S \to \mathbb R$ (see
\eqref{eq:eta} below) that is used to isolate the singularity of the
kernel function, and thus produces a splitting of the integral
operator of the form
$K=K_{\mathrm{reg},\delta}+K_{\mathrm{sing},\delta}$---whereby the
regular operator is an integral operator with a smooth kernel, and
the singular part which enjoys a reduced domain of integration but
contains the weak singularity of the integral kernel.

The Nystr\"{o}m method under consideration is based on approximation
of the integral operator by a quadrature rule which treats the regular
and singular parts $K_{\mathrm{reg},\delta}$ and $
K_{\mathrm{sing},\delta}$ separately. The $J$ local charts mentioned
above are used to push forward $J$ uniform grids from the unit square
to the surface; these grids are used for both, approximation and
integration.  The quadrature rule used for the regular operator
$K_{\mathrm{reg},\delta}$ is based on application of the
two-dimensional trapezoidal rule in the parameter space $[0,1]\times
[0,1]$ for each one of the $J$ parametric domains: since the
corresponding integrands are smooth with compact support strictly
contained in the unit square, these trapezoidal quadratures give rise
to super-algebraically accurate approximations. For the singular part,
a change of variables to polar coordinates around each integration
point is applied. This procedure results in a smooth integrand to
which, upon necessary Fourier-based interpolations, the trapezoidal
rule is applied for radial and angular integration---once again
yielding super-algebraically accurate approximations.

As a result of these constructions we obtain an algorithm that
evaluates the action of an approximating discrete operator on a
continuous function, using only the values of the function on the
selected quadrature points. 


Our theoretical treatment relies on use of both existing and new
analytical tools. In a first key step of the analysis the problem is
re-expressed as a system of integral equations {\em in a space of
  periodic functions}. This is accomplished by means of yet another
set of local cut-off functions, whose presence does not affect the
actual numerical scheme. The use of periodic Sobolev spaces allows us
to take full advantage of numerous results for approximation of
Fourier series and interpolation operators~\cite{SaVa:2002}, as well
as the theory of {\em collective compact operators} by
Anselone~\cite{MR0443383}.  Recasting the numerical scheme of
quadrature type as a discretization method in $L^2$-type (Sobolev)
spaces gives rise to a number of difficulties. In particular, for the
sake of the analysis we introduce Fourier projection operators,
bi-periodic trigonometric interpolation operators, and a discrete
operator~\cite{CeDoSa:2002} that produces a linear combination of {\em
  Dirac delta distributions} on a uniform grid from a smooth function
input.  The convergence analysis for the operators arising from the
regular part of the original boundary integral operator follows the
lines of the theory~\cite{CeDoSa:2002,DoRaSa:2008} on periodic
integral equations in one variable.  The final (rather technical)
element of our convergence proof is a detailed analysis of the
integration error arising from the numerical polar coordinate
integration algorithm for products of smooth functions, ``shrinking''
floating cut-off functions and bi-variate trigonometric polynomials,
in terms of Sobolev norms of the latter polynomials.

We point out that, for efficiency, a variety of
  acceleration techniques were used in~\cite{377360,MR1875086} in
  conjunction with the Nystr\"om algorithm we consider. Some of these
  algorithmic components have been taken into account in our
  analysis. In the formulation considered in the present paper, for
  example, the computation of the singular part requires the
  evaluation of bivariate trigonometric polynomials that approximate
  the unknown at points on a polar grid. A deliberate choice of the
  radial quadrature nodes for this integral makes it possible to
  reduce this process to 1--dimensional trigonometric interpolation
  problems (see section~\ref{disc_int_op} and Figure
  \ref{fig:points03} below) on the horizontal and vertical lines of
  the grid. In~\cite{377360}, such trigonometric polynomials are then
  approximated by means of piecewise Hermite interpolation on dyadic
  grids, which can be evaluated much more rapidly than the either of
  the underlying trigonometric polynomials. We have analyzed the
  effect of these additional approximations on the full convergence of
  the Nystr\"om method. The corresponding results can be found
  in Appendix B; briefly, upon correct parameter
  selections, the resulting (more efficient) method retains the
  super-algebraic convergence of the original approach.  Additional,
  more sophisticated acceleration techniques which, based on use of
  equivalent sources and Fast Fourier Transforms, provide a means to
  reduce the solution cost for high-frequency problems (but on which
  the Nystr\"om method itself does not depend, and whose use is not
  advantageous for problems of lower frequencies) were introduced
  in~\cite{377360}. The impact of such equivalent-source acceleration
  methods on convergence are not considered either in the present
  paper or in the 
  Appendix \ref{sec:B} of this paper.

This paper is structured as follows. In Section \ref{sec:2} we
describe, in a compact form, the Nystr\"om method under
consideration, and we state the main convergence results of this
paper. In Section \ref{sec:3} we then recast both the continuous and
discrete problems as systems of equations in spaces of biperiodic
functions, we derive bounds, on various norms, of the main
continuous integral operators in our periodic formulation, we
establish unique solvability of the continuous system of periodic
integral equations as well as the equivalence of this system to the
original BIE, and, finally, we state the main approximation results
in the biperiodic frame: norm convergence of the discrete operators
to the continuous ones together with corresponding error estimates. In Section
\ref{sec:4} we present key estimates on the singular
operators that appear in the biperiodic formulation, and in Section
\ref{sec:5}, in turn, we provide the proofs of the main results
stated in Section 3 and Section \ref{sec:2}---in that order. 
Appendix \ref{sec:A}  is devoted to the Sobolev error analysis mentioned above
for the polar-integration of products of smooth functions, cut-offs
and trigonometric polynomials. Finally, in Appendix \ref{sec:B} we describe
and
analyze a slight variant of the numerical method, where one of the interpolation
processes is substituted by polynomial interpolation.


We conclude our introduction with two remarks concerning notation.
\begin{remark}\label{notation1}
  To make a clearly visible distinction between points on the surface
  $S$ and coordinates for their parametrization, we use boldface
  letters (e.g.  $\r$) for points on the scattering surface $S\subset
  \R^3$, and underlined letters (such as $\underline u$) for points in
  $\R^2$. The coordinates of such points will be denoted according to
  $\underline{u}=(u_1,u_2)$.
\end{remark}
\begin{remark}\label{notation2}
  Throughout this paper the letter $C$ denotes a positive constant
  independent of the parameters $h$ and $\delta$ and any other
  variable quantities appearing in the equation. When necessary a
  subscripted letter $C$ is used---either to avoid confusion or to
  explicitly signify dependence on parameters other than $h$ and
  $\delta$.
\end{remark}

\section{The Nystr\"om method}\label{sec:2}

\subsection{The Boundary Integral Equation}
We consider the problem of time-harmonic acoustic scattering by a
sound-soft obstacle with smooth boundary $S$ in three-dimensional
space:
\begin{equation}
\label{eq:2.1} \left[
\begin{array}{rcll}
 \Delta U+\kappa^2 U&=&0\quad&\mbox{in $\mathrm{ext}(S)$},\\[1.1ex]
 U&=&-U^{\rm inc}&  \mbox{on $S$},\\[1.1ex]
 \partial_r  U-{\rm i} \kappa  U&=& o(r^{-1})\qquad&
\mbox{as }r\to\infty.
\end{array}
\right.
\end{equation}
Here $\kappa>0$ is the wave number, $U^{\rm inc}$ is the incident
wave, $r := |{\bf x} |$ and $\partial_r $ denotes the radial
derivative. Letting $\Phi_\kappa$ denote the fundamental solution of
the Helmholtz equation,
\[
 \Phi_{\kappa} (\r,\r'):=\frac{\exp({\rm i}\kappa|\r-\r'|)}{4\pi\,|\r-\r'|},
\]
then the solution $U$ of \eqref{eq:2.1} can be expressed as the
combined (or Brakhage-Werner \cite{BrWe1965}) potential
\[
U(\r)=\int_S \frac{\partial \Phi_{\kappa}(\r,\r')}{\partial
\bm{\nu}(\r')} \psi(\r'){\rm d}\r'-{\rm i}\eta\int_S
\Phi_{\kappa}(\r,\r')\psi(\r'){\rm d}\r',
\]
where $\frac{\partial }{\partial \bm{\nu}}$ denotes the outward normal
derivative on $S$ and $\eta > 0$ is a coupling parameter. The density
$\psi$ is the unique solution of the integral equation
\begin{equation}
\label{eq:intEqn} {\textstyle\frac{1}{2}}\psi(\r)+A\psi(\r)=-U^{\rm
inc}(\r) \qquad \forall \r\in S,
\end{equation}
where
\begin{equation}
\label{eq:defA}
 A\psi(\r)=\int_S K(\r,\r')\psi(\r')\,{\rm d}\r':=\int_S
\Big(\frac{\partial \Phi_{\kappa}(\r,\r')}{\partial \bm{\nu}(\r')}-{\rm
i}{\eta} \Phi_{\kappa}(\r,\r')\Big)\psi(\r'){\rm d}\r'.
\end{equation}
Various choices of the coupling parameter $\eta$ have been proposed
for accuracy and numerical efficiency; see
e.g.~\cite{MR2272197,ChandlerMonk:2007,DoGrSm:2007,377360}.  Note that
the kernel $K$ can be expressed in the form $K = K_0+K_1$, where
\begin{equation}\label{K0}  K_0(\mathbf r,\mathbf r'):=\eta
\frac{\sin(\kappa|\mathbf r-\mathbf r'|)}{4\pi|\mathbf r-\mathbf
r'|}+\mathrm i\left(\kappa \cos(\kappa |\mathbf r-\mathbf r'|)
-\frac{\sin(\kappa|\mathbf r-\mathbf r'|)}{|\mathbf r-\mathbf
r'|}\right) \frac{(\mathbf r-\mathbf r')\cdot\boldsymbol\nu(\mathbf
r')}{4\pi|\mathbf r-\mathbf r'|^2}
\end{equation}
and
\begin{equation}
\label{K1} K_1(\mathbf r,\mathbf r'):=-\mathrm
i\eta\frac{\cos(\kappa|\mathbf r-\mathbf r'|)}{4\pi|\mathbf
r-\mathbf r'|}- \frac{\kappa |\mathbf r-\mathbf r'|\sin
(\kappa|\mathbf r-\mathbf r'|)+\cos(\kappa|\mathbf r-\mathbf
r'|)}{4\pi|\mathbf r-\mathbf r'|}\frac{(\mathbf r-\mathbf
r')\cdot\boldsymbol\nu(\mathbf r')}{|\mathbf r-\mathbf r'|^2}.
\end{equation}
\begin{remark}\label{kernel_reg}
It is easy to check that $K_0\in\mathcal {C}^\infty(S\times S)$. The
kernel $K_1$, on the other hand, while not  $\mathcal
{C}^\infty(S\times S)$, can be integrated
with super-algebraic accuracy by means of a combination of a polar
change of variables and the trapezoidal rule; see~\cite{377360} and
equation~\eqref{polar}.
\end{remark}

In this paper we focus on the Brakhage-Werner formulation presented
above; a similar analysis can be used to treat the closely related
Burton-Miller formulation~\cite{BurMil:1971}.

\subsection{Geometry\label{geometry}}

The numerical method studied in this work relies heavily on the use
of a system of local charts for description of the surface $S$. We
will thus use a set of a number $J$ of open overlapping coordinate
patches $\{{ S}^j\}_{j=1}^J$ that cover $S$,
\begin{equation}\label{cover}
S=\bigcup_{j=1}^J { S}^j,
\end{equation}
each one of which is the image of a  $\mathcal
{C}^\infty$ parametrization
\[
 \begin{array}{rccl}
   \r ^j:&D^j&\longrightarrow &S^j\\
   &\underline{u}:=(u_1,u_2)&\longmapsto&\r^j(u_1,u_2),
 \end{array}
\]
where $\overline{D^j}\subset I_2:=(0,1)\times(0,1)$. We assume that
$\r^j$ can be extended to a  $\mathcal
{C}^\infty$ bijective diffeomorphism between
$\overline{D^j}$ and $\overline{S^j}$ so that, in particular, the
Jacobians
\begin{equation}\label{Jac}
  a^j(\underline{u}):= \bigg|\frac{\partial \r ^j(u_1,u_2)}{\partial
    u_1}\times \frac{\partial \r ^j(u_1,u_2)}{\partial u_2}\bigg|>{
    0},\qquad j=1,\ldots,J
\end{equation}
($a^j:D^j \to \mathbb R$) are $\mathcal
{C}^\infty$ functions of $\underline{u}$.

The method requires explicit use of a smooth partition of unity
$\{\omega^j\}_{j=1}^J\subset{\cal C}^\infty(S)$ on $S$, subordinated
to the covering~\eqref{cover}, that is
\[
\omega^j \ge 0, \qquad \supp\,\omega^j \subset {S}^j\quad
\mbox{and}\quad \sum_{j=1}^J \omega^j \equiv 1.
\]
The hypotheses on availability of local charts and a partition of
unity is not restrictive in practice: such parameterizations can be
constructed for smooth arbitrary geometries (see
e.g.~\cite{1322676}). We will also assume that the boundary of $\supp \omega^j$
is the finite union of Lipschitz arcs, a restriction which, again, is easy to
accommodate~\cite{1322676}.

For any $\delta>0$ and $\r\in S$ we let
\[
B({\bf r},\delta):=\{ {\bf r}'\in S\: :\: |\r'-\r|<\delta\}
\]
and, selecting parameters $0< \epsilon_0<\epsilon_1\le 1$ that will
otherwise be fixed throughout this paper, we define
\begin{equation}
\label{eq:2.8} S^j_{\delta}:=\overline{\bigcup_{\r\in\supp\omega^j}
B(\r,2\epsilon_1\delta)}=\bigcup_{\r\in
\supp\omega^j}\overline{B(\r,2\epsilon_1\delta)}.
\end{equation}
Clearly there exists $\delta_0>0$ such that for all $j$
\begin{equation}
\label{eq:requirement_on_delta}
\overline{B(\r,\epsilon_1\delta_0)}\cap
S^j_{2\delta_0}=\emptyset\qquad \forall\r\in S\setminus S^j
\end{equation}
---as it can be checked by considering a pair of points that realize
the distance between the boundaries of $\supp\omega^j$ and $S^j$. In
particular, this implies that $S^j_{2\delta_0}\subset S^j$.  The final
element in our geometric constructions is a $\mathcal
{C}^\infty$ function
$\upsilon:S\times[0,\infty)\to [0,1]$ such that
\[
\begin{array}{l}
\upsilon(\r,\,\cdot\,) \equiv 1 \mbox{\quad in $
[0,\epsilon_0\delta_0]$} \\[1.5ex] \upsilon(\r,\,\cdot\,) \equiv 0
\mbox{\quad in $ [\epsilon_1\delta_0,\infty)$}\end{array} \qquad
\forall \r \in S.
\]

Given $\delta\in(0,\delta_0]$ we now define the functions
$\eta_\delta:S \times S \to [0,1]$
\begin{equation}
\label{eq:eta} \eta_\delta({\bf r},{\bf r}'):=\upsilon \Big({\bf
r},\frac{\delta_0|{\bf r}'-{\bf r}|}{\delta}\Big).
\end{equation}
Clearly
\begin{equation}
\label{eq:suppetadelta} \supp \eta_\delta(\r,\,\cdot\,) \subset
\overline{B(\r,\epsilon_1\delta)}, \qquad \forall \r \in S.
\end{equation}
In view of the definition of the sets $S_{\delta}^j$ and
\eqref{eq:requirement_on_delta} we also have
\begin{equation}
 \label{eq:requirement_on_delta2}
\supp\eta_{\delta}(\mathbf{r},\cdot)\cap
S^j_{3\delta/2}=\emptyset\qquad \forall \mathbf{r}\in S\setminus
S^j_{2\delta}.
\end{equation}

Given $\delta$ and considering the decomposition $K=K_0+K_1$ in term
of the kernel functions given in equations~\eqref{K0}
and~\eqref{K1}, we define the regular part of the kernel of the
integral operator \eqref{eq:defA} as
\begin{equation}
\label{eq:Kreg} K_{\rm reg,\delta}(\r,\r'):= K_0(\r,\r')+
\big(1-\eta_\delta(\r, \r')\big) \, K_1(\r,\r').
\end{equation}
Clearly $K_{\rm reg,\delta}\in \mathcal C^\infty(S\times S)$. The
remainder is the singular part of the kernel,
\begin{equation}
\label{eq:Ksing} K_{\rm sing,\delta}:= K - K_{\rm reg,\delta}=
\eta_\delta   \, K_1,
\end{equation}
which, like the kernel $K_1$, can be integrated accurately by means of
a polar change of variables; see Remark~\ref{kernel_reg}.
The parameter $\delta$, which controls the support of
  the kernel $K_{\rm sing,\delta}(\r,\,\cdot\,)$, plays an essential
  role in both, the performance of the algorithm and its theoretical
  analysis; see Remark \ref{remark:delta:clarified} for details.

We next introduce
\[
K^{ij}(\underline u,\underline v):= K(\r^i(\underline
u),\r^j(\underline v)) \,a^j(\underline v),
\]
(see equation~\eqref{Jac}) with corresponding definitions of
$K^{ij}_{\rm sing,\delta}$ and $K^{ij}_{\rm reg,\delta}$ (cf.
equations~\eqref{eq:Kreg} and~\eqref{eq:Ksing}). Noting that
$K^{ij}$, $K^{ij}_{\rm sing,\delta}$ and $K^{ij}_{\rm
  reg,\delta}$, are defined in $D^i \times D^j$, we extend these
functions by zero (possibly thereby introducing discontinuities on the
boundary of $D^j$) to the full product of squares $I_2\times
I_2$. Clearly,
\[
 \int_S K(\r^i(\underline u),\r')\xi(\r')\,{\rm d}\r'=\sum_{j=1}^J
\int_{D^j}  K^{ij}(\underline u,\underline
v)\omega^j(\r^j(\underline v))\xi (\r^j(\underline v))\,{\rm
d}\underline v \qquad \forall \underline u \in D^i.
\]
Therefore, if $\psi$ is the exact solution of \eqref{eq:intEqn},
then $\psi^i:=\psi\circ \r^i :D^i\to \C$ ($i=1,\ldots,J$), is a
solution of the system
\begin{eqnarray}\nonumber
\frac{1}{2}\psi^i(\underline u)+ \sum_{j=1}^J \int_{I_2} K^{ij}_{\rm
reg,\delta}(\underline u,\underline v) \omega^j(\r^j(\underline
v))\psi^j(\underline v)\,{\rm
d}\underline v\\
+\sum_{j=1}^J \int_{I_2}  K^{ij}_{\rm sing,\delta}(\underline
u,\underline v)  \omega^j(\r^j(\underline v)) \psi^j(\underline
v)\,{\rm d}\underline v&=&
- U^{\rm inc}(\r^i(\underline u))\nonumber\\
&& \forall \underline u \in D^i,\quad i=1,\ldots, J.\qquad
\label{eq:ExactSystem}
\end{eqnarray}
\begin{remark}\label{dom_def}
Note that the functions $\psi^j$ that appear in the integrals over $I_2$ in
equation~\eqref{eq:ExactSystem} are only defined in $D^j$. However, they are
multiplied by the cutoff function $\omega^j\circ \r^j$ which vanishes outside
$D^j$ and thus provides a natural extension for the product throughout $I_2$.
\end{remark}
The solution of this system is used in Section~\ref{sec:3} to
reconstruct the solution of the original problem~\eqref{eq:intEqn}.

We can clearly distinguish two different types of integral operators
in~\eqref{eq:ExactSystem}, namely, integral operators with smooth and
singular kernels. The discretizations of these operators are produced,
accordingly, by means of two different strategies---as discussed in
the following section.

\subsection{Discretization of the integral operators in
  equation~\eqref{eq:ExactSystem}\label{disc_int_op}}
For each patch $S^j$, we select a positive integer $N_j$, we let
$h_j:=1/N_j$, and we introduce the grid points
\[
\underline x^j_{\underline m} := h_j \underline m= h_j\, (m_1,m_2),
\qquad 0\le m_1,m_2 \le N_j-1.
\]
We assume that these grids are quasiuniform: letting
\[
N:=\min_{j=1,\ldots,J} N_j, \qquad h:=1/N=\max_{j=1,\ldots,J} h_j,
\]
we assume there exists $c>0$ such that
\begin{equation}
\label{eq:quasiuniform}
h<c h_j,\qquad j=1,\ldots, J.
\end{equation}
This is not a restrictive assumption in view of the assumed smoothness
of $S$ and $U^{\rm inc}$, and, therefore, of the solution $(\psi^j)$.

\begin{figure}[htp]
\centerline{\includegraphics[width=0.8\textwidth]{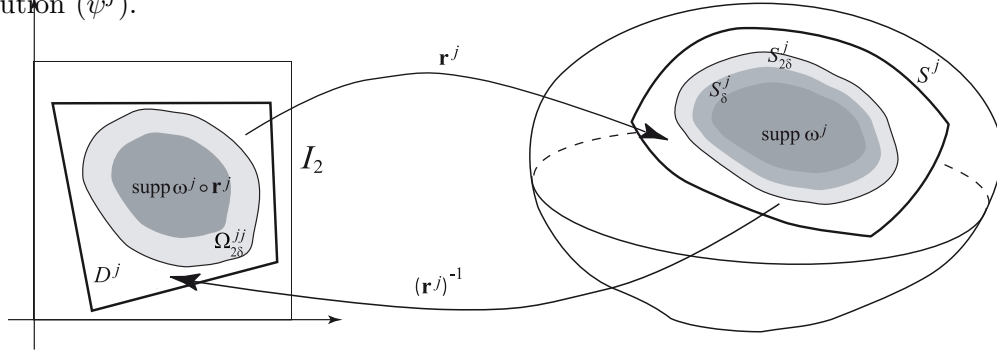}}
\caption{Sketch of the  geometric layout of a single chart and
its associated cut-off function.}\label{fig:geometry}
\end{figure}

The algorithm under consideration is a Nystr\"{o}m method which
produces point-wise values of the unknown $(\psi^j)$ at the
discretization points $\underline x_{\underline m}^j$. The quadrature
rules that ultimately define the method are described in what follows;
for notational simplicity our approximate quadrature formulae use the
set
\begin{equation}\label{eq:defZNj}
  \mathbb Z_{N_j}:=\big\{ \underline{m}:=(m_1,m_2)\in \Z^2
  :  0\le m_1,m_2\le N_j-1\big\}
\end{equation}
of two-dimensional summation indices $\underline m$.

The approximate integration method used by the algorithm to treat the
{\em regular} portion of the kernel is, simply, based on the
trapezoidal rule,
\begin{equation}\label{eq:regularpart}
\int_{I_2} K^{ij}_{{\rm reg},\delta}(\,\cdot\,,
 \underline v) \xi(\underline v){\rm d}\underline v
\approx h_j^2\sum_{\underline{m}\in \Z_{N_j}} \!
 K^{ij}_{{\rm reg},\delta}(\,\cdot\,,
\underline x^j_{\underline m}) \xi (\underline x^j_{\underline m}),
\end{equation}
with the convention that $K^{ij}_{{\rm reg},\delta}(\,\cdot\,,
\underline v)\equiv 0$ for $\underline v\not\in D^j$. Notice that for
a  $\mathcal
{C}^\infty$ function $\xi$ compactly supported in
\begin{equation}
\label{eq:defwj}
\Omega^j:=(\r ^{j})^{-1}(\supp \omega^j)\subset D^j,
\end{equation}
(such as $\xi=(\omega^j\psi)\circ \r^j$ with $\psi\in {\cal
  C}^\infty(S)$) the rule embodied in equation~\eqref{eq:regularpart}
gives rise to super-algebraic convergence.

\begin{figure}[htb]
\centerline{\includegraphics[width=0.8\textwidth]{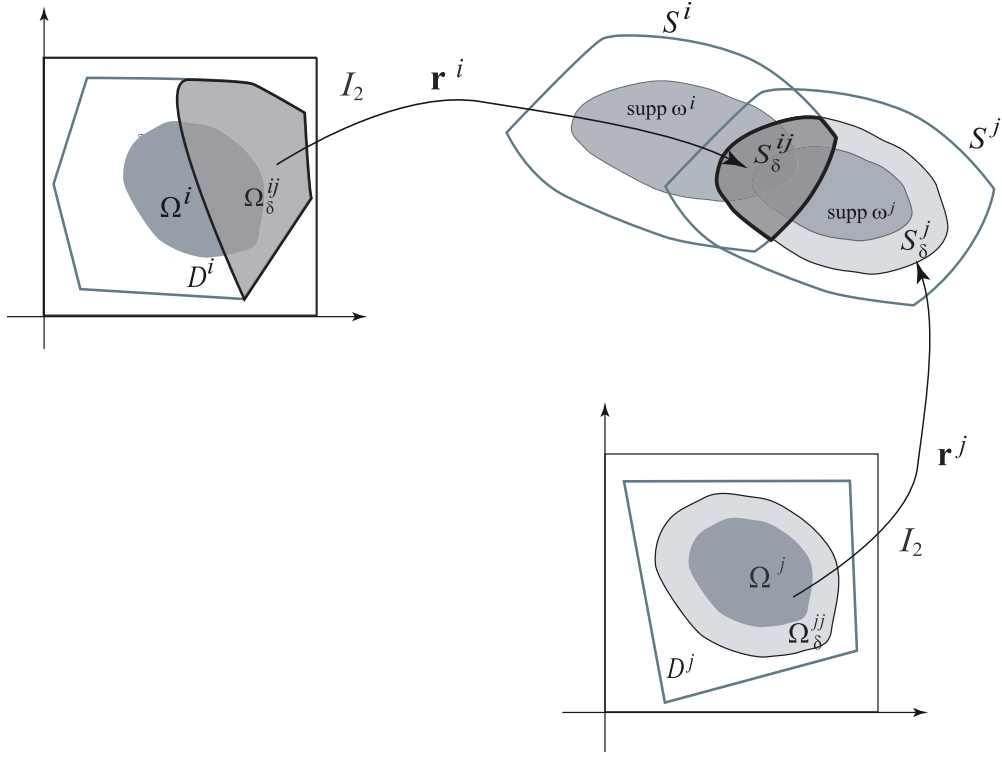}}
\caption{The overlapping of two charts and the associated
domains.}\label{fig:geometry2}
\end{figure}
To approximate integrals of the form
\begin{equation}
\label{eq:singularPart}
 \int_{I_2}  K^{ij}_{{\rm sing},\delta}(\underline u,
\underline v)  \xi(\underline v) \mathrm d \underline v, \qquad \xi
\in \mathcal C^\infty(I_2), \quad \supp \xi \subseteq \Omega^j,
\end{equation}
that include the {\em singular} part of the kernel, the algorithm uses
polar coordinates around the singularity. To properly account for
contributions arising from various regions delineated by local charts
and overlaps we let
\begin{equation}
\label{eq:defSij}
\begin{array}{c}
  \underline{r}^{ji}:=(\r ^{j})^{-1} \circ\r ^i:(\r ^i)^{-1}(S^i\cap
  S^j)\subset D^i\to D^j \\[1.5ex]
  S^{ij}_{\delta}:=S^i
  \cap S^j_{\delta} \subset S^i\cap S^j,\qquad
  \Omega^{ij}_{\delta}:=(\r^{i})^{-1}(S^i \cap
  S^j_{\delta})=(\r^i)^{-1} (S^{ij}_\delta);
\end{array}
\end{equation}
see Figure~\ref{fig:geometry2} and equation~\eqref{eq:2.8}. A close
inspection of the definition of the sets $S^j_\delta$ shows that
$\overline{B(\r,\epsilon_1\delta)}\cap \supp \omega^j=\emptyset$ if
$\r \not\in S^j_{\delta/2}$. Since, additionally,
\[
\supp K^{ij}_{{\rm sing},\delta}(\underline u, \,\cdot\,) \subset
\supp \eta_\delta(\r^i(\underline u), \r^j(\,\cdot\,))\subset
 (\r^{j})^{-1}\big( \overline{B(\r^i(\underline
u),\epsilon_1\delta)}\cap S^j\big),
\]
we conclude that the integral \eqref{eq:singularPart} with $\supp\xi
\subseteq \Omega^j$, vanishes for all $\underline u$ such that
$\r^i(\underline u)\not\in S^j_{\delta/2}$, i.e., for $\underline
u\not\in \Omega_{\delta/2}^{ij}$. The algorithm therefore only
produces approximations of~\eqref{eq:singularPart} for $\underline
u\in \Omega_{\delta/2}^{ij}$. To do this consider the relations
\begin{eqnarray}
\int_{\R^2} K^{ij}_{{\rm sing},\delta}( \underline u, \underline v)
\xi(\underline v)
 \mathrm d \underline v\!&&\nonumber\\
&&\hspace{-4cm}= \int_{\mathbb R^2}
 K_{{\rm sing},\delta}^{ij}\left( \underline u,
\underline r^{ji}(\underline u) + \underline w\right)   \xi
(\underline r^{ji}(\underline u)+\underline w)\, \mathrm d
\underline
w\nonumber\\
&&\hspace{-4cm}=
\frac{1}{2}\int_{0}^{2\pi}\bigg(\int_{-\infty}^\infty
|\rho|\,K_{{\rm sing},\delta}^{ij}\left( \underline u, \underline
r^{ji}(\underline u) + \rho \underline{e}(\theta)\right)
 \,\xi(\underline r^{ji}(\underline u)+ \rho
\underline{e}(\theta))\,
\mathrm d \rho\bigg) \mathrm d \theta\label{eq:singularPart2}
 \end{eqnarray}
 where $\underline e(\theta):=(\cos \theta,\sin\theta)$ and note that
 the function
\begin{equation}\label{polar}
(\rho,\theta) \mapsto  |\rho| K_{{\rm
sing},\delta}^{ij}\left(\underline{u}^i,\, \underline
 r^{ji}( \underline{u} ) + \rho \underline{e}(\theta)\right)
\end{equation}
is smooth (as shown in Section~\ref{sec:4},
cf. \cite{377360,MR1875086}), $2\pi$-periodic in $\theta$ and
compactly supported as a function of the variable $\rho$.  In what
follows we temporarily let $\underline z=(z_1,z_2):=\underline
r^{ji}(\underline u)$, so that the dependences on the patch indices
$(i,j)$ and parametric coordinate $\underline u$ are not displayed.

The integral in the angular variable is approximated using a
trapezoidal rule on a uniform partition of $[0,2\pi]$ in $\Theta_j$
subintervals of length $ k_j=2\pi/\Theta_j$. Therefore, we consider
the angles
\[
\theta_p^j:= p\, k_j, \qquad p=0,\ldots, \Theta_j-1
\]
and approximate \eqref{eq:singularPart2} by
\begin{equation}\label{eq:sing:01}
\frac{ k_j}2\sum_{p=0}^{\Theta_j-1} \Bigg( \int_{-\infty}^\infty
K^{ij}_{\rm sing,\delta} (\underline u,\underline z+
\rho\,\underline e(\theta_p^j) )\, |\rho|\,
 \xi (\underline z+\rho\, \underline
e(\theta_p^j) ) \mathrm d \rho\Bigg).
\end{equation}
Recall that a uniform Cartesian grid in $I_2$ with mesh length $h_j$
has been introduced in the approximation of the regular part
\eqref{eq:regularpart}. We will now use points of this grid to
approximate the radial integrals  in \eqref{eq:sing:01}.

If $|\cos \theta_p^j|\ge \sqrt2/2$, that is, modulo $2\pi$,
$\theta_p^j \in [-\pi/4,\pi/4] \cup [3\pi/4, 5\pi /4]$, we look for
the intersections
\[
\{ \underline x= \underline z+\rho\, e(\theta_p^j)\, :\, \rho \in
\mathbb R\}\cap \{ \underline x= ( q\, h_j,\xi)\, :\, \xi \in
\mathbb R\}, \qquad q=0,\ldots, N_j-1,
\]
i.e., the intersections of the double rays stemming from $\underline
z$ with angle $\pm\theta_p^j$ with the vertical lines of the uniform
grid: the points of intersections are given by
\begin{equation}
\label{eq:points01} \big( q\, h_j, z_2+( q\,h_j-z_1) \tan
\theta_p^j\big),\qquad q=0,1,\ldots, N_j-1;
\end{equation}
clearly, the distance between two consecutive points is
$h_j/|\cos\theta_p^j|\le \sqrt{2}h_j.$ Alternatively we can express
these points in the form
\[
\underline z + \rho_q^{h_j} (\underline z,\theta_p^j)\, \underline
e(\theta_p^j), \qquad \mbox{where}\qquad \rho_q^h (\underline
z,\theta ):= \frac{q\,h-z_1}{\cos\theta}.
\]
For these values of $\theta_p^j$, the corresponding integral in
(\ref{eq:sing:01}) is approximated by
\begin{equation}
 h_j\,c(\theta_p^j) \sum_{q=0}^{N_j-1}  K^{ij}_{\rm sing,\delta}
\big(\underline u,\underline z+ \rho_q^{ h_j} (\underline
z,\theta_p^j)\,\underline e(\theta_p^j)\big)  \, |\rho_q^{h_j}
(\underline z,\theta_p^j)| \, \xi \left(\underline z+\rho_q^{ h_j}
(\underline z,\theta_p^j)\, \underline e(\theta_p^j)
\right),\label{eq:sing2}
\end{equation}
where $c(\theta)=1/|\cos\theta|$. For the remaining angles,
intersections are computed with the corresponding horizontal lines:
\begin{equation}
\label{eq:points02} \{ \underline x= \underline z+\rho\,
\underline e(\theta_p^j)\, :\, \rho \in \mathbb R\}\cap \{ \underline x= (\xi,
q\, h_j)\, :\, \xi \in \mathbb R\}, \qquad q=0,\ldots, N_j-1.
\end{equation}
The quadrature points are deliberately selected to
lie on lines with one of the coordinates equal to $q \, h_j$. As
  discussed in the introduction, such a selection was introduced
  in~\cite{377360} to enable fast interpolation of the function $\xi$
  on the radial quadrature points (that is, to produce a fast
  algorithm for evaluation of the operator $R_{N_j}$
  in~\eqref{eq:nyst}).

Using the angle-dependent weight
\begin{equation}\label{eq:2.3.a}
c(\theta):= \min \Big\{
\frac1{|\cos\theta|},\frac1{|\sin\theta|}\Big\},
\end{equation}
as well as the nodal points radii
\[
\rho^h_q(\underline z,\theta) := \left\{ \begin{array}{ll}
\displaystyle \frac{qh-z_1}{|\cos\theta|}, & \mbox{if
$|\cos\theta|\ge
\sqrt2/2$},\\[1.5ex] \displaystyle \frac{qh-z_2}{|\sin\theta|}, &
\mbox{otherwise}, \end{array}\right.
\]
expression (\ref{eq:sing2}) provides approximation of all the
integrals in (\ref{eq:sing:01}).

In sum, we define our discrete singular operator (which depends on
$h_j$ and $k_j$, or, equivalently, on $N_j$ and $\Theta_j$) by
\begin{equation}\label{eq:newQuad-1}
(\mathrm L^{ij}_{\delta,h}\xi)(\underline{u}):= \left\{
\begin{array}{l} \ds \frac{h_j k_j}{2} \!\!\sum_{p=0}^{\Theta_j-1}
 \sum_{q=0}^{N_j-1}\!\! c(\theta_p^j)\bigg(K^{ij}_{\rm sing,\delta}
\Big(\underline{u},\underline r^{ji}(\underline{u})+ \rho_q^{h_j}
(\underline r^{ji}(\underline{u}),\theta_p^j)\,\underline
e(\theta_p^j) \Big)\\[1.5ex]
\ds \hspace{1.4cm} \times |\rho_q^{h_j} (\underline
r^{ji}(\underline{u}),\theta_p^j)|\,
 \xi \Big(\underline r^{ji}(\underline{u})+\rho_q^{h_j} (
\underline r^{ji}(\underline{u}),\theta_p^j)\, \underline
e(\theta_p^j)
\Big)\bigg),\\[1.5ex]
\ds \hfill \mbox{if $\underline u  \in
\Omega^{ij}_{\delta/2}$},\\[1.5ex]
\ds 0, \hfill \mbox{otherwise},
\end{array}\right.
\end{equation}
where $\Omega^{ij}_{\delta/2}=(\r^i)^{-1}(S^{ij}_{\delta/2})=
(\r^i)^{-1}(S^{i}\cap S^{j}_{\delta/2}).$
\begin{figure}[htp]
\centerline{\includegraphics[width=0.90\textwidth]{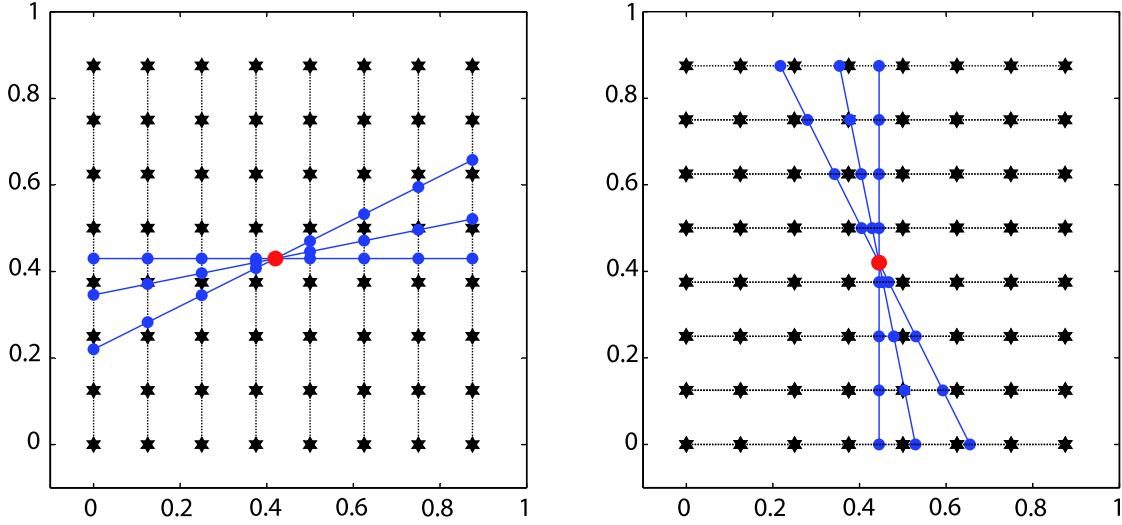}}
\caption{Points used in the integration of the singular
part.\label{fig:points03}}
\end{figure}
In what follows we use the scaling
\begin{equation}
\label{eq:alpha}
\Theta_j\approx N_j^{1+\alpha},\qquad \alpha>0,
\end{equation}
i.e., we assume that there exist $C,c>0$ such that for all $j$,
$cN_j^{1+\alpha}\le \Theta_j\le C N_j^{1+\alpha}$.  We point out that
$\rho_q^h(\underline{z},\,\cdot\,)$ is discontinuous at
$\{\pi/4,3\pi/4,5\pi/4, 7\pi/4\}$ but both side limits exist. It is
immaterial which value (the right or left limit or even the average of
both) is set as value to $\rho_q^{h}(\underline{z},\,\cdot\,)$ at
these points.

\subsection{The numerical method}

We are now in a position to lay down a fully discrete version of
equations~\eqref{eq:ExactSystem}. The unknowns are taken to be
approximate values
\begin{equation}\label{eq:values}
\varphi^j_{\underline{\ell}}, \qquad \underline\ell \in
\Omega^j_h:=\big\{\underline m\in\Z_{N_j}\::\:\r^j(\underline{x}
^j_{\underline m})\in \mathrm{int}(\mathrm{supp}\,\omega^j)\big\},
\qquad j=1,\ldots,J
\end{equation}
of the true function values $ \psi^j(\underline
x^j_{\underline\ell})$. In what follows we denote by
$\boldsymbol\varphi^j\in \mathbb C(\Omega^j_h)$ the array whose
$\underline\ell = (\ell_1,\ell_2)$ entry is
$\varphi^j_{\underline\ell}$ for all indices $\underline\ell \in
\Omega^j_h$. We consider the set of indices
\begin{equation}\label{znstar}
\mathbb Z_{N_j}^* := \big\{ \underline{m}:=(m_1,m_2)\in \Z^2\, :\,
-N_j/2\le m_1,m_2 < N_j/2\big\},
\end{equation}
(note that the cardinality of $\displaystyle \mathbb Z_{N_j}^*$ is
$\# \displaystyle\Z^*_{N_j}=\# \Z_{N_j}=N_j^2$), the space of trigonometric
polynomials
\[
\mathbb{T}_{N_j}:= \mathrm{span}\, \,\big\{ \exp( 2\pi{\rm i}\,
\underline m\cdot \underline u)\, \,:\,\, \underline m \in \mathbb
Z_{N_j}^*\big\},
\]
the interpolation operator $\mathrm R_{N_j}: \mathbb C(\Omega^j_h)
\to \mathbb T_{N_j}$ defined by the equations
\[
\mathrm R_{N_j}\boldsymbol\varphi (\underline x^j_{\underline\ell})=
\left\{ \begin{array}{ll} \varphi^j_{\underline\ell}, \quad&
\underline\ell \in \Omega^j_h,\\[1.5ex]
0, & \underline\ell \in \mathbb Z_{N_j}\setminus
\Omega^j_h,\end{array}\right.
\]
and the vectors $\boldsymbol\omega^j\in \mathbb C(\Omega^j_h)$ with
components $\omega^j_{\underline\ell}:=\omega^j(\mathbf r^j(
\underline x^j_{\underline\ell}))$. Using these notations, the
discrete Nystr\"{o}m equations for the system~\eqref{eq:ExactSystem} are
given by
\begin{eqnarray}
{\textstyle\frac12} \varphi^{i}_{\underline \ell} + \sum_{j=1}^J
 h_j^2 \sum_{\underline m\in \Omega^j_h} K^{ij}_{\rm
reg,\delta}(\underline x^i_{\underline \ell}, \underline
x^j_{\underline m}) \omega^j_{\underline m}\, \varphi^j_{\underline
m}  + \sum_{j=1}^J  \Big(\mathrm L^{ij}_{\delta,h} \mathrm
R_{N_j}(\boldsymbol\omega^j \odot \boldsymbol\varphi^j) \Big)
(\underline x_{\underline \ell}^i)&& \nonumber \\
&& \hspace{-96pt}=-U^{\rm inc}(\r^i(\underline
x^i_{\underline \ell})) \label{eq:nyst}
\end{eqnarray}
for $\underline\ell \in \mathbb C(\Omega^j_h)$ and $j=1,\ldots,N$,
where the binary  operator $\odot$ denotes
componentwise product of arrays: for example, $\boldsymbol\omega^j
\odot \boldsymbol\varphi^j$ is the array whose $\underline\ell$-th
entry is given by
$\omega^j_{\underline\ell}\varphi^j_{\underline\ell}$.  Once these
point values $\varphi^j_{\underline\ell}$ have been computed, the
Nystr\"{o}m method provides the reconstruction formula
\begin{equation}
\psi^{i}_h =-2\sum_{j=1}^J
 \bigg( h_j^2 \sum_{\underline m\in \Omega^j_h} K^{ij}_{\rm
reg,\delta}(\,\cdot\,, \underline x^j_{\underline m})
\omega^j_{\underline m}\, \varphi^j_{\underline m}  + \mathrm
L^{ij}_{\delta,h} \mathrm
R_{N_j}(\boldsymbol\omega^j\odot\boldsymbol\varphi^j) \bigg)-2U^{\rm
inc}\circ\r^i, \label{eq:nyst2}
\end{equation}
($i=1,\ldots,J$).  With these definitions, we clearly have
\[
\psi^j_h(\underline x^j_{\underline\ell})=\varphi^j_{\underline\ell}
\qquad \underline\ell \in \Omega^j_h.
\]
Finally, the $J$ parameter-space continuous functions $\psi^j_h:I_2\to
\mathbb{C}$ can be assembled into a single continuous approximate
solution $\psi_h(\r)$ defined on $S$:
\begin{equation}
\label{eq:nyst3} \psi_h(\r):=\sum_{j \in \mathcal I(\r)}
\omega^j(\r) \psi^j_h((\r^j)^{-1}(\r)), \qquad \mbox{where $\mathcal
I(\r):=\{ j\,:\, \r \in \supp \omega^j \}$.}
\end{equation}

The main convergence result of this paper can now be stated; its proof
is given in Sections~\ref{sec:3} through~\ref{sec:5}. The regularity
estimates given in the statement of the theorem are expressed in terms
of standard Sobolev norms on the surface $S$  (see for instance
\cite{adams:2003,McLean:2000,sauter2010boundary} or
any standard text on Sobolev spaces) .

\begin{remark}\label{remark_23}
The notation $\delta\approx h^\beta$ means that
\[
 c h^\beta\le \delta\le Ch^\beta,
\]
for some constants $c>0$ and $C>0$, independent of $h$ and $\beta$.
Hereafter, the parameter $\beta \in (0,1)$ will be taken to be
fixed. Dependence of constants on this parameter will not be shown
explicitly.
\end{remark}

\begin{theorem}\label{theo:MAIN} Let $\psi$ be the solution of
  \eqref{eq:intEqn}, and assume $\delta\approx h^\beta$ with
  $\beta\in(0,1)$. Then for sufficiently large values of $N$ the
  Nystr\"om equations \eqref{eq:nyst} admit a unique
  solution. Letting $\psi_h$ denote the reconstructed function on $S$
  as defined by~\eqref{eq:nyst2} and~\eqref{eq:nyst3}, we
  have the error estimate
\begin{equation}
\label{eq:MAIN01}
 \|\psi_h-\psi\|_{L^2(S)}\le C_{t}
\big(|\log h|^{\frac32}h^{t(1-\beta)}+h^{t-1}\big)\|\psi\|_{H^t(S)},
\end{equation}
for all $t\ge 2$. Finally, for all
$\varepsilon>0$ and $t\ge 2$
\begin{equation}
\label{eq:MAIN02}
 \|\psi_h-\psi\|_{L^\infty(S)}\le C_{t,\varepsilon}
\big(h^{t(1-\beta)-(1+\varepsilon)\beta}+h^{t-1}\big)\|\psi\|_{H^{t}(S)}.
\end{equation}
\end{theorem}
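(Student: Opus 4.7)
My proposed plan is to reduce the theorem to three classical ingredients (reformulation on biperiodic Sobolev spaces, operator stability, and Nyström consistency) after recasting the problem as described in Section~\ref{sec:3}. Introducing yet another layer of local cut-offs that are identically one on $\supp\omega^j$ but vanish slightly outside it, one biperiodically extends each local quantity $\omega^j\psi\circ\r^j$ to all of $[0,1]^2$, so that \eqref{eq:ExactSystem} and \eqref{eq:nyst} become, respectively, a continuous system $\mathrm A\boldsymbol{\psi}=\mathbf g$ and a discrete system $\mathrm A_{h,\delta}\boldsymbol{\varphi}=\mathbf g$ acting on $J$-tuples of biperiodic functions. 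The results announced in Section~\ref{sec:3} then give both the equivalence of the continuous periodic system with the BIE \eqref{eq:intEqn} and its unique solvability in $L^2$.

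Next, stability of the discrete equations is obtained from the convergence $\mathrm A_{h,\delta}\to\mathrm A$ as $N\to\infty$. The discretised regular part converges in operator norm (trapezoidal rule on $C^\infty$ compactly supported integrands), while the discretised singular part converges in a collectively compact sense in the spirit of Anselone~\cite{MR0443383}; the analysis uses the Fourier projection and trigonometric interpolation operators, the Dirac-distribution operator from~\cite{CeDoSa:2002}, and the Sobolev polar-integration estimates of Appendix~\ref{sec:A}. Combined with the unique solvability of $\mathrm A$ this yields, for $N$ sufficiently large, uniform invertibility $\|\mathrm A_{h,\delta}^{-1}\|_{L^2\to L^2}\le C$, hence existence and uniqueness of $\boldsymbol\varphi$ and the Nyström identity
\[
\boldsymbol\varphi - \boldsymbol\psi = \mathrm A_{h,\delta}^{-1}(\mathrm A-\mathrm A_{h,\delta})\boldsymbol\psi.
\]

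The $L^2$ estimate~\eqref{eq:MAIN01} then follows from a sharp bound on the consistency error $\|(\mathrm A-\mathrm A_{h,\delta})\boldsymbol\psi\|_{L^2}$. The regular part, being a trapezoidal rule applied to a $C^\infty$ function with compact support in $I_2$, produces super-algebraic errors that are absorbed into the $h^{t-1}$ contribution arising from the Fourier truncation and trigonometric interpolation of $\omega^j\psi$. The singular part, in which $\mathrm L^{ij}_{\delta,h}\mathrm R_{N_j}$ is applied to the bivariate trigonometric polynomial approximating $\omega^j\psi$, is the genuinely delicate term: Appendix~\ref{sec:A} should yield an estimate of order $h^{t(1-\beta)}$, reflecting that smoothness of order $t$ provides an $h^t$ gain while the shrinking support $|\cdot|\lesssim\delta\approx h^\beta$ of $\eta_\delta$ forces a loss of $h^{-t\beta}$ when derivatives are rescaled through the polar change of variables. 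The $|\log h|^{3/2}$ factor is the standard logarithmic loss in bivariate trigonometric interpolation when one needs to control compactly supported functions in $L^\infty$-type norms.

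For \eqref{eq:MAIN02} I would invoke the two-dimensional Sobolev embedding $H^{1+\varepsilon}\hookrightarrow L^\infty$ (or an equivalent inverse inequality on trigonometric polynomials), which replaces the logarithmic factor by $\delta^{-(1+\varepsilon)}=h^{-(1+\varepsilon)\beta}$, the price of paying for $1+\varepsilon$ additional derivatives of the floating cut-off $\eta_\delta$. The reconstruction~\eqref{eq:nyst3} is only a partition-of-unity weighted sum with smooth weights and therefore preserves both convergence rates in passing from $\boldsymbol\varphi$ back to $\psi_h$ on $S$. The principal obstacle is precisely the singular-part consistency estimate, which requires tracking simultaneously the Sobolev smoothness of the polynomial input, the $\delta$-dependence of the floating cut-off, the anisotropic spacing of the radial nodes (tied to the Cartesian grid and therefore $\theta_p^j$-dependent), and the super-algebraic regime enforced by the oversampling rule $\Theta_j\approx N_j^{1+\alpha}$; once that estimate is in hand, the remaining steps amount to bookkeeping around it.
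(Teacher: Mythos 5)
Your overall route matches the paper's: biperiodic reformulation with an auxiliary $\delta$-dependent cut-off family $\widetilde\omega^j_\delta$, operator-norm convergence $\mathcal B_{\delta,h}\to\mathcal B_\delta$ combined with collective compactness of $\{\mathcal A_\delta\}$ (so that $\mathcal B_\delta^{-1}$ is uniformly bounded and hence $\mathcal B_{\delta,h}^{-1}$ is too), a Nyström-type perturbation identity, and the polar-quadrature consistency bounds of Appendix~A. These are indeed the ingredients of the proof.

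The one genuine gap is your treatment of the passage from the periodic error estimate back to $\psi_h$ on $S$. You claim the reconstruction \eqref{eq:nyst3} is ``only a partition-of-unity weighted sum with smooth weights and therefore preserves both convergence rates.'' That is not accurate, and it is where the specific constants in \eqref{eq:MAIN01}--\eqref{eq:MAIN02} actually arise. In the Nyström scheme, $\psi^j_h$ is not defined by multiplying $\phi^j_h$ by $\omega^j$; rather \eqref{eq:nyst2} rebuilds $\psi^j_h$ by \emph{re-applying} the discrete regular operator $\mathrm A^{ij}_{\mathrm{reg},\delta}\mathrm D_{N_j}$ and the discrete singular operator $\mathrm A^{ij}_{\mathrm{sing},\delta,h}$ to $\boldsymbol\phi_h$. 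Comparing this with the analogous reconstruction of $\psi$ from $\boldsymbol\phi$ (which uses the \emph{continuous} operators $\mathrm A^{ij}_{\mathrm{reg},\delta}$ and $\mathrm A^{ij}_{\mathrm{sing},\delta}$) requires bounding $\|\mathrm A^{ij}_{\mathrm{reg},\delta}\mathrm D_{N_j}\phi^j_h-\mathrm A^{ij}_{\mathrm{reg},\delta}\phi^j\|$ and $\|\mathrm A^{ij}_{\mathrm{sing},\delta,h}\phi^j_h-\mathrm A^{ij}_{\mathrm{sing},\delta}\phi^j\|$---this is the content of Propositions~\ref{prop:5.8} and~\ref{prop:5.9}. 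The periodic error estimate of Theorem~\ref{theo:main2} gives $\|\boldsymbol\phi-\boldsymbol\phi_h\|_0\le C_t h^{t(1-\beta)}\|\boldsymbol\phi\|_t$ (no log factor, no $h^{t-1}$ term), so the rate in \eqref{eq:MAIN01} is \emph{not} simply inherited: the factor $|\log h|^{3/2}$ enters precisely at this stage, via the non-standard $L^2\to L^\infty$ bound for $\mathrm A^{ij}_{\mathrm{sing},\delta}$ on trigonometric polynomials (Proposition~\ref{Prop:BoundOnSingOperatorLinfty}), not from trigonometric interpolation as you suggest; and the $h^{t-1}$ term emerges from the geometric cut-off operator $\mathcal G$ together with bounding $\|\mathrm P_{N_j}\phi^j-\phi^j\|_{\infty,I_2}$ by Sobolev embedding inside that same reconstruction estimate (equation~\eqref{eq:p59e}). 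Likewise the $h^{t(1-\beta)-(1+\varepsilon)\beta}$ rate in \eqref{eq:MAIN02} results from applying $\mathcal A^{\rm sing}_\delta$ and $\mathcal A^{\rm reg}_\delta$ to $\phi^j_h-\phi^j$ with the $\delta^{-|r|}$-dependent norm of Proposition~\ref{prop:3.3}, not simply from swapping a log for $\delta^{-(1+\varepsilon)}$. Without this reconstruction analysis you would only obtain rates in terms of $\|\boldsymbol\phi-\boldsymbol\phi_h\|$, which neither produce nor explain the $|\log h|^{3/2}$ and $h^{t-1}$ terms stated in the theorem. A minor further point: your Nyström identity $\boldsymbol\varphi-\boldsymbol\psi=\mathrm A_{h,\delta}^{-1}(\mathrm A-\mathrm A_{h,\delta})\boldsymbol\psi$ omits the right-hand side projection error $\|\mathbf U-\mathcal Q_h\mathbf U\|_0$, which appears because the discrete system uses $\mathcal Q_h\mathbf U$ rather than $\mathbf U$.
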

Theorem \ref{theo:MAIN} tells us that for a smooth surface $S$ and a
 $\mathcal
{C}^\infty$   right-hand side $U^\mathrm{inc}$ (from which it follows that
$\psi \in \mathcal C^\infty(S)$) the Nystr\"{o}m algorithm under
consideration converges super-algebraically fast.

\begin{remark}\label{remark:delta:clarified} As mentioned in 
  Section~\ref{geometry}, the parameter $\delta\approx h^\beta$ plays
  central roles in both the theory and the actual performance of the
  the algorithm under consideration. With regards to the latter we
  briefly mention here that use of the floating cut-off
  function~\eqref{eq:eta}, whose support is controlled by the
  parameter $\delta$, helps restrict the use of the costly polar
  integration scheme to a small region around the singular point (thus
  reducing the overall computing time required by the algorithm), and,
  further, it enables acceleration via a sparse, parallel-face
  FFT-based equivalent-source technique; see~\cite{377360} for
  details. (In particular we note that the value $\beta = 1/3$ is used
  in~\cite{377360,MR1875086} for optimal speed of the
  equivalent-source accelerated Nystr\"om method.) The parameter
  $\delta\approx h^\beta$ also has a significant impact on our
  theoretical treatment. Indeed, one of the most delicate points in
  our stability analysis concerns the convergence in norm of a
  discrete singular operator to a corresponding continuous singular
  operator. One of the terms in our estimate of the norm of the
  difference of these operators (equation \eqref{eq01:prop:main1-02}
  in Proposition \ref{prop:main1-02}) is bounded by $|\log h|^{3/2}
  h^{\beta/2}$. By taking $\beta>0$ we ensure that this terms also
  tends to zero, from which the desired convergence in norm results.
\end{remark} 

\section{Biperiodic framework}\label{sec:3}

\subsection{Continuous equations\label{continuous}}

The analysis of the method will be carried out by recasting the system
\eqref{eq:ExactSystem} as a system of periodic integral equations with
all unknown functions defined on the unit square $I_2$. To introduce
our periodic formulation we utilize a second family of cut-off
functions, $\widetilde{\omega}^j_\delta \in{\cal C}^\infty(I_2)$, that
depends on $\delta$ and satisfies the following assumptions:
\begin{subequations}
 \label{eq:omegatilde}
\begin{eqnarray}\label{eq:omegatildeProp1}
& & \supp \widetilde{\omega}^j_\delta \subset
\Omega^{jj}_{3\delta/2}=(\r^j)^{-1}(S^{j}_{3\delta/2}),\\
& & \widetilde{\omega}_\delta^j\ \equiv\ 1  \quad \mbox{ in }
\Omega^{jj}_{\delta}=(\r^j)^{-1}(S^j_\delta)\supset \Omega^j,
\label{eq:omegatildeProp3}
\\
& & \|\partial_{\underline\alpha}\widetilde{\omega}_\delta^j
\|_{\infty,I_2}\le
C_{\underline\alpha}\delta^{-|\underline\alpha|}\qquad
\forall\underline\alpha\ge \underline 0,\label{eq:omegatildeProp2}
\end{eqnarray}
\end{subequations}
where for a given non-negative bi-index
$\underline\alpha=(\alpha_1,\alpha_2)$,
\[
\partial_{\underline\alpha}\xi(\underline{u})=\partial^{\alpha_1}_{ {u
} _1 } \partial^ { \alpha_2 } _ { {u}_2}\xi(u_1,u_2)
\]
and $\|\cdot\|_{\infty,I_2}$ is the $L^\infty(I_2)$ norm.
We will use the characteristic function
\begin{equation}
\label{eq:defOmegaTilde0}
 \widetilde{\omega}_0^j(\underline{u}):=\left\{
\begin{array}{ll}
 1&\mbox{if }\underline{u}\in \Omega^j,\\
0&\mbox{otherwise},\\
\end{array}
\right.
\end{equation}
which can be viewed as the limit of $\widetilde{\omega}^j_{\delta}$ as
$\delta\to 0$.

Using these functions, we define the following periodic integral
operators
\begin{eqnarray} \mathrm A^{ij}_{\rm reg,\delta}\xi&:=&(\omega^i\circ
\r^i) \int_{I_2}  K^{ij}_{\rm
reg,\delta}(\,\cdot\,,\underline{u})\widetilde
{\omega}^j_\delta(\underline{u})\xi(\underline{u})
{\rm d}\underline{u},\label{eq:defKijreg}\\
\mathrm A^{ij}_{\rm sing,\delta}\xi&:=&
 (\omega^i\circ \r^i)  \int_{I_2} K^{ij}_{\rm
sing,\delta}(\,\cdot\,,\underline{u})\widetilde{\omega}^j_\delta
(\underline{u})\xi(\underline{u})
 {\rm d}\underline{u},\label{eq:defKijsing}\\
\mathrm A^{ij}_\delta \xi&:=& (\mathrm A^{ij}_{\rm reg,\delta}+
\mathrm A^{ij}_{\rm sing,\delta})\xi, \label{eq:defOpKij}
\end{eqnarray}
as well as the right-hand sides $ U^i:=-(\omega^iU^{\rm
  inc})\circ\r^i$, properly extended by zero to the full unit square
$I_2$. If $\underline u \in I_2 \setminus \Omega^{ij}_{2\delta}$, then
$\r^i(\underline u) \not\in S^{j}_{2\delta}$ and
\begin{eqnarray*}
\supp (K^{ij}_{\mathrm{sing},\delta}(\underline u,\,\cdot\,)
\,\widetilde\omega^j_\delta) &\subset& \supp
\eta_\delta(\r^i(\underline u), \r^j(\,\cdot\,) )\cap
\supp\widetilde\omega^j_\delta
\\
&\subset& \supp
\eta_\delta(\r^i(\underline u), \r^j(\,\cdot\,) ) \cap
\Omega^{jj}_{3\delta/2}=\emptyset
\end{eqnarray*}
by \eqref{eq:requirement_on_delta2} and \eqref{eq:omegatildeProp1},
which implies that
\begin{equation}
(\mathrm A^{ij}_{\rm sing,\delta}\xi)(\underline{u})=0\qquad \forall
\underline{u}\in I_2\setminus \Omega^{ij}_{2\delta}.
\label{eq:KijSingVanishes}
\end{equation}

Letting $\psi^j:=\psi\circ \r^j$ be as in
equation~\eqref{eq:ExactSystem} and since
$\widetilde{\omega}_\delta^j\:(\omega^j\circ\r^j)\equiv
\omega^j\circ\r^j$ (see \eqref{eq:omegatildeProp3} and note that
$\supp (\omega^j\circ \r^j)\subset \Omega^{jj}_\delta$), we see that
the functions
\begin{equation}\label{defphij}
  \phi^j:=(\omega^j\psi)\circ \r^j=(\omega^j\circ \r^j)\psi^j:I_2\to
  \mathbb{C}\qquad j=1,\ldots,J
\end{equation}
extended by zero outside $\Omega^j=\supp(\omega^j\circ\r^j)$
constitute a solution of the system
\begin{equation}
  \label{eq:continuous} {\textstyle\frac12} \phi^i+\sum_{j=1}^J
  \mathrm A^{ij}_{\rm reg,\delta}\phi^j+\sum_{j=1}^J \mathrm
  A^{ij}_{\rm sing,\delta}\phi^j=U^i, \qquad i=1,\ldots,J.
\end{equation}

Equation~\eqref{eq:continuous} amounts to a system of equations for
the vector $(\phi^j)\in \left(L^2(I_2)\right)^J$  for a
  given right-hand side  $(V^i)\in \left(L^2(I_2)\right)^J$. With this
understanding, Theorem \ref{th:3.6} shows that the
system~\eqref{eq:continuous} has a unique solution for any right-hand
side $(U^i)\in \left(L^2(I_2)\right)^J$. (It follows that for the
particular right-hand side \eqref{eq:continuous} the solution
is~\eqref{defphij}---which, clearly, is independent of $\delta$ in
spite of the $\delta$-dependence of the system of
equations~\eqref{eq:continuous}.)  Theorem \ref{prop:2per2cont} shows
that for the case $ U^i:=-(\omega^iU^{\rm inc})\circ\r^i$, the
solution of \eqref{eq:intEqn} can be reconstructed from the solution
of~\eqref{eq:continuous}.

\subsection{Analysis of the continuous system}

\begin{theorem}\label{prop:2per2cont}
Let $(\phi^1,\ldots,\phi^J)$ be a solution of \eqref{eq:continuous}
with $U^i:=-(\omega^i U^{\mathrm{inc}})\circ \r^i$. Then the
solution of \eqref{eq:intEqn}  can be expressed in the form
\[
\psi(\r):= \sum_{j \in \mathcal I(\r)}\phi^j((\r^j)^{-1}(\r)),
\]
see equation~\eqref{eq:nyst3}.
\end{theorem}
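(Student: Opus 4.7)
The plan is to exploit the uniqueness of both the solution to the BIE \eqref{eq:intEqn} and the solution to the biperiodic system \eqref{eq:continuous}, the latter provided by Theorem~\ref{th:3.6}. First, I would verify the forward implication: if $\psi$ solves the BIE, then the functions $\phi^j_\star := (\omega^j \psi) \circ \r^j$ (extended by zero outside $\Omega^j$, as in \eqref{defphij}) solve the biperiodic system with right-hand side $U^i = -(\omega^i U^{\mathrm{inc}}) \circ \r^i$. This is essentially the computation the paper alludes to just before equation \eqref{eq:continuous}: starting from ${\textstyle\frac{1}{2}}\psi(\r^i(\underline u)) + A\psi(\r^i(\underline u)) = -U^{\mathrm{inc}}(\r^i(\underline u))$, multiplying by $\omega^i(\r^i(\underline u))$, decomposing the surface integral over $S$ via the partition of unity and the parametrizations $\r^j$ (the Jacobians $a^j$ are already absorbed in $K^{ij}$), and splitting $K = K_{\mathrm{reg},\delta} + K_{\mathrm{sing},\delta}$ according to \eqref{eq:Kreg}--\eqref{eq:Ksing}, one recovers \eqref{eq:continuous} with $\phi^j_\star$ in place of $\phi^j$. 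The key observation that allows the cut-offs $\widetilde{\omega}^j_\delta$ to be inserted for free is property \eqref{eq:omegatildeProp3}: since $\supp(\omega^j \circ \r^j) \subset \Omega^{jj}_\delta$ and $\widetilde{\omega}^j_\delta \equiv 1$ on $\Omega^{jj}_\delta$, we have $\widetilde{\omega}^j_\delta\, \phi^j_\star = \phi^j_\star$.

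With the forward implication in hand, I invoke the uniqueness part of Theorem~\ref{th:3.6}: the biperiodic system admits at most one solution in $(L^2(I_2))^J$ for each right-hand side. Consequently, the given tuple $(\phi^j)_j$ must coincide with $(\phi^j_\star)_j = ((\omega^j \psi) \circ \r^j)_j$. The conclusion then follows by a direct computation using the partition of unity: for every $\r \in S$,
\[
\sum_{j \in \mathcal I(\r)} \phi^j((\r^j)^{-1}(\r)) = \sum_{j \in \mathcal I(\r)} \omega^j(\r)\, \psi(\r) = \psi(\r),
\]
since $\omega^j(\r) = 0$ whenever $j \notin \mathcal I(\r)$ and $\sum_j \omega^j \equiv 1$.

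The main obstacle is really the verification of the forward implication, which, while conceptually transparent, requires care in handling the various domains of definition (the $\phi^j_\star$ are nominally supported in $\Omega^j \subset D^j$ while the biperiodic integrals in \eqref{eq:defKijreg}--\eqref{eq:defKijsing} run over all of $I_2$) and in confirming that the insertion of $\widetilde{\omega}^j_\delta$ does not alter the integrals tested against $\phi^j_\star$. Once that bookkeeping is in order, the remainder of the argument is purely algebraic.
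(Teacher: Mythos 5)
Your proposal is logically correct, but it takes a genuinely different route than the paper's proof. The paper proceeds \emph{constructively}: from the given tuple $(\phi^j)$ it defines $\psi^i := -2\sum_j\int_{I_2}K^{ij}(\cdot,\underline v)\phi^j(\underline v)\,\mathrm d\underline v - 2\,U^{\mathrm{inc}}\circ\r^i$, checks that $\phi^i = (\omega^i\circ\r^i)\psi^i$, assembles $\psi := \sum_j\Phi^j$, and then verifies by a direct computation that $\tfrac12\psi + A\psi = -U^{\mathrm{inc}}$, invoking only the uniqueness of the boundary integral equation \eqref{eq:intEqn}. Your approach instead observes that the reference tuple $\phi^j_\star = (\omega^j\psi)\circ\r^j$ also solves the biperiodic system (which is already laid out in the exposition surrounding \eqref{defphij}--\eqref{eq:continuous}), and then invokes the \emph{invertibility of $\mathcal B_\delta$} from Theorem \ref{th:3.6} to identify the given $(\phi^j)$ with $(\phi^j_\star)$. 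This buys you brevity: the heavy bookkeeping is already done before the theorem is stated, and the rest is a two-line partition-of-unity calculation. What it costs is a change of logical ordering: Theorem \ref{th:3.6} appears \emph{after} Theorem \ref{prop:2per2cont} in the paper, and its proof ``argues as in the proof of Theorem \ref{prop:2per2cont}.'' A careful reading shows this phrase refers only to repeating the same type of computation, not to citing its conclusion, so there is no genuine circularity---but you would need to flag this and present Theorem \ref{th:3.6} first (or at least establish injectivity of $\mathcal B_\delta$ independently) for your argument to be airtight within the paper's structure. Ultimately both arguments rest on the uniqueness of \eqref{eq:intEqn}: the paper uses it directly, while you use it once removed, via the injectivity half of Theorem \ref{th:3.6}.
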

\begin{proof}
  Because of the particular form of the right-hand side as well as
  the presence of the factor $\omega^i \circ \r^i$ in the operator
  $\mathrm A^{ij}_\delta$ (see
  equations~\eqref{eq:defKijreg},~\eqref{eq:defKijsing} and
  ~\eqref{eq:continuous}), it follows that $\supp \phi^i\subseteq
  \supp (\omega^i \circ \r^i) =\Omega^i$. Therefore, by
  \eqref{eq:omegatildeProp3}, $\widetilde\omega^i_\delta \phi^i \equiv
  \phi^i$ for all $i$.

Consider now the functions $\psi^i: D^i \to \mathbb C$ given by
\[
\psi^i:= -2 \sum_{j=1}^J \int_{I_2} K^{ij} (\,\cdot\,, \underline v)
\phi^j(\underline v) \mathrm d \underline v-2\,U^{\mathrm{inc}}\circ
\r^i.
\]
These functions are constructed so that $\phi^i= (\omega^i \circ
\r^i)\, \psi^i$ for all $i$. Although the functions $\psi^i$ are
 infinitely differentiable  only up to the boundary of
$D^i$ (the domain of the chart $\r^i$), the function $\phi^i$ can be
smoothly extended by zero to $\mathbb R^2$. Similarly, we can consider
the functions $\Phi^i:S \to \mathbb C$ such that $\Phi^i(\r)=
\omega^i(\r) \psi^i ((\r^i)^{-1}(\r))=\phi^i((\r^i)^{-1}(\r))$ if $\r
\in S^i$ and are zero otherwise. These are $\mathcal
  {C}^\infty$ functions on the surface $S$ and $\psi= \sum_{j=1}^J
\Phi^j$.

Then, if $\r \in S$ and $i \in \mathcal I(\r)$, we can write
$\r=\r^i(\underline u^i)$ with $\underline u^i:=(\r^i)^{-1}(\r)$,
and
\begin{eqnarray*}
({\textstyle\frac12}\psi+A\psi)(\r)&=& {\textstyle\frac12}\sum_{i
\in \mathcal I(\r)} \phi^i(\underline u^i)+
\sum_{i=1}^J\omega^i(\r)(A\psi)(\r)\\
&=&\sum_{i\in \mathcal
I(\r)}\Big({\textstyle\frac12}\phi^i(\underline u^i)+ \omega^i(\r)
\int_S K(\r,\r') \Big(\sum_{j=1}^J \Phi^j(\r')\Big) \mathrm d
\mathbf r'\Big)\\
&=& \sum_{i\in \mathcal
I(\r)}\Big({\textstyle\frac12}\phi^i(\underline u^i) + \sum_{j=1}^J
\omega^i(\r) \int_{\supp \omega^j} K(\r,\r') \Phi^j(\r') \mathrm d
\r'\Big)\\
&=& \sum_{i\in \mathcal
I(\r)}\Big({\textstyle\frac12}\phi^i(\underline u^i) + \sum_{j=1}^J
\omega^i(\r) \int_{I_2} K(\r,\r^j(\underline v))\,a^j(\underline
v)\,\phi^j(\underline v)\,\mathrm d \underline v\Big)\\
&=& \sum_{i\in \mathcal
I(\r)}\Big({\textstyle\frac12}\phi^i(\underline u^i) + \sum_{j=1}^J
\omega^i(\r^i(\underline u^i)) \int_{I_2} K^{ij}(\underline
u^i,\underline v) \widetilde\omega^j_\delta(\underline v)
\phi^j(\underline v) \mathrm d \underline v\Big)\\
&=& \sum_{i\in \mathcal
I(\r)}\Big({\textstyle\frac12}\phi^i(\underline u^i)+ \sum_{j=1}^J
\mathrm A^{ij}_\delta \phi^j (\underline u^i)\Big)=\sum_{i\in
\mathcal I(\r)}U^i(\underline u^i) =-U^{\mathrm{inc}}(\r).
\end{eqnarray*}
Note that we have used the fact that $\widetilde\omega^j_\delta
\phi^j \equiv \phi^j$. This finishes the proof.
\end{proof}

The product space
\[
 {\cal H}^s:=\underbrace{\H^s\times \H^s\times\cdots\times \H^s}_{\mbox{$J$
times}},
\]
will be endowed with the product norm, also denoted by
$\|\cdot\|_s$. We can then consider the matrices of operators
\[
 {\mathcal A}^{\rm
reg}_\delta:=\left(\mathrm A^{ij}_{\rm
reg,\delta}\right)_{i,j=1}^J,\quad {\mathcal A}^{\rm
sing}_\delta:=\left(\mathrm A^{ij}_{\rm
sing,\delta}\right)_{i,j=1}^J,\qquad {\mathcal A}_\delta:= {\mathcal
A}^{\rm reg}_\delta+ {\mathcal A}_\delta^{\rm sing},
\]
as well as the identity operator  ${\cal I}:{\cal H}^s\to {\cal
H}^s$. With this notation,  \eqref{eq:continuous} can be written in
operator form as
\begin{equation}
 \label{eq02:continuous}
 {\mathcal B}_\delta\bm{\phi}:=({\textstyle\frac12}{\cal I}+  {\mathcal
A}_\delta) \boldsymbol\phi={\bf U}
\end{equation}
where $\bm{\phi} :=(\phi ^1,\phi ^2,\ldots,\phi ^J)$ and
\begin{equation}\label{defU}
\mathbf U:=\left(U^1,\ldots, U^J\right) \qquad
U^j=-(\omega^jU^{\mathrm{inc}})\circ \r^j.
\end{equation}
\begin{remark}\label{rem_32}
  Note that, while the operator ${\mathcal A}_\delta$ depends on
  $\delta$, equations~\eqref{eq:defKijreg}-\eqref{eq:defOpKij} show
  that, for elements $\boldsymbol\phi =(\phi^1,\ldots,\phi^J)$ such
  that $\mathrm{supp}\,\phi^i\subseteq \supp(\omega^i\circ \r^i)$,
  $\mathcal A_\delta\boldsymbol\phi$ is independent of $\delta$. The
  following theorem shows that the operator in
  equation~\eqref{eq02:continuous} is invertible, and, thus, in view
  of Theorem~\ref{prop:2per2cont}, for right-hand sides of the
  form~\eqref{defU}, the solution of equation~\eqref{eq02:continuous}
  is independent of $\delta$ as well.
\end{remark}

\begin{theorem}\label{th:3.6}
For all\, $0\le \delta\le\delta_0$, the operators $\mathcal
B_\delta=\frac12\mathcal I+\mathcal A_\delta:\mathcal H^0\to\mathcal
H^0$ are invertible. Moreover
\[
\|\mathcal B_\delta\|_{\mathcal H^0\to \mathcal H^0}+\|\mathcal
B_\delta^{-1}\|_{\mathcal H^0\to\mathcal H^0}\le C.
\]
\end{theorem}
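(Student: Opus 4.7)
The plan is to combine a Fredholm alternative on $\mathcal{H}^0$ with injectivity inherited from the classical Brakhage--Werner equation, and then to promote the individual invertibility of each $\mathcal{B}_\delta$ to a uniform norm bound by establishing norm continuity of the family $\{\mathcal{A}_\delta\}_{\delta\in[0,\delta_0]}$ on the compact parameter interval. The key structural observation is that, since $K_{\mathrm{reg},\delta}+K_{\mathrm{sing},\delta}=K$, the entries of $\mathcal{A}_\delta=\mathcal{A}^{\mathrm{reg}}_\delta+\mathcal{A}^{\mathrm{sing}}_\delta$ all carry the same kernel
\[
(\omega^i\circ\r^i)(\underline u)\,K^{ij}(\underline u,\underline v)\,\widetilde\omega^j_\delta(\underline v),
\]
with the $\delta$--dependence residing only in the cutoff $\widetilde\omega^j_\delta$.

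First I would show that $\mathcal{A}_\delta$ is bounded and compact on $\mathcal{H}^0$ uniformly in $\delta$. Since $K^{ij}$ inherits from the Helmholtz kernel a weakly singular behavior of order $|\underline u-\underline v|^{-1}$ on the unit square, and $\|\widetilde\omega^j_\delta\|_\infty\le 1$ for every $\delta\in[0,\delta_0]$, Schur's test gives $\|\mathcal{A}_\delta\|_{\mathcal{H}^0\to\mathcal{H}^0}\le C$ uniformly in $\delta$. Compactness follows by approximating $K^{ij}$ in $L^1(I_2\times I_2)$ by smooth kernels (via truncation of the singular region) and writing $\mathcal{A}_\delta$ as a norm limit of Hilbert--Schmidt operators. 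Consequently $\mathcal{B}_\delta=\tfrac12\mathcal{I}+\mathcal{A}_\delta$ is Fredholm of index zero on $\mathcal{H}^0$.

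For injectivity, suppose $\mathcal{B}_\delta\boldsymbol\phi=0$; then $\phi^i=-2\sum_j\mathrm A^{ij}_\delta\phi^j$ is supported in $\Omega^i$, and $\widetilde\omega^j_\delta\phi^j=\phi^j$ by \eqref{eq:omegatildeProp3}. Mimicking the construction in the proof of Theorem~\ref{prop:2per2cont} with $U^{\mathrm{inc}}\equiv 0$, define
\[
\psi^i_0(\underline u):=-2\sum_{j=1}^J\int_{I_2}K^{ij}(\underline u,\underline v)\,\phi^j(\underline v)\,\mathrm d\underline v,\qquad \Phi^j_0(\r):=\phi^j((\r^j)^{-1}(\r))\ \text{on}\ S^j,
\]
both extended by zero, and set $\psi_0:=\sum_j\Phi^j_0$. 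The same change of variables used in that proof yields $\psi^i_0((\r^i)^{-1}(\r))=-2(A\psi_0)(\r)$ for $\r\in S^i$, hence $\Phi^i_0=-2\omega^i\cdot A\psi_0$; summing over $i$ with $\sum_i\omega^i\equiv 1$ produces $\psi_0=-2A\psi_0$, i.e., $\tfrac12\psi_0+A\psi_0=0$. Classical uniqueness of the combined field equation \cite{BrWe1965} forces $\psi_0\equiv 0$, hence $\Phi^j_0\equiv 0$ and $\phi^j\equiv 0$ for every $j$. Together with the Fredholm property, this gives invertibility of $\mathcal{B}_\delta$ for each $\delta\in[0,\delta_0]$.

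The final step, and the expected main obstacle, is promoting this $\delta$--wise invertibility to a uniform bound. I would establish that $\delta\mapsto\mathcal{A}_\delta$ is continuous from $[0,\delta_0]$ into $\mathcal{L}(\mathcal{H}^0)$: the difference $\mathcal{A}_\delta-\mathcal{A}_{\delta'}$ has entry kernel $(\omega^i\circ\r^i)\,K^{ij}\,(\widetilde\omega^j_\delta-\widetilde\omega^j_{\delta'})$, in which the bracket is bounded by $1$ with support in a narrow band of area $O(|\delta-\delta'|)$ near $\partial\Omega^j$. The delicate point is that $K^{ij}$ is only weakly singular, so one must carefully estimate integrals of the form $\int|K^{ij}(\underline u,\underline v)|\,\chi_{\mathrm{band}}(\underline v)\,\mathrm d\underline v$ uniformly in $\underline u$ (using in particular that $\int_{B(0,r)}|\underline w|^{-1}\,\mathrm d\underline w=2\pi r$ in two dimensions) to obtain a bound of order $|\delta-\delta'|^{1/2}$ that does not degenerate as $\delta,\delta'\to 0$. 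Once this norm continuity is in hand, continuity of the inversion map at invertible operators makes $\delta\mapsto\mathcal{B}_\delta^{-1}$ continuous on the compact interval $[0,\delta_0]$, and hence uniformly bounded there.
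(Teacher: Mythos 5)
Your overall structure (Fredholm-plus-injectivity, then a separate argument for uniformity of the inverses) matches the paper's, and your injectivity argument is essentially identical to the one given there. The uniformity argument, however, diverges. The paper uses Anselone's theory of collectively compact operators: uniform boundedness of $\mathcal A_\delta:\mathcal H^0\to\mathcal H^1$ gives collective compactness, and together with the pointwise convergence $\mathcal A_\delta\xi\to\mathcal A_0\xi$ this yields $\|\mathcal B_0^{-1}(\mathcal A_0-\mathcal A_\delta)\mathcal A_\delta\|\to 0$, whence a Neumann-series argument on $\mathcal C_\delta\mathcal B_\delta$ supplies the uniform bound. You propose instead to prove that $\delta\mapsto\mathcal A_\delta$ is norm-continuous on all of $[0,\delta_0]$ and then invoke compactness of the parameter interval.

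The difficulty with your proposal lies in the asserted estimate that $\widetilde\omega^j_\delta-\widetilde\omega^j_{\delta'}$ is supported in a band of area $O(|\delta-\delta'|)$. The family $\{\widetilde\omega^j_\delta\}$ is only constrained by \eqref{eq:omegatilde}: each $\widetilde\omega^j_\delta$ equals one on $\Omega^{jj}_\delta$, is supported in $\Omega^{jj}_{3\delta/2}$, and obeys the derivative bounds. Nothing ties $\widetilde\omega^j_\delta$ to $\widetilde\omega^j_{\delta'}$ pointwise when $\delta\ne\delta'$ are both positive: on the annular region $\Omega^{jj}_{3\max(\delta,\delta')/2}\setminus\Omega^{jj}_{\min(\delta,\delta')}$, whose area is $O(\max(\delta,\delta'))$, the two cutoffs may differ by $O(1)$ no matter how close $\delta$ and $\delta'$ are. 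Thus $\delta\mapsto\mathcal A_\delta$ need not be norm continuous away from $\delta=0$, and the compactness-of-$[0,\delta_0]$ step does not go through.

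What does survive — and is in fact a simpler alternative to the paper's collective-compactness machinery — is continuity at $\delta=0$. The endpoint $\widetilde\omega^j_0=\chi_{\Omega^j}$ is a prescribed function, and for any admissible $\widetilde\omega^j_\delta$ the difference $\widetilde\omega^j_\delta-\widetilde\omega^j_0$ is bounded by one and supported in $\Omega^{jj}_{3\delta/2}\setminus\Omega^j$, a Lipschitz tubular neighborhood of $\partial\Omega^j$ of width $O(\delta)$. A Schur-type estimate on the weakly singular kernel $K^{ij}$ then gives $\|\mathcal A_\delta-\mathcal A_0\|_{\mathcal H^0\to\mathcal H^0}\le C\,(\delta\log(1/\delta))^{1/2}\to 0$ (the extra logarithm coming from the $|\underline u-\underline v|^{-1}$ singularity integrated along the band), from which the standard perturbation lemma gives a uniform bound on $\|\mathcal B_\delta^{-1}\|$ for $\delta$ small. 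This is the regime the paper's proof actually covers and the only regime the Nyström analysis needs ($\delta\approx h^\beta\to0$); both your corrected argument and the paper's are silent on uniformity for $\delta$ bounded away from zero. So your step 2 is salvageable and arguably more elementary than the collective-compactness route, but as written it rests on a support estimate that does not hold and overclaims continuity on the whole interval.
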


\begin{proof}  We will first show that  ${\mathcal B}_\delta:{\cal H}^0\to{\cal
H}^0$ is invertible. Propositions
\ref{prop:propertiesK} and \ref{prop:3.4} and the compact injection
$H^{1}\subset H^{0}$ prove that ${\mathcal A}_\delta:{\cal H}^0\to
{\cal H}^0$ is compact (and uniformly bounded in $\delta$).
Therefore, ${\mathcal B}_\delta:{\cal H}^0\to{\cal H}^0$ is bounded
and Fredholm of index zero. Thus, it suffices to prove the
injectivity of the operator.

Let then $\bm{\phi}\in{\rm Ker}\,{\mathcal B}_\delta$, that is,
\[
{\textstyle\frac12}\phi^i +\sum_{j=1}^J \mathrm
A^{ij}_\delta\phi^j=0,\qquad i=1,\ldots,J.
\]
Arguing as in the proof of Theorem \ref{prop:2per2cont}, it is clear
that $\widetilde\omega_\delta^j\phi^j\equiv \phi^j$ for all $j$,
which means that $\mathrm{Ker}\,\mathcal B_\delta$ is independent of
$\delta$. Since $\mathrm A^{ij}_{\delta}:H^s\to H^{s+1}$ are
continuous for all $s$, then $\phi^i\in{\cal C}^\infty(\overline I_2)$.
Following the argumentation in the proof of Theorem
\ref{prop:2per2cont}, we define
\begin{equation}
\label{eq:th36a} \psi^i:=-2\sum_{j=1}^J \int_{I_2}
K^{ij}(\,\cdot\,,\underline v) \widetilde{\omega}_\delta^j
(\underline v)\phi^j(\underline v)\,{\rm d}\underline
v=-2\sum_{j=1}^J \int_{I_2} K^{ij}(\,\cdot\,,\underline v)
\phi^j(\underline v)\mathrm d \underline v,
\end{equation}
so that $\phi^i=(\omega^i\circ \r^i)\psi^i$. We now set
\[
\psi(\r):=\sum_{j \in\mathcal I(\r)}\phi^j((\r^j)^{-1}(\r))=
\sum_{j=1}^J\Phi^j(\r),
\]
where the functions $\Phi^j:S\to \mathbb C$ are defined by extending
$\omega^j \, \psi^j \circ (\r^j)^{-1}=\phi^j \circ (\r^j)^{-1}$ by
zero to $S \setminus S^j$. Proceeding as before, we prove that
$\frac12\psi+ A\psi=0$ and therefore $\psi=0$.

From \eqref{eq:th36a} we deduce that for $\underline u\in D^i$,
\begin{eqnarray*}
\psi^i(\underline u)&=& -2\sum_{j=1}^J \int_{I_2} K^{ij}(\underline
u,\underline v)\,\phi^j(\underline v)\,{ \rm d}\underline
v=-2\sum_{j=1}^J \int_{S^j} K(\r^i(\underline
u),\r')\Phi^j(\r')\,\mathrm d\r'\\
&=&-2 \int_{S} K(\r^i(\underline u),\r')\psi(\r')\,{ \rm d}\r'=0
\end{eqnarray*}
since $\psi$ itself is zero and the functions $\Phi^j$ are each
supported in $S^j$. Therefore $\phi^i=(\omega^i\circ \r^i)\psi^i=0$
and the injectivity of $\mathcal B_\delta$ is proven.

To prove the uniform boundedness of ${\mathcal B}_\delta^{-1}:{\cal
H}^0\to{\cal H}^0$ for $\delta\ge 0$ we proceed as in the proof of
Theorem 10.9 in \cite{kress98}. Recall that ${\mathcal A}_\delta:
{\cal H}^0\to {\cal H}^1$ is uniformly bounded in $\delta$ by
Proposition \ref{prop:propertiesK} with $s=0$. Since the injection
${\cal H}^1\subset{\cal H}^0$ is compact, the set $\{{\mathcal
A}_\delta\}_\delta$ turns out to be collectively compact. Moreover,
it is easy to verify that $\lim_{\delta \to 0} {\mathcal
A}_\delta\bm{\xi}=\mathcal A_0\boldsymbol\xi$ in $\mathcal H^0$ for
all $\boldsymbol\xi \in \mathcal H^0$. Applying that pointwise
convergence is uniform on compact sets and the collective
compactness of the set of operators $\{ \mathcal A_\delta\}$ it
follows (see \cite[Corollaries 10.5 and 10.8]{kress98}) that
\begin{equation}\label{collcomp}
\| {\mathcal B}_0^{-1}({\mathcal A}_0-{\mathcal
A}_{\delta}){\mathcal A}_{\delta}\|_{{\cal H}^0\to{\cal H}^0}\to 0.
\end{equation}
Consider now ${\cal C}_\delta:=2(\mathcal I-{\mathcal
B}_0^{-1}{\mathcal A}_\delta)$, which is uniformly bounded.
Straightforward computations show that
\[
 {\cal C}_{\delta}{\mathcal B}_\delta=
{\cal I}+2{\mathcal B}_0^{-1}({\mathcal B}_0{\mathcal
A}_\delta-{\mathcal B}_\delta{\mathcal A}_{\delta}) ={\cal
I}+2{\mathcal B}_0^{-1}({\mathcal A}_0-{\mathcal
A}_{\delta}){\mathcal A}_{\delta}.
\]
Therefore \eqref{collcomp} proves that for $\delta$ small enough
$\mathcal C_\delta \mathcal B_\delta$ is invertible with uniformly
bounded inverse and therefore, since $\mathcal C_\delta$ is uniformly
bounded, so is $\mathcal B_\delta^{-1}.$
\end{proof}

\subsection{Discrete system\label{discrete}}

In order to obtain a continuous system of equations from the fully
discrete system~\eqref{eq:nyst} we introduce an interpolation
operator $\mathrm Q_{N_j}:{\cal C}^0(\overline I_2)\to
\mathbb{T}_{N_j}$ given by
\begin{equation}\label{defQNj}
(\mathrm Q_{N_j}\xi)(\underline x^j_{\underline{\ell}})=\xi(\underline
x^j_{\underline{\ell}})\qquad \forall \underline{\ell}\in \Z_{N_j}.
\end{equation}
The discrete counterparts of the functions $\phi^j$ are
\begin{equation}
 \phi^j_{h}:=\mathrm Q_{N_j}((\omega^j\circ{\bf r}^j)\psi^j_h),
\label{eq:defphij}\\
\end{equation}
where $\psi^j_h$ is defined by \eqref{eq:nyst2} with
$\varphi^j_{\underline m}$ obtained as the solution of the system
\eqref{eq:nyst}. Note that
\begin{equation}\label{eq:relate}
\phi^j_h(\underline x^j_{\underline\ell})=\mathrm
Q_{N_j}((\omega^j\circ{\bf r}^j)\: \psi^j_h)(\underline
x_{\underline\ell}^j)= \mathrm R_{N_j}(\boldsymbol\omega^j\odot
\boldsymbol\varphi^j) (\underline x_{\underline\ell}^j)= \left\{
\begin{array}{ll}
\omega^j_{\underline \ell}\, \varphi^j_{\underline
\ell},\qquad&\mbox{if }\underline {
\ell }\in \Omega^j_h,\\[1ex]
0,&{\rm otherwise}.
\end{array}
\right.
\end{equation}

On the other hand, $\big(\widetilde{\omega}^j_{\delta}
\phi^j_{h}\big)(\underline{x}_{\underline{\ell}}^j)=\phi^j_{h}(\underline{x}_{
  \underline{\ell}}^j)$ for all $\underline\ell$, since
$\phi^j_{h}(\underline{x}_{\underline{\ell}}^j)=0,$ for all
$\underline{\ell}\not\in\Omega^j_h$ and inside $\Omega^j$ the
functions $\widetilde\omega^j$ do not have any influence by
\eqref{eq:omegatildeProp3}. Therefore \eqref{eq:nyst} can be
equivalently re-expressed as an equation for continuous biperiodic
functions $(\phi^1_h ,\ldots, \phi^J_h )$ such that for all $i\in
\{1,\ldots, J\}$
\begin{eqnarray}
 \nonumber
{\textstyle\frac12} \phi^i_h   + \sum_{j=1}^J \mathrm Q_{N_i} \bigg(
h_j^2 \sum_{\underline m \in \mathbb{Z}_{N_j}}
\left(\omega^i\circ\r^i\right) K^{ij}_{\rm reg,\delta} (\,\cdot\,,
\underline x_{\underline m}^j) \widetilde\omega^j_\delta(\underline
x^j_{\underline m})
\phi_{h}^j (\underline x_{\underline m}^j) \bigg)&&\\
+ \sum_{j=1}^J \mathrm Q_{N_i}\Big( (\omega^i\circ\r^i) \mathrm
L^{ij}_{\delta,h} \phi^j_{h} \Big) &=& \mathrm Q_{N_i} U^i.\qquad \
\ \label{nyst3}
\end{eqnarray}
Note that if the system \eqref{nyst3} is uniquely solvable, the
solution belongs necessarily to $\mathbb{T}_{N_1}\times
\cdots\times\mathbb{T}_{N_J}$. The solutions of \eqref{eq:nyst} and
\eqref{nyst3} are related by the formula \eqref{eq:relate}.

For the sake of our analysis we recast the system \eqref{nyst3} in
an equivalent operator form. To do this, we introduce the discrete
operators
\begin{equation}\label{newKijsing}
\mathrm A^{ij}_{{\rm sing},\delta,h}:= (\omega^i\circ \r^i) \mathrm
L^{ij}_{\delta,h},
\end{equation}
and, for $\xi\in\mathcal C^0(\overline I_2)$, we define
(cf.~\cite{CeDoSa:2002} and~\cite{DoRaSa:2008})
\begin{equation}
\label{eq:def:DN-1} \mathrm D_{N_j} \xi := h_j^2 \sum_{\underline
m\in \mathbb Z_{N_j}} \xi(\underline x^j_{\underline m})
\delta_{\underline x_{\underline m}^j},
\end{equation}
where $\delta_{\underline u}$ denotes here the Dirac delta
distribution at the point $\underline u \in \overline I_2$:
$\delta_{\underline
  u}\psi:=\psi(\underline{u})$. Further, we note that since the
operators $\mathrm A^{ij}_{\rm reg,\delta}$ have continuous kernels,
they may be applied to delta distributions:
\[
\mathrm A^{ij}_{\rm reg,\delta} \delta_{\underline u}=
\left(\omega^i\circ\r^i\right) K^{ij}_{\rm reg,\delta}
(\hspace{2pt}\cdot\hspace{2pt}, {\underline u})
 \widetilde{\omega}^j_\delta  ({\underline u}).
\]
In view of these definitions, we clearly have
\[
\mathrm A^{ij}_{\rm reg,\delta} \mathrm D_{N_j}\xi= h_j^2
\sum_{\underline m \in \mathbb{Z}_{N_j}}
\left(\omega^i\circ\r^i\right) K^{ij}_{\rm reg,\delta}
(\hspace{2pt}\cdot\hspace{2pt}, \underline x_{\underline m}^j)
\widetilde\omega^j_\delta(\underline x^j_{\underline m})\xi
(\underline x_{\underline m}^j),
\]
and~\eqref{nyst3} is equivalent to the equation
\begin{equation}
\label{nyst4} {\textstyle\frac12} \phi^i_{h} + \sum_{j=1}^J \mathrm
Q_{N_i} \Big(\mathrm A^{ij}_{\rm reg,\delta} \mathrm D_{N_j}
\phi^j_{h} + \mathrm A^{ij}_{{\rm sing},\delta,h}  \phi^j_{h}\Big) =
\mathrm Q_{N_i} U^i, \qquad i=1,\ldots, J,
\end{equation}
for the unknowns $(\phi^1_{h},\ldots,\phi^J_{h})\in
\mathbb{T}_{N_1}\times\cdots\times \mathbb{T}_{N_J}$.

In order to recast this system of operator equations as a system
defined in $L^2$ spaces, we insert the orthogonal projections
$\mathrm P_{N_j}:L^2(I_2)\to \mathbb{T}_{N_j}$ and write
\eqref{nyst3} in the form
\begin{equation}
{\textstyle\frac12} \phi^i_h + \sum_{j=1}^J \mathrm Q_{N_i}
\Big(\mathrm A^{ij}_{\rm reg,\delta} \mathrm D_{N_j} \mathrm P_{N_j}
\phi^j_h+ \mathrm A^{ij}_{{\rm sing},\delta,h} \mathrm
P_{N_j}\phi^j_h\Big) = \mathrm Q_{N_i} U^i, \qquad i=1,\ldots,
J.\label{nyst5}
\end{equation}
for the unknowns $(\phi^1_h ,\ldots,\phi^J_h )\in
L^2(I_2)\times\cdots\times L^2(I_2)$.
\begin{remark}\label{remark_31}
  Note that the projection operator $\mathrm P_{N_j}$, which maps
  $L^2$ to the space of trigonometric polynomials,
   makes  it possible to recast the fully discrete
  equation~\eqref{nyst3} (whose unknowns are approximate point values
  of the continuous solution of equation~\eqref{eq:ExactSystem}) via
  its version~\eqref{nyst4}, posed in the space of trigonometric
  polynomials, as an equation~\eqref{nyst5} in the complete space
  $L^2$.
\end{remark}

\subsection{Mapping properties of the operators introduced in
  Section~\ref{continuous}}

As a result of the work in Sections~\ref{continuous}
and~\ref{discrete}, our overall integral equation and its Nystr\"om
discretization have been re-expressed as equations
\eqref{eq:continuous} and~\eqref{nyst5}, respectively, which are
posed in terms of unknowns defined (and compactly supported) on the
open unit square $I_2$.  Such functions can naturally be extended to
period-1 biperiodic functions defined in $\R^2$, on which our
analysis is based. In this section, in particular, we study the
mapping properties of the operators introduced in
Section~\ref{continuous}, when viewed as operators defined on spaces
of periodic functions.

To do this we first consider the biperiodic Sobolev spaces, see
e.g.~\cite{SaVa:2002}, and we study their connections (that result
through our use of local charts and partitions of unity) with the
classical Sobolev spaces on the surface $S$. The Fourier coefficients
of any locally integrable biperiodic function $\xi:\mathbb R^2\to
\mathbb C$ are given by
\[
 \widehat{\xi}(\underline{m}):=\int_{I_2} \xi(\underline u)
 e_{\underline m}(\underline u)\mathrm d\underline u, \qquad
e_{\underline{m}}(\underline{u}):=\exp(2\pi{\rm i}\underline{m}
\cdot\underline{u}),   \quad \underline{m}\in\Z^2.
\]
For arbitrary $s\in \mathbb R$, the Sobolev norm
\begin{equation}
\label{eq:defSobolevNorm}
 \|\xi\|_{s}:=\Big(|\widehat{\xi}(\underline{0})|^2+\sum_{\underline{m}
\ne\underline{0}
}(|m_1|^2+|m_2|^2)^s|\widehat{\xi}(\underline{m})|^2\Big)^{1/2}
\end{equation}
is well defined for all $\xi$ in the space $\mathbb
T:=\mathrm{span}\{ e_{\underline m}\,:\, \underline m \in \mathbb
Z^2\}=\cup_N \mathbb T_N$ of trigonometric polynomials. The Sobolev
space $H^s$ is defined as the completion of $\mathbb T$ under the
norm $\|\,\cdot\,\|_s$. Note that $H^0$ is the space of biperiodic
extensions of functions in $L^2(I_2)$. By a simple density argument,
we can define the Fourier coefficients for any element of $H^s$ and
any $s<0$. It is also possible to define partial derivatives of any
order: for a given non-negative bi-index
$\underline\alpha=(\alpha_1,\alpha_2)$, using the notation
$|\underline\alpha|=\alpha_1+\alpha_2$, we see that
$\partial_{\underline\alpha}$ is a bounded operator from $H^s$ to
$H^{s-|\underline\alpha|}$ for all $s$.

The atlas introduced in Section~\ref{sec:2} for representation of the
surface $S$ together with the $H^s$ norm just defined gives rise to a
definition of the Sobolev norm
\begin{equation}
\label{Sobolev1}
 \|U\|_{H^s(S)}:=\bigg(\sum_{i=1}^J\|(\omega^i U)\circ
\r^i\|^2_{s}\bigg)^{1/2}
\end{equation}
for any $U\in \mathcal{C}^\infty(S)$ and any $s \in \mathbb R$.  (In
this formula it has been implicitly assumed that, even though $\r^i$
is only defined on $D^i\subset I_2$, the function $(\omega^i U)\circ
\r^i$ can be extended by zero to a function in $\mathcal{C}^\infty
(I_2)$---since the support of $\omega^i\circ \r^i$ is contained in
$D^i$---and the result can be extended as a  $\mathcal
{C}^\infty$   biperiodic function
to all of $\mathbb{R}^2$.)  The space $H^s(S)$ can then be then
defined, for instance, as the completion of ${\cal C}^\infty(S)$ in
the above norm: this definition is equivalent to the classical
definitions given in e.g. \cite{adams:2003}, \cite[\S 1.3.3]{Gri:1985},
 \cite[\S 2.4]{sauter2010boundary} 
and~\cite[Chapter 3]{McLean:2000}.

\begin{lemma}\label{lemma:sobolev} For all $s\in \mathbb R$ and $j\in
\{1,\ldots,J\}$
\begin{equation}
\label{Sobolev2}
 \|(\widetilde{\omega}_\delta^j\xi)\circ(\r^j)^{-1}\|_{H^s(S)} \le
C_s\delta^{-|s|}\|\xi\|_{s} \qquad \forall \xi \in H^s, \qquad
\forall \delta,
\end{equation}
where $C_s$ is a constant independent of $\delta$.
\end{lemma}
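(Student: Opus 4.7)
The plan is to reduce the assertion to a multiplier estimate for the cut-off $\widetilde{\omega}_\delta^j$. Unpacking the definition \eqref{Sobolev1} of the surface Sobolev norm, I would write
\[
\|(\widetilde\omega_\delta^j\xi)\circ(\r^j)^{-1}\|_{H^s(S)}^2 \;=\; \sum_{i=1}^J \|(\omega^i\circ\r^i)\,\cdot(\widetilde\omega_\delta^j\xi)\circ \underline r^{ji}\|_s^2,
\]
with each summand extended by zero outside the region where $\underline r^{ji}$ is defined, and then bound each of the $J$ terms separately.

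For the diagonal contribution $i=j$, the map $\underline r^{jj}$ is the identity and property \eqref{eq:omegatildeProp3} gives $(\omega^j\circ\r^j)\,\widetilde\omega_\delta^j \equiv \omega^j\circ\r^j$; this term therefore equals $\|(\omega^j\circ\r^j)\,\xi\|_s$ and is controlled by $C_s\|\xi\|_s$ via a standard smooth-multiplier estimate, without any dependence on $\delta$. For an off-diagonal contribution $i\ne j$, I would introduce an auxiliary smooth cut-off $\chi^i\in\mathcal C^\infty(I_2)$ which is identically one on $\mathrm{supp}(\omega^i\circ\r^i)$ and supported inside $(\r^i)^{-1}(S^i\cap S^j)\cap D^i$, and smoothly extend $\underline r^{ji}$ to a biperiodic diffeomorphism of $\R^2$; neither alteration affects the value of the expression on the relevant support. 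Standard composition and smooth-multiplier bounds on the periodic $H^s$ scale then yield
\[
\|(\omega^i\circ\r^i)\cdot(\widetilde\omega_\delta^j\xi)\circ\underline r^{ji}\|_s \;\le\; C_s\,\|\widetilde\omega_\delta^j\xi\|_s.
\]

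It remains to establish the multiplier estimate $\|\widetilde\omega_\delta^j\xi\|_s \le C_s\delta^{-|s|}\|\xi\|_s$ for every $s\in\R$. For $s=k\in\N\cup\{0\}$, Leibniz's rule together with \eqref{eq:omegatildeProp2} gives, for each multi-index with $|\underline\alpha|\le k$,
\[
\|\partial_{\underline\alpha}(\widetilde\omega_\delta^j\xi)\|_0 \;\le\; C\!\!\sum_{\underline\beta\le\underline\alpha}\!\delta^{-|\underline\beta|}\,\|\partial_{\underline\alpha-\underline\beta}\xi\|_0 \;\le\; C_k\,\delta^{-k}\,\|\xi\|_k,
\]
which gives the estimate for integer $s\ge 0$. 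For real $s>0$ I would invoke interpolation between consecutive integers; for $s<0$, using that $\widetilde\omega_\delta^j$ is real-valued, duality yields $\|\widetilde\omega_\delta^j\xi\|_s = \sup_{\eta\neq 0}|(\xi,\widetilde\omega_\delta^j\eta)|/\|\eta\|_{-s}$, and the positive-index bound applied to $\widetilde\omega_\delta^j\eta$ produces the factor $\delta^{-(-s)}=\delta^{-|s|}$.

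The main obstacle, albeit a technical rather than conceptual one, is the rigorous treatment of the off-diagonal terms: the composition with $\underline r^{ji}$ is intrinsically defined only on a subdomain of $I_2$, and some care is needed to embed it into the biperiodic framework without introducing boundary artefacts when estimating Sobolev norms. The multiplier estimate itself is then a routine combination of the derivative bounds \eqref{eq:omegatildeProp2} with interpolation and duality in the periodic scale.
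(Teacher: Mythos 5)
Your proposal follows the same underlying strategy as the paper's proof: unpack the surface Sobolev norm \eqref{Sobolev1} into a sum over charts, reduce each summand to the biperiodic $H^s$ norm of a cut-off times a composition, and combine Leibniz/chain-rule estimates for integer $s$ with interpolation (for $s\ge 0$) and duality (for $s<0$). The organization is mildly different: the paper defines the single $\delta$-dependent multiplier $\varrho^{ji}_\delta:=(\omega^i\circ\r^i)(\widetilde\omega^j_\delta\circ\underline r^{ji})$, records that $\supp\varrho^{ji}_\delta\subset(\r^i)^{-1}(S^i\cap S^j)$ and $\|\partial_{\underline\alpha}\varrho^{ji}_\delta\|_{\infty,I_2}\le C_{\underline\alpha}\delta^{-|\underline\alpha|}$, and estimates the operator $\xi\mapsto\varrho^{ji}_\delta\,(\xi\circ\underline r^{ji})$ directly. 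You instead peel off the $\delta$-independent composition-plus-outer-cut-off, reducing everything to the pure multiplier bound $\|\widetilde\omega^j_\delta\xi\|_s\le C_s\delta^{-|s|}\|\xi\|_s$, and you treat the diagonal term separately to avoid the $\delta^{-|s|}$ loss there. That is a legitimate, slightly more modular route, and the multiplier estimate (Leibniz, interpolation, duality) is carried out correctly.

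There is, however, one genuine slip in the off-diagonal step: the auxiliary cut-off $\chi^i$ you describe cannot exist in general. A $\chi^i$ with $\chi^i\equiv 1$ on $\supp(\omega^i\circ\r^i)=\Omega^i$ and supported inside $(\r^i)^{-1}(S^i\cap S^j)\cap D^i$ would force $\supp\omega^i\subset S^j$, which fails whenever the patch $S^j$ does not cover all of $\supp\omega^i$. What you actually need is $\chi^i\equiv 1$ only where the combined factor $(\omega^i\circ\r^i)\cdot(\widetilde\omega^j_\delta\circ\underline r^{ji})$ can be nonzero: since $\supp\widetilde\omega^j_\delta\subset\Omega^{jj}_{3\delta/2}\subset(\r^j)^{-1}(S^j_{2\delta_0})$ for every $\delta\le\delta_0$, this locus is contained in the $\delta$-independent set $\Omega^i\cap(\r^i)^{-1}(S^i\cap S^j_{2\delta_0})$, which is compactly contained in $(\r^i)^{-1}(S^i\cap S^j)$. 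With that corrected support condition (and with it, no need to extend $\underline r^{ji}$ to a global biperiodic diffeomorphism—only to a smooth biperiodic map agreeing with $\underline r^{ji}$ on $\supp\chi^i$), your argument goes through and is essentially equivalent to the paper's, which sidesteps the issue by letting $\varrho^{ji}_\delta$ itself play the role of the auxiliary cut-off.
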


\begin{proof}
Because of \eqref{Sobolev1} we only need to prove that the maps
\begin{equation}\label{Tij}
\xi \longmapsto \mathrm T^{ji}_\delta\xi:=
\underbrace{(\omega^i\circ
\r^i)(\widetilde\omega^j_\delta\circ\underline
r^{ji})}_{=:\varrho_\delta^{ji}}\, (\xi\circ \underline r^{ji})
\end{equation}
map $H^s$ to $H^s$ for every $s$ and that their norms are bounded by a
multiple of $\delta^{-|s|}$. To do this we first note that
\begin{equation}\label{supprhoji}
\supp \varrho^{ji}_\delta \subset (\r^i)^{-1} (S^j\cap S^i),
\end{equation}
(which is the domain where $\underline r^{ji}$ is defined), and that,
in view of equation~\eqref{eq:omegatildeProp2}, we have
\begin{equation}\label{boundrhoji}
\|\partial_{\underline\alpha}\varrho^{ji}_\delta\|_{\infty,I_2}\le
C_{\underline\alpha}\delta^{-|\alpha|}.
\end{equation}

Since the operator $\mathrm T^{ji}_\delta$ is given by multiplication
by the $\mathcal C^\infty$ function $\varrho^{ji}_\delta$ preceded by
application of the smooth ($\delta$-independent) diffeomorphism
$\underline r^{ji}:(\r^i)^{-1} (S^j\cap S^i) \to (\r^j)^{-1}(S^j\cap
S^i)$, using the differential form of the Sobolev norms $H^s$ for
positive integers $s$ together with equations~\eqref{supprhoji}
and~\eqref{boundrhoji}, we see that
\[
\| \mathrm T^{ji}_\delta \xi\|_s \le C_s \delta^{-s}\|\xi\|_s \qquad
\forall \xi \in H^s \qquad s=0,1,2,\ldots
\]
Together with the interpolation properties~\cite{SaVa:2002} of the
Sobolev spaces $H^s$, this bound establishes the result for all $s\ge
0$. The result for $s<0$ follows from a duality argument.
\end{proof}

Here and in the sequel, $\|A\|_{X\to Y}$ denotes the operator norm of
the bounded operator $A:X\to Y$ between the Hilbert spaces $X$ and
$Y$.

\begin{proposition}\label{prop:propertiesK}
  For all $s\in\R$ and all indices $i,j$, the integral operators
  $\mathrm A^{ij}_\delta:H^s\to H^{s+1}$ given by
  equation~\eqref{eq:defOpKij} are continuous and we have
\[
 \|\mathrm A^{ij}_{\delta}\|_{H^s\to H^{s+1}}\le C_s\delta^{-|s|}.
\]
\end{proposition}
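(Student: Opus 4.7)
The plan is to reduce the bound on $\mathrm{A}^{ij}_\delta$ to the classical smoothing properties of the combined field integral operator $A$ on the smooth surface $S$, combined with Lemma \ref{lemma:sobolev} which already absorbs the $\delta$-dependence through the cut-off $\widetilde\omega^j_\delta$. First, I would rewrite $\mathrm{A}^{ij}_\delta\xi$ as a pulled-back action of $A$. More precisely, for $\xi\in H^s$, set
\[
\Psi := \bigl(\widetilde\omega^j_\delta\,\xi\bigr)\circ(\r^j)^{-1},
\]
extended by zero to $S$; because $\supp \widetilde\omega^j_\delta\subset\Omega^{jj}_{3\delta/2}\subset D^j$, this is a well-defined function on $S$. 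Changing variables $\r'=\r^j(\underline v)$ in the definitions \eqref{eq:defKijreg}--\eqref{eq:defOpKij} of $\mathrm{A}^{ij}_\delta$ and using \eqref{eq:defA}, one finds the identity
\[
\mathrm A^{ij}_\delta\xi \;=\; (\omega^i\circ\r^i)\,\bigl(A\Psi\bigr)\circ\r^i \;=\; (\omega^i A\Psi)\circ\r^i.
\]

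The rest of the argument is a three-link chain of continuity estimates. The first link is Lemma \ref{lemma:sobolev} itself, which gives $\|\Psi\|_{H^s(S)}\le C_s\,\delta^{-|s|}\|\xi\|_s$ and is the only place where the $\delta^{-|s|}$ factor enters. The second link is the standard mapping property of the combined field boundary integral operator on a smooth closed surface: since the single-layer and double-layer operators associated with the Helmholtz kernel are pseudodifferential operators of order $-1$ on $S$, one has $A:H^s(S)\to H^{s+1}(S)$ continuously for every $s\in\mathbb R$, so that $\|A\Psi\|_{H^{s+1}(S)}\le C_s\|\Psi\|_{H^s(S)}$. The third link uses the definition \eqref{Sobolev1} of the $H^{s+1}(S)$ norm via the atlas: keeping only the $i$-th term of the sum yields
\[
\|\mathrm A^{ij}_\delta\xi\|_{s+1}=\|(\omega^i A\Psi)\circ\r^i\|_{s+1}\le \|A\Psi\|_{H^{s+1}(S)}.
\]
Chaining the three bounds gives the desired estimate $\|\mathrm A^{ij}_\delta\xi\|_{s+1}\le C_s\,\delta^{-|s|}\|\xi\|_s$.

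The only substantive point is the second link, i.e.\ invoking $A:H^s(S)\to H^{s+1}(S)$ for arbitrary $s\in\mathbb R$. For $s\ge 0$ this is classical for smooth boundaries; for negative $s$ one argues by duality, using the fact that the $L^2$-adjoint of $A$ has a kernel of the same type. For the Helmholtz fundamental solution one can also obtain this by writing $\Phi_\kappa=\Phi_0+(\Phi_\kappa-\Phi_0)$, where $\Phi_0$ is the Laplace fundamental solution (for which the mapping properties are textbook) and $\Phi_\kappa-\Phi_0$ is a smooth kernel contributing a smoothing operator of arbitrary order. Apart from this essentially book-keeping step, no further difficulty arises: the $\delta$-dependence has been isolated entirely in Lemma \ref{lemma:sobolev}, and the chart localization in the final step is $\delta$-independent because $\omega^i$ itself is $\delta$-independent.
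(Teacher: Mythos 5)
Your proof is correct and takes essentially the same route as the paper: factor $\mathrm A^{ij}_\delta$ as the push-forward $\xi\mapsto(\widetilde\omega^j_\delta\xi)\circ(\r^j)^{-1}$ (controlled by Lemma~\ref{lemma:sobolev}, which supplies the $\delta^{-|s|}$ factor), followed by the order-$(-1)$ pseudodifferential operator $A$ on $S$, followed by the pull-back $\varphi\mapsto(\omega^i\varphi)\circ\r^i$ (bounded by the definition~\eqref{Sobolev1} of the surface Sobolev norm). Your expanded justification of the mapping property $A:H^s(S)\to H^{s+1}(S)$ for all real $s$ is sound but is simply a spelled-out version of the reference the paper makes to standard pseudodifferential-operator theory.
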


\begin{proof}
  We can write $\mathrm A^{ij}_\delta$ as the composition (from right
  to left) of the maps $\xi \mapsto
  (\widetilde\omega^j_\delta\xi)\circ(\r^j)^{-1}$ with $A$ (the
  combined integral operator given in \eqref{eq:defA}) and then
  $\varphi \mapsto (\omega^i \varphi)\circ
  \r^i$. Lemma~\ref{lemma:sobolev} provides a bound for the norm of
  first of these maps, whereas the norm of the last function as a map from
  $H^s(S)$ to the periodic Sobolev space $H^s$ is clearly bounded in
  view of the definition~\eqref{Sobolev1} of the
  surface Sobolev norms. The result therefore follows from the fact
  that $A$ is a bounded operator from $H^s(S)$ to $H^{s+1}(S)$ for all
  $s \in \mathbb R$---as it follows from standard results concerning
  pseudodifferential operators on smooth surfaces (see
  e.g.~\cite[Chapters 6--9]{HsWe:2008}).
\end{proof}

The next result studies the mapping properties of the regular and
singular part of $\mathrm A^{ij}_\delta$ in the frame of the periodic
Sobolev spaces and gives some estimates for the continuity constants
in terms of $\delta$. In some of the arguments, it is convenient to
use the space $\mathcal D:=\cap_{s\in \mathbb R}H^s$, consisting of
 $\mathcal
{C}^\infty$ biperiodic functions.

\begin{proposition}\label{prop:3.3}
For all $i,j=1,\ldots,J$, and $s,t\in\R$ the operators
\[
\mathrm A_{\rm reg,\delta}^{ij}:\H^s\to \H^t,\qquad  \mathrm A_{\rm
sing,\delta}^{ij}: \H^s\to \H^{s+1}
\]
are continuous. Moreover, for all $t\ge 0\ge s$ and all $r\in\R$ there
exist positive constants $C_{s,t}$ and $C_r$ such that
\begin{eqnarray}
\|\mathrm A_{\rm reg,\delta}^{ij}\|_{\H^{s}\to \H^t} \label{eq:p33a}
&\le&
C_{s,t}\delta^{s-t},\\
 \|\mathrm A_{\rm sing,\delta}^{ij}\|_{\H^{r}\to \H^{r+1}}&\le&
C_r\delta^{-\max\{-r,r+1,1\}}.\label{eq:p33b}
\end{eqnarray}
Finally
\begin{equation}
 \|\mathrm A_{\rm sing,\delta}^{ij}\|_{\H^{0}\to \H^{0}} \le
C_0. \label{eq:p33c}
\end{equation}
\end{proposition}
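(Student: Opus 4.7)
The plan is to prove the three bounds by related but distinct arguments: (a) by kernel-based $L^2$ estimates combined with the splitting $K^{ij}_{\mathrm{reg},\delta}=K^{ij}_0+(1-\eta_\delta)K^{ij}_1$ of \eqref{eq:Kreg}; (b) as a short consequence of (a) and Proposition~\ref{prop:propertiesK} via the identity $\mathrm A^{ij}_{\mathrm{sing},\delta}=\mathrm A^{ij}_\delta-\mathrm A^{ij}_{\mathrm{reg},\delta}$; and (c) by a direct Schur-test estimate on the singular kernel.

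The heart of the proof is (a). Writing the kernel of $\mathrm A^{ij}_{\mathrm{reg},\delta}$ as
\[
\tilde k(\underline u,\underline v):=(\omega^i\circ\r^i)(\underline u)\,K^{ij}_{\mathrm{reg},\delta}(\underline u,\underline v)\,\widetilde\omega^j_\delta(\underline v),
\]
which is $\mathcal C^\infty$ with compact support in $I_2\times I_2$, the duality $|\int \tilde k(\underline u,\underline v)\xi(\underline v)\,\mathrm d\underline v|\le \|\tilde k(\underline u,\cdot)\|_{-s}\|\xi\|_s$ (valid for $s\le 0$) followed by differentiation in $\underline u$ up to order $t$ and integration in $L^2(I_2)$ gives, for non-positive integer $s$ and non-negative integer $t$,
\[
\|\mathrm A^{ij}_{\mathrm{reg},\delta}\|_{H^s\to H^t}\le C_{s,t}\!\!\sum_{|\underline\alpha|\le t,\,|\underline\beta|\le -s}\!\!\|\partial^{\underline\alpha}_{\underline u}\partial^{\underline\beta}_{\underline v}\tilde k\|_{L^2(I_2\times I_2)}.
\]
The $K^{ij}_0$-contribution to $\tilde k$ has $\delta$-independent smooth $\underline u$-derivatives, so its mixed derivatives pick up at most a factor $\delta^{-|\underline\beta|}\le\delta^{s}$ from~\eqref{eq:omegatildeProp2}, and this is dominated by $\delta^{s-t}$ since $t\ge 0$ and $\delta\le 1$. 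The $(1-\eta_\delta)K^{ij}_1$-contribution is supported in $\{|\underline u-\underline v|\ge c\delta\}$ and satisfies the sharper pointwise estimate
\[
\bigl|\partial^{\underline\alpha}_{\underline u}\partial^{\underline\beta_1}_{\underline v}[(1-\eta_\delta)K^{ij}_1]\,\partial^{\underline\beta_2}_{\underline v}\widetilde\omega^j_\delta\bigr|\le C\,\delta^{-|\underline\beta_2|}\,|\underline u-\underline v|^{-|\underline\alpha|-|\underline\beta_1|-1}
\]
with $\underline\beta=\underline\beta_1+\underline\beta_2$; polar integration around the $K_1$-singularity then yields $\|\partial^{\underline\alpha}_{\underline u}\partial^{\underline\beta}_{\underline v}\tilde k\|_{L^2(I_2\times I_2)}\le C\delta^{-|\underline\alpha|-|\underline\beta|}$ whenever $|\underline\alpha|+|\underline\beta|\ge 1$. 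Summing over the admissible multi-indices produces the claimed $C_{s,t}\delta^{s-t}$ bound. The borderline $s=t=0$ case falls outside this Hilbert--Schmidt-type argument (the integral $\int|\underline u-\underline v|^{-2}\,\mathrm d\underline u\,\mathrm d\underline v$ is logarithmically divergent over the support) and is treated instead by a direct Schur test, exploiting the integrability of $|\underline u-\underline v|^{-1}$ on 2-dimensional compact sets. Non-integer $s,t$ are then covered by complex interpolation of biperiodic Sobolev spaces \cite{SaVa:2002}.

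With (a) in hand, (b) and (c) are short. For (b), Proposition~\ref{prop:propertiesK} supplies $\|\mathrm A^{ij}_\delta\|_{H^r\to H^{r+1}}\le C_r\delta^{-|r|}$, and (a) must be invoked in three regimes: for $r\ge 0$, the embedding $H^r\hookrightarrow L^2$ reduces matters to $(s,t)=(0,r+1)$, yielding $C\delta^{-r-1}$; for $r\le -1$, the dual embedding $L^2\hookrightarrow H^{r+1}$ reduces to $(s,t)=(r,0)$, yielding $C\delta^r$; and for $-1\le r\le 0$, (a) applies directly with $(s,t)=(r,r+1)$, yielding $C\delta^{-1}$. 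These bounds combine to give precisely $\delta^{-\max\{-r,r+1,1\}}$. For (c), Schur's test is immediate from \eqref{eq:suppetadelta}: the kernel of $\mathrm A^{ij}_{\mathrm{sing},\delta}$ is supported in $\{|\underline u-\underline v|\le \epsilon_1\delta\}$ and bounded there by $C|\underline u-\underline v|^{-1}$, so both its row and column $L^1$ sums are $\le C\int_0^{\epsilon_1\delta}\mathrm d r = C\delta\le C$. The principal technical obstacle is (a): obtaining the sharp exponent $s-t$ (rather than $s-t+1$ or $s-t-\varepsilon$) requires both the $L^2$-based polar integration of the pointwise kernel estimate and the separate Schur treatment of the $s=t=0$ endpoint, in order to eliminate the logarithmic losses that either a naive sup-norm kernel bound or a pure Hilbert--Schmidt argument would introduce.
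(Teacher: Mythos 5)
Your approach to~\eqref{eq:p33a} is genuinely different from the paper's, which never unpacks the $K_0/K_1$ decomposition inside this proof: it conjugates $\mathrm A^{ij}_{\mathrm{reg},\delta}$ by the isomorphisms $\Lambda_m\xi:= D_{\underline u}^m\xi+\widehat\xi(\underline 0)$, writes $\Lambda_m\mathrm A^{ij}_{\mathrm{reg},\delta}\Lambda_n$ explicitly as an integral operator by moving $D^n_{\underline v}$ onto the kernel via integration by parts, and bounds it through the sup-norm estimate~\eqref{eq:p33d} on the transformed kernel. Your $L^2$-duality/Hilbert--Schmidt framework combined with the split $K^{ij}_0+(1-\eta_\delta)K^{ij}_1$ and polar integration is a reasonable alternative, and your treatments of~\eqref{eq:p33b} (the same three-regime reduction as the paper's) and~\eqref{eq:p33c} (a Schur test, where the paper instead combines~\eqref{eq:p33a} at $s=t=0$ with Proposition~\ref{prop:propertiesK}) are fine.

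The gap is in the step ``polar integration around the $K_1$-singularity then yields $\|\partial_{\underline\alpha}\partial_{\underline\beta}\tilde k\|_{L^2(I_2\times I_2)}\le C\delta^{-|\underline\alpha|-|\underline\beta|}$ whenever $|\underline\alpha|+|\underline\beta|\ge 1$.'' That claim is false for the splitting $|\underline\alpha|=|\underline\beta_1|=0$, $|\underline\beta_2|\ge 1$, i.e.\ when every $\underline v$-derivative lands on $\widetilde\omega^j_\delta$ and none on the $K_1$ factor: the pointwise bound there is $C\delta^{-|\underline\beta_2|}|\underline u-\underline v|^{-1}$ supported on $\{|\underline u-\underline v|\ge c\delta\}$, and
\[
\left(\int_{c\delta}^{R}r^{-2}\,r\,\mathrm dr\right)^{1/2}\sim|\log\delta|^{1/2},
\]
so you only obtain $C\delta^{-|\underline\beta_2|}|\log\delta|^{1/2}$. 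This is the very logarithmic obstruction you correctly isolate at $s=t=0$, but it recurs at every pair $(s,t)=(s,0)$ with $s\le -1$, where the top-order term $|\underline\alpha|=0$, $|\underline\beta|=-s$ is the offending one and no spare power $\delta^{t}$ is available to absorb the log; your Schur treatment is stated only for $s=t=0$ and does not reach these cases, and duality does not help, since the adjoint kernel still carries $\widetilde\omega^j_\delta$ in the inner variable and runs into the identical splitting. The gap is repairable within your own toolkit---for instance by extending the Schur test to all pairs with $t=0$ (bounding $\sup_{\underline u}\int_{I_2}|\partial_{\underline\beta}\tilde k(\underline u,\underline v)|\,\mathrm d\underline v$, using that the weak singularity is $L^1$-integrable uniformly in $\delta$), or by observing that $\partial_{\underline\beta_2}\widetilde\omega^j_\delta$ for $\underline\beta_2\neq\underline 0$ is supported in a collar of measure $O(\delta)$ around $\partial\Omega^j$, which contributes an extra factor $\delta^{1/2}$ to the $L^2(I_2\times I_2)$ norm and dominates $|\log\delta|^{1/2}$. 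As written, however, the proof of~\eqref{eq:p33a} is incomplete for $t=0$, $s<0$, a range the paper does rely on (e.g.\ in the proof of Proposition~\ref{prop:main1-01}).
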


\begin{proof}Let
\[
F_{\rm reg,\delta}^{ij}(\underline{u},\underline{v}):=(\omega^i\circ
\r^i)(\underline{u}) K^{ij}_{\rm
reg,\delta}(\underline{u},\underline{v})
 \widetilde{\omega}^j_\delta (\underline{v})
\]
be the integral kernel of the operator $\mathrm A_{\rm
  reg,\delta}^{ij}$. This function is well defined on $D^i\times D^j$
and can be extended by zero to the rest of $I_2\times I_2$ thanks to
the cut-off functions that appear in its definition. In particular,
$F_{\rm reg,\delta}^{ij}$ admits a  $\mathcal
{C}^\infty$  biperiodic extension to
$\R^2\times \R^2$, and, thus $\mathrm A_{\rm reg,\delta}^{ij}$ is a
continuous operator from $H^s$ to $\H^t$ for all $s,t\in\R$ (see
\cite[Theorem 6.1.1]{SaVa:2002}). Since $\mathrm A_{\rm
  sing,\delta}^{ij}=\mathrm A^{ij}_\delta-\mathrm A_{\rm
  reg,\delta}^{ij}$, using Proposition \ref{prop:propertiesK}, the
claimed mapping properties of $\mathrm A_{\rm sing,\delta}^{ij}$
follow directly.

To establish~\eqref{eq:p33a} we consider the operators
\begin{equation}
 \label{eq:def:Lambdam}
D_{\underline{u}}^{m}\xi:= \partial_{u_1}^{2m}\xi+
\partial_{u_2}^{2m}\xi,\qquad
\Lambda_m
\xi:=D_{\underline{u}}^{m}\xi+\widehat{\xi}(\underline{0}).
\end{equation}
(Note that $\Lambda_m:H^s\to H^{s-2m}$ is a bounded isomorphism for
all $s$.) Letting
\[
G(\underline{u},\underline{v}):=D_{\underline{u}}^{m} F_{\rm
reg,\delta}^{ij}(\underline{u},\underline{v})+\int_{I_2}
 F_{\rm
reg,\delta}^{ij}(\underline{w},\underline{v})\,{\rm
d}{\underline{w}},
\]
we note that $G\in{\cal C}^\infty(\R^2\times \R^2)$, that $G$ is
periodic in each of its variables, and that, when restricted to
$I_2\times I_2$, $G$ has compact support. Now, for all $\eta\in
\mathcal D$ we have
\begin{eqnarray*}
 \Lambda_m \mathrm A_{\rm reg,\delta}^{ij}\Lambda_n \eta &=&\int_{I_2}
G(\cdot ,\underline{v})D_{\underline{v}}^{n}\eta(\underline{v})
\,{\rm d}\underline{v}+\widehat{\eta}(\underline{0})  \int_{I_2}
G(\cdot,\underline{v})\,{\rm
d}\underline{v} \\
&=&\int_{I_2}
D_{\underline{v}}^{n} G(\cdot ,\underline{v})\eta(\underline{v})
\,{\rm d}\underline{v}+\widehat{\eta}(\underline{0}) \int_{I_2}
G(\cdot,\underline{v})\,{\rm
d}\underline{v}.
\end{eqnarray*}
Further, bounds of the form
\begin{equation}
\label{eq:p33d} \sup_{\underline{u},\underline{v}\in I_2}
|D_{\underline{u}}^{m}D_{\underline{v}}^{n} F_{\rm
reg,\delta}^{ij}(\underline{u},\underline{v})|\le C_{m,n}
 \delta^{-2m-2n},
\end{equation}
follow from the definition the kernel functions $K^{ij}_{{\rm
reg},\delta}$ (see \eqref{eq:Kreg} and subsequent lines) and from
the assumptions \eqref{eq:omegatildeProp2} on
$\widetilde{\omega}^{j}_\delta$. It follows that, for $\xi\in{\cal
D}$ we have
\begin{eqnarray*}
\|\mathrm A^{ij}_{\rm reg,\delta}\xi\|_{2m}&\le&C_{2m}\|\Lambda_m
\mathrm A^{ij}_{\rm reg,\delta}\xi\|_{0}=C_{2m}\|\Lambda_m \mathrm
A^{ij}_{\rm
reg,\delta}\Lambda_n \Lambda_n^{-1}\xi\|_{0}\\
&\le& C_{2m,2n}\delta^{-2m-2n}\| \Lambda_n^{-1}\xi\|_{0}\le
C_{2m,2n}\delta^{-2m-2n}\|  \xi\|_{-2n},
\end{eqnarray*}
where we have used the integro-differential form of $\Lambda_m
\mathrm A^{ij}_{\rm reg,\delta}\Lambda_n $ and \eqref{eq:p33d}. This
inequality establishes \eqref{eq:p33a} in the particular case
$t=2m$, $s=-2n$; the result for general $t\geq 0\geq s$ then follows
by interpolation~\cite{SaVa:2002}.

To establish~\eqref{eq:p33b}, in turn, we first note
that~\eqref{eq:p33a} implies
\[
\| \mathrm A^{ij}_{{\rm reg},\delta}\|_{H^r\to H^{r+1}}\le \left\{
\begin{array}{ll}
  \| \mathrm A^{ij}_{{\rm reg},\delta}\|_{H^r\to H^{0}}\le C_{r,0} \delta^{r},&
  \mbox{if $r<-1$},\\
  \| \mathrm A^{ij}_{{\rm reg},\delta}\|_{H^r\to H^{r+1}}\le C_{r,r+1}
\delta^{-1},&
  \mbox{if
    $-1\le r<0$},\\
  \| \mathrm A^{ij}_{{\rm reg},\delta}\|_{H^{0}\to H^{r+1}}\le C_{0,r+1}
  \delta^{-(r+1)},& \mbox{if $r\ge 0$},
\end{array}
\right.
\]
or, equivalently
\[
\| \mathrm A^{ij}_{{\rm reg},\delta}\|_{H^r\to H^{r+1}} \le C'_{r}
\delta^{-\max\{-r,r+1,1\}}.
\]
Since $\mathrm A^{ij}_{{\rm sing},\delta}=\mathrm
A^{ij}_\delta-\mathrm A^{ij}_{{\rm reg},\delta}$, and in view of
Proposition \ref{prop:propertiesK}, we have
\begin{eqnarray*}
\|\mathrm A^{ij}_{{\rm sing},\delta}\|_{H^r\to H^{r+1}}&\le&
\|\mathrm A^{ij}_\delta\|_{H^r\to H^{r+1}}+\|\mathrm A^{ij}_{{\rm
reg},\delta}\|_{H^r\to H^{r+1}}\\
&\le& C_r \delta^{-|r|}
+C_r'\delta^{-\max\{-r,r+1,1\}} \le  C_r''\delta^{-\max\{-r,r+1,1\}}.
\end{eqnarray*}
which establishes~\eqref{eq:p33b}.

Inequality~\eqref{eq:p33c}, finally, follows from~\eqref{eq:p33a}
with $s=t=0$ and Proposition~\ref{prop:propertiesK}:
\[
\|\mathrm A^{ij}_\delta\|_{H^{0}\to H^{0}}\le \|\mathrm
A^{ij}_\delta\|_{H^{0}\to H^{1}} \le C,
\]
with $C$ independent of $\delta.$
\end{proof}

The limiting case $\delta=0$ is studied next.

\begin{proposition}\label{prop:3.4}
For all $i,j$, the operators $\mathrm A^{ij}_0:H^0\to H^1$ are
bounded.
\end{proposition}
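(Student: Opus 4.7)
The plan is to exploit the fact that at $\delta=0$ the floating cut-off $\eta_0$ collapses to zero off the diagonal, so that $\mathrm A^{ij}_0$ reduces to a single smooth chart pull-back of the original boundary integral operator $A$ from \eqref{eq:defA}, whose $H^0(S)\to H^1(S)$ mapping property was already invoked in Proposition~\ref{prop:propertiesK}.

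First, I would observe that by \eqref{eq:eta} and the fact that $\upsilon(\r,\cdot)\equiv 0$ on $[\epsilon_1\delta_0,\infty)$, one has $\eta_0(\r,\r')=0$ whenever $\r\neq \r'$, so $K_{\rm sing,0}=\eta_0 K_1$ vanishes almost everywhere. Hence $\mathrm A^{ij}_{\rm sing,0}\equiv 0$ on $L^2$ and $\mathrm A^{ij}_0=\mathrm A^{ij}_{\rm reg,0}$, which by \eqref{eq:Kreg} and \eqref{eq:defKijreg} has kernel $(\omega^i\circ\r^i)(\underline u)\,K^{ij}(\underline u,\underline v)\,\widetilde\omega^j_0(\underline v)$, where $\widetilde\omega^j_0=\chi_{\Omega^j}$ by \eqref{eq:defOmegaTilde0}.

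Second, given $\xi\in H^0$ I would lift its restriction to $\Omega^j\subset D^j$ to the surface via the chart $\r^j$, defining
\[
\Xi(\r^j(\underline v)):=\xi(\underline v)\quad\text{for }\underline v\in\Omega^j,\qquad \Xi:=0\ \text{on }S\setminus \r^j(\Omega^j).
\]
Because $\overline{\Omega^j}\subset D^j$ and the Jacobian $a^j$ is continuous and strictly positive on $\overline{D^j}$ (see \eqref{Jac}), the change of variables $\r'=\r^j(\underline v)$ yields $\|\Xi\|_{L^2(S)}\le C\|\xi\|_0$. The same change of variables, now applied inside the integral \eqref{eq:defKijreg}, gives the key identity
\[
(\mathrm A^{ij}_0\xi)(\underline u)=\omega^i(\r^i(\underline u))\,(A\Xi)(\r^i(\underline u))=\bigl((\omega^i\cdot A\Xi)\circ\r^i\bigr)(\underline u),\qquad \underline u\in I_2.
\]

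Third, I would invoke the standard pseudodifferential result (used already in Proposition~\ref{prop:propertiesK}) that $A:H^0(S)\to H^1(S)$ is bounded, giving $\|A\Xi\|_{H^1(S)}\le C\|\Xi\|_{L^2(S)}\le C\|\xi\|_0$. The conclusion then follows directly from the very definition \eqref{Sobolev1} of the surface Sobolev norm, because $(\omega^i\cdot A\Xi)\circ \r^i$ is precisely the $i$-th summand in the expression for $\|A\Xi\|_{H^1(S)}$, so that
\[
\|\mathrm A^{ij}_0\xi\|_1=\|(\omega^i\cdot A\Xi)\circ\r^i\|_1\le \|A\Xi\|_{H^1(S)}\le C\|\xi\|_0.
\]
No step here is technically deep; the only point deserving care is the a.e. vanishing of $\eta_0$ that makes $\mathrm A^{ij}_{\rm sing,0}$ disappear despite the apparent $\delta^{-1}$ blow-up in the bounds \eqref{eq:p33b} from Proposition~\ref{prop:3.3}, and the verification that all pull-backs remain legitimate operations in $L^2$ thanks to the uniform positivity of $a^j$ on $\overline{D^j}$.
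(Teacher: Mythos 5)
Your proof is correct. It does, however, take a slightly different (more explicit) route than the paper. The paper's proof is a one-liner: since $\widetilde\omega^j_{\delta_0}\equiv 1$ on $\Omega^j$ and $\widetilde\omega^j_0=\chi_{\Omega^j}$, one has $\widetilde\omega^j_{\delta_0}\widetilde\omega^j_0=\widetilde\omega^j_0$, hence $\mathrm A^{ij}_0\xi=\mathrm A^{ij}_{\delta_0}(\widetilde\omega^j_0\xi)$; multiplication by the characteristic function $\widetilde\omega^j_0$ is bounded $H^0\to H^0$, and Proposition~\ref{prop:propertiesK} already gives $\mathrm A^{ij}_{\delta_0}:H^0\to H^1$ bounded. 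You instead bypass the factorization through $\mathrm A^{ij}_{\delta_0}$ and unpack, at $\delta=0$ and $s=0$, the chain-of-maps argument that underlies the proof of Proposition~\ref{prop:propertiesK}: you directly verify the pull-back identity $(\mathrm A^{ij}_0\xi)(\underline u)=\omega^i(\r^i(\underline u))\,(A\Xi)(\r^i(\underline u))$, check $\|\Xi\|_{L^2(S)}\le C\|\xi\|_0$ by hand (uniform positivity of $a^j$ is actually only needed for the change of variables to be legitimate, boundedness of $a^j$ suffices for the norm estimate), and finish with the pseudodifferential bound $A:H^0(S)\to H^1(S)$ and the definition~\eqref{Sobolev1}. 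Both arguments hinge on the same two facts --- multiplication by $\chi_{\Omega^j}$ is $L^2$-bounded (this is precisely why the limiting case $\delta=0$ is salvageable at $s=0$ even though Lemma~\ref{lemma:sobolev} fails for a non-smooth cut-off) and $A$ gains one Sobolev order. The paper's route is shorter because it reuses Proposition~\ref{prop:propertiesK} as a black box; yours is more self-contained. Your opening observation that $K_{\mathrm{sing},0}=\eta_0K_1$ vanishes a.e.\ is true but not actually needed, since the sum $\mathrm A^{ij}_{\mathrm{reg},\delta}+\mathrm A^{ij}_{\mathrm{sing},\delta}$ has the $\delta$-independent kernel $K^{ij}\widetilde\omega^j_\delta$, so there is never any issue with the splitting itself.
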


\begin{proof}
  Clearly, $\mathrm A^{ij}_{0}\xi =\mathrm
  A^{ij}_{\delta_0}\:(\widetilde{\omega}_0^j\xi) $, where
  $\widetilde{\omega}_0^j $ coincides with the characteristic function
  of set $\Omega^j$ (see \eqref{eq:defOmegaTilde0}). Since the
  operator of multiplication by $\widetilde{\omega}_0^j $ defines a
  continuous operator in $H^0$, the result follows from Proposition
  \ref{prop:propertiesK}.
\end{proof}

\subsection{Convergence estimates in the biperiodic
framework}\label{sec:3.5}

We now state a convergence theorem (Theorem~\ref{theo:main2}) in the
biperiodic framework; as shown in Section~\ref{sec:5}, this theorem is
equivalent to our main convergence result, Theorem~\ref{theo:MAIN}.
The proofs of the two results presented in this section (Theorems
\ref{theo:main1} and \ref{theo:main2}), which require certain
analytical tools that are developed in the next section, are given in
section \ref{section:proofs}.

Let us introduce the operators
\begin{eqnarray*}
\mathcal Q_h \boldsymbol\xi&:=&\left(
\mathrm Q_{N_1}\xi_1,\ldots,\mathrm Q_{N_J}\xi_J\right),\\
\mathcal P_h \boldsymbol\xi&:=&\left(
\mathrm P_{N_1}\xi_1,\ldots,\mathrm P_{N_J}\xi_J\right),\\
\mathcal D_h \boldsymbol\xi&:=&\left( \mathrm
D_{N_1}\xi_1,\ldots,\mathrm D_{N_J}\xi_J\right),
\end{eqnarray*}
see Section~\ref{discrete}, and let
\[
 {\mathcal A}^{\rm sing}_{\delta, h}:=\big(\mathrm A_{{\rm
sing},\delta,h}^{ij}\big)_{i,j=1}^J.
 \]
 Clearly, equation \eqref{nyst5} (and, thus, in view of
 Remark~\ref{remark_31}, the main Nystr\"om system presented in
 equation~\eqref{eq:nyst}), can be re-expressed in the form
\begin{equation}
\label{eq:NystromEnd}
 {\mathcal B}_{\delta,h}\bm{\phi}_h:=\left({\textstyle\frac12}{\cal I}+\mathcal
Q_h {\mathcal A}_\delta^{\rm reg}\mathcal D_h\mathcal P_h+
 \mathcal Q_h {\mathcal A}^{\rm sing}_{\delta,h}
 \mathcal P_h\right)
 \bm{\phi}_h=\mathcal Q_h {\bf U}
\end{equation}
where $\bm{\phi}_h:=(\phi_{h}^1,\phi_{h}^2,\ldots,\phi_{h}^J)$.
(Note that, although not explicit in the notation, the unique solution
$\bm{\phi}_h$ of equation~\eqref{eq:NystromEnd} does depend on the
parameter $\delta$.)

\begin{theorem}\label{theo:main1} Let $\delta\approx h^{\beta}$ with
  $\beta\in(0,1)$. Then
\[
\lim_{h \to 0}\|{\mathcal B}_{\delta,h}-{\mathcal B}_\delta\|_{{\cal
H}^0\to {\cal H}^0}= 0.
\]
Thus, for all $h$ small enough, ${\mathcal B}_{\delta,h}:{\cal H}^0\to
{\cal H}^0$ is invertible, with $h$-uniformly bounded inverse.
\end{theorem}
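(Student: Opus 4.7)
The plan is to establish norm convergence $\|\mathcal B_{\delta,h}-\mathcal B_\delta\|_{\mathcal H^0\to\mathcal H^0}\to 0$, from which the second assertion follows by a standard Neumann-series argument applied to the uniformly invertible family $\mathcal B_\delta$ guaranteed by Theorem \ref{th:3.6}: indeed, $\mathcal B_{\delta,h}=\mathcal B_\delta(\mathcal I+\mathcal B_\delta^{-1}(\mathcal B_{\delta,h}-\mathcal B_\delta))$, and once the perturbation is smaller than $1/\|\mathcal B_\delta^{-1}\|$ we obtain invertibility together with a uniform bound on $\mathcal B_{\delta,h}^{-1}$.

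To control the error I would split it into a regular and a singular contribution,
\[
\mathcal B_{\delta,h}-\mathcal B_\delta
=\underbrace{\bigl(\mathcal Q_h\mathcal A_\delta^{\mathrm{reg}}\mathcal D_h\mathcal P_h-\mathcal A_\delta^{\mathrm{reg}}\bigr)}_{=:E_{\mathrm{reg}}}
+\underbrace{\bigl(\mathcal Q_h\mathcal A_{\delta,h}^{\mathrm{sing}}\mathcal P_h-\mathcal A_\delta^{\mathrm{sing}}\bigr)}_{=:E_{\mathrm{sing}}},
\]
and bound each piece separately in the $\mathcal H^0\to\mathcal H^0$ operator norm. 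For $E_{\mathrm{reg}}$ I would telescope
\[
E_{\mathrm{reg}}=(\mathcal Q_h-\mathcal I)\mathcal A_\delta^{\mathrm{reg}}+\mathcal Q_h\mathcal A_\delta^{\mathrm{reg}}(\mathcal P_h-\mathcal I)+\mathcal Q_h\mathcal A_\delta^{\mathrm{reg}}(\mathcal D_h-\mathcal I)\mathcal P_h,
\]
and estimate each factor by combining the smoothing property $\|\mathcal A_\delta^{\mathrm{reg}}\|_{H^s\to H^t}\le C_{s,t}\delta^{s-t}$ from Proposition \ref{prop:3.3} with the standard approximation estimates for Fourier projection and bi-periodic trigonometric interpolation on $H^s$ (see \cite{SaVa:2002}) and the Dirac-comb operator $\mathrm D_{N}-\mathrm P_N$ estimates from \cite{CeDoSa:2002,DoRaSa:2008}. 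Because $\mathcal A_\delta^{\mathrm{reg}}$ gains arbitrary smoothness (modulo powers of $\delta^{-1}$), each telescope term is bounded by a quantity of the form $C_t\,h^{t-1}\delta^{-t}\approx h^{t-1-t\beta}=h^{t(1-\beta)-1}$, which tends to zero for every $t$ large enough since $\beta<1$.

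The main technical difficulty lies in $E_{\mathrm{sing}}$, which compares the polar quadrature operator $\mathrm A_{\mathrm{sing},\delta,h}^{ij}$ to the continuous integral operator $\mathrm A_{\mathrm{sing},\delta}^{ij}$. For this I would rely on the key estimate advertised in Remark \ref{remark:delta:clarified} (namely, what becomes Proposition \ref{prop:main1-02} and whose proof occupies Section \ref{sec:4} and Appendix \ref{sec:A}): after inserting projectors and interpolants and writing
\[
E_{\mathrm{sing}}=(\mathcal Q_h-\mathcal I)\mathcal A_\delta^{\mathrm{sing}}+\mathcal Q_h\mathcal A_\delta^{\mathrm{sing}}(\mathcal P_h-\mathcal I)+\mathcal Q_h\bigl(\mathcal A_{\delta,h}^{\mathrm{sing}}-\mathcal A_\delta^{\mathrm{sing}}\bigr)\mathcal P_h,
\]
the first two terms are handled exactly as in the regular case using the smoothing bound \eqref{eq:p33b}, while the third—the genuinely new estimate—gives rise to a bound of order $|\log h|^{3/2}h^{\beta/2}$. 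Thanks to $\beta>0$, this last quantity also tends to zero, so that $\|E_{\mathrm{sing}}\|_{\mathcal H^0\to\mathcal H^0}\to 0$.

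Adding the two estimates yields $\|\mathcal B_{\delta,h}-\mathcal B_\delta\|_{\mathcal H^0\to\mathcal H^0}\to 0$ under the scaling $\delta\approx h^\beta$ with $\beta\in(0,1)$, which is exactly what is needed to close the Neumann-series perturbation argument and conclude the theorem. The decisive step—and the one I expect to be hardest—is the polar-quadrature convergence estimate for $\mathrm A_{\mathrm{sing},\delta,h}^{ij}-\mathrm A_{\mathrm{sing},\delta}^{ij}$, since it must simultaneously accommodate the shrinking cut-off $\eta_\delta$, the trigonometric-polynomial structure inherited through $\mathcal P_h$, and the weakly singular factor $|\rho|\,K_{\mathrm{sing},\delta}^{ij}$ made smooth by the polar change of variables.
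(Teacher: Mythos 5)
Your overall strategy is exactly the paper's: split $\mathcal B_{\delta,h}-\mathcal B_\delta$ into a regular and a singular contribution (these become Propositions \ref{prop:main1-01} and \ref{prop:main1-02}), drive each to zero under $\delta\approx h^\beta$, and then close with the Neumann-series perturbation argument based on the uniform invertibility of $\mathcal B_\delta$ from Theorem \ref{th:3.6}. Your telescope for $E_{\mathrm{reg}}$ is a harmless rearrangement of the paper's; since $\mathcal A_\delta^{\mathrm{reg}}$ smooths to arbitrary order, every term is safe.

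The telescope you write for $E_{\mathrm{sing}}$, however, has a genuine gap that the paper's decomposition is specifically designed to avoid. Your first term $(\mathcal Q_h-\mathcal I)\mathcal A_\delta^{\mathrm{sing}}$ (and in fact also the second, $\mathcal Q_h\mathcal A_\delta^{\mathrm{sing}}(\mathcal P_h-\mathcal I)$) must be controlled on all of $\mathcal H^0$, but $\mathcal A_\delta^{\mathrm{sing}}$ gains only one degree of smoothness, $H^0\to H^1$ by \eqref{eq:p33c}--\eqref{eq:p33b}, while the bivariate interpolation estimate \eqref{eq02:lemma:aux01} and the $L^\infty$ stability of $\mathcal Q_h$ both require regularity strictly greater than $H^1$. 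Thus $\mathrm A_{\mathrm{sing},\delta}^{ij}\xi$ for $\xi\in H^0$ is not smooth enough for $\mathrm Q_{N_i}$ to act on with a quantitative error bound, and no choice of $t>1$ in \eqref{eq02:lemma:aux01} lets you return to $\|\xi\|_0$ on the right-hand side. This is why the paper's Proposition \ref{prop:main1-02} telescopes in the opposite order: $\mathrm A^{ij}_{\mathrm{sing},\delta}(\mathrm I-\mathrm P_{N_j})$ (no $\mathrm Q_{N_i}$, bounded via $\mathrm A^{ij}_{\mathrm{sing},\delta}:H^{-1}\to H^0$ and \eqref{eq01:lemma:aux01}), then $(\mathrm I-\mathrm Q_{N_i})\mathrm A^{ij}_{\mathrm{sing},\delta}\mathrm P_{N_j}$, where $\mathrm P_{N_j}\xi\in\mathbb T_{N_j}$ permits the inverse inequality \eqref{eq:invIneqSobolev} to trade the extra Sobolev index back for a power of $h$, and finally the genuine quadrature error $\mathrm Q_{N_i}(\mathrm A^{ij}_{\mathrm{sing},\delta}-\mathrm A^{ij}_{\mathrm{sing},\delta,h})\mathrm P_{N_j}$. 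Once you rewrite your $E_{\mathrm{sing}}$ in the paper's order, the rest of your outline (in particular the $|\log h|^{3/2}h^{\beta/2}$ bound for the quadrature term) matches the argument in Section \ref{section:proofs}.
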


Hence, for $h$ small enough the numerical scheme admits a unique
solution which depends continuously on the right-hand side.

\begin{theorem}\label{theo:main2} Let $\delta\approx h^{\beta}$ with
  $\beta\in(0,1)$ and let $\mathbf U$ be given by \eqref{defU}. Let
  $\boldsymbol\phi$ and $\boldsymbol\phi_h$ be the respective
  solutions of ${\mathcal B}_\delta\bm{\phi}={\bf U}$ and ${\mathcal
    B}_{\delta,h}\bm{\phi}_h=\mathcal Q_h{\bf U}$. Then, there exist
  constants $C_{s,t}>0$ for all $t\ge s\ge 0$ and $t>1$ such that
\[
 \|\bm{\phi}-\bm{\phi}_h\|_{s}\le
C_{s,t} {h^{t(1-\beta)-s}}\|\bm{\phi}\|_{t}.
\]
\end{theorem}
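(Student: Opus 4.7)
My plan is to combine the uniform invertibility of $\mathcal{B}_{\delta,h}$ on $\mathcal{H}^0$ granted by Theorem~\ref{theo:main1} with an error decomposition around the orthogonal Fourier projection $\mathcal{P}_h$. Since each component of $\bm{\phi}_h$ is a trigonometric polynomial, I would write
\[
\bm{\phi}-\bm{\phi}_h=(\mathcal{I}-\mathcal{P}_h)\bm{\phi}+\bigl(\mathcal{P}_h\bm{\phi}-\bm{\phi}_h\bigr),
\]
and handle the two contributions separately. The projection term obeys the classical Fourier estimate $\|(\mathcal{I}-\mathcal{P}_h)\bm{\phi}\|_s\le Ch^{t-s}\|\bm{\phi}\|_t$, which for $\beta\in(0,1)$ and $h\le 1$ is already sharper than the claimed bound $h^{t(1-\beta)-s}\|\bm{\phi}\|_t$. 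For the trigonometric-polynomial residual, I would use $\mathbf{U}=\mathcal{B}_\delta\bm{\phi}$ and $\mathcal{B}_{\delta,h}\bm{\phi}_h=\mathcal{Q}_h\mathbf{U}$ to write
\[
\mathcal{B}_{\delta,h}\bigl(\mathcal{P}_h\bm{\phi}-\bm{\phi}_h\bigr)=(\mathcal{B}_{\delta,h}\mathcal{P}_h-\mathcal{Q}_h\mathcal{B}_\delta)\bm{\phi},
\]
so that estimating the $\mathcal{H}^0$-norm of the right-hand side and applying $\mathcal{B}_{\delta,h}^{-1}$ (uniformly bounded by Theorem~\ref{theo:main1}) produces the desired bound on the residual.

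The second step is to split the consistency operator $\mathcal{B}_{\delta,h}\mathcal{P}_h-\mathcal{Q}_h\mathcal{B}_\delta$, by inserting $\mathcal{I}$ and $\mathcal{P}_h$ in strategic places, into the five natural pieces
\begin{equation*}
\begin{aligned}
(\mathcal{B}_{\delta,h}\mathcal{P}_h-\mathcal{Q}_h\mathcal{B}_\delta)\bm{\phi}={}&\tfrac{1}{2}(\mathcal{P}_h-\mathcal{Q}_h)\bm{\phi}+\mathcal{Q}_h\mathcal{A}^{\rm reg}_\delta(\mathcal{D}_h-\mathcal{I})\mathcal{P}_h\bm{\phi}+\mathcal{Q}_h\mathcal{A}^{\rm reg}_\delta(\mathcal{P}_h-\mathcal{I})\bm{\phi}\\
&+\mathcal{Q}_h(\mathcal{A}^{\rm sing}_{\delta,h}-\mathcal{A}^{\rm sing}_\delta)\mathcal{P}_h\bm{\phi}+\mathcal{Q}_h\mathcal{A}^{\rm sing}_\delta(\mathcal{P}_h-\mathcal{I})\bm{\phi}.
\end{aligned}
\end{equation*}
The first summand is standard periodic Fourier/interpolation theory ($\|(\mathcal{P}_h-\mathcal{Q}_h)\bm{\phi}\|_0\le Ch^{t}\|\bm{\phi}\|_t$ for $t>1$). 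The third and fifth summands have the form $\mathcal{Q}_h\mathrm{A}(\mathcal{P}_h-\mathcal{I})\bm{\phi}$ with $\mathrm{A}\in\{\mathcal{A}^{\rm reg}_\delta,\mathcal{A}^{\rm sing}_\delta\}$; I would combine the $\delta$-dependent continuity constants of Proposition~\ref{prop:3.3} with the dual projection estimate $\|(\mathcal{P}_h-\mathcal{I})\bm{\phi}\|_{-r}\le Ch^{t+r}\|\bm{\phi}\|_t$ and optimise $r$ so that the gain $h^{t+r}$ absorbs the loss, which is a negative power of $\delta\approx h^\beta$. The second summand is the trapezoidal-rule (Poisson-summation) consistency error for $\mathcal{A}^{\rm reg}_\delta\mathcal{D}_h$ versus $\mathcal{A}^{\rm reg}_\delta$ acting on a smooth input, to be treated in the spirit of the one-dimensional theory of \cite{CeDoSa:2002,DoRaSa:2008}. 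The fourth summand is the norm-convergence of the discrete singular operator to its continuous counterpart on trigonometric polynomials, furnished by Proposition~\ref{prop:main1-02} together with the Sobolev-level integration error analysis outlined in Section~\ref{sec:4} and Appendix~\ref{sec:A}.

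Assembling the five bounds and using the uniform stability of $\mathcal{B}_{\delta,h}^{-1}$ on $\mathcal{H}^0$ yields $\|\mathcal{P}_h\bm{\phi}-\bm{\phi}_h\|_0\le Ch^{t(1-\beta)}\|\bm{\phi}\|_t$. To upgrade this to the $\mathcal{H}^s$ bound with $s>0$, I would invoke the standard inverse inequality $\|\bm{\psi}\|_s\le Ch^{-s}\|\bm{\psi}\|_0$ valid on $\mathbb{T}_{N_1}\times\cdots\times\mathbb{T}_{N_J}$ thanks to the quasi-uniformity assumption~\eqref{eq:quasiuniform}, delivering the claimed rate $h^{t(1-\beta)-s}\|\bm{\phi}\|_t$. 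The hardest step will be the fourth summand above: the polar-coordinate quadrature applied to bivariate trigonometric polynomials, whose Sobolev norms themselves grow with $N$, must be analysed in $\delta$-weighted norms, and it is precisely the tension between this growth and the decay enforced by smoothness of $\bm{\phi}$---coupled through $\delta\approx h^\beta$---that produces the characteristic $1-\beta$ factor in the final convergence exponent.
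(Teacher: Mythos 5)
Your decomposition
\[
\mathcal{B}_{\delta,h}\bigl(\mathcal{P}_h\bm{\phi}-\bm{\phi}_h\bigr)=(\mathcal{B}_{\delta,h}\mathcal{P}_h-\mathcal{Q}_h\mathcal{B}_\delta)\bm{\phi}
\]
is algebraically correct and closely parallel to the paper's consistency argument, which instead writes $\mathcal{B}_{\delta,h}(\bm{\phi}-\bm{\phi}_h)=(\mathcal{B}_{\delta,h}-\mathcal{B}_\delta)\bm{\phi}+(\mathbf{U}-\mathcal{Q}_h\mathbf{U})$ and appeals to the two consistency Propositions~\ref{prop:main1-01} and~\ref{prop:main1-02}. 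However, two ingredients that are decisive in the paper are missing from your sketch, and without them the argument does not close for the stated range of $t$.

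First, the singular-part consistency term (your fourth summand) cannot be controlled at the super-algebraic rate using only operator-norm bounds: the first estimate of Proposition~\ref{prop:main1-02} only gives a bound $O(|\log h|^{3/2}h^{\beta/2})\|\mathcal P_h\bm\xi\|_0$, which is a fixed, non-improvable rate as $t$ grows. The super-algebraic rate comes from the \emph{second} estimate of Proposition~\ref{prop:main1-02}, and that estimate carries the extra term $\delta\|\mathcal G\bm\xi\|_{\infty,I_2}$. The proof then crucially uses the structural property $\mathcal G\bm\phi=\bm 0$, i.e.\ each $\phi^j$ is supported in $\Omega^j$, to annihilate this term (and to replace $\mathrm G_j\mathrm P_{N_j}\phi^j$ by $\mathrm G_j(\mathrm P_{N_j}\phi^j-\phi^j)$ in the intermediate step). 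Your sketch never invokes this property, yet without it the bound on the fourth summand contains an $O(\delta)$ contribution that does not decay with $t$.

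Second, the crucial estimates of Propositions~\ref{prop:main1-015},~\ref{prop:5.5} and hence~\ref{prop:main1-02} (which ultimately rest on Theorem~\ref{th:A.1}) hold only for integer $r\ge 2$, and the ``$\mathcal P_h\bm\phi-\bm\phi$ in $L^\infty$'' term they produce can only be absorbed into $h^{r(1-\beta)}\|\bm\phi\|_r$ once $r\ge\max\{1/\beta,\beta/(1-\beta)\}$. In other words the direct argument gives $\|(\mathcal{B}_{\delta,h}-\mathcal{B}_\delta)\bm{\phi}\|_0\le C_r h^{r(1-\beta)}\|\bm{\phi}\|_r$ only for large integers $r$. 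To obtain the claimed rate for the full range $t>1$, the paper introduces the operator $\mathcal M_{\delta,h}\psi:=(\mathcal B_{\delta,h}-\mathcal B_\delta)\bm\phi$ viewed as a map $H^r(S)\to\mathcal H^0$, observes that it is uniformly bounded on $H^0(S)$ thanks to the norm convergence in Theorem~\ref{theo:main1}, and then invokes Sobolev interpolation to fill in all intermediate $t$. Your plan of ``optimising $r$'' does not address this interpolation step and would leave a gap for $t$ between $1$ and the integer threshold; you need to make the interpolation argument explicit.

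A minor point: your pieces two and three bound $\mathcal Q_h\mathcal A^{\rm reg}_\delta(\mathcal D_h\mathcal P_h-\mathcal I)\bm\phi$ by first passing $\mathcal Q_h$ through an $L^\infty$/Sobolev-embedding estimate, which costs an extra $\delta^{-(1+\varepsilon)}\approx h^{-(1+\varepsilon)\beta}$. By contrast, Proposition~\ref{prop:main1-01} organizes this term so that the leading piece is $\mathcal A^{\rm reg}_\delta(\mathcal I-\mathcal D_h\mathcal P_h)$ \emph{without} a leading $\mathcal Q_h$, giving the cleaner $\delta^{-t}h^t$ bound. Your version can be made to work by taking $\varepsilon$ small relative to $t-1$, but it is worth noting the paper's re-grouping avoids this complication.

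In short: the proposal is structurally sound and largely parallels the paper's route, but the two load-bearing facts --- $\mathcal G\bm\phi=\bm 0$ and the interpolation-theory extension of the consistency estimate to all $t\ge 0$ --- are absent, and without them the bound $\|\mathcal P_h\bm\phi-\bm\phi_h\|_0\le Ch^{t(1-\beta)}\|\bm\phi\|_t$ cannot be established as stated.
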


Note that $\boldsymbol\phi$ does not depend on $\delta$ (see
Remark~\ref{rem_32}), although $\boldsymbol\phi_{h}$ is a $\delta$
dependent quantity.

\section{Estimates for the singular kernel and associated singular
  operator}\label{sec:4}

 In this section we discuss the regularity properties
  of the singular kernel, and we present a non-standard estimate
  involving $L^\infty$ and $L^2$ norms over spaces of trigonometric
  polynomials (Proposition~\ref{Prop:BoundOnSingOperatorLinfty}) for
  the continuity constants of the associated singular operator. 

\begin{lemma}\label{K1polar}
The functions
\[
(\underline u,\rho,\theta) \longmapsto |\rho|K_1(\mathbf
r^i(\underline u),\mathbf r^j(\underline r^{ji}(\underline
u)+\rho\,\underline e(\theta))\,)
\]
are $\mathcal C^\infty$ in their domain of definition.
\end{lemma}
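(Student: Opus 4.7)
The plan is to show that after the polar change of variables, every occurrence of the Euclidean distance $R := |\mathbf r^i(\underline u) - \mathbf r^j(\underline r^{ji}(\underline u)+\rho\,\underline e(\theta))|$ in the denominator of $K_1$ is cancelled by a matching factor coming either from $|\rho|$ or from the numerator $(\mathbf r - \mathbf r')\cdot\boldsymbol\nu(\mathbf r')$. I would first set $\mathbf p(\underline u,\rho,\theta) := \mathbf r^j(\underline r^{ji}(\underline u)+\rho\,\underline e(\theta))$, so that $\mathbf p(\underline u,0,\theta) = \mathbf r^i(\underline u)$ by definition of $\underline r^{ji}$. The displacement $\mathbf d := \mathbf p - \mathbf r^i(\underline u)$ then vanishes at $\rho = 0$, so Taylor's theorem with integral remainder yields a $C^\infty$ vector field $\mathbf g(\underline u,\rho,\theta)$ with $\mathbf d = \rho\,\mathbf g$ and $\mathbf g(\underline u,0,\theta) = (\partial_{\underline v}\mathbf r^j)(\underline r^{ji}(\underline u))\,\underline e(\theta)$. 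Since $\mathbf r^j$ is a diffeomorphism with nonsingular Jacobian, $\mathbf g(\underline u,0,\theta)\neq 0$; restricting to the (relatively compact) domain of interest, $|\mathbf g|$ is bounded below by a positive constant, so $|\mathbf g| = \sqrt{\mathbf g\cdot\mathbf g}$ is $C^\infty$.

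Next I would exploit the identity $R^2 = \rho^2|\mathbf g|^2$ to conclude that $|\rho|/R = 1/|\mathbf g|$ is $C^\infty$, and that the transcendental factors appearing in $K_1$ are smooth in $(\underline u,\rho,\theta)$: since $\cos$ is even and $y\mapsto \sin(y)/y$ is even and entire, both $\cos(\kappa R)$ and $R^{-1}\sin(\kappa R)$ (equivalently $\kappa R\sin(\kappa R) = \kappa^2 R^2\cdot \sin(\kappa R)/(\kappa R)$) are smooth functions of $R^2 = \rho^2|\mathbf g|^2$, hence smooth in $(\underline u,\rho,\theta)$.

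The main (and genuinely surface-geometric) point will be the second-order vanishing of the normal-derivative factor. I would observe that $(\mathbf r-\mathbf r')\cdot\boldsymbol\nu(\mathbf r') = -\rho\,\mathbf g\cdot\boldsymbol\nu(\mathbf p)$, and that at $\rho=0$ the vector $\mathbf g(\underline u,0,\theta)$ lies in the tangent plane of $S$ at $\mathbf r^i(\underline u) = \mathbf p(\underline u,0,\theta)$, while $\boldsymbol\nu(\mathbf p)$ is the unit normal there, so $\mathbf g\cdot\boldsymbol\nu(\mathbf p)$ vanishes at $\rho=0$. Applying Taylor's theorem once more produces a smooth scalar $\tilde h(\underline u,\rho,\theta)$ such that $\mathbf g\cdot\boldsymbol\nu(\mathbf p) = \rho\,\tilde h$, and therefore
\[
\frac{(\mathbf r-\mathbf r')\cdot\boldsymbol\nu(\mathbf r')}{R^2} \;=\; \frac{-\rho^2\tilde h}{\rho^2|\mathbf g|^2} \;=\; -\frac{\tilde h}{|\mathbf g|^2}
\]
is $C^\infty$.

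Finally I would assemble these pieces: multiplying the formula \eqref{K1} through by $|\rho|$ and replacing $|\rho|/R$ by $1/|\mathbf g|$ gives
\[
|\rho|\,K_1 \;=\; \frac{1}{|\mathbf g|}\left[-\mathrm i\eta\,\frac{\cos(\kappa R)}{4\pi} + \bigl(\kappa R\sin(\kappa R)+\cos(\kappa R)\bigr)\,\frac{\tilde h}{4\pi|\mathbf g|^2}\right],
\]
a product and composition of $C^\infty$ functions, which is the desired conclusion. The only delicate step is the $O(\rho^2)$ estimate of the normal-projection numerator; the rest is bookkeeping with even/entire functions and the Taylor factorization. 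No new analytic tools are required beyond what has been stated up to this point in the paper.
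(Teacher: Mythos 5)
Your proof is correct, and it rests on precisely the same three ingredients as the paper's proof: Taylor factorization in $\rho$ around $\rho=0$, nonsingularity of the Jacobian of $\mathbf r^j$ to ensure the factored radial quantity is bounded away from zero, and the tangent--normal orthogonality to gain the extra order of vanishing in the normal-projection numerator. The organization differs slightly and is worth noting: you factor the \emph{vector} displacement $\mathbf d = \rho\,\mathbf g$ once, then treat the two scalar quantities $|\mathbf g|^2$ and $\mathbf g\cdot\boldsymbol\nu(\mathbf p)$ separately (the latter requiring one further $\rho$-factorization, justified by tangency). The paper instead works entirely with scalar functions from the start, defining $g_1 = \rho^{-2}|\mathbf d|^2$ and $g_2 = \rho^{-2}\,\mathbf d\cdot\boldsymbol\nu(\mathbf p)$, and applies a single auxiliary lemma (Lemma~\ref{K1polarLemma}, a second-order Taylor factorization for functions vanishing with their gradient at the origin) twice; the non-vanishing of $g_1(\cdot,0,\cdot)$ is then read off the positive-definite Hessian of $|\mathbf d|^2$, whose determinant is $4(a^j)^2$. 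Your two routes are algebraically equivalent (indeed $g_1 = |\mathbf g|^2$ and $g_2 = -\tilde h$), but your vector-first factorization avoids computing a Hessian explicitly, replacing it by the more transparent observation that $\mathbf g(\underline u,0,\theta) = (\partial_{\underline v}\mathbf r^j)\,\underline e(\theta)\neq 0$, which some readers may find cleaner; the paper's scalar lemma, on the other hand, packages the two applications into one reusable statement and makes the $O(\rho^2)$ structure of both numerators visible in the same form.
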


\begin{proof}
The kernel function $K_1$ can be decomposed (see \eqref{K1}) as
\[
K_1(\r,\r')=\frac{F_1(\r,\r')}{|\r-\r'|}+
\frac{F_2(\r,\r')}{|\r-\r'|^3}\,(\r-\r')\cdot\boldsymbol\nu(\r'),
\]
where $F_1, F_2\in \mathcal C^\infty(S\times S)$. Let
\begin{eqnarray*}
f_\ell(\underline u,\rho,\theta) &:=& F_\ell (\r^i(\underline
u),\r^j(\underline r^{ji}(\underline u)+\rho\,\underline
e(\theta)))\qquad \ell =1,2,\\
g_1(\underline u,\rho,\theta) &:=& \rho^{-2} |\r^i(\underline u)-
\r^j (\underline r^{ji}(\underline u)+\rho \underline
e(\theta))|^2\\
g_2(\underline u,\rho,\theta) &:=& \rho^{-2}\Big(\r^i(\underline u)-
\r^j (\underline r^{ji}(\underline u)+\rho \underline
e(\theta))\Big) \cdot \boldsymbol\nu(\underline r^{ji}(\underline
u)+\rho\,\underline e(\theta)).
\end{eqnarray*}
It is clear that $f_1$ and $f_2$ are $\mathcal C^\infty$. Noticing
that the functions
\begin{eqnarray}\label{G1}
\underline z &\longmapsto & |\r^i(\underline
u)-\r^j(\underline r^{ji}(\underline u)+\underline z)|^2\\
\label{G2} \underline z &\longmapsto & \Big(\r^i(\underline
u)-\r^j(\underline r^{ji}(\underline u)+\underline z)\Big)\cdot
\boldsymbol\nu(\underline r^{ji}(\underline u)+\underline z)
\end{eqnarray}
satisfy the hypotheses of Lemma \ref{K1polarLemma}, it follows that
$g_1$ and $g_2$ are $\mathcal C^\infty$. It is also clear that $g_1$
is positive for $\rho\neq 0$. The Hessian matrix at $\underline
z=\underline 0$ of the function defined in \eqref{G1} is the matrix
with elements
\[
2 \partial_{z_{n_1}} \r^j(\underline u)\cdot \partial_{z_{n_2}}
\r^j(\underline u) \qquad 1\le n_1, n_2\le 2.
\]
Clearly this matrix is positive semidefinite, and since its
determinant is
\[
4 |\partial_{z_1}\r^j(\underline u)\times
\partial_{z_2}\r^j(\underline u)|^2 = 4a^j(\underline u)^2,
\]
(see \eqref{Jac}), it is positive definite. Using the equality
\eqref{K1polarLemmaA}, it follows that $g_1(\underline u,0,\theta)\neq
0$. The mapping mentioned in the statement of the present lemma can be
formulated in terms of the expression
\[
g_1^{-1/2} f_1+ g_1^{-3/2} f_2\,g_2;
\]
the previous arguments show this mapping is  infinitely
  differentiable, and the lemma thus follows .
\end{proof}

\begin{lemma}\label{K1polarLemma}
Let $F$ be a $\mathcal C^\infty$ function in a neighborhood of the
origin in $\mathbb R^2$. If $F(\underline 0)=0$ and $\nabla
F(\underline 0)=\underline 0$, then the function
\[
f(\rho,\theta):=\rho^{-2}F(\rho\,\underline e(\theta))
\]
is $\mathcal C^\infty$. Moreover,
\begin{equation}\label{K1polarLemmaA}
f(0,\theta)={\textstyle\frac12}\, \underline e(\theta)\cdot \mathrm
HF(\underline 0)\underline e(\theta),
\end{equation}
where $\mathrm HF(\underline 0)$ is the Hessian matrix of $F$ at the
origin.
\end{lemma}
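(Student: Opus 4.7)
The plan is to apply Taylor's theorem with integral remainder to extract the quadratic vanishing of $F$ at the origin in a form that keeps the $\mathcal C^\infty$ structure visible.

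First I would invoke the second-order Taylor expansion with integral remainder,
\[
F(\underline x)=F(\underline 0)+\nabla F(\underline 0)\cdot \underline x+\int_0^1(1-t)\,\underline x^{\,T}\,\mathrm HF(t\underline x)\,\underline x\,\mathrm d t,
\]
which is valid on a convex neighborhood of the origin where $F$ is smooth. Applying the two hypotheses $F(\underline 0)=0$ and $\nabla F(\underline 0)=\underline 0$ eliminates the first two terms, leaving
\[
F(\underline x)=\int_0^1(1-t)\,\underline x^{\,T}\,\mathrm HF(t\underline x)\,\underline x\,\mathrm d t.
\]

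Next I would substitute $\underline x=\rho\,\underline e(\theta)$ and pull the factor $\rho^2$ out of the quadratic form, obtaining
\[
F(\rho\,\underline e(\theta))=\rho^{2}\int_0^1(1-t)\,\underline e(\theta)^{\,T}\,\mathrm HF(t\rho\,\underline e(\theta))\,\underline e(\theta)\,\mathrm d t.
\]
Dividing by $\rho^2$ yields the crucial identity
\[
f(\rho,\theta)=\int_0^1(1-t)\,\underline e(\theta)^{\,T}\,\mathrm HF(t\rho\,\underline e(\theta))\,\underline e(\theta)\,\mathrm d t,
\]
which now has no apparent singularity at $\rho=0$. Since $\mathrm HF$ is smooth in a neighborhood of the origin, the integrand is $\mathcal C^\infty$ jointly in $(\rho,\theta,t)$ on a neighborhood of $\{0\}\times\mathbb R\times[0,1]$; differentiation under the integral sign (justified because the domain $[0,1]$ is compact and the integrand is smooth) then gives $f\in\mathcal C^\infty$. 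Finally, setting $\rho=0$ the integrand becomes $(1-t)\,\underline e(\theta)^{\,T}\mathrm HF(\underline 0)\underline e(\theta)$, and $\int_0^1(1-t)\,\mathrm d t=\tfrac12$ produces the claimed formula \eqref{K1polarLemmaA}.

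There is no real obstacle here: the only point that requires a moment of care is verifying that the integral representation truly removes the apparent singularity at $\rho=0$, but this is immediate once the factor $\rho^2$ has been extracted algebraically from the quadratic form. Everything else is a routine application of smoothness of parameter-dependent integrals.
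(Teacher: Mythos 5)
Your proof is correct and follows essentially the same strategy as the paper: Taylor's theorem with integral remainder to extract the $\rho^2$ factor, leaving a manifestly smooth integral representation of $f$, and evaluation at $\rho=0$ to obtain \eqref{K1polarLemmaA}. The only cosmetic difference is that the paper applies the one-variable Taylor formula in $\rho$ to $h(\rho,\theta):=F(\rho\,\underline e(\theta))$, whereas you apply the multivariable Taylor formula to $F$ directly and then substitute $\underline x=\rho\,\underline e(\theta)$; the resulting integral identity is the same.
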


\begin{proof}
  The result follows from an application of the Taylor formula
\[
h(\rho,\theta)= h(0,\theta) + \rho \partial_\rho h(0,\theta) + \rho^2
\int_0^1 \partial_\rho^2 h(\rho\,u,\theta)\,(1-u)\mathrm d u.
\]
to the function $h(\rho,\theta):= F(\rho\,\underline e(\theta))$.
\end{proof}

The forthcoming analysis relies heavily on use of families $\{
\chi_\delta\,:\,0<\delta\le \delta_0\}$ of functions for which there
exist constants $c_0>0$ and $C_{\underline\alpha}$ for
$\underline\alpha\ge \underline 0$ such that for all $\delta$
($0<\delta\le \delta_0$)
\begin{subequations}
\label{eq:4.4}
\begin{eqnarray}
& & \chi_\delta \in \mathcal C^\infty(\mathbb
R^2),  \\
\label{eq:4.4.b} & &
\chi_\delta(\rho,\theta)=\chi_\delta(\rho,\theta+2\pi)
\qquad \forall (\rho,\theta) \in \mathbb R^2,\\
& & \supp \chi_\delta \subset(-c_0\,\delta,c_0\,\delta)\times
\mathbb R,\label{eq:4.4.c}\\
& & \label{eq:4.4.d} \|\partial_{\underline{\alpha}} \chi_\delta
\|_{L^\infty(\mathbb R^2)}\le C_{\underline{\alpha}}
\delta^{-|\underline{\alpha}|}\ \quad \forall\underline\alpha\ge
\underline 0.
\end{eqnarray}
\end{subequations}

\begin{proposition}\label{lemma:Kijsing}
For all $\underline{u}\in {\Omega^{ij}_{2\delta_0}}={({\bf
\r}^i)^{-1}(S^i\cap S_{2\delta_0}^j)}$ and $\delta\le \delta_0$, let
us define:
\begin{equation}
\chi_{\underline u,\delta}(\rho,\theta):={\textstyle\frac12}
\omega^i(\r^i(\underline u))|\rho| \, K^{ij}_{\rm
sing,\delta}(\underline{u},\underline r^{ji}(\underline{u})+\rho
\underline{e}(\theta))\: \widetilde{\omega}_\delta^j (\underline
r^{ji}(\underline{u})+\rho \underline{e}(\theta))\label{eq:cor:5.2}
\end{equation}
Then the sequence $\{\chi_{\underline u,\delta}\}$ satisfies
conditions \eqref{eq:4.4} with $c_0$ and $C_{\underline\alpha}$
independent of $\underline u$.
\end{proposition}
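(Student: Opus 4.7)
The plan is to verify the four properties in \eqref{eq:4.4} one at a time, exploiting the factorization
\[
\chi_{\underline u,\delta}(\rho,\theta) = {\textstyle\frac12}\,\omega^i(\r^i(\underline u))\,F(\underline u,\rho,\theta)\,\widetilde\eta_{\underline u,\delta}(\rho,\theta)\,\widetilde{\widetilde\omega}_{\underline u,\delta}(\rho,\theta),
\]
where $F(\underline u,\rho,\theta):=|\rho|\,K_1(\r^i(\underline u),\r^j(\underline r^{ji}(\underline u)+\rho\,\underline e(\theta)))\,a^j(\underline r^{ji}(\underline u)+\rho\,\underline e(\theta))$ is $\mathcal C^\infty$ by Lemma \ref{K1polar}, and the remaining two factors are the pullbacks of $\eta_\delta(\r^i(\underline u),\,\cdot\,)$ and $\widetilde\omega_\delta^j$ under the map $(\rho,\theta)\mapsto \underline r^{ji}(\underline u)+\rho\,\underline e(\theta)$. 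The periodicity \eqref{eq:4.4.b} is immediate from $\underline e(\theta+2\pi)=\underline e(\theta)$, so the work is in the remaining three properties.

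First I would establish the support condition \eqref{eq:4.4.c}. Writing $|\r^i(\underline u)-\r^j(\underline r^{ji}(\underline u)+\rho\,\underline e(\theta))|^2 = \rho^2\,g_1(\underline u,\rho,\theta)$ as in the proof of Lemma \ref{K1polar}, we know $g_1$ is smooth and $g_1(\underline u,0,\theta)=\tfrac12\underline e(\theta)^\top H(\underline u)\underline e(\theta)$ where $H(\underline u)$ is the metric tensor of $\r^j$ at $\underline r^{ji}(\underline u)$; by compactness of $\overline{\Omega^{ij}_{2\delta_0}}\times[0,2\pi]$ and the positive-definiteness of $H$, we obtain constants $c_*,\rho_*>0$, independent of $(\underline u,\theta)$, with $g_1\ge c_*$ on $|\rho|\le\rho_*$. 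Shrinking $\delta_0$ if necessary so that $\epsilon_1\delta_0/\sqrt{c_*}\le\rho_*$, the requirement $\eta_\delta\neq 0$ forces $|\r^i(\underline u)-\r^j(\cdots)|\le\epsilon_1\delta$, hence $|\rho|\sqrt{c_*}\le\epsilon_1\delta$, i.e. $|\rho|\le c_0\,\delta$ with $c_0:=\epsilon_1/\sqrt{c_*}$. This also confines the argument $\underline r^{ji}(\underline u)+\rho\,\underline e(\theta)$ to a fixed compact neighbourhood of $\underline r^{ji}(\underline u)\in D^j$, validating all subsequent evaluations.

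Smoothness of $\chi_{\underline u,\delta}$ then follows by splitting into two overlapping regions: on $|\rho|\le \epsilon_0\delta/\sqrt{\|g_1\|_\infty}$ the argument of $\upsilon(\r^i(\underline u),\,\cdot\,)$ lies in $[0,\epsilon_0\delta_0]$, so $\widetilde\eta_{\underline u,\delta}\equiv 1$ and smoothness reduces to that of $F$; on $|\rho|>0$ every factor is a smooth composition. For the derivative bound \eqref{eq:4.4.d}, I would apply Leibniz to the displayed factorization. Derivatives of $\omega^i(\r^i(\underline u))$ and $F$ produce $O(1)$ contributions uniformly on the compact parameter set. Derivatives of $\widetilde{\widetilde\omega}_{\underline u,\delta}$ produce $\delta^{-|\underline\alpha|}$ by \eqref{eq:omegatildeProp2} and the chain rule, since the inner map $(\rho,\theta)\mapsto \underline r^{ji}(\underline u)+\rho\,\underline e(\theta)$ is smooth with $\delta$-independent bounds. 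Derivatives of $\widetilde\eta_{\underline u,\delta}$ likewise contribute $\delta^{-|\underline\alpha|}$, each differentiation of $\delta_0|\r-\r'|/\delta$ carrying the factor $\delta^{-1}$ while $\upsilon$ itself has $\delta$-independent bounded derivatives. Multiplying the Leibniz contributions yields $C_{\underline\alpha}\,\delta^{-|\underline\alpha|}$ uniformly in $\underline u\in\overline{\Omega^{ij}_{2\delta_0}}$.

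The main technical obstacle is the last step: justifying that, despite the presence of the non-smooth quantity $|\rho|\sqrt{g_1}$ in the argument of $\upsilon$, the composition $\widetilde\eta_{\underline u,\delta}$ differentiates with the claimed $\delta^{-|\underline\alpha|}$ scaling. The resolution is that the only region where $\upsilon(\r^i(\underline u),\,\cdot\,)$ actually varies — and so has nonzero derivative — is $\epsilon_0\delta_0\le\delta_0|\r-\r'|/\delta\le\epsilon_1\delta_0$, which using the two-sided bound $\sqrt{c_*}\,|\rho|\le |\r-\r'|\le \sqrt{\|g_1\|_\infty}\,|\rho|$ restricts us to $|\rho|\ge c'_*\,\delta>0$; on that set $\sqrt{g_1}$ (and hence $|\rho|\sqrt{g_1}=\rho\,\mathrm{sgn}(\rho)\sqrt{g_1}$) is smooth in $(\rho,\theta)$ with derivatives bounded uniformly in $\underline u$, so the chain-rule bookkeeping goes through cleanly. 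All other uniformity statements follow from the compactness of $\overline{\Omega^{ij}_{2\delta_0}}$.
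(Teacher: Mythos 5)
Your overall strategy---factor $\chi_{\underline u,\delta}$ into the smooth piece handled by Lemma~\ref{K1polar}, the $\eta_\delta$ cut-off carrying the $\delta$-size support and the $\delta^{-|\underline\alpha|}$ derivative scaling, and the $\widetilde\omega^j_\delta$ factor---is essentially the paper's, which first treats $\psi_{\underline u,\delta}:=|\rho|\,K^{ij}_{\rm sing,\delta}(\underline u,\underline r^{ji}(\underline u)+\rho\,\underline e(\theta))$ via Cartesian intermediaries $f_{\underline u}(\underline z)$ and $\eta_{\underline u,\delta}(\underline z)$, passing to polar coordinates only at the end, and then attaches the $\omega^i$ and $\widetilde\omega^j_\delta$ factors.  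Your identification of the chain-rule bookkeeping for $\upsilon\big(\r^i(\underline u),\delta_0|\r-\r'|/\delta\big)$ as the delicate point, and your ``shell'' observation resolving it, match what the paper does implicitly in Cartesian coordinates.

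There is, however, a gap in your support argument for \eqref{eq:4.4.c}.  You establish the lower bound $g_1\ge c_*$ only for $|\rho|\le\rho_*$---which is all that positive-definiteness of the Hessian at $\rho=0$ plus compactness gives---and then you write ``the requirement $\eta_\delta\neq 0$ forces $|\r^i(\underline u)-\r^j(\underline r^{ji}(\underline u)+\rho\,\underline e(\theta))|\le\epsilon_1\delta$, hence $|\rho|\sqrt{c_*}\le\epsilon_1\delta$.''  That ``hence'' invokes $g_1\ge c_*$ at a value of $\rho$ for which you have not yet established $|\rho|\le\rho_*$, and $|\rho|\le c_0\delta\le\rho_*$ is precisely what you are trying to prove; the step is circular.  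You have not ruled out a $\rho$ with $|\rho|>\rho_*$, $\underline r^{ji}(\underline u)+\rho\,\underline e(\theta)\in D^j$ and $|\r^i(\underline u)-\r^j(\underline r^{ji}(\underline u)+\rho\,\underline e(\theta))|\le\epsilon_1\delta$.  The paper closes this by using the Lipschitz bound for $(\r^j)^{-1}$ on $\overline{S^j}$ (available since $\r^j$ extends to a bijective $\mathcal C^\infty$ diffeomorphism between the compacts $\overline{D^j}$ and $\overline{S^j}$), which gives $|\rho|\le C_j\,|\r^j(\underline r^{ji}(\underline u)+\rho\,\underline e(\theta))-\r^i(\underline u)|\le C_j\epsilon_1\delta$ directly for every admissible $\rho$.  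Equivalently, the global bi-Lipschitz property of $\r^j$ gives $g_1\ge C_j^{-2}>0$ on all of the compact domain, not merely near $\rho=0$; replacing your local bound with this global one repairs the circularity and also makes the shrinking of $\delta_0$ unnecessary.
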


\begin{proof} We start by considering the simpler functions
\begin{equation}\label{eq:4.4.e}
\psi_{\underline{u} ,\delta}(\rho,\theta):=|\rho|\, K^{ij}_{\rm
sing,\delta}(\underline{u},\underline r^{ij}(\underline{u})+\rho
\underline{e}(\theta)).
\end{equation}
Since $\Omega^{ij}_{2\delta_0}\subset(\r^{i})^{-1}(S^i\cap S^j)$,
the mapping $\underline
r^{ji}:=(\r^j)^{-1}\circ\r^i:\Omega^{ij}_{2\delta_0} \subset I_2\to
I_2$ is well defined. Therefore, the functions
\begin{eqnarray*}
 \eta_{\underline{u},\delta}(\underline{z})&:=&\eta_\delta({\bf
r}^i(\underline{u}), {\bf r}^j(\underline r^{ji}(\underline{u})+
\underline{z})),\\
f_{\underline{u}}(\underline{z})&:=&|\underline z|\,
K_{1}(\r^i(\underline{u}),\r^j(\underline r^{ji}
(\underline{u})+\underline{z}))
\end{eqnarray*}
are well defined in $\Omega_{\underline{u}}:=\{\underline{z}\,:\,
\underline r^{ji}(\underline u)+\underline{z}\in D^j\}$ (that is
$\Omega_{\underline{u}} = D^j-\underline r^{ji}(\underline u)$).

As a simple consequence of \eqref{eq:requirement_on_delta}, if $\r
\in S^j_{2\delta_0}$, then $\overline{B(\r,\epsilon_1\delta)}
\subset S^j$. Applying \eqref{eq:suppetadelta} and the fact that
$\r^i(\underline u)\in S^{j}_{2\delta_0}$, it follows that
\begin{equation}\label{riju}
\supp \eta_\delta(\r^i (\underline u),\,\cdot\,) \subset
\overline{B(\r^i(\underline u),\epsilon_1\delta)} \subset S^j.
\end{equation}
Consequently
\[
\supp \eta_{\underline u,\delta} \subset
(\r^j)^{-1}( 
\overline{B(\r^i(\underline u)
,\epsilon_1\delta) 
)}
-\underline r^{ji}(\underline u) \subset (\r^j)^{-1}(S^j)-\underline
r^{ji}(\underline u)=\Omega_{\underline u}.
\]
Therefore $\supp \eta_{\underline u,\delta}$ is strictly contained
in $\Omega_{\underline u}$ and $\eta_{\underline u,\delta}$ can be
extended by zero to a function in $\mathcal C^\infty(\mathbb R^2)$.

On the other hand, for $\underline z \in \supp \eta_{\underline
  u,\delta}$ we have
\begin{eqnarray*}
|\underline z| &=& |\underline r^{ji}(\underline u)+\underline
z-\underline r^{ji}(\underline u)| \le C_j |\r^j(\underline
r^{ji}(\underline u)+\underline z)- \r^j (\underline
r^{ji}(\underline u))|\\
&=& C_j |\r^j(\underline r^{ji}(\underline u)+\underline
z)-\r^i(\underline u)|\le C_j \epsilon_1\delta
\end{eqnarray*}
(we have used \eqref{riju} in the final inequality). Therefore we can
fix $c>0$ independent of the particular chart number $j$ so that
$\supp \eta_{\underline u,\delta}\subset B(\underline 0,c\delta)$ for
all $\underline u \in \Omega^{ij}_{2\delta_0}$. Moreover, from
\eqref{eq:eta} we see that
\[
 \|\partial_{\underline{\alpha}}\eta_{\underline{u},\delta}\|_{L^\infty(\mathbb
R^2)}\le C_{\underline{\alpha}}\delta^{-|\underline{\alpha}|} \qquad
\forall \underline u\in \Omega^{ij}_{2\delta_0}.
\]
Thus, $\eta_{\underline{u} ,\delta}(\rho \underline{e}(\theta))$
satisfies the conditions \eqref{eq:4.4}.

Finally,  $\psi_{\underline{u} ,\delta}(\rho,\theta)=
(f_{\underline{u}}\eta_{{\underline{u}},\delta})(\rho\cos\theta,\rho
\sin\theta)$ and since, by Lemma \ref{K1polar}, $f_{\underline u}(\rho
\underline{e}(\theta))$ is $\mathcal
{C}^\infty$  on the support of
$\eta_{\underline{u} ,\delta}(\rho \underline{e}(\theta))$, it follows
that $\{\psi_{\underline u,\delta}\}$ satisfies \eqref{eq:4.4} with
$c_0$ and $C_{\underline\alpha}$ independent of $\underline u$. In
view of the inequalities \eqref{eq:omegatilde} the same is true of the
family $\{\chi_{\underline u,\delta}\}$, and the result follows.
\end{proof}

\begin{proposition}
  \label{Prop:BoundOnSingOperatorLinfty} There exists $C>0$ such that
  for all $N\in\mathbb{N}$ and all $\delta$, $0 < \delta\le \delta_0$,
\[
\|\mathrm A^{ij}_{\rm sing,\delta}\xi_N\|_{\infty,I_2}\le C|\log
N|^{3/2} \|\xi_N\|_0\qquad \forall \xi_N\in\mathbb{T}_N.
\]
\end{proposition}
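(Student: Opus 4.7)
The plan is to write $\mathrm A^{ij}_{\rm sing,\delta}\xi_N(\underline u)$ in polar form using the function $\chi_{\underline u,\delta}$ of Proposition~\ref{lemma:Kijsing} and to exploit the finite Fourier expansion of the trigonometric polynomial $\xi_N$. By \eqref{eq:KijSingVanishes} the left-hand side vanishes outside $\Omega^{ij}_{2\delta_0}$, so I restrict to $\underline u \in \Omega^{ij}_{2\delta_0}$, where $\chi_{\underline u,\delta}$ is defined and satisfies \eqref{eq:4.4} with constants independent of $\underline u$ and $\delta$. The polar change of variables around $\underline r^{ji}(\underline u)$ gives
\[
\mathrm A^{ij}_{\rm sing,\delta}\xi_N(\underline u)=\int_0^{2\pi}\!\!\int_{-\infty}^\infty \chi_{\underline u,\delta}(\rho,\theta)\,\xi_N\bigl(\underline r^{ji}(\underline u)+\rho\,\underline e(\theta)\bigr)\,\mathrm d\rho\,\mathrm d\theta.
\]
Inserting $\xi_N=\sum_{\underline m\in\mathbb Z_N^*}\hat\xi_N(\underline m)\,e_{\underline m}$ and interchanging sum and integrals, Cauchy--Schwarz combined with Parseval reduces the statement to the uniform bound
\[
\sum_{\underline m\in\mathbb Z_N^*}|G(\underline u,\underline m)|^2\le C(\log N)^3,\qquad G(\underline u,\underline m):=\int_0^{2\pi}\!\!\int_{-\infty}^\infty \chi_{\underline u,\delta}(\rho,\theta)\,e^{2\pi i\rho\,\underline m\cdot\underline e(\theta)}\,\mathrm d\rho\,\mathrm d\theta.
\]

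To estimate $G(\underline u,\underline m)$, I would combine the trivial $L^\infty$ bound on $\chi_{\underline u,\delta}$ (which yields $|G|\le C\delta$) with a single integration by parts in $\rho$, exploiting \eqref{eq:4.4.c}--\eqref{eq:4.4.d} to obtain
\[
\left|\int_{-\infty}^\infty \chi_{\underline u,\delta}(\rho,\theta)\,e^{2\pi i\rho\,\underline m\cdot\underline e(\theta)}\,\mathrm d\rho\right|\le \frac{C}{|\underline m\cdot\underline e(\theta)|}.
\]
Writing $\underline m=|\underline m|\,\underline e(\phi)$ so that $\underline m\cdot\underline e(\theta)=|\underline m|\cos(\theta-\phi)$, and splitting $[0,2\pi)$ into the two arcs where $|\underline m|\delta|\cos(\theta-\phi)|\le 1$ versus $>1$, the $\theta$-integration produces
\[
|G(\underline u,\underline m)|\le C\min\!\Big(\delta,\,\frac{\log(|\underline m|\delta+2)}{|\underline m|}\Big),
\]
the logarithm arising from the zero of $\cos(\theta-\phi)$ at $\theta-\phi=\pm\pi/2$.

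To conclude, I would split the sum $\sum_{\underline m\in\mathbb Z_N^*}|G|^2$ along $|\underline m|\le\delta^{-1}$ versus $|\underline m|>\delta^{-1}$. The first region uses $|G|^2\le C\delta^2$ on $O(\delta^{-2})$ lattice points and contributes $O(1)$; the second is estimated by polar summation followed by the substitution $t=|\underline m|\delta$, giving
\[
\sum_{\delta^{-1}<|\underline m|\le CN}\frac{\log^2(|\underline m|\delta+2)}{|\underline m|^2}\le C\!\int_1^{CN\delta}\frac{\log^2(t+2)}{t}\,\mathrm d t\le C(\log N)^3,
\]
since $\delta\le 1$. Square rooting and taking the supremum over $\underline u$ yields the claim. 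The main obstacle is the careful angular integration producing the logarithmic factor: both the $\log$ from the degeneracy of $\cos(\theta-\phi)$ near $\pm\pi/2$ and the $\log$ from the radial dyadic summation are essential to reach the $3/2$ power, and further integrations by parts in $\rho$ would not help near those critical directions where $\underline m\cdot\underline e(\theta)$ is small.
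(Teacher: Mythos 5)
Your proposal is correct and takes essentially the same route as the paper: you write the operator in polar form via $\chi_{\underline u,\delta}$, expand $\xi_N$ in its Fourier series, apply Cauchy--Schwarz and Parseval, and estimate the resulting kernel coefficients by combining the trivial $O(\delta)$ bound with a single radial integration by parts, the angular integral producing one logarithm and the lattice summation a second. The only cosmetic difference is in the bookkeeping: the paper discards the $\delta$-gain by replacing $\min\{\delta,\cdot\}$ with $\min\{1,\cdot\}$ (Lemma~\ref{lemma:auxP}) and gets the uniform bound $D_{\underline m,\delta}\le C\log N/|\underline m|$ before summing $\sum |\underline m|^{-2}\le C\log N$, whereas you retain the $\delta$ dependence and split the lattice sum at $|\underline m|=\delta^{-1}$; both bookkeeping choices yield the same $(\log N)^{3}$ after squaring and summing, hence the same final bound.
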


\begin{proof} By \eqref{eq:KijSingVanishes}, it suffices to bound the
  values of $(\mathrm A^{ij}_{\mathrm{sing},\delta}\xi_N)(\underline
  u)$ for $\underline u\in \Omega^{ij}_{2\delta}$.  The polar
  coordinate form \eqref{eq:singularPart2} gives, for $\underline u
  \in \Omega^{ij}_{2\delta}$,
\begin{eqnarray*}
(\mathrm A^{ij}_{\mathrm{sing},\delta}\xi_N)(\underline u) &=&
\int_{-c_0\delta}^{c_0\delta} \int_0^{2\pi} \chi_{\underline
u,\delta}(\rho,\theta) \xi_N(\underline r^{ji}(\underline
u)+\rho\,\underline e(\theta))\,\mathrm d\rho\mathrm d \theta\\
&=& \sum_{\underline m \in \mathbb Z_N^* } \widehat\xi_N(\underline
m) e_{\underline m} (\underline r^{ji}(\underline u)) \,
\int_{-c_0\delta}^{c_0 \delta} \int_0^{2\pi} \chi_{\underline
u,\delta}(\rho,\theta) e_{\underline m}(\rho\,\underline
e(\theta))\mathrm d \rho \mathrm d \theta,
\end{eqnarray*}
where $\chi_{\underline u,\delta}$ is given by \eqref{eq:cor:5.2}.
Therefore, by the Cauchy-Schwarz inequality we have
\begin{equation}\label{KijxiN}
\| \mathrm A^{ij}_{\mathrm{sing},\delta}\xi_N\|_{\infty,I_2} \le \|
\xi_N\|_0 \Big( \sum_{\underline m\in \Z_N^*} D_{\underline
m,\delta}^2\Big)^{1/2},
\end{equation}
where
\[
D_{\underline m,\delta}:=\max_{\underline u \in
\Omega^{ij}_{2\delta}}
\int_0^{2\pi}\left|\int_{-c_0\delta}^{c_0\delta} \chi_{\underline
u,\delta}(\rho,\theta) \exp( {2\pi}{\rm i}\rho \underline m
\cdot\underline
e(\theta))\, \mathrm d\rho\right|\mathrm d\theta.
\]
As a direct consequence of Proposition \ref{lemma:Kijsing} it
follows that
\[
\left| \int_{-c_0\delta}^{c_0\delta} \chi_{\underline{u},
\delta}(\rho,\theta) \exp( {2\pi}{\rm i}\rho\,\underline m\cdot
\underline
e(\theta) )\,{\rm d}\rho\right|\le C \delta,
\]
and, integrating by parts, that
\begin{eqnarray*}
\left|\int_{-c_0\delta}^{c_0\delta} \chi_{\underline{u},
\delta}(\rho,\theta) \exp( {2\pi}{\rm i}\rho\,\underline m\cdot
\underline
e(\theta) )\,{\rm d}\rho \right|&=& \left|{\rm i}
\int_{-c_0\delta}^{c_0\delta}
\partial_\rho \chi_{\underline{u}, \delta}(\rho,\theta)\,
\frac{\exp({2\pi}{\rm
i}\rho\,\underline m \cdot\underline e(\theta) )}{\underline m
\cdot\underline e(\theta)}\mathrm d\rho\right|\\
&\le& C
\frac1{|\underline m\cdot \underline e(\theta)|},
\end{eqnarray*}
where the constant in both inequalities is independent of
$\underline u$. Therefore
\[
D_{\underline m,\delta} \le C
\int_0^{2\pi}\!\!\min\Big\{\delta,\frac{1}{|m_1\cos\theta+m_2\sin\theta|}\Big\}
\mathrm d \theta\le  C' \frac{\log N}{\sqrt{ m_1^2+m_2^2}},\quad
\forall \underline m \in \Z_N \setminus \{ \underline 0\},
\]
where we have applied Lemma \ref{lemma:auxP} for the last bound.
Inserting this bound in \eqref{KijxiN} and using the fact that
\[
\sum_{\underline m\in \mathbb Z_N^*\setminus\{ \underline 0\}}
\frac1{m_1^2+m_2^2} \le C \log N,
\]
(this can be proved by comparison with the integral of
$1/(x^2+y^2)$) the result follows readily.
\end{proof}

\begin{lemma}\label{lemma:auxP} For all $\underline m\in \Z^2\setminus\{
\underline
0\}$
\[
\int_0^{2\pi}\min \Big\{ 1,
\frac1{|m_1\cos\theta+m_2\sin\theta|}\Big\} \mathrm d\theta \le
\frac{2\pi\big(1+\log \sqrt{m_1^2+m_2^2}\big)}{\sqrt{m_1^2+m_2^2}}.
\]
\end{lemma}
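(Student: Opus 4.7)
\noindent\textit{Plan of proof.} Set $R := \sqrt{m_1^2+m_2^2}$, so that $R\ge 1$. The identity $m_1\cos\theta+m_2\sin\theta = R\cos(\theta-\theta_0)$ (with $\theta_0$ the argument of $(m_1,m_2)$), together with the $2\pi$-periodicity of the integrand and the standard invariances of $|\cos(\cdot)|$ (period $\pi$ and evenness about $0$ and $\pi/2$), reduces the problem to bounding
\begin{equation*}
I(R) := 4\int_0^{\pi/2}\min\bigl\{1,\, 1/(R\cos\theta)\bigr\}\,\mathrm d\theta.
\end{equation*}
Splitting at $\theta^*:=\arccos(1/R)$, where the minimum switches branches, and using the antiderivative $\int\sec\theta\,\mathrm d\theta = \log|\sec\theta+\tan\theta|$, I arrive at the closed form
\begin{equation*}
I(R) = \frac{4}{R}\log\bigl(R+\sqrt{R^2-1}\bigr) + 4\arcsin(1/R).
\end{equation*}

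Multiplying the claimed inequality by $R$, the task becomes to prove $F(R)\le G(R)$ for every $R\ge 1$, where
\begin{equation*}
F(R):=4\log\bigl(R+\sqrt{R^2-1}\bigr)+4R\arcsin(1/R), \qquad G(R):=2\pi+2\pi\log R.
\end{equation*}
One checks directly that $F(1)=4\cdot(\pi/2)=2\pi=G(1)$, so it suffices to show $F'(R)\le G'(R)$ on $(1,\infty)$. A key cancellation appears here: the term $4/\sqrt{R^2-1}$ produced by differentiating the logarithm is exactly cancelled by a term $-4/\sqrt{R^2-1}$ arising through the chain rule applied to $R\arcsin(1/R)$, leaving the clean expression $F'(R)=4\arcsin(1/R)$. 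Since $G'(R)=2\pi/R$, the required derivative inequality reduces to $\arcsin(1/R)\le \pi/(2R)$, i.e.\ to $\arcsin(x)\le \pi x/2$ for $x=1/R\in(0,1]$. The latter is immediate from the convexity of $\arcsin$ on $[0,1]$ ($\arcsin''(x)=x(1-x^2)^{-3/2}\ge 0$), which places the graph below the chord joining $(0,0)$ and $(1,\pi/2)$.

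The main obstacle is that the naive termwise estimates $\log(R+\sqrt{R^2-1})\le \log(2R)$ and $R\arcsin(1/R)\le \pi/2$, if inserted separately into the expression for $RI(R)$, yield the bound $RI(R)\le 4\log R+4\log 2+2\pi$, which exceeds $G(R)$ for values of $R$ close to $1$. The derivative cancellation above is therefore essential: it converts the claim into a single scalar inequality, sharp at the base point $R=1$ (where equality holds), whose integration from $R=1$ gives the stated bound for all $R\ge 1$.
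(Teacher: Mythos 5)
Your proof is correct, but it follows a genuinely different route from the paper's. After the same reduction to a quarter-period (the paper happens to use $\sin$ where you use $\cos$, which is immaterial), the paper does \emph{not} evaluate $\int_0^{\pi/2}\min\{1,(c\sin\theta)^{-1}\}\,\mathrm d\theta$ in closed form. Instead it inserts Jordan's inequality $\sin\theta\ge 2\theta/\pi$ directly into the integrand to get $\min\{1,(c\sin\theta)^{-1}\}\le c^{-1}\min\{c,\pi/(2\theta)\}$, and then computes the elementary integral $\int_0^{\pi/2}\min\{c,\pi/(2\theta)\}\,\mathrm d\theta=\tfrac{\pi}{2}(1+\log c)$. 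Your argument, by contrast, evaluates the original integral exactly (splitting at $\arccos(1/R)$), yielding $RI(R)=4\log(R+\sqrt{R^2-1})+4R\arcsin(1/R)$, and then compares this with $2\pi(1+\log R)$ via equality at $R=1$ together with a derivative comparison, whose essential content is $\arcsin(x)\le\pi x/2$. Note these two ``key inequalities'' are in fact the same statement: substituting $x=2\theta/\pi$ into $\arcsin(x)\le\pi x/2$ gives precisely $\sin\theta\ge 2\theta/\pi$. So the two proofs appeal to the same sharp linear estimate, but the paper's route is shorter because it applies the estimate \emph{before} integrating rather than evaluating the exact antiderivative and comparing afterward. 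Your closed-form computation, the observed cancellation of the $4/\sqrt{R^2-1}$ terms in $F'$, and the convexity justification of $\arcsin(x)\le\pi x/2$ are all correct, and your remark about the failure of the naive termwise bound near $R=1$ is accurate and correctly identifies why the derivative comparison (rather than crude term-by-term bounding) is needed in your approach.
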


\begin{proof} With some simple trigonometric arguments we prove that
\begin{eqnarray*}
\int_0^{2\pi} \!\!\!\!\min\{ 1,
|m_1\cos\theta+m_2\sin\theta|^{-1}\}\mathrm d \theta &=&\!\!
\int_0^{2\pi}\!\!\min \{ 1,|\mathrm{Re}( (m_1-{\rm i}m_2)
\exp({\rm i}\theta)) |^{-1}\}\mathrm d \theta\\
&=& 4 \int_0^{\pi/2}\!\!\!\! \min\{ 1,
(\sqrt{m_1^2+m_2^2}\,\sin\theta)^{-1}\} \mathrm d\theta.
\end{eqnarray*}
If we now shorten $c:=\sqrt{m_1^2+m_2^2}$, we can estimate
\begin{eqnarray*}
\int_0^{\pi/2}\min\{ 1,(c\,\sin\theta)^{-1}\} \mathrm d \theta &=&
c^{-1} \int_0^{\pi/2}\min \{ c,(\sin\theta)^{-1}\}\mathrm d \theta\\
 & \le &
c^{-1}\int_0^{\pi/2}\min\{ c,\frac\pi{2\theta}\} \mathrm
d\theta=\frac{\pi}{2c}(1+\log c),
\end{eqnarray*}
which finishes the proof.
\end{proof}

\section{Proofs of the main results}\label{sec:5}

\subsection{Inverse inequalities. Auxiliary approximation properties}

 In this subsection we collect some properties concerning
  the bivariate trigonometric polynomials $\xi_N\in\mathbb T_N$, which
  are needed for the analysis of the Nystr\"om method under
  consideration.

  From the definition of Sobolev norms \eqref{eq:defSobolevNorm} it is
  easy to establish the inverse inequalities
\begin{equation}\label{eq:invIneqSobolev}
\| \xi_N\|_t  \le  (\sqrt2\, h)^{s-t}\|\xi_N\|_s \qquad \forall\xi_N
\in \mathbb T_N, \qquad s \le t
\end{equation}
and
\begin{equation} \label{eq:invIneqLinfty}
\|\xi_N\|_{\infty,I_2}\le
\sum_{\underline{m}\in\mathbb{Z}_N^*}|\widehat{\xi}_N(\underline{m})|\le   N
\bigg(\sum_{\underline{m}\in\Z_N^*}|\widehat{\xi}_N(\underline{m})|^2\bigg)^{
1/2}=  h^{-1 }\|\xi_N\|_{0} \quad \forall\xi_N \in \mathbb T_N.
\end{equation}

The operator $\mathrm P_N:\cup_s H^s\to\mathbb{T}_N$ that cuts off
the tail of the Fourier series and at the same time gives the best
$H^s$ approximation in $\mathbb T_N$ for all $s$ is given by
\[
\mathrm P_N\xi:=\sum_{\underline{m}\in \Z_{N}^*}
\widehat{\xi}(\underline{m}) e_{\underline{m}}.
\]
Recalling that $h=1/N$, it is easy to check that
\begin{equation}
\label{eq01:lemma:aux01}
 \|\mathrm P_N\xi-\xi\|_{s}\le (2\,h)^{t-s}\|\xi\|_t \qquad \forall
 \xi \in H^t, \qquad s\le t.
\end{equation}
The interpolation operators introduced in \eqref{defQNj} satisfy
(cf. \cite[Theorem 8.5.3]{SaVa:2002})
\begin{equation}
\label{eq02:lemma:aux01}
 \|\mathrm Q_N\xi-\xi\|_{s}\le C_{s,t} h^{t-s}\|\xi\|_{t}\qquad \forall \xi\in
 H^t, \qquad 0 \le s \le t,\qquad  t> 1.
\end{equation}
The following lemma studies the uniform boundedness of $\mathrm Q_N$
as an operator from the space of continuous bivariate periodic
functions to $H^0$. (See \cite[Lemma 11.5]{Kr:1999} for a different
proof of this result in the univariate case.)

\begin{lemma}\label{lemma:aux01}
For all $N$
\begin{equation}
\label{eq03:lemma_aux01} \|\mathrm Q_N\xi\|_{0} =
\frac{1}{N}\bigg(\sum_{\underline{n}\in\mathbb{Z}_N}
\left|\xi(\underline{x}_{\underline{n}})\right|^2\bigg)^{1/2}\le\|\xi\|_{\infty,
I_2 } \qquad \forall \xi \in \mathcal C^0(\overline I_2).
\end{equation}

\end{lemma}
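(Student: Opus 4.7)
The plan is to reduce the equality in \eqref{eq03:lemma_aux01} to a discrete Parseval identity on the grid $\{\underline{x}_{\underline{n}}\}_{\underline{n}\in\mathbb{Z}_N}$, and then obtain the bound from the trivial inequality $|\xi(\underline{x}_{\underline{n}})|\le \|\xi\|_{\infty,I_2}$. The core observation is that $\mathrm Q_N\xi\in\mathbb{T}_N$, so by (continuous) Parseval applied to the orthonormal basis $\{e_{\underline{m}}\}_{\underline{m}\in\mathbb{Z}_N^*}$ of $\mathbb{T}_N$ in $L^2(I_2)$, one has
\[
\|\mathrm Q_N\xi\|_0^2 = \sum_{\underline{m}\in \mathbb{Z}_N^*} |\widehat{\mathrm Q_N\xi}(\underline{m})|^2.
\]

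Next, I would express the Fourier coefficients of $\mathrm Q_N\xi$ via the discrete Fourier transform. Since $\mathrm Q_N\xi$ is a trigonometric polynomial whose frequency support is $\mathbb{Z}_N^*$ and the grid $\{\underline{x}_{\underline{n}}:\underline{n}\in\mathbb{Z}_N\}$ has exactly $N^2$ nodes (the cardinality of $\mathbb{Z}_N^*$), the interpolation conditions $(\mathrm Q_N\xi)(\underline{x}_{\underline{n}})=\xi(\underline{x}_{\underline{n}})$ can be inverted by the DFT to give
\[
\widehat{\mathrm Q_N\xi}(\underline{m}) = \frac{1}{N^2}\sum_{\underline{n}\in\mathbb{Z}_N}\xi(\underline{x}_{\underline{n}})\,\overline{e_{\underline{m}}(\underline{x}_{\underline{n}})},\qquad \underline{m}\in\mathbb{Z}_N^*.
\]
The standard discrete orthogonality relation $N^{-2}\sum_{\underline{n}\in\mathbb{Z}_N}e_{\underline{m}}(\underline{x}_{\underline{n}})\,\overline{e_{\underline{m}'}(\underline{x}_{\underline{n}})}=\delta_{\underline{m},\underline{m}'}$ (valid for $\underline{m},\underline{m}'\in\mathbb{Z}_N^*$) then produces the discrete Parseval identity
\[
\sum_{\underline{m}\in\mathbb{Z}_N^*}|\widehat{\mathrm Q_N\xi}(\underline{m})|^2 = \frac{1}{N^2}\sum_{\underline{n}\in\mathbb{Z}_N}|\xi(\underline{x}_{\underline{n}})|^2,
\]
which is precisely the claimed equality in \eqref{eq03:lemma_aux01}.

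Finally, the inequality is immediate: since $\#\mathbb{Z}_N = N^2$ and $|\xi(\underline{x}_{\underline{n}})|\le \|\xi\|_{\infty,I_2}$ for every node,
\[
\frac{1}{N^2}\sum_{\underline{n}\in\mathbb{Z}_N}|\xi(\underline{x}_{\underline{n}})|^2 \le \|\xi\|_{\infty,I_2}^2,
\]
and taking square roots yields the bound. No step here is really an obstacle; the only subtlety worth flagging is making sure the indexing set $\mathbb{Z}_N^*$ for the frequencies and $\mathbb{Z}_N$ for the nodes have the same cardinality $N^2$, so that the DFT is a bijection and the discrete orthogonality relations hold exactly (this is guaranteed by \eqref{znstar}). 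Once this bookkeeping is in place, the argument is essentially a one-line application of discrete Parseval.
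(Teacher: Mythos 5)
Your argument is correct and is essentially the same as the paper's: both reduce the equality to the discrete Parseval identity on the $N\times N$ grid (the paper writes it by expanding a trigonometric polynomial's grid values via its Fourier coefficients, while you go in the reverse direction via the DFT inversion formula, but this is the same computation), and both obtain the inequality from $\#\mathbb{Z}_N = N^2$ and the pointwise bound by $\|\xi\|_{\infty,I_2}$. Your write-up simply spells out the discrete orthogonality relation and the cardinality bookkeeping more explicitly than the paper does.
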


\begin{proof} Given $\xi_N \in\mathbb T_N$ we can write
\[
\xi_N\left( {\textstyle\frac{2\pi{\rm i}}{N}}\,\underline n\right)=
\sum_{\underline m \in \Z_N^*} \widehat\xi_N(\underline m)
\exp({\textstyle\frac{2\pi{\rm i}}{N}}\,(\underline n\cdot\underline
m)), \qquad \underline n \in \Z_N,
\]
and therefore, using the Parseval Theorem for the 
2-dimensional discrete finite Fourier transform, we obtain
\[
\|\xi_N\|_0^2=\sum_{\underline m\in
\Z_N^*}|\widehat\xi_N(\underline m)|^2 = \frac1{N^2}
\sum_{\underline n\in \Z_N}|\xi_N\left( {\textstyle\frac{2\pi{\rm
i}}{N}\,\underline n}\right)|^2.
\]
When $\xi_N=\mathrm Q_N\xi$, this gives the equality in
\eqref{eq03:lemma_aux01}, whereas the inequality is a simple
consequence of the fact that $\# \Z_N=N^2$.
\end{proof}

We finally give an approximation result for the operator $\mathrm
D_N$ defined in \eqref{eq:def:DN-1}:

\begin{lemma} \label{lemma:aux02}
There exists $C_{s,t}>0$ independent of $h$ such that
\[
  \|\mathrm D_N \mathrm P_N \xi-\xi\|_{s}\le C_{s,t} h^{t-s}\|\xi\|_t \qquad
  \forall \xi \in H^t, \qquad s\le t \le 0, \qquad s<-1.
\]
\end{lemma}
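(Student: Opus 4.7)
The plan is to split the error via the triangle inequality,
\[
\mathrm D_N \mathrm P_N \xi - \xi = (\mathrm D_N \mathrm P_N \xi - \mathrm P_N \xi) + (\mathrm P_N \xi - \xi),
\]
and bound the two pieces separately. The second piece is handled directly by the Fourier projection estimate \eqref{eq01:lemma:aux01}, which gives a contribution of order $h^{t-s}\|\xi\|_t$ as soon as $s\le t$. All the substance lies in the first piece, which measures the aliasing error committed by the quadrature operator $\mathrm D_N$ on the trigonometric polynomial $\mathrm P_N\xi$.

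For this I would first record the elementary aliasing identity for $\mathrm D_N$. The discrete orthogonality
\[
h^2\sum_{\underline \ell\in \mathbb Z_N} e_{\underline m+\underline k}(h\underline \ell) = \begin{cases}1, & \underline m+\underline k\equiv \underline 0\pmod N,\\ 0, & \text{otherwise,}\end{cases}
\]
yields, as periodic distributions, $\mathrm D_N e_{\underline m} = \sum_{\underline j\in \mathbb Z^2} e_{\underline m+N\underline j}$ for every $\underline m\in \mathbb Z^2$. Applied to each basis term of $\mathrm P_N\xi$ and then corrected by subtracting $\mathrm P_N\xi$, this shows that $\mathrm D_N\mathrm P_N\xi - \mathrm P_N\xi$ has its Fourier content supported on $\mathbb Z^2\setminus \mathbb Z_N^*$, and that each Fourier coefficient indexed by $\underline k=\underline m+N\underline j$ with $\underline m\in \mathbb Z_N^*$ and $\underline j\ne \underline 0$ coincides in modulus with $\widehat\xi(\underline m)$.

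Passing to the $H^s$ norm and reorganizing the sum via the bijection $\underline k\leftrightarrow(\underline m,\underline j)$, I would then estimate
\[
\|\mathrm D_N\mathrm P_N\xi - \mathrm P_N\xi\|_s^2 \le \sum_{\underline m\in \mathbb Z_N^*}|\widehat\xi(\underline m)|^2 \sum_{\underline j\ne \underline 0}\bigl(1+|\underline m+N\underline j|^2\bigr)^s.
\]
Since $|\underline m|\le \sqrt 2\,N/2$ and $|\underline j|\ge 1$ force $|\underline m+N\underline j|\ge N|\underline j|/2$, and the hypothesis $s<-1$ makes $\sum_{\underline j\ne \underline 0}|\underline j|^{2s}$ finite, the inner sum is bounded by $C\,N^{2s}$ uniformly in $\underline m\in \mathbb Z_N^*$. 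This lattice-sum estimate is the decisive quantitative step, and it is the one that forces the restriction $s<-1$ (the critical exponent for $\ell^2$ lattice sums on $\mathbb Z^2\setminus\{\underline 0\}$).

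To finish, for $t\le 0$ the elementary inequality $1\le C\,N^{-2t}(1+|\underline m|^2)^t$, valid for $\underline m\in \mathbb Z_N^*$, absorbs the remaining factor in the outer sum and yields
\[
\|\mathrm D_N\mathrm P_N\xi - \mathrm P_N\xi\|_s^2 \le C\,N^{2(s-t)}\|\xi\|_t^2 = C\,h^{2(t-s)}\|\xi\|_t^2,
\]
after which the triangle inequality combined with \eqref{eq01:lemma:aux01} closes the argument. The main obstacle is therefore the aliasing computation together with the lattice-sum estimate of the third paragraph; minor boundary irregularities of $\mathbb Z_N^*$ (components equal to $-N/2$) produce at most $O(1)$ multiplicities in the aliasing pattern and do not affect the final bound.
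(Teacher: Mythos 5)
Your proof is correct. Note, however, that the paper does not actually supply a proof here: it cites \cite[Lemma 6]{CeDoSa:2002} for the univariate $1$-periodic case (where the threshold is $s<-1/2$) and asserts that the argument ``can be easily adapted'' to two variables. Your write-up is precisely that adaptation carried out in full: the aliasing identity $\mathrm D_N e_{\underline m}=\sum_{\underline j\in\mathbb Z^2}e_{\underline m+N\underline j}$, the identification of $\mathrm D_N\mathrm P_N\xi-\mathrm P_N\xi$ with the $\underline j\neq\underline 0$ tail of the aliasing sum, the lattice-sum estimate (which in two variables forces $s<-1$ in place of $s<-1/2$), and the elementary inverse inequality in $t$. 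This usefully makes explicit where the condition $s<-1$ actually originates, which the paper leaves to the reader.

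Two small points of imprecision, neither of which is a gap. First, the inequality $|\underline m+N\underline j|\ge N|\underline j|/2$ does not follow from $|\underline m|\le N/\sqrt 2$ and $|\underline j|\ge1$ by the triangle inequality alone (for $|\underline j|=1$ that only yields $\ge N(1-1/\sqrt2)\approx 0.29N$); the clean route is to argue componentwise, $|m_i+Nj_i|\ge N|j_i|-N/2\ge N|j_i|/2$ whenever $j_i\ne0$, whence $|\underline m+N\underline j|^2\ge (N/2)^2|\underline j|^2$. Alternatively, replacing $1/2$ by the smaller constant $1-1/\sqrt2$ changes nothing, since $\sum_{\underline j\ne\underline 0}|\underline j|^{2s}<\infty$ for $s<-1$ regardless. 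Second, the closing remark about ``boundary irregularities'' of $\mathbb Z_N^*$ is unnecessary: $\mathbb Z_N^*=\{-N/2\le m_1,m_2<N/2\}$ is a genuine set of coset representatives for $\mathbb Z^2/N\mathbb Z^2$, so $(\underline m,\underline j)\mapsto\underline m+N\underline j$ is already a bijection from $\mathbb Z_N^*\times\mathbb Z^2$ onto $\mathbb Z^2$ and no multiplicities occur.
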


\begin{proof}
 The result is proven in \cite[Lemma 6]{CeDoSa:2002} (see also
\cite{DoRaSa:2008}) for univariate 1-periodic functions (with the
restriction $s<-1/2$ instead). The proofs can be easily adapted to
the current case.
\end{proof}

\subsection{Error analysis of the polar coordinate quadrature rules}
 
The results presented in this subsection, which lie at the heart of the main
convergence proof presented in this paper, provides estimates on the
quadrature error
\begin{equation}\label{eq:ErrorEstimate}
\mathcal E_{h,k,\gamma}(\psi):=\left|
\int_{-\infty}^\infty\int_0^{2\pi} \psi(\rho,\theta) \,\mathrm
d\rho\,\mathrm d\theta- Q_{h,k,\gamma}\psi\right|,
\end{equation}
(which are used in Section~\ref{section:proofs}) for the family of
polar-integration quadrature rules
\begin{equation}
  \label{eq:NewQuadRule} Q_{h,k,\gamma} \psi := k\sum_{p=0}^{\Theta-1}
  \bigg[c(\theta_p)h \sum_{q=-\infty}^{\infty} \psi (\gamma(\theta_p)+q
  c(\theta_p)h,\theta_p)\bigg]\approx
  \int_{-\infty}^{\infty}\int_{0}^{2\pi}
  \psi(\rho,\theta)\,{\rm d}\rho\,{\rm d}\theta
\end{equation}
with integrands {\em arising from certain trigonometric
  polynomials}. The parameters in this quadrature formula are the
positive integers
$N$ and $\Theta$ and the mesh-sizes $h=1/N$ and $k=2\pi/\Theta$; the
angular and radial quadrature nodes, in turn, are given by
$\theta_p=pk$ for $p= 0,\dots ,\Theta-1$ and $\gamma(\theta_p)+q
c(\theta_p)h$ for $q=-\infty,\dots, \infty$ with   
  $c(\theta):=\min\{1/|\cos \theta |$, $1/|
\sin\theta|\}$ respectively  (see~\eqref{eq:2.3.a}). Finally, the function 
 $\gamma$ is a 2$\pi-$periodic piecewise continuous function 
%
%
Equation~\eqref{eq:NewQuadRule} embodies  trapezoidal quadrature rules in the
variables
$\theta$ and $\rho$, where the grid points used for integrating in
$\rho$ depend on $\theta$. The error estimate~\eqref{eq:ErrorEstimate}
is applied in Propositions \ref{prop:main1-015} and \ref{prop:5.5}  
on the $j$-th coordinate patch for
each $j=1,\dots, J$---with $N=N_j$, $\Theta=\Theta_j$ and $h=h_j$.


Our main results concerning this family of rules are  collected in the 
next theorem. For brevity our proof of this result is omitted here; all details
in these regards can be found in Appendix \ref{sec:A}.

\begin{theorem}\label{th:A.1}
Let $\Theta\approx N^{1+\alpha}$ with $\alpha>0$ and assume the sequence $\{
\chi_\delta\}_{\delta >0}$ satisfies the conditions listed in~\eqref{eq:4.4}.
Then, there exists $C>0$, independent of
$\delta$, $h$ and $\gamma$ such that, for all $\delta$ and $h$ satisfying
$c_0\delta < 1/2$ and $h<\delta$, we have
\begin{equation}
 \mathcal E_{h,k,\gamma}(\chi_\delta\,\widetilde\xi_N) \le   C \Big(
h\delta^{-1}|\log h|+\delta\Big)\|\xi_N\|_{0}\qquad \forall \xi_N
\in\mathbb T_N. \label{eq:A1.1}
\end{equation}
In addition, for each $r \ge 2$ there exists $C_r>0$ such that
\begin{equation}
 \mathcal E_{h,k,\gamma}(\chi_\delta\,\widetilde\xi_N    ) \le C_r
h^r\delta^{1-r} \|\xi_N\|_r \qquad \forall \xi_N\in \mathbb
T_N.\label{eq:A1.2}
\end{equation}

\end{theorem}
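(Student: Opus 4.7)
The plan is to Fourier-decompose $\xi_N$ and estimate the quadrature error mode-by-mode, then reassemble the contributions using Cauchy--Schwarz. Writing $\xi_N = \sum_{\underline m \in \mathbb Z_N^*} \widehat{\xi}_N(\underline m)\, e_{\underline m}$, the linearity of the quadrature error and the triangle inequality give $\mathcal E_{h,k,\gamma}(\chi_\delta \widetilde\xi_N) \le \sum_{\underline m} |\widehat{\xi}_N(\underline m)|\, \mathcal E_{\underline m}$, where $\mathcal E_{\underline m}:=\mathcal E_{h,k,\gamma}(\chi_\delta \widetilde e_{\underline m})$. Applying Cauchy--Schwarz with weight $1$ produces bounds of the form $\|\xi_N\|_0\,\bigl(\sum_{\underline m} \mathcal E_{\underline m}^2\bigr)^{1/2}$, which is the right framework for \eqref{eq:A1.1}; with weight $(1+|\underline m|)^r$ one obtains estimates involving $\|\xi_N\|_r$, as needed for \eqref{eq:A1.2}. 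The crux of the argument is thus to bound $\mathcal E_{\underline m}$ uniformly in $\gamma$.

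Since $\widetilde e_{\underline m}(\rho,\theta) = e_{\underline m}(\underline z)\, e^{2\pi i\,\omega(\theta)\,\rho}$ with $\omega(\theta):=\underline m\cdot\underline e(\theta)$ (and $\underline z$ the base point of the polar change of variables), the radial integrand at fixed $\theta$ is the compactly supported function $g(\rho) = \chi_\delta(\rho,\theta)\,e^{2\pi i\,\omega(\theta)\,\rho}$, whose $\rho$-support has width $O(\delta)$ by \eqref{eq:4.4.c}. Poisson summation applied to the radial trapezoidal rule with step $H(\theta):=c(\theta)h$ gives the aliasing identity
\[
H(\theta)\!\!\sum_{q\in\mathbb Z}\!\! g(\gamma(\theta)+qH(\theta)) - \!\int_{\mathbb R}\!\! g(\rho)\,\mathrm d\rho = \!\!\sum_{\ell\ne 0}\!\widehat{\chi}_\delta\Big(\tfrac{\ell}{H(\theta)}-\omega(\theta),\theta\Big)\,e^{2\pi i\ell \gamma(\theta)/H(\theta)},
\]
where $\widehat{\chi}_\delta(\cdot,\theta)$ denotes the Fourier transform in the first variable. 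Integration by parts using \eqref{eq:4.4.c}--\eqref{eq:4.4.d} yields the two-sided decay $|\widehat{\chi}_\delta(\nu,\theta)|\le C_s\,\delta\,\min\{1,(\delta|\nu|)^{-s}\}$ for every $s\ge 0$, uniformly in $\theta$. The angular trapezoidal rule is treated analogously: once the radial step is performed, the integrand in $\theta$ is $2\pi$-periodic and smooth with derivatives controlled by powers of $|\underline m|$, so the angular error is rendered super-algebraically small by the oversampling condition $\Theta\approx N^{1+\alpha}$ with $\alpha>0$.

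Combining these two ingredients and integrating the aliasing identity in $\theta$, the per-mode error $\mathcal E_{\underline m}$ is dominated by the low-$|\ell|$ terms in the Poisson sum, with higher $|\ell|$ decaying super-algebraically through the $\delta|\nu|$ factor. Squaring and summing over $\underline m\in\mathbb Z_N^*$ yields the $h\delta^{-1}$ scale, while a harmonic-type sum (close in spirit to the argument in the proof of Proposition \ref{Prop:BoundOnSingOperatorLinfty}, with Lemma \ref{lemma:auxP} handling the accompanying $\theta$-integral) contributes a $|\log N|$ factor. Together with the Cauchy--Schwarz step, this produces the $h\delta^{-1}|\log h|$ term in \eqref{eq:A1.1}; the additive $\delta$ term absorbs the contribution of the $\underline m=\underline 0$ mode, whose exact integral has size $O(\delta)$ and is approximated super-algebraically well by the quadrature. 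For \eqref{eq:A1.2}, the decay of $\widehat{\chi}_\delta$ is exploited with $s=r$, which trades $r$ derivatives (each contributing a factor $\delta^{-1}$) for the Sobolev norm $\|\xi_N\|_r$ via the weighted Cauchy--Schwarz; the overall support factor $\delta$ completes the $\delta^{1-r}$ scaling.

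The main obstacle is the borderline nature of the radial quadrature. The step $H(\theta)=c(\theta)h$ and the largest relevant frequency $|\omega(\theta)|\le|\underline m|\lesssim N=1/h$ are comparable, so the quadrature sits precisely at the Nyquist limit and the arguments $\ell/H(\theta)-\omega(\theta)$ of $\widehat{\chi}_\delta$ in the aliasing sum are \emph{not} uniformly large. The smoothing provided by the cut-off $\chi_\delta$, manifested through the $\delta|\nu|$ factor in the decay of $\widehat{\chi}_\delta$, is precisely what produces the characteristic $h\delta^{-1}$ (rather than $h$-independent) scaling of the error. Handling the terms with $\ell/H(\theta)\approx \omega(\theta)$, together with the additional oscillation in $\theta$ provided by the aliasing phase $e^{2\pi i\ell\gamma(\theta)/H(\theta)}$ and by the sensitive dependence of $H$ and $\omega$ on $\theta$, is the central technical point that the detailed Appendix A argument is designed to address.
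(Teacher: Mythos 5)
Your overall plan — expand $\xi_N$ in Fourier modes, treat the radial trapezoidal error of each mode via Poisson summation/aliasing, exploit the decay of $\widehat{\chi}_\delta$ through integration by parts, handle the angular trapezoidal rule via the oversampling $\Theta\approx N^{1+\alpha}$, and reassemble with (weighted) Cauchy--Schwarz — is exactly the strategy the paper follows. Your identification of the near-Nyquist radial modes as the crux is also correct. However, two of your specific claims are wrong or unresolved, and they are precisely the load-bearing points.

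First, your explanation of the additive $\delta$ term in \eqref{eq:A1.1} is self-contradictory. You say it ``absorbs the contribution of the $\underline m=\underline 0$ mode, whose exact integral has size $O(\delta)$ and is approximated super-algebraically well.'' But if the quadrature of a mode is super-algebraically accurate, the \emph{error} contributed by that mode is super-algebraically small, not $O(\delta)$; the size of the integral is irrelevant. The $\delta$ term actually comes from the opposite extreme: the Nyquist corner modes $m_1,m_2=\pm N/2$ (the set $\Delta_N$ in the paper), for which the radial step $c(\theta)h$ can be exactly resonant with the oscillation frequency $\underline m\cdot\underline e(\theta)$, so that no aliasing decay is available and only the crude bound $E_{\underline m}(\theta)\lesssim\delta+h$ applies (first part of Lemma~\ref{lemma:A.3}). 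Because there are at most four such modes, their total contribution is $O(\delta)\|\xi_N\|_0$. Your attribution would lead you to look for the $\delta$ error in the wrong place entirely.

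Second, you gesture at ``a harmonic-type sum (close in spirit to Proposition~\ref{Prop:BoundOnSingOperatorLinfty} / Lemma~\ref{lemma:auxP})'' for the $|\log h|$ factor, and you flag the sensitive $\theta$-dependence of $H(\theta)$ and $\omega(\theta)$ as ``the central technical point.'' That point is left unresolved, and the mechanism you cite is in fact different. In Proposition~\ref{Prop:BoundOnSingOperatorLinfty}, the logarithm arises from a \emph{continuous} $\theta$-integral of an integrable singularity $|\underline m\cdot\underline e(\theta)|^{-1}$, producing $\log|\underline m|/|\underline m|$ and, after the $\ell^2$-sum over $\underline m$, a $(\log N)^{3/2}$ factor. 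The paper's proof of \eqref{eq:A1.1} proceeds differently: the index set is split as $\mathbb Z_N^*\subset\Lambda_N\cup\Xi_N\cup\Delta_N$ (low modes with $|m_1|+|m_2|\le N/2$, intermediate modes $\Xi_N$, and the corner modes above). For $\Xi_N$ the aliasing bound from Lemma~\ref{lemma:A.3} yields $\delta^{-1}\Psi_N(\theta)\|\xi_N\|_0$ at each angle, where $\Psi_N(\theta)$ (see \eqref{eq:defPsiN}) is uniformly bounded but concentrates near the diagonal angles $\theta=\pm\pi/4\pmod{\pi/2}$, behaving like $(N(1-d(\theta)))^{-1}$ there. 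The single power of $|\log h|$ then arises from the \emph{discrete} angular trapezoidal sum $k\sum_p\Psi_N(\theta_p)\le C(k+h\log(1/k))$ (Lemma~\ref{lemma:propPsi}), i.e., a harmonic sum in $\theta$ over the quadrature nodes that avoid an $O(k)$ neighborhood of the four critical angles. Without this index-set decomposition and the $\Psi_N$ device, the assembly of per-mode bounds into \eqref{eq:A1.1} does not close; it is not a matter of re-using Lemma~\ref{lemma:auxP}.

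Your account of \eqref{eq:A1.2}, in contrast, is essentially correct: the integration-by-parts decay $|\widehat{\chi}_\delta(\nu,\theta)|\lesssim\delta^{1-r}|\nu|^{-r}$ gives the $\delta^{1-r}$ scaling, and the oversampling $\Theta\approx N^{1+\alpha}$ makes the angular error super-algebraic; one still needs to supply a tail estimate on $\sum_{\ell\neq0}|\,\cdot\,-\ell N|^{-r}$ (Lemma~\ref{lemma:tech1}) and the observation that low modes $|m_1|+|m_2|\le N/2$ enjoy the clean bound $E_{\underline m}(\theta)\le C_r\delta^{1-r}h^r$ (Lemma~\ref{lemma:A.4}), but these fit your outline.
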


\subsection{Proofs of the results of Section \ref{sec:3.5}}

\label{section:proofs}

Recall that $N$ has been taken to be the largest of the discrete
parameters $N_j$ and therefore $h_j\le h$ for all $j=1,\ldots,J$.
Also, by \eqref{eq:quasiuniform}, we can bound $h_j^{-1}\le C
h^{-1}$ and $|\log h_j|\le C |\log h|$ whenever needed.

\begin{proposition}
 \label{prop:main1-01} For all  $t> 2$, there exists $C_t>0$ such
that
 \[
  \left\|{\mathcal A}^{\rm reg}_\delta-\mathcal Q_h{\mathcal A}^{\rm
reg}_{\delta}\mathcal D_h\mathcal P_h\right\|_{{\cal H}^0\to {\cal H}^0}\le C_t
\delta^{-t} h ^{t}.
 \]
\end{proposition}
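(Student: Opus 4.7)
The plan is to split the error by inserting a single intermediate term,
\[
 \mathcal{A}^{\rm reg}_\delta - \mathcal{Q}_h \mathcal{A}^{\rm reg}_\delta \mathcal{D}_h \mathcal{P}_h
 \;=\; (\mathcal{I} - \mathcal{Q}_h)\,\mathcal{A}^{\rm reg}_\delta
 \;+\; \mathcal{Q}_h\, \mathcal{A}^{\rm reg}_\delta\,(\mathcal{I} - \mathcal{D}_h \mathcal{P}_h),
\]
which isolates, respectively, the trigonometric interpolation error applied to the smooth image of $\mathcal{A}^{\rm reg}_\delta$, and the error incurred when a function is replaced by its Dirac-comb representative before the operator is applied.

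For the first summand I would chain the componentwise interpolation estimate \eqref{eq02:lemma:aux01} (with $s=0$) with the smoothing estimate \eqref{eq:p33a} (with $s=0$) of Proposition~\ref{prop:3.3}: for any $t>2$,
\[
 \|(\mathcal{I}-\mathcal{Q}_h)\,\mathcal{A}^{\rm reg}_\delta\boldsymbol\xi\|_0
 \;\le\; C_t\,h^t\,\|\mathcal{A}^{\rm reg}_\delta\boldsymbol\xi\|_t
 \;\le\; C_t\,h^t\,\delta^{-t}\,\|\boldsymbol\xi\|_0,
\]
which already matches the bound claimed by the proposition.

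The second summand is the more delicate piece, because $(\mathcal{I}-\mathcal{D}_h\mathcal{P}_h)\boldsymbol\xi$ is genuinely distributional. The strategy is first to pass to $L^\infty$ via Lemma~\ref{lemma:aux01}, $\|\mathrm{Q}_{N_i}\eta\|_0\le\|\eta\|_{\infty,I_2}$, and then to estimate $\|\mathrm A^{ij}_{\rm reg,\delta}\zeta\|_{\infty,I_2}$ by \emph{direct} kernel duality rather than via Sobolev embedding. Writing
\[
 (\mathrm A^{ij}_{\rm reg,\delta}\zeta)(\underline u)
 \;=\; \bigl\langle F^{ij}_{\rm reg,\delta}(\underline u,\cdot),\,\zeta\bigr\rangle,
\]
the derivative bound \eqref{eq:p33d} together with the fact that $\widetilde\omega^j_\delta$ has second-variable support of uniformly bounded area gives
\[
 \sup_{\underline u\in I_2}\|F^{ij}_{\rm reg,\delta}(\underline u,\cdot)\|_t
 \;\le\; C_t\,\delta^{-t}\qquad (t\ge 0),
\]
so that $\|\mathrm A^{ij}_{\rm reg,\delta}\zeta\|_{\infty,I_2}\le C_t\,\delta^{-t}\,\|\zeta\|_{-t}$. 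Since $t>2$ implies $-t<-1$, the choice $s=-t$ meets the hypotheses of Lemma~\ref{lemma:aux02} (with ``$t$''$=0$ in the notation of that lemma), which then yields $\|(\mathcal{I}-\mathcal{D}_h\mathcal{P}_h)\boldsymbol\xi\|_{-t}\le C_t\,h^t\,\|\boldsymbol\xi\|_0$. Concatenating the three ingredients produces the same $C_t\,h^t\,\delta^{-t}\,\|\boldsymbol\xi\|_0$ bound for the second summand.

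The crux of the argument, and the main obstacle to a more straightforward proof, is the $L^\infty$ estimate at the third step. The naive chain through a Sobolev embedding $\|\mathrm A^{ij}_{\rm reg,\delta}\zeta\|_{\infty,I_2}\le C\|\mathrm A^{ij}_{\rm reg,\delta}\zeta\|_{1+\varepsilon}$ followed by \eqref{eq:p33a} would sacrifice a factor $h^{1+\varepsilon}$ and deliver only the weaker rate $h^{t-1-\varepsilon}\delta^{-t}$; it is essential instead to exploit the smoothness and compact second-variable support of the regular kernel directly in an $H^t\times H^{-t}$ pairing in order to preserve the full $h^t\delta^{-t}$ rate demanded by the statement (and, ultimately, by the super-algebraic convergence of Theorem~\ref{theo:MAIN}).
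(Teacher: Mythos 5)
Your two-term split
\[
\mathcal A^{\rm reg}_\delta-\mathcal Q_h\mathcal A^{\rm reg}_\delta\mathcal D_h\mathcal P_h
=(\mathcal I-\mathcal Q_h)\mathcal A^{\rm reg}_\delta
+\mathcal Q_h\,\mathcal A^{\rm reg}_\delta(\mathcal I-\mathcal D_h\mathcal P_h)
\]
is algebraically correct and your argument goes through, but it is a genuinely different route from the paper's. The paper uses a \emph{three}-term decomposition: it peels off $\mathrm A^{ij}_{\rm reg,\delta}(\mathrm I-\mathrm D_{N_j}\mathrm P_{N_j})$ \emph{without} a $\mathrm Q_{N_i}$ in front, then $(\mathrm I-\mathrm Q_{N_i})\mathrm A^{ij}_{\rm reg,\delta}(\mathrm D_{N_j}\mathrm P_{N_j}-\mathrm I)$, and finally $(\mathrm I-\mathrm Q_{N_i})\mathrm A^{ij}_{\rm reg,\delta}$. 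This arrangement is precisely designed so that the interpolation operator $\mathrm Q_{N_i}$ only ever acts on differences whose $H^0$-norm is already small (the middle term picks up $h^{t/2}$ from each of the two factors, yielding $h^t$), and the whole chain lives inside the periodic Sobolev scale via \eqref{eq02:lemma:aux01}, Proposition~\ref{prop:3.3} and Lemma~\ref{lemma:aux02}; no $L^\infty$ arguments are needed, and the hypothesis $t>2$ arises because the middle term applies Lemma~\ref{lemma:aux02} at the index $s=-t/2<-1$. Your split instead forces you to apply $\mathrm Q_{N_i}$ directly to $\mathrm A^{ij}_{\rm reg,\delta}(\mathrm I-\mathrm D_{N_j}\mathrm P_{N_j})\xi$, and you pay for the shorter decomposition with the pointwise duality step: you pass to $L^\infty$ via Lemma~\ref{lemma:aux01} and bound the supremum by pairing the second-variable kernel $F^{ij}_{\rm reg,\delta}(\underline u,\cdot)$ in $H^t$ against $(\mathrm I-\mathrm D_{N_j}\mathrm P_{N_j})\xi$ in $H^{-t}$. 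This is valid (the kernel bound $\sup_{\underline u}\|F^{ij}_{\rm reg,\delta}(\underline u,\cdot)\|_t\le C_t\delta^{-t}$ follows from \eqref{eq:p33d} with $m=0$ via $\Lambda_n$ for $t=2n$ and interpolation, exactly the ingredients the paper already uses in proving \eqref{eq:p33a}), and you correctly identify why the lazy route through Sobolev embedding is insufficient, although strictly speaking the lazy route loses a factor $\delta^{-(1+\varepsilon)}$ (equivalently $h^{-\beta(1+\varepsilon)}$) rather than $h^{1+\varepsilon}$; one can redistribute the loss between the $h$ and $\delta$ exponents but one cannot avoid it. In effect, what the paper's extra middle term buys is that the entire proof stays inside the periodic Sobolev machinery; what your version buys is one fewer term at the cost of a direct $L^\infty$ kernel estimate. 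Both deliver $C_t h^t\delta^{-t}$.
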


\begin{proof} Let us first consider the decomposition
\begin{eqnarray*}
\mathrm A^{ij}_{\rm reg,\delta}-\mathrm Q_{N_i} \mathrm A^{ij}_{\rm
reg,\delta}\mathrm D_{N_j} \mathrm P_{N_j}&=& \mathrm A^{ij}_{\rm
reg,\delta}(\mathrm I-\mathrm D_{N_j} \mathrm P_{N_j})+(\mathrm
I-\mathrm Q_{N_i})\mathrm A^{ij}_{\rm reg,\delta}(\mathrm D_{N_j}
\mathrm P_{N_j}-\mathrm I)\\&&+ (\mathrm I-\mathrm Q_{N_i})\mathrm
A^{ij}_{\rm reg,\delta}.
\end{eqnarray*}
By Proposition  \ref{prop:3.3} and Lemma \ref{lemma:aux02}, for all
$t>2$,
\[
\|\mathrm A^{ij}_{\rm reg,\delta}(\mathrm I-\mathrm D_{N_j}\mathrm
P_{N_j})\xi\|_{0}\le C_t \delta^{-t}\|(\mathrm I-\mathrm
D_{N_j}\mathrm P_{N_j})\xi\|_{-t}\le C'_t h^t\delta^{-t}\|\xi\|_0.
\]
The second term is bounded using \eqref{eq02:lemma:aux01},
Proposition \ref{prop:3.3} and Lemma \ref{lemma:aux02}:
\begin{eqnarray*}
\|(\mathrm I-\mathrm Q_{N_i}) \mathrm A^{ij}_{\rm
reg,\delta}(\mathrm D_{N_j} \mathrm P_{N_j}-\mathrm I)
\xi\|_{0}&\le& C_t h^{t/2} \|\mathrm A^{ij}_{\rm reg,\delta}(\mathrm
D_{N_j}\mathrm
P_{N_j}-\mathrm I) \xi\|_{t/2}\\
&& \hspace{-24pt}\le C_t' h^{t/2} \delta^{-t}\|(\mathrm D_{N_j}\mathrm
P_{N_j}-\mathrm I) \xi\|_{-t/2}\le C_t''\delta^{-t} h^t\|\xi\|_{0}.
\end{eqnarray*}
Finally, by \eqref{eq02:lemma:aux01}, Proposition \ref{prop:3.3} and
Lemma \ref{lemma:aux02},
\begin{eqnarray*}
 \|(\mathrm I-\mathrm Q_{N_i})\mathrm A^{ij}_{\rm reg,\delta}\xi \|_{0}&\le& C_t
h^t\|\mathrm A^{ij}_{\rm
reg,\delta}\xi \|_{t}\le C'_t h^t\delta^{-t}\|\xi \|_{0},
\end{eqnarray*}
and the proof is finished.
\end{proof}

\begin{proposition}
\label{prop:main1-015} There exists $C>0$ such that for all
$\xi_{N_j}\in \mathbb{T}_{N_j}$ and $\delta>h$
\begin{eqnarray}
\sup_{\underline u\in \Omega^{ij}_{\delta/2}}\Big| (\mathrm
A^{ij}_{\rm sing,\delta} -\mathrm A^{ij}_{{\rm
sing},\delta,h})\xi_{N_j}(\underline u) \Big|&\le& C\Big(\delta^{-1}
h_j|\log h_j|+\delta
\Big)\|\xi_{N_j}\|_{0},\label{eq01a:prop:main1-015}\\
\sup_{\underline u\in \Omega^{ij}_{2\delta}\setminus
\Omega^{ij}_{\delta/2}}\Big| (\mathrm A^{ij}_{\rm sing,\delta}
-\mathrm A^{ij}_{{\rm sing},\delta,h})\xi_{N_j}(\underline u)
\Big|&\le&
C|\log h_j|^{3/2}\|\xi_{N_j}\|_{0},\label{eq01b:prop:main1-015}\\
\sup_{\underline u\in I_2\setminus \Omega^{ij}_{2\delta}}\Big|(
\mathrm A^{ij}_{\rm sing,\delta}-\mathrm A^{ij}_{{\rm
sing},\delta,h})\xi_{N_j} (\underline
u)\Big|&=&0.\label{eq01c:prop:main1-015}
\end{eqnarray}
\end{proposition}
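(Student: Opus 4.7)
The plan is to treat the three regions separately, exploiting the explicit support properties of both operators. The two auxiliary cases \eqref{eq01c:prop:main1-015} and \eqref{eq01b:prop:main1-015} are almost immediate. For $\underline u\in I_2\setminus \Omega^{ij}_{2\delta}$, the continuous operator vanishes by \eqref{eq:KijSingVanishes} and the discrete one by its very definition \eqref{eq:newQuad-1} (since $\Omega^{ij}_{\delta/2}\subset \Omega^{ij}_{2\delta}$), giving \eqref{eq01c:prop:main1-015}. For $\underline u\in \Omega^{ij}_{2\delta}\setminus \Omega^{ij}_{\delta/2}$ only the continuous operator survives, and Proposition~\ref{Prop:BoundOnSingOperatorLinfty} applied to $\xi_{N_j}\in \mathbb T_{N_j}$ bounds $|\mathrm A^{ij}_{\rm sing,\delta}\xi_{N_j}(\underline u)|$ uniformly by $C|\log N_j|^{3/2}\|\xi_{N_j}\|_0\le C|\log h_j|^{3/2}\|\xi_{N_j}\|_0$, yielding \eqref{eq01b:prop:main1-015}.

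The substantive work is in the interior region, where I would reduce the problem to the quadrature-error estimate of Theorem~\ref{th:A.1}. First I would verify that for $\underline u\in \Omega^{ij}_{\delta/2}$ the cut-off $\widetilde\omega^j_\delta$ equals $1$ on the relevant integration support: picking $\r_0\in \supp\omega^j$ with $|\r^i(\underline u)-\r_0|\le \epsilon_1\delta$, the triangle inequality gives $|\r'-\r_0|\le 2\epsilon_1\delta$ for every $\r'\in \overline{B(\r^i(\underline u),\epsilon_1\delta)}$, so $\r'\in S^j_\delta$ by \eqref{eq:2.8}, and \eqref{eq:omegatildeProp3} applies. Using this observation together with the polar decomposition \eqref{eq:singularPart2} and the definition~\eqref{eq:cor:5.2} of $\chi_{\underline u,\delta}$, the continuous operator becomes
\[
\mathrm A^{ij}_{\rm sing,\delta}\xi_{N_j}(\underline u)=\int_0^{2\pi}\!\!\int_{-\infty}^{\infty}\chi_{\underline u,\delta}(\rho,\theta)\,\xi_{N_j}(\underline r^{ji}(\underline u)+\rho\,\underline e(\theta))\,\mathrm d\rho\,\mathrm d\theta.
\]
A direct inspection of \eqref{eq:newQuad-1} then shows that, after the same elimination of $\widetilde\omega^j_\delta$, $\mathrm A^{ij}_{{\rm sing},\delta,h}\xi_{N_j}(\underline u)$ coincides with $Q_{h_j,k_j,\gamma}(\chi_{\underline u,\delta}\widetilde\xi_{N_j})$ for a $\gamma$ built from $\underline r^{ji}(\underline u)$ and from the piecewise choice of horizontal/vertical grid lines in the definition of $\rho_q^{h_j}$. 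Since Proposition~\ref{lemma:Kijsing} ensures the family $\{\chi_{\underline u,\delta}\}$ meets hypothesis~\eqref{eq:4.4} uniformly in $\underline u\in \Omega^{ij}_{\delta/2}$, the estimate \eqref{eq:A1.1} of Theorem~\ref{th:A.1} is directly applicable and yields \eqref{eq01a:prop:main1-015}.

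The main obstacle, and the place where the argument requires genuine bookkeeping, is the identification between the finite radial sum over $q\in\{0,\ldots,N_j-1\}$ in \eqref{eq:newQuad-1} and the doubly infinite sum in \eqref{eq:NewQuadRule}. One must justify that the compact $\rho$-support of $\chi_{\underline u,\delta}$ in $(-c_0\delta,c_0\delta)$ (property~\eqref{eq:4.4.c}), together with the hypothesis $h_j<\delta$, excludes any wrap-around contribution coming from the biperiodic extension of $\xi_{N_j}$ and makes both sums genuinely equal. One must also check that the angle-dependent switch at $\theta=\pi/4,\,3\pi/4,\,5\pi/4,\,7\pi/4$ between the two expressions for $\rho_q^{h_j}$ is absorbed into a single (piecewise) $\gamma$ that fits the format of Theorem~\ref{th:A.1}, so that a single invocation of \eqref{eq:A1.1} handles all angular sectors simultaneously.
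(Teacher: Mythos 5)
Your proposal follows the paper's proof almost exactly: the same three-region decomposition, the same use of the support property \eqref{eq:KijSingVanishes} and of the vanishing of $\mathrm A^{ij}_{{\rm sing},\delta,h}$ outside $\Omega^{ij}_{\delta/2}$, the same appeal to Proposition~\ref{Prop:BoundOnSingOperatorLinfty} on the annular region, and the same reduction to Theorem~\ref{th:A.1} via Proposition~\ref{lemma:Kijsing} in the interior. You additionally make explicit (and verify correctly) two bookkeeping steps that the paper leaves tacit — that $\widetilde\omega^j_\delta\equiv 1$ on the support of $K^{ij}_{\rm sing,\delta}(\underline u,\,\cdot\,)$ whenever $\underline u\in\Omega^{ij}_{\delta/2}$, and that the finite radial sum in $\mathrm L^{ij}_{\delta,h}$ captures all nonzero terms of the bi-infinite sum in $Q_{h,k,\gamma}$ once $\delta_0$ is taken small enough.
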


\begin{proof} Recalling the definition of the discrete operator
$\mathrm A^{ij}_{\mathrm{sing},\delta,h}=(\omega^i\circ\r^i) \mathrm
L^{ij}_{\delta,h}$ (the operator $\mathrm L^{ij}_{\delta,h}$ is
defined in \eqref{eq:newQuad-1}), it is clear that
\begin{equation}\label{eq:Kijsingh=0}
(\mathrm A^{ij}_{{\rm sing},\delta,h}\xi_{N_j})(\underline u)=0
\qquad \underline u\in I_2\setminus\Omega^{ij}_{\delta/2}.
\end{equation}
Using also \eqref{eq:KijSingVanishes}, it is obvious that both
operators in \eqref{eq01c:prop:main1-015} vanish for $\underline u
\in I_2\setminus\Omega^{ij}_{2\delta}$. Also
\eqref{eq01b:prop:main1-015} is a simple consequence of
\eqref{eq:Kijsingh=0} and Proposition
\ref{Prop:BoundOnSingOperatorLinfty}.

For $\underline u\in \Omega^{ij}_{\delta/2}$, recalling that
$\widetilde\xi_{N_j}(\rho,\theta)=\xi_{N_j}(\underline
r^{ji}(\underline u)+ \rho \underline e(\theta))$,
\[
(\mathrm A^{ij}_{\rm sing,\delta} -\mathrm A^{ij}_{{\rm
sing},\delta,h}) \xi_{N_j} (\underline u)=
\int_{-\infty}^{\infty}\int_0^{2\pi}\! (\chi_{\underline u,\delta}
\widetilde{\xi}_{N_j})(\rho,\theta){\rm d}\rho\,{\rm d}\theta
-Q_{h,k,\gamma}(\chi_{\underline u,\delta}\widetilde{\xi}_{N_j}),
\]
where $\chi_{\underline u,\delta}$ is defined by \eqref{eq:cor:5.2}
and the quadrature rule $Q_{h,k,\gamma}$ is given in
\eqref{eq:NewQuadRule}. By Proposition \ref{lemma:Kijsing} we can
apply Theorem \ref{th:A.1}, which estimates the above quadrature
error in terms of the constants that appear in \eqref{eq:4.4}. Since
these constants can be taken to be independent of $\underline u \in
\Omega^{ij}_{\delta/2}$ (this is part of the assertion of
Proposition \ref{lemma:Kijsing}), then \eqref{eq01a:prop:main1-015}
follows readily.
\end{proof}

In the following sequence of results we will use the geometric
cut-off operator
\begin{equation}\label{eq:def:G}
\mathcal G\boldsymbol\xi= (\mathrm G_1\xi_1,\ldots,\mathrm G_J
\xi_J):= \big( (1-\widetilde\omega_0^1)
\xi_1,\ldots,(1-\widetilde\omega_0^J)\xi_J\big),
\end{equation}
where, as a reminder, $\widetilde\omega_0^j$ is the characteristic
function of the domain $\Omega^j$.

\begin{proposition}\label{prop:5.5}
For all $t\ge 2$, there exists $C_t>0$ such that for all $
\xi_{N_j}\in \mathbb T_{N_j}$ and $\delta> h$
\begin{equation}
\label{eq02:prop:main1-015}
  \| (\mathrm A^{ij}_{\rm sing,\delta}
-\mathrm A^{ij}_{{\rm sing},\delta,h})\xi_{N_j}\|_{\infty,I_2}\le
C_t\delta^{1-t}h_j^{t}\|\xi_{N_j}\|_{t}+C \delta \|\mathrm G_j
\xi_{N_j}\|_{\infty,I_2}.
\end{equation}
\end{proposition}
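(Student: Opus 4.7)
The plan is to split the domain $I_2$ into the same three regions appearing in Proposition \ref{prop:main1-015}, and, in each one, to refine the bound given there using the higher-order quadrature estimate \eqref{eq:A1.2} of Theorem \ref{th:A.1} in place of the low-order estimate \eqref{eq:A1.1}. The previous proposition gives bounds proportional to $\|\xi_{N_j}\|_{0}$ with suboptimal powers of $h_j$; I aim to replace $\|\xi_{N_j}\|_{0}$ by $\|\xi_{N_j}\|_{t}$ in exchange for the improved power $h_j^t$ on the innermost region, while showing that the contribution from the middle region is absorbed into the tail term involving $\|\mathrm G_j\xi_{N_j}\|_{\infty,I_2}$.

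For $\underline u\in\Omega^{ij}_{\delta/2}$, I would express the error as a polar quadrature error of the form $\mathcal E_{h_j,k_j,\gamma}(\chi_{\underline u,\delta}\widetilde\xi_{N_j})$ exactly as in the proof of Proposition \ref{prop:main1-015}, with $\chi_{\underline u,\delta}$ as in Proposition \ref{lemma:Kijsing}. Since that proposition ensures that the constants in \eqref{eq:4.4} are uniform in $\underline u$, Theorem \ref{th:A.1} applies, and the high-order bound \eqref{eq:A1.2} with $r=t$ immediately yields the first summand $C_t\delta^{1-t}h_j^{t}\|\xi_{N_j}\|_{t}$.

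For $\underline u\in\Omega^{ij}_{2\delta}\setminus\Omega^{ij}_{\delta/2}$, the discrete operator $\mathrm A^{ij}_{{\rm sing},\delta,h}\xi_{N_j}(\underline u)$ vanishes by the definition of $\mathrm L^{ij}_{\delta,h}$ in \eqref{eq:newQuad-1}, so only the continuous operator needs controlling. The key step is to decompose $\xi_{N_j}=\widetilde{\omega}_0^j\xi_{N_j}+\mathrm G_j\xi_{N_j}$ and argue from supports that $\mathrm A^{ij}_{\rm sing,\delta}(\widetilde{\omega}_0^j\xi_{N_j})(\underline u)=0$: if some $\underline v$ lay in the intersection $\supp K^{ij}_{\rm sing,\delta}(\underline u,\cdot)\cap\Omega^j$, then $\r^j(\underline v)\in\supp\omega^j\cap\overline{B(\r^i(\underline u),\epsilon_1\delta)}$, forcing $\r^i(\underline u)\in S^j_{\delta/2}$, contradicting $\underline u\notin\Omega^{ij}_{\delta/2}$. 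The remaining piece is then bounded via the polar representation \eqref{eq:singularPart2} together with the uniform $L^\infty$ estimate and $\delta$-support control on $\chi_{\underline u,\delta}$ furnished by Proposition \ref{lemma:Kijsing}:
\[
|\mathrm A^{ij}_{\rm sing,\delta}(\mathrm G_j\xi_{N_j})(\underline u)|\le \|\mathrm G_j\xi_{N_j}\|_{\infty,I_2}\int_0^{2\pi}\!\!\int_{-c_0\delta}^{c_0\delta}|\chi_{\underline u,\delta}(\rho,\theta)|\,\mathrm d\rho\,\mathrm d\theta\le C\delta\|\mathrm G_j\xi_{N_j}\|_{\infty,I_2},
\]
which is precisely the second summand of the target bound.

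Finally, for $\underline u\in I_2\setminus\Omega^{ij}_{2\delta}$ both operators vanish by \eqref{eq:KijSingVanishes} and the definition of $\mathrm L^{ij}_{\delta,h}$, contributing nothing. Taking the supremum of the three regional bounds then yields the claimed estimate. The hard part is the geometric support argument in the intermediate region, which requires a careful chase through the nested definitions of $S^j_{\delta/2}$ and $S^j_{2\delta}$ together with the supports of the cut-offs $\eta_\delta$ and $\widetilde{\omega}_0^j$; once that is established, the remaining estimates reduce to direct applications of Theorem \ref{th:A.1} and Proposition \ref{lemma:Kijsing}.
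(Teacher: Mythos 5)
Your proposal is correct and follows essentially the same route as the paper's proof: split $I_2$ into the three regions, apply the high-order polar quadrature bound \eqref{eq:A1.2} of Theorem \ref{th:A.1} on $\Omega^{ij}_{\delta/2}$, use the support argument to reduce the annular region to $\mathrm G_j\xi_{N_j}$ and bound it via the $L^1$ control on $\chi_{\underline u,\delta}$, and observe that both operators vanish outside $\Omega^{ij}_{2\delta}$. The only cosmetic difference is that you phrase the middle-region argument as an explicit decomposition $\xi_{N_j}=\widetilde\omega_0^j\xi_{N_j}+\mathrm G_j\xi_{N_j}$ while the paper states directly that only values of $\xi_{N_j}$ on $I_2\setminus\Omega^j$ enter; these are the same geometric observation stated contrapositively.
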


\begin{proof} Following the argument of the preceding proof, but
using \eqref{eq:A1.2} of Theorem \ref{th:A.1}, it follows that for
any integer $t\ge 2$,
\begin{equation}
 \sup_{\underline u\in \Omega^{ij}_{\delta/2}}\Big| (\mathrm A^{ij}_{\rm
sing,\delta} -\mathrm A^{ij}_{{\rm
sing},\delta,h})\xi_{N_j}(\underline u) \Big|\le C_t
\delta^{1-t}h^t\|\xi_{N_j}\|_t.\label{eq08:prop:main1-15}
\end{equation}
From the definition \eqref{eq:2.8}, it follows that if $\r\not\in
S^j_{\delta/2}$, then $\overline{B(\r,\epsilon_1\delta)}\cap\supp
\omega^j =\emptyset$. Therefore, if $\underline
u\not\in\Omega^{ij}_{\delta/2}$,
\begin{eqnarray*}
\supp K^{ij}_{\mathrm{sing},\delta}(\underline u,\,\cdot\,)\cap
\Omega^j &\subset & \supp \eta_{\delta}(\r^i(\underline
u),\r^j(\,\cdot\,)) \cap\Omega^j\\
&\subset& (\r^j)^{-1}\big(\overline{B(\r^i(\underline
u),\epsilon_1\delta)}\cap \supp \omega^j)=\emptyset,
\end{eqnarray*}
where we have used \eqref{eq:suppetadelta}. This means that if
$\underline u\in
\Omega^{ij}_{2\delta}\setminus\Omega^{ij}_{\delta/2}$, only  the
value of $\xi_{N_j}$ on $I_2\setminus\Omega^j$ (where
$\xi_{N_j}=\mathrm G_j \xi_{N_j}$) is relevant. We then change to
polar coordinates as in the proof of Proposition
\ref{Prop:BoundOnSingOperatorLinfty} to obtain:
\begin{eqnarray}\nonumber
|(\mathrm A^{ij}_{\mathrm{sing},\delta}-\mathrm
A^{ij}_{\mathrm{sing},\delta,h})\xi_{N_j}(\underline u)| &=&
|(\mathrm A^{ij}_{\mathrm{sing},\delta}\xi_{N_j})(\underline u)|
\\
\nonumber & \le & \|\xi_{N_j}\|_{\infty,I_2\setminus\Omega^j}
\int_{-c_0\delta}^{c_0\delta} \int_0^{2\pi}|\chi_{\underline
u,\delta}(\rho,\theta)|\,\mathrm d\rho\,\mathrm d\theta \\
&\le& C \delta \|\mathrm G_j\xi_{N_j}\|_{\infty,I_2}.
\label{eq09:prop:main1-15}
\end{eqnarray}
(In the last inequality we have applied Proposition
\ref{lemma:Kijsing}, according to which the constants $c_0$ and $C$ do
not depend on $\underline u$.) Equation~\eqref{eq02:prop:main1-015}
now follows from equations~\eqref{eq01c:prop:main1-015},
\eqref{eq08:prop:main1-15} and~\eqref{eq09:prop:main1-15}.
\end{proof}

\begin{proposition}
 \label{prop:main1-02} For all $\varepsilon>0$ there exists $C_\varepsilon>0 $
independent of $h$
and
$\delta>h$
such that
\begin{equation}
\label{eq01:prop:main1-02}
 \left\|{\mathcal A}^{\rm sing}_\delta -\mathcal Q_h{\mathcal A}^{{\rm
sing}}_{\delta,h}\mathcal P_h\right\|_{{\cal H}^0\to {\cal H}^0}\le
C_\varepsilon \Big(\delta^{-1-\varepsilon}h+ \delta^{-1}h\,|\log h|
+\delta^{1/2}|\log h|^{3/2} \Big).
\end{equation}
Furthermore, for any integer $r\ge 2$,
\begin{equation}
\label{eq02:prop:main1-02}
 \big\|\big({\mathcal A}^{\rm sing}_\delta -\mathcal Q_h{\mathcal A}^{{\rm
sing}}_{\delta,h}\mathcal P_h\big)\bm{\xi}\big\|_{0}\le C_r
\big(\delta^{-r} h^r\|\bm{\xi}\|_{r} +\delta\|\mathcal
P_h\boldsymbol\xi-\boldsymbol\xi\|_{\infty,I_2}+\delta\|\mathcal
G\boldsymbol\xi\|_{\infty,I_2}\Big) \quad \forall\boldsymbol\xi\in\mathcal H^0.
\end{equation}
\end{proposition}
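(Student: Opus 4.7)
The proof splits $\mathrm A^{ij}_{\rm sing,\delta}-\mathrm Q_{N_i}\mathrm A^{ij}_{{\rm sing},\delta,h}\mathrm P_{N_j}$ into the three pieces
\[
\mathrm A^{ij}_{\rm sing,\delta}(\mathrm I-\mathrm P_{N_j})\ +\ (\mathrm I-\mathrm Q_{N_i})\mathrm A^{ij}_{\rm sing,\delta}\mathrm P_{N_j}\ +\ \mathrm Q_{N_i}(\mathrm A^{ij}_{\rm sing,\delta}-\mathrm A^{ij}_{{\rm sing},\delta,h})\mathrm P_{N_j}
\]
and estimates each one using the mapping properties of Proposition~\ref{prop:3.3}, the inverse and interpolation inequalities of Section~5.1, the consistency bounds of Proposition~\ref{prop:main1-015} (for~\eqref{eq01:prop:main1-02}) or Proposition~\ref{prop:5.5} (for~\eqref{eq02:prop:main1-02}), together with the fact, from Lemma~\ref{lemma:aux01}, that $\|\mathrm Q_{N_i}\eta\|_0\le \|\eta\|_{\infty,I_2}$. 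The estimates will then be assembled componentwise and summed over $i,j$.

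\textbf{Bound \eqref{eq01:prop:main1-02}.} For the first piece I use $\|\mathrm A^{ij}_{\rm sing,\delta}\|_{H^{-1}\to H^0}\le C\delta^{-1}$ (from \eqref{eq:p33b} with $r=-1$) together with $\|(\mathrm I-\mathrm P_{N_j})\xi\|_{-1}\le 2h\|\xi\|_0$ from \eqref{eq01:lemma:aux01}; this yields a $\delta^{-1}h$ contribution. For the second piece I use the interpolation bound \eqref{eq02:lemma:aux01} with $s=1+\varepsilon$, followed by $\|\mathrm A^{ij}_{\rm sing,\delta}\|_{H^{\varepsilon}\to H^{1+\varepsilon}}\le C\delta^{-1-\varepsilon}$ and the inverse inequality \eqref{eq:invIneqSobolev} to convert $\|\mathrm P_{N_j}\xi\|_\varepsilon$ into $h^{-\varepsilon}\|\xi\|_0$; the $h^{-\varepsilon}$ is absorbed by a matching factor $h^{1+\varepsilon}$ from $(\mathrm I-\mathrm Q_{N_i})$, giving the $\delta^{-1-\varepsilon}h$ term with a constant $C_\varepsilon$.

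\textbf{Main obstacle: the third piece and the $\delta^{1/2}|\log h|^{3/2}$ term.} For term (c) the naive bound $\|\mathrm Q_{N_i}\eta\|_0\le\|\eta\|_{\infty,I_2}$ combined with \eqref{eq01b:prop:main1-015} only yields $|\log h|^{3/2}$, which does not vanish. To recover smallness I expand $\mathrm Q_{N_i}\eta$ via the Parseval identity of Lemma~\ref{lemma:aux01}, $\|\mathrm Q_{N_i}\eta\|_0^2=N_i^{-2}\sum_{\underline n}|\eta(\underline x^i_{\underline n})|^2$, and split the grid points according to whether they lie in $\Omega^{ij}_{\delta/2}$, in the ``strip'' $\Omega^{ij}_{2\delta}\setminus\Omega^{ij}_{\delta/2}$, or outside $\Omega^{ij}_{2\delta}$. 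The first set contains $O(N_i^2)$ points and carries the pointwise bound \eqref{eq01a:prop:main1-015} of order $\delta^{-1}h|\log h|+\delta$; the strip has area $O(\delta)$, hence contains only $O(N_i^2\delta)$ grid points, on which \eqref{eq01b:prop:main1-015} bounds $|\eta|$ by $C|\log h|^{3/2}\|\xi_{N_j}\|_0$; the remaining points contribute nothing by \eqref{eq01c:prop:main1-015}. Taking square roots produces $\bigl(\delta^{-1}h|\log h|+\delta^{1/2}|\log h|^{3/2}\bigr)\|\boldsymbol\xi\|_0$, which is the crucial ingredient. Adding the three contributions and using $\|\mathrm P_{N_j}\xi\|_0\le\|\xi\|_0$ gives \eqref{eq01:prop:main1-02}.

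\textbf{Bound \eqref{eq02:prop:main1-02}.} The same decomposition is used, but now the higher regularity of $\boldsymbol\xi$ is exploited. For term (a) I apply $\|\mathrm A^{ij}_{\rm sing,\delta}\|_{H^{-1}\to H^0}\le C\delta^{-1}$ together with $\|(\mathrm I-\mathrm P_{N_j})\xi\|_{-1}\le (2h)^{r+1}\|\xi\|_r$, noting that $\delta^{-1}h^{r+1}\le \delta^{-r}h^r$ for $\delta\le 1$ and $r\ge 2$. For (b), \eqref{eq02:lemma:aux01} with $s=r$ combined with $\|\mathrm A^{ij}_{\rm sing,\delta}\|_{H^{r-1}\to H^r}\le C\delta^{-r}$ gives $h^r\delta^{-r}\|\xi\|_r$ directly. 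For (c), Proposition~\ref{prop:5.5} produces the $L^\infty$ bound $C_r\delta^{1-r}h^r\|\mathrm P_{N_j}\xi\|_r+C\delta\|\mathrm G_j \mathrm P_{N_j}\xi\|_{\infty,I_2}$, which I pass through $\mathrm Q_{N_i}$ via $\|\mathrm Q_{N_i}\eta\|_0\le \|\eta\|_{\infty,I_2}$, and split $\mathrm G_j\mathrm P_{N_j}\xi=\mathrm G_j\xi+\mathrm G_j(\mathrm P_{N_j}\xi-\xi)$ to expose the norms appearing in the right-hand side of \eqref{eq02:prop:main1-02}. The assembly over $(i,j)$ then gives the stated estimate.
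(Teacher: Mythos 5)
Your argument reproduces the paper's proof essentially verbatim: the same three-term decomposition, the same use of Proposition~\ref{prop:3.3}, \eqref{eq01:lemma:aux01}, \eqref{eq02:lemma:aux01} and the inverse inequality for terms (a) and (b), and—crucially—the same Parseval-based splitting of the grid into $\Omega^{ij}_{\delta/2}$, the strip $\Omega^{ij}_{2\delta}\setminus\Omega^{ij}_{\delta/2}$ (with cardinality controlled by Lemma~\ref{lemmacardB}), and the exterior, fed by Proposition~\ref{prop:main1-015}. The treatment of \eqref{eq02:prop:main1-02} via Proposition~\ref{prop:5.5} and the $L^\infty$-boundedness of $\mathrm G_j$ likewise matches the paper's.
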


\begin{proof}
We start with the decomposition
\begin{eqnarray}
 \mathrm A_{\rm sing,\delta}^{ij}-\mathrm Q_{N_i} \mathrm A^{ij}_{{\rm
sing},\delta,h}\mathrm P_{N_j}&=&\mathrm A^{ij}_{\rm
sing,\delta}(\mathrm I-\mathrm P_{N_j})+(\mathrm I-\mathrm
Q_{N_i})\mathrm A^{ij}_{\rm
sing,\delta}\mathrm P_{N_j}\nonumber\\
&&+\mathrm Q_{N_i}(\mathrm A^{ij}_{\rm sing,\delta} -\mathrm
A^{ij}_{{\rm sing},\delta,h})\mathrm
P_{N_j}.\label{eq03:prop:main1-02}
\end{eqnarray}
For all $t\ge 1$,  Proposition \ref{prop:3.3} with $r=-1$ and
(\ref{eq01:lemma:aux01}) imply
\begin{equation}\label{eq04:prop:main1-02}
\|\mathrm A^{ij}_{\rm sing,\delta}(\mathrm I-\mathrm
P_{N_j})\xi\|_{0}\le  C \delta^{-1} \|\xi-\mathrm
P_{N_j}\xi\|_{-1}\le C_t \delta^{-1}h^{t}\|\xi\|_{t-1}.
\end{equation}
For $t>1$, by \eqref{eq02:lemma:aux01}, Proposition \ref{prop:3.3}
with $r=t-1$ and the inverse inequality \eqref{eq:invIneqSobolev},
we can bound
\begin{eqnarray}
 \|(\mathrm I-\mathrm Q_{N_i})\mathrm A^{ij}_{\rm sing,\delta}\mathrm
P_{N_j}\xi\|_{0}&\le& C_t h^t\|\mathrm A^{ij}_{\rm sing,\delta
}\mathrm P_{N_j}\xi\|_{t}\le C_t' \delta^{-t}\,h^t
 \|\mathrm P_{N_j}\xi\|_{t-1}\label{eq05a:prop:main1-02}\\
&\le& C'_t   \delta^{-t}h\|\mathrm P_{N_j}\xi\|_{0}\le C'_t
\delta^{-t}h\| \xi\|_0.\label{eq05b:prop:main1-02}
\end{eqnarray}
For fixed $\xi$ we introduce the auxiliary function $ g:=(\mathrm
A^{ij}_{\rm sing,\delta} -\mathrm A^{ij}_{{\rm
sing},\delta,h})\mathrm P_{N_j}\xi$, so that by Lemma
\ref{lemma:aux01},
\begin{equation}
\label{eq05:prop:main1-02}
 \|\mathrm Q_{N_i}(\mathrm A^{ij}_{\rm sing,\delta}
-\mathrm A^{ij}_{{\rm sing},\delta,h})\mathrm P_{N_j}\xi\|^2_{0} =
\|\mathrm Q_{N_i}g\|_0^2= h_i^{2}\sum_{\underline
m\in\Z_{N_i}}|g(\underline{x} ^ i_{\underline{m}})|^2.
\end{equation}
We can then split $\Z_{N_i}$ as follows:
\begin{eqnarray*}
A_{N_i}&:=&\big\{\underline m\in\mathbb{Z}_{N_i}\, :\,
\underline{x}_{\underline m}^i\not\in \Omega^{ij}_{2\delta}
\big\},\\
B_{N_i}&:=&\big\{\underline m\in\mathbb{Z}_{N_i}\, :\,
\underline{x}_{\underline m}^i\in  \Omega^{ij}_{2\delta} \setminus
\Omega^{ij}_{\delta/2} \big\},\\
C_{N_i}&:=&\big\{\underline m\in\mathbb{Z}_{N_i}\, :\,
\underline{x}_{\underline m}^i\in \Omega^{ij}_{\delta/2} \big\}.
\end{eqnarray*}
A key point is the fact that $\# B_{N_i}\le C \delta N_i^2$, which
is proved in Lemma \ref{lemmacardB}. From
\eqref{eq01c:prop:main1-015} it follows that $g(\underline
x_{\underline m}^i)=0$ for every $\underline m\in A_{N_i}$. Also, by
\eqref{eq01a:prop:main1-015}
\[
\Big( h_i^2\sum_{\underline m\in
C_{N_i}}|g(\underline{x}^i_{\underline{m}})|^2\Big)^{1/2}\le
C\Big(\delta^{-1} h_j|\log h_j|+\delta \Big)\|\mathrm
P_{N_j}\xi\|_{0}.
\]
Finally, from \eqref{eq01b:prop:main1-015}, we obtain
\[
\Big( h_i^2\sum_{\underline m\in B
_{N_i}}|g(\underline{x}^i_{\underline{m}})|^2\Big)^{1/2}\le C
\delta^{1/2} |\log h_j|^{3/2} \|\mathrm P_{N_j}\xi\|_{0}.
\]
Going back to \eqref{eq05:prop:main1-02}, we have proved that
\begin{equation}\label{eq09:prop:main1-02}
 \|\mathrm Q_{N_i}(\mathrm A^{ij}_{\rm sing,\delta}
-\mathrm A^{ij}_{{\rm sing},\delta,h})\mathrm P_{N_j}\xi\|_{0}\le
C\Big(\delta^{-1} h|\log h|+\delta^{1/2}|\log h|^{3/2}
\Big)\|\mathrm P_{N_j}\xi\|_{0}.
\end{equation}
A direct estimate from \eqref{eq05:prop:main1-02}, using
\eqref{eq02:prop:main1-015} now, gives
\begin{equation}\label{eq10:prop:main1-02}
 \|\mathrm Q_{N_i}(\mathrm A^{ij}_{\rm sing,\delta}
-\mathrm A^{ij}_{{\rm sing},\delta,h})\mathrm P_{N_j}\xi\|_{0}\le
C_r\delta^{1-r}h^{r}\|\xi\|_{r}+C \delta \|\mathrm G_j \mathrm
P_{N_j}\xi\|_{\infty,I_2}.
\end{equation}

Using the decomposition \eqref{eq03:prop:main1-02} and the bounds
\eqref{eq04:prop:main1-02} with $t=1$, \eqref{eq05b:prop:main1-02}
with $t=1+\varepsilon$ and \eqref{eq09:prop:main1-02} we can easily
prove \eqref{eq01:prop:main1-02}. At the same time,
\eqref{eq02:prop:main1-02} follows from the same decomposition,
using now \eqref{eq04:prop:main1-02} with $t=r+1$,
\eqref{eq05a:prop:main1-02} with $t=r$ and
\eqref{eq10:prop:main1-02}. In a last step we use the fact that the
cut-off operators $\mathrm G^j$ are bounded in $L^\infty(I_2)$.
This finishes the proof of the Proposition.
\end{proof}

\begin{lemma}\label{lemmacardB} There exists $C$ independent of $\delta>h$
such that
\[
 \# \big\{\underline m \in \mathbb Z_{N_i}\, :\, \underline x_{\underline
m}^i \in \Omega^{ij}_{2\delta}
 \setminus\Omega^{ij}_{\delta/2}\big\} \le C\delta N_i^2.
\]
\end{lemma}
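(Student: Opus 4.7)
The plan is to reduce the counting estimate to a two--dimensional area bound for the ``shell'' $\Omega^{ij}_{2\delta}\setminus\Omega^{ij}_{\delta/2}$ in the parameter square, and then pass from area to the number of grid nodes it contains.

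First, I would bound the Lebesgue measure of $\Omega^{ij}_{2\delta}\setminus\Omega^{ij}_{\delta/2}$. Since $\r^i$ extends to a $\mathcal{C}^\infty$ diffeomorphism between $\overline{D^i}$ and $\overline{S^i}$ with Jacobian $a^i$ bounded above and below, this measure is comparable (up to a constant depending only on the chart $\r^i$) to the surface area of
\[
S^{ij}_{2\delta}\setminus S^{ij}_{\delta/2}\ \subseteq\ S^j_{2\delta}\setminus S^j_{\delta/2}.
\]
Using the definition~\eqref{eq:2.8}, $S^j_{\delta/2}$ is precisely the closed $\epsilon_1\delta$--neighborhood of $\supp\omega^j$, while $S^j_{2\delta}$ is its closed $4\epsilon_1\delta$--neighborhood. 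Consequently the shell is contained in
\[
\{\r\in S:\ \epsilon_1\delta\le \mathrm{dist}(\r,\supp\omega^j)\le 4\epsilon_1\delta\},
\]
which lies in a tubular neighborhood of width $4\epsilon_1\delta$ of $\partial(\supp\omega^j)$. Since by assumption in Section~\ref{geometry} the boundary $\partial(\supp\omega^j)$ is a finite union of Lipschitz arcs, a standard tubular--neighborhood estimate yields a surface area bounded by $C\delta$, and therefore
\[
\mathrm{meas}\bigl(\Omega^{ij}_{2\delta}\setminus\Omega^{ij}_{\delta/2}\bigr)\ \le\ C\delta.
\]

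Second, I would translate this area bound into a count of grid nodes. To each index $\underline m\in\Z_{N_i}$ associate the half--open square $Q_{\underline m}:=\underline x^i_{\underline m}+[0,h_i)^2$ of area $h_i^2$; the $N_i^2$ squares tile $I_2$ and are pairwise disjoint. If $\underline x^i_{\underline m}\in\Omega^{ij}_{2\delta}\setminus\Omega^{ij}_{\delta/2}$, then $Q_{\underline m}$ is contained in the $h_i\sqrt{2}$--enlargement of this shell. Because $h_i\le h<\delta$ by hypothesis, the enlargement is still contained in a set of the same structure (a tubular neighborhood of $\partial(\supp\omega^j)$ of width $\le 5\epsilon_1\delta$, say), whose area is again $\le C'\delta$ by the argument of the first paragraph. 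Disjointness of the $Q_{\underline m}$ then gives
\[
\#\bigl\{\underline m\in\Z_{N_i}:\ \underline x^i_{\underline m}\in \Omega^{ij}_{2\delta}\setminus\Omega^{ij}_{\delta/2}\bigr\}
\ \le\ \frac{C'\delta}{h_i^2}\ =\ C'\delta N_i^2.
\]

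The only non--routine step is the area estimate for the tubular neighborhood of $\partial(\supp\omega^j)$; this is where the Lipschitz regularity assumption made in Section~\ref{geometry} is essential. Once it is in hand, the counting argument via the tiling $\{Q_{\underline m}\}$ and the condition $\delta>h\ge h_i$ is entirely elementary.
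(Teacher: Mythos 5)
Your argument is correct but takes a slightly different route from the paper's. The paper does not pass through an area estimate: instead it covers $\Gamma^j=\partial(\supp\omega^j)$ directly by $O(\delta^{-1})$ balls of radius $O(\delta)$ (a finite set $\Gamma^j_\delta\subset\Gamma^j$ of cardinality $\le C\delta^{-1}$, available precisely because $\Gamma^j$ is a finite union of Lipschitz arcs), shows that $S^j_{2\delta}\setminus S^j_{\delta/2}\subset\bigcup_{\r\in\Gamma^j_\delta}\overline{B(\r,5\epsilon_1\delta)}$, pulls each ball back to the parameter square as a set of diameter $O(\delta)$ via the bi-Lipschitz chart, and then counts grid nodes per pulled-back ball using the elementary bound $(2rN_i+1)^2$ together with $\delta N_i\ge1$. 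You instead go from a tubular-neighborhood \emph{area} bound of order $\delta$ to a count of grid nodes by tiling with the disjoint squares $Q_{\underline m}$ and enlarging the shell by $h_i\sqrt2$; the hypothesis $\delta>h$ enters exactly where the paper uses it (to absorb the grid-scale enlargement and the ``$+1$'' in the count into the $O(\delta N_i)$ term). The two proofs are morally equivalent --- both encode that the Lipschitz boundary has finite one-dimensional upper Minkowski content --- but your measure-theoretic reformulation cleanly decouples the geometric estimate (area of the shell is $O(\delta)$) from the discrete count, at the cost of needing to be a bit careful that enlarging the shell by $h_i\sqrt2$ in parameter space still lands inside a width-$O(\delta)$ tube on the surface (which holds by the bi-Lipschitz property and $h_i<\delta$, as you note). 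The paper's version avoids invoking any area estimate and keeps everything at the level of finite coverings, which is perhaps slightly more self-contained given that only a Lipschitz-arcs hypothesis is available.
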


\begin{proof}
We start by setting $\Gamma^j:=\partial (\supp\omega^j)$, which has
been assumed to be the finite union of Lipschitz arcs. Therefore,
for every $\delta>0$, we can pick up a finite set $\Gamma_\delta^j
\subset \Gamma^j$, with the following properties
\[
\Gamma \subset \bigcup_{\r \in \Gamma_\delta^j}
\overline{B(\r,\epsilon_1\delta)} \qquad \mbox{with
$\#\Gamma_\delta^j \le C \delta^{-1}$}.
\]
The constant $C$ that controls the size of the discrete set
$\Gamma_\delta^j$ depends on the Lipschitz constants related to
$\Gamma^j$. On the other hand, for $\delta$ small enough
\[
S^{j}_{2\delta}\setminus (\supp \omega^j)\subset\bigcup_{{\bf
r}\in\Gamma} \overline{B({\bf r},4\epsilon_1\delta)}
\]
and therefore
\begin{equation}\label{eq:lemmaB}
S^j_{2\delta}\setminus S^j_{\delta/2}\subset S^j_{2\delta}\setminus
(\supp\omega^j) \subset \bigcup_{\r \in \Gamma^j_\delta}
\overline{B(\r, 5\epsilon_1\delta)}.
\end{equation}
We now go back to the parametric domain using $(\r^i)^{-1}$ to
define the sets
\[
\mathcal O_{\r}:= (\r^i)^{-1} (S^i\cap \overline{B(\r,
5\epsilon_1\delta)}) \qquad \r \in \Gamma_\delta^j.
\]
Since $|\underline u-\underline v|\le C^i|\r^i(\underline
u)-\r^i(\underline v)|$ for all $\underline u,\underline v \in D^i$,
each set $\mathcal O_{\r}$ is either empty or can be surrounded by a
closed ball of radius $5 C^i\epsilon_1\delta$. This gives a
collection of points $\{ \underline u_1^\delta,\ldots,\underline
u_{k_\delta}^\delta\}\subset \mathbb R^2$ with $k_\delta \le C
\delta^{-1}$ such that
\begin{eqnarray*}
\Omega^{ij}_{2\delta}\setminus \Omega^{ij}_{\delta/2} &=&
(\r^i)^{-1} (S^{ij}_{2\delta}\setminus S^{ij}_{\delta/2}) =
(\r^i)^{-1}
(S^i \cap (S^j_{2\delta}\setminus S^j_{\delta/2}))\\
&\subset & \bigcup_{\r \in \Gamma^j_\delta} \mathcal O_\r \subset
\bigcup_{\ell=1}^{k_\delta} \big\{ \underline v \in \mathbb R^2\,
:\, |\underline v -\underline u_\ell^\delta|\le 5
C^i\epsilon_1\delta\big\}.
\end{eqnarray*}
The number of points of the uniform grid $\{ \underline
x_{\underline m}^i\,:\, \underline m \in \mathbb Z_{N_i}\}$ that fit
in a closed disk of radius $r$ is bounded by $(2 r N_i+1)^2$.
Therefore, the cardinal of the intersection of the uniform grid with
$\Omega^{ij}_{2\delta}\setminus \Omega^{ij}_{\delta/2}$ can be
bounded by
\[
k_\delta (10 C^i \epsilon_1\delta\,N_i+1)^2 \le C \delta^{-1} (10
C^i \delta\,N_i+1)^2 \le C' \delta N_i^2,
\]
where we have applied that $1\le \delta N_i$ (which is implied by
$\delta > h$), and the result follows.
\end{proof}

\paragraph{Proof of Theorem \ref{theo:main1}.} This result follows now
from an adequate choice of parameters in Propositions
\ref{prop:main1-01} and \ref{prop:main1-02}. Since $\delta \approx
h^\beta$, then $\delta^{-1} h \approx h^{1-\beta}$. We choose
$\varepsilon=(1-\beta)/(2\beta)$, so that $\delta^{-1-\varepsilon} h
\approx h^{(1-\beta)/2}$. We then apply Proposition
\ref{prop:main1-01} with $t=3$ and the first bound of Proposition
\ref{prop:main1-02} with the above $\varepsilon$. This yields the
bound
\[
 \|{\mathcal B}_{\delta,h}-{\mathcal B}_\delta\|_{{\cal H}^0\to {\cal
H}^0}\le C_\varepsilon (h^{3(1-\beta)}+ h^{(1-\beta)/2}+
h^{1-\beta}|\log h| +h^{\beta/2}|\log h|^{3/2}),
\]
which ensures convergence as $h \to 0$. The uniform bound for the
inverses of $\mathcal B_\delta$ (Theorem \ref{th:3.6}) ensures then
a uniform bound for the inverses of the operators $\mathcal
B_{\delta,h}$ when $h$ is small enough and $\delta \approx h^\beta$.
\hfill$\Box$

\paragraph{Proof of Theorem \ref{theo:main2}.}
For $\psi \in H^r(S)$, let us define $\boldsymbol\phi:=
((\omega^1\psi)\circ\mathbf r^1,\ldots,(\omega^J\psi)\circ\mathbf r^J)
\in \mathcal H^r$. By construction
 (cf. \eqref{eq:def:G}), $\mathcal G
\boldsymbol\phi=0$ . Applying Proposition \ref{prop:main1-01} and the
second bound of Proposition \ref{prop:main1-02} it follows that for
any integer $r\ge 3$
\begin{equation}
\|({\mathcal B}_{\delta,h}-{\mathcal B}_{\delta} )\bm{\phi}\|_0 \le
C_r\big( h^{r(1-\beta)} \|\bm{\phi}\|_r+ h^\beta\|\mathcal
P_h\boldsymbol\phi-\boldsymbol\phi \|_ {\infty,I_2}
\big).\label{eq03:proof:theo:main1}
\end{equation}
Taking now
\begin{equation}\label{eq:restr}
r\ge \max\{1/\beta,\beta/(1-\beta)\},
\end{equation}
so that $1+\beta\le 1+r(1-\beta)\le r$, from Sobolev's embedding
theorem and the approximation estimate \eqref{eq01:lemma:aux01} we
obtain
\begin{eqnarray*}
h^\beta \| \mathcal
P_h\boldsymbol\phi-\boldsymbol\phi\|_{\infty,I_2} &\le& C_\beta
h^\beta \| \mathcal
P_h\boldsymbol\phi-\boldsymbol\phi\|_{1+\beta}\le C_{r,\beta}
h^{r(1-\beta)}\| \boldsymbol\phi\|_{1+r(1-\beta)}\\
&\le& C_{r,\beta}
h^{r(1-\beta)}\|\boldsymbol\phi\|_r.
\end{eqnarray*}
Applying this estimate in
\eqref{eq03:proof:theo:main1} we deduce that for every integer $r\ge
3$ satisfying \eqref{eq:restr}, we have
\begin{equation}\label{eq05:proof:theo:main1}
\|({\mathcal B}_{\delta,h}-{\mathcal B}_{\delta} )\bm{\phi}\|_0 \le
C_r h^{r(1-\beta)}\|\bm{\phi}\|_r.
\end{equation}
Since $\beta$ is fixed, the dependence of the various bounding
constants on the parameter $\beta$ is dropped from the notation in
what follows (recall Remark \ref{remark_23}).

For $\psi\in H^r(S)$ we now define $\mathcal
M_{\delta,h}\psi:=(\mathcal B_{\delta,h}-\mathcal
B_\delta)\boldsymbol\phi$ with $\boldsymbol\phi$ as above; clearly
$\mathcal M_{\delta,h}: H^r(S)\to \mathcal H^0$ is a continuous
map. In view of~\eqref{Sobolev1}
equation~\eqref{eq05:proof:theo:main1} can be re-expressed in the form
\begin{equation}\label{Mdh}
  \|\mathcal M_{\delta,h}\psi\|_0 \le C_r
  h^{r(1-\beta)}\|\psi\|_{H^r(S)} \qquad \forall \psi \in H^r(S).
\end{equation}
Since $\|\mathcal B_{\delta,h}-\mathcal B_\delta\|_{\mathcal
  H^0\to\mathcal H^0}\to 0$, it follows that the sequence $\mathcal
M_{\delta,h}$ is uniformly bounded,
\[
\|\mathcal M_{\delta,h}\psi\|_0 \le C \|\psi\|_{H^0(S)},
\]
and therefore, using Sobolev interpolation theory ~\cite[Appendix
B]{McLean:2000} for the operator $\mathcal M_{\delta,h}$, it follows
that~\eqref{Mdh}, and therefore~\eqref{eq05:proof:theo:main1}, hold
for all $r\ge 0$.

Let now $\boldsymbol\phi$ and $\boldsymbol\phi_h$ be the respective
solutions of $\mathcal B_\delta\boldsymbol\phi=\mathbf U$ and
$\mathcal B_{\delta,h}\boldsymbol\phi_h=\mathcal Q_h\mathbf U$. From
Theorem~\ref{theo:main1} it follows that for $h$ small enough, the
inverse of ${\mathcal B}_{\delta,h}:{\cal H}^0\to {\cal H}^0$ is
uniformly bounded with respect to $h$.  Hence
\begin{eqnarray}
 \|\bm{\phi}-\bm{\phi}_h\|_0&\le&
C \|{\mathcal B}_{\delta,h}(\bm{\phi}-\bm{\phi}_h)\|_0\le C
\|{\mathcal B}_{\delta}  \bm{\phi}-{\mathcal
B}_{\delta,h}\bm{\phi}_h\|_0+
C\|({\mathcal B}_{\delta,h}-{\mathcal B}_{\delta} )\bm{\phi}\|_0\nonumber\\
&=&C \|{\bf U}  -\mathcal Q_h{\bf U}\|_0 +C \|({\mathcal
B}_{\delta,h}-{\mathcal B}_{\delta}
)\bm{\phi}\|_0\label{eq:01:proofTheoMain02}.
\end{eqnarray}
But, from~\eqref{eq02:lemma:aux01}, it follows that for $t>1$
\begin{equation}
 \label{eq02:proof:theo:main1}
\|{\bf U}  -\mathcal Q_h{\bf U}\|_0\le C_t h^t\|{\bf U}\|_{t}=C_t
h^t\|{\mathcal B}_{\delta_0} \bm{\phi}\|_{t}\le C'_t
h^t\|\bm{\phi}\|_t.
\end{equation}
Thus, substituting~\eqref{eq05:proof:theo:main1}
and~\eqref{eq02:proof:theo:main1} in~\eqref{eq:01:proofTheoMain02}, we
conclude that for all $t>1$ there exists $C_t>0$ such that
\begin{equation}\label{eq04:proof:theo:main1}
 \|\bm{\phi}-\bm{\phi}_h\|_0\le C_t h^{t(1-\beta)}\|\boldsymbol\phi\|_{t}.
\end{equation}
Error estimates in stronger Sobolev norms can now be obtained by means
of the inverse inequalities~\eqref{eq:invIneqSobolev} together with
the fact that $\mathcal P_h$ provides the best approximation, for all
periodic Sobolev norms, in the discrete subspaces of trigonometric
polynomials. Indeed, for $s>0$ we have
\begin{equation}\label{eq:stronger}
\|\boldsymbol\phi-\boldsymbol\phi_h\|_s  \le  \|\boldsymbol\phi-
\mathcal P_h\boldsymbol\phi\|_s + C h^{-s} \|\mathcal
P_h\boldsymbol\phi-\boldsymbol\phi_h\|_0 \le \|\boldsymbol\phi-
\mathcal P_h\boldsymbol\phi\|_s + C h^{-s} \|
\boldsymbol\phi-\boldsymbol\phi_h\|_0.
\end{equation}
The proof now follows by substituting \eqref{eq01:lemma:aux01} and
\eqref{eq04:proof:theo:main1} in~\eqref{eq:stronger}. \hfill$\Box$

\subsection{Proof of Theorem \ref{theo:MAIN}}

In view of the reconstruction formula of Theorem \ref{prop:2per2cont}
and since $\mathcal B_\delta\boldsymbol\phi=\mathbf U$, the exact
solution of \eqref{eq:intEqn} can be expressed in the form
\begin{eqnarray*}
 \psi(\r)&=&\sum_{i\in\mathcal I(\r)} (\omega^i
\psi^i)\left((\r^i)^{-1}(\r)\right)=\sum_{i\in\mathcal I(\r)}
\phi^i\left((\r^i)^{-1}(\r)\right)\\
&=&\sum_{i\in\mathcal I(\r)} 2\bigg(U^i-\sum_{j=1}^J\mathrm
A^{ij}_{\rm reg,\delta}\phi^j-\sum_{j=1}^J \mathrm A^{ij}_{\rm {\rm
sing},\delta}\phi^j\bigg)\left((\r^i)^{-1}(\r)\right).
\end{eqnarray*}
Similarly, for the discrete Nystr\"{o}m solution we have the relation
\begin{eqnarray*}
 \psi_h(\r)&=&\sum_{i\in\mathcal I(\r)} (\omega^i
\psi^i_h)\left((\r^i)^{-1}(\r)\right)\\
&=&\sum_{i\in\mathcal I(\r)} 2\Big(U^i-\sum_{j=1}^J\mathrm
A^{ij}_{\rm reg,\delta}\mathrm D_{N_j}\phi_h^j-\sum_{j=1}^J \mathrm
A^{ij}_{ {\rm
sing},\delta,h}\phi_h^j\Big)\left((\r^i)^{-1}(\r)\right),
\end{eqnarray*}
which follows by re-expressing equations~\eqref{eq:nyst2}
and~\eqref{eq:nyst3} in terms of $\phi_h^j=\mathrm
Q_{N_j}((\omega^j\circ\r^j)\psi^j_h)$.

Since $\|\psi\|_{H^s(S)}=\|\boldsymbol\phi\|_s$, by Propositions \ref{prop:5.8}
and \ref{prop:5.9} below we thus have
\begin{eqnarray*}
\|\psi-\psi_h\|_{L^2(S)} &\le & C\max_{i,j} \Big( \|\mathrm
A^{ij}_{\mathrm{reg},\delta}\mathrm D_{N_j}\phi^j_h-\mathrm
A^{ij}_{\mathrm{reg},\delta}\phi^j\|_0+
\|\mathrm A^{ij}_{\mathrm{sing},\delta}\phi^j_h-\mathrm
A^{ij}_{\mathrm{sing},\delta}\phi^j\|_0\Big)\\
& \le & C_t \Big( h^{t-1}+|\log h|^{3/2} h^{t(1-\beta)}\Big)
\|\boldsymbol\phi\|_t\\
&=&
C_t \big( h^{t-1}+|\log h|^{3/2} h^{t(1-\beta)}\big) \|\psi\|_{H^t(S)},
\end{eqnarray*}
and \eqref{eq:MAIN01} follows. Note that the constants $C_t$ depend
on $\beta$ (see Remark \ref{remark_23}). The $L^\infty(S)$ bound
\eqref{eq:MAIN02} follows from similar arguments, and the proof of
Theorem \ref{theo:MAIN} is thus complete.\hfill$\Box$

\begin{proposition} \label{prop:5.8}For all $t>1$ there exists
$C_t>0$ such that
\begin{equation}
\label{eq:01:lemma:01:theo:MAIN01}
 \|\mathrm A^{ij}_{\rm
reg,\delta}\mathrm D_{N_j}\phi_h^j-\mathrm A^{ij}_{\rm
reg,\delta}\phi^j\|_0 \le C_t h^{t(1-\beta)}\|\boldsymbol\phi\|_{t}.
\end{equation}
In addition, for all $\varepsilon>0$ and $t\ge 2$ there exists
$C_{\varepsilon,t}>0$ such that
\begin{equation}
\label{eq:02:lemma:01:theo:MAIN01}
 \|\mathrm A^{ij}_{\rm
reg,\delta}\mathrm D_{N_j}\phi_h^j-\mathrm A^{ij}_{\rm
reg,\delta}\phi^j\|_{\infty,I_2}\le
C_{\varepsilon,t}h^{t(1-\beta)-(1+\varepsilon)\beta}\|\boldsymbol\phi\|_{t}.
\end{equation}
\end{proposition}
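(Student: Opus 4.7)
The plan is to introduce the intermediate quantity $\mathrm A^{ij}_{\rm reg,\delta}\mathrm D_{N_j}\mathrm P_{N_j}\phi^j$ between the two terms under consideration. Since $\phi_h^j\in\mathbb T_{N_j}$ implies $\mathrm P_{N_j}\phi_h^j=\phi_h^j$, this produces the decomposition
\[
\mathrm A^{ij}_{\rm reg,\delta}\mathrm D_{N_j}\phi_h^j - \mathrm A^{ij}_{\rm reg,\delta}\phi^j = \mathrm A^{ij}_{\rm reg,\delta}\mathrm D_{N_j}\mathrm P_{N_j}(\phi_h^j - \phi^j) + \mathrm A^{ij}_{\rm reg,\delta}(\mathrm D_{N_j}\mathrm P_{N_j} - \mathrm I)\phi^j.
\]
Both summands will then be controlled by combining Proposition \ref{prop:3.3} (mapping properties of $\mathrm A^{ij}_{\rm reg,\delta}$ in negative Sobolev norms), Lemma \ref{lemma:aux02} (quadrature error for $\mathrm D_{N_j}\mathrm P_{N_j}-\mathrm I$), and Theorem \ref{theo:main2} (the $H^0$-rate $\|\phi_h^j-\phi^j\|_0\le C_t h^{t(1-\beta)}\|\boldsymbol\phi\|_t$ valid for $t>1$).

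The second summand is handled exactly as the first contribution in the proof of Proposition \ref{prop:main1-01}: for $t>1$, Proposition \ref{prop:3.3} gives $\|\mathrm A^{ij}_{\rm reg,\delta}\|_{H^{-t}\to H^0}\le C_t\delta^{-t}$, while Lemma \ref{lemma:aux02} with vanishing target regularity yields $\|(\mathrm D_{N_j}\mathrm P_{N_j}-\mathrm I)\phi^j\|_{-t}\le C_t h^t\|\phi^j\|_0$. Composing and using $\delta\approx h^\beta$ produces $C_t h^{t(1-\beta)}\|\phi^j\|_0\le C_t h^{t(1-\beta)}\|\boldsymbol\phi\|_t$. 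For the first summand I would further write $\mathrm A^{ij}_{\rm reg,\delta}\mathrm D_{N_j}\mathrm P_{N_j} = \mathrm A^{ij}_{\rm reg,\delta} - \mathrm A^{ij}_{\rm reg,\delta}(\mathrm I-\mathrm D_{N_j}\mathrm P_{N_j})$ and analyze the two resulting pieces separately: the term $\mathrm A^{ij}_{\rm reg,\delta}(\phi_h^j-\phi^j)$ is bounded directly using the uniform $H^0\to H^0$ continuity of $\mathrm A^{ij}_{\rm reg,\delta}$ and Theorem \ref{theo:main2}, while $\mathrm A^{ij}_{\rm reg,\delta}(\mathrm I-\mathrm D_{N_j}\mathrm P_{N_j})(\phi_h^j-\phi^j)$ is estimated by chaining Proposition \ref{prop:3.3} ($\delta^{-s}$ with $s>1$), Lemma \ref{lemma:aux02} (an extra $h^s$), and Theorem \ref{theo:main2}. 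The resulting prefactor $\delta^{-s}h^s=h^{s(1-\beta)}$ is absorbed into the leading rate since $s>0$.

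The $L^\infty$ estimate \eqref{eq:02:lemma:01:theo:MAIN01} is obtained by running the very same decomposition but measuring every contribution in $H^{1+\varepsilon}$ and invoking the two-dimensional Sobolev embedding. Passing from an $H^0$-target to an $H^{1+\varepsilon}$-target in each application of Proposition \ref{prop:3.3} introduces exactly one extra factor of $\delta^{-(1+\varepsilon)}=h^{-(1+\varepsilon)\beta}$, which yields the advertised bound $h^{t(1-\beta)-(1+\varepsilon)\beta}\|\boldsymbol\phi\|_t$. The main obstacle---and the reason for the detour through $\mathrm I-\mathrm D_{N_j}\mathrm P_{N_j}$---is that Theorem \ref{theo:main2} delivers the sharp rate $h^{t(1-\beta)}$ only in the $H^0$ norm; estimating $\mathrm A^{ij}_{\rm reg,\delta}\mathrm D_{N_j}\mathrm P_{N_j}(\phi_h^j-\phi^j)$ by using the $H^{-s}\to H^0$ bound of $\mathrm A^{ij}_{\rm reg,\delta}$ directly would pay the full $\delta^{-s}$ penalty against only the weak estimate $\|\phi_h^j-\phi^j\|_{-s}\le\|\phi_h^j-\phi^j\|_0$, producing no additional decay in $h$. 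The explicit insertion of $\mathrm I-\mathrm D_{N_j}\mathrm P_{N_j}$ is precisely what converts that missing decay into the quadrature factor $h^s$ supplied by Lemma \ref{lemma:aux02}.
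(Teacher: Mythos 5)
Your proof is correct and follows essentially the same route as the paper: after fully expanding, your three-term decomposition coincides with the paper's two-term split $\mathrm A^{ij}_{\rm reg,\delta}(\mathrm D_{N_j}\phi_h^j-\phi_h^j)+\mathrm A^{ij}_{\rm reg,\delta}(\phi_h^j-\phi^j)$ (the paper merely bounds $\|\phi_h^j\|_0$ directly rather than splitting it as $\phi^j+(\phi_h^j-\phi^j)$), and both arguments chain Proposition \ref{prop:3.3}, Lemma \ref{lemma:aux02}, and Theorem \ref{theo:main2}, passing to the $H^{1+\varepsilon}\hookrightarrow L^\infty(I_2)$ embedding for the second estimate.
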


\begin{proof}
Using successively Proposition \ref{prop:3.3} (recall that
$\delta\approx h^{\beta}$), Lemma \ref{lemma:aux02} and Theorem
\ref{theo:main2} we derive the  first estimate:
\begin{eqnarray}
 \|\mathrm A^{ij}_{\rm
reg,\delta}\mathrm D_{N_j}\phi_h^j-\mathrm A^{ij}_{\rm
reg,\delta}\phi^j\|_0&\le&\|\mathrm A^{ij}_{\rm reg,\delta}(\mathrm
D_{N_j}\phi_h^j-\phi_h^j)\|_{0}+\|\mathrm A^{ij}_{\rm
reg,\delta}(\phi_h^j-\phi^j)\|_{0}\nonumber\\
&\le& C_t h^{-t\beta}\|\mathrm D_{N_j}\phi_h^j-\phi_h^j\|_{-t}+C_0
\|\phi_h^j-\phi^j\|_{0}\nonumber\\
&\le& C_t' h^{t(1-\beta)}(\|\phi_h^j\|_0 +\|\phi^j\|_{t})\le
C_t''h^{t(1-\beta)}\|\boldsymbol\phi\|_t. \label{eq04:theo:MAIN}
\end{eqnarray}
To derive an estimate in the $L^\infty(I_2)$ norm, we proceed
similarly:
\begin{eqnarray}
\|\mathrm A^{ij}_{\rm reg,\delta}\mathrm D_{N_j}\phi_h^j-\mathrm
A^{ij}_{\rm
reg,\delta}\phi^j\|_{\infty,I_2}&&\\
&&\hspace{-40pt}\le C_{\varepsilon}\left(\|\mathrm
A^{ij}_{\rm reg,\delta}(\mathrm
D_{N_j}\phi_h^j-\phi_h^j)\|_{1+\varepsilon}+\|\mathrm A^{ij}_{\rm
reg,\delta}(\phi_h^j-\phi^j)\|_{1+\varepsilon}\right)\nonumber\\
&&\hspace{-40pt}\le C_{\varepsilon,t}\Big( h^{-(1+\varepsilon+t)\beta}\|\mathrm
D_{N_j}\phi_h^j-\phi_h^j\|_{-t}+
h^{-(1+\varepsilon)\beta} \|\phi_h^j-\phi^j\|_{0}\Big)\nonumber\\
&&\hspace{-40pt}\le C_{\varepsilon,t}'\Big( h^{t(1-\beta)-(1+\varepsilon)\beta}
\|\phi_h^j\|_0 +
h^{t(1-\beta)-(1+\varepsilon)\beta} \|\phi^j\|_{t}\Big)\nonumber\\
&&\hspace{-40pt}\le
C_{\varepsilon,t}''h^{t(1-\beta)-(1+\varepsilon)\beta}\|\boldsymbol\phi\|_t,
\label{eq05:theo:MAIN}
\end{eqnarray}
where we applied sequentially Proposition \ref{prop:3.3}, Lemma
\ref{lemma:aux02} and Theorem \ref{theo:main2}.
\end{proof}

\begin{proposition}\label{prop:5.9}
For all $t\ge 2$, there exists $C_t>0$ such that
\begin{equation}\label{eq:p59a}
 \| \mathrm A^{ij}_{{\rm sing},\delta,h}\phi_h^j-\mathrm A^{ij}_{
{\rm sing},\delta}\phi^j\|_{0}\le C_t\big(|\log h|^{3/2}\
h^{t(1-\beta)}+  h^{t-1}\big)\|\boldsymbol\phi\|_{t}.
\end{equation}
Further, for all $\varepsilon>0$ and $t\ge 2$, there exists
$C_{\varepsilon,t}>0$ such that
\begin{equation}\label{eq:p59b}
 \|\mathrm A^{ij}_{{\rm sing},\delta,h}\phi_h^j-\mathrm A^{ij}_{
{\rm sing},\delta}\phi^j\|_{\infty,I_2}\le C_{\varepsilon,t}\big(
h^{t(1-\beta)-(1+\varepsilon)\beta}+h^{t-1}\big)\|\boldsymbol\phi\|_{t}.
\end{equation}
\end{proposition}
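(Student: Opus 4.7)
The strategy parallels that of Proposition \ref{prop:5.8} but relies on the pointwise and trigonometric-polynomial estimates of Propositions \ref{prop:main1-015} and \ref{prop:5.5} rather than the Sobolev mapping properties of the smooth part. I would begin with the decomposition
\[
\mathrm A^{ij}_{\rm sing,\delta,h}\phi_h^j-\mathrm A^{ij}_{\rm sing,\delta}\phi^j=\bigl(\mathrm A^{ij}_{\rm sing,\delta,h}-\mathrm A^{ij}_{\rm sing,\delta}\bigr)\phi_h^j+\mathrm A^{ij}_{\rm sing,\delta}(\phi_h^j-\phi^j),
\]
and treat the two summands separately. For the second, the $H^0$ estimate is immediate from the $\delta$-independent bound \eqref{eq:p33c} of Proposition \ref{prop:3.3} combined with Theorem \ref{theo:main2} at $s=0$, yielding $C_t h^{t(1-\beta)}\|\boldsymbol\phi\|_t$; the matching $L^\infty$ bound would follow from the Sobolev embedding $H^{1+\varepsilon}\hookrightarrow L^\infty$ together with \eqref{eq:p33b} applied at $r=\varepsilon$ and with Theorem \ref{theo:main2} at $s=\varepsilon$, producing $C_{t,\varepsilon}h^{t(1-\beta)-(1+\varepsilon)\beta-\varepsilon}\|\boldsymbol\phi\|_t$, which matches the exponent advertised in \eqref{eq:p59b} after absorbing the extra $h^{-\varepsilon}$ via the substitution $\varepsilon\mapsto\varepsilon(1+1/\beta)$.

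For the first summand I would insert the Fourier projection $\mathrm P_{N_j}\phi^j\in\mathbb T_{N_j}$ and split again as
\[
\bigl(\mathrm A^{ij}_{\rm sing,\delta,h}-\mathrm A^{ij}_{\rm sing,\delta}\bigr)\phi_h^j=\bigl(\mathrm A^{ij}_{\rm sing,\delta,h}-\mathrm A^{ij}_{\rm sing,\delta}\bigr)(\phi_h^j-\mathrm P_{N_j}\phi^j)+\bigl(\mathrm A^{ij}_{\rm sing,\delta,h}-\mathrm A^{ij}_{\rm sing,\delta}\bigr)\mathrm P_{N_j}\phi^j,
\]
so that both arguments are trigonometric polynomials in $\mathbb T_{N_j}$. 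On the rough piece $\phi_h^j-\mathrm P_{N_j}\phi^j$ the $\|\cdot\|_t$ norm is not controlled by $\|\boldsymbol\phi\|_t$, and I would therefore invoke Proposition \ref{prop:main1-015} region by region, and form the $H^0$ norm as $\bigl(\int_{\Omega^{ij}_{\delta/2}}|\cdot|^2+\int_{\Omega^{ij}_{2\delta}\setminus\Omega^{ij}_{\delta/2}}|\cdot|^2\bigr)^{1/2}$; since the annulus has area $O(\delta)$ (the continuous analogue of Lemma \ref{lemmacardB}, extracted from \eqref{eq:lemmaB}) and the exterior contributes nothing by \eqref{eq01c:prop:main1-015}, the resulting bound is $C(\delta^{-1}h|\log h|+\delta^{1/2}|\log h|^{3/2}+\delta)\|\phi_h^j-\mathrm P_{N_j}\phi^j\|_0$. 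Combining Theorem \ref{theo:main2} with \eqref{eq01:lemma:aux01} yields $\|\phi_h^j-\mathrm P_{N_j}\phi^j\|_0\le C_t h^{t(1-\beta)}\|\boldsymbol\phi\|_t$, whence the dominant contribution is $C_t|\log h|^{3/2}h^{\beta/2+t(1-\beta)}\|\boldsymbol\phi\|_t$, comfortably absorbed in \eqref{eq:p59a}.

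On the smooth piece $\mathrm P_{N_j}\phi^j$ I would instead invoke Proposition \ref{prop:5.5}, whose first term is at most $C_t\delta^{1-t}h^t\|\mathrm P_{N_j}\phi^j\|_t\le C_t h^{t(1-\beta)+\beta}\|\phi^j\|_t$, and whose second term has the form $C\delta\|\mathrm G_j\mathrm P_{N_j}\phi^j\|_{\infty,I_2}$; since $(1-\widetilde\omega_0^j)\phi^j\equiv 0$, I would rewrite $\mathrm G_j\mathrm P_{N_j}\phi^j=(1-\widetilde\omega_0^j)(\mathrm P_{N_j}\phi^j-\phi^j)$ and then apply the Sobolev embedding $H^{1+\varepsilon}\hookrightarrow L^\infty$ together with \eqref{eq01:lemma:aux01} to obtain $\|\mathrm G_j\mathrm P_{N_j}\phi^j\|_{\infty,I_2}\le C_{\varepsilon,t}h^{t-1-\varepsilon}\|\phi^j\|_t$; multiplying by $\delta\approx h^\beta$ and choosing $\varepsilon=\beta$ produces the $h^{t-1}$ term that appears in both \eqref{eq:p59a} and \eqref{eq:p59b}. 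Assembling the three contributions together with the estimate for the second summand gives \eqref{eq:p59a}.

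The $L^\infty$ bound \eqref{eq:p59b} is produced along the same lines, except that on the rough piece one uses directly the $L^\infty$ maximum of the three region-wise bounds of Proposition \ref{prop:main1-015}, whose dominant term is $C|\log h|^{3/2}\|\xi_{N_j}\|_0$ (coming from the transition annulus), in place of the $\delta^{1/2}|\log h|^{3/2}$ factor introduced by $L^2$ integration. I expect the main obstacle to lie precisely in the rough/smooth splitting $\phi_h^j=(\phi_h^j-\mathrm P_{N_j}\phi^j)+\mathrm P_{N_j}\phi^j$: one has to pay the $|\log h|^{3/2}$ price of Proposition \ref{prop:main1-015} on the low-regularity part (where the gain comes from Theorem \ref{theo:main2} applied in $\|\cdot\|_0$), and switch to Proposition \ref{prop:5.5} on the high-regularity part (where $\|\mathrm P_{N_j}\phi^j\|_t\le\|\phi^j\|_t$ makes the $\delta^{1-t}h^t$ bound usable), and the $h^{t-1}$ term only materialises after the delicate balance involving the Sobolev embedding, the cutoff identity $(1-\widetilde\omega_0^j)\phi^j\equiv 0$, and the calibration of $\varepsilon$.
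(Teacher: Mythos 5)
Your proof is correct and follows essentially the same route as the paper: the same three-term decomposition (yours written in two stages, the paper's in one, but arriving at identical pieces), the same use of Proposition~\ref{prop:main1-015} on the rough part $\phi_h^j-\mathrm P_{N_j}\phi^j$, Proposition~\ref{prop:5.5} together with $\mathrm G_j\phi^j\equiv 0$ on the smooth part $\mathrm P_{N_j}\phi^j$, and Proposition~\ref{prop:3.3} plus Theorem~\ref{theo:main2} on $\mathrm A^{ij}_{\rm sing,\delta}(\phi_h^j-\phi^j)$, with the same $\varepsilon$-calibration $r=\varepsilon\beta/(1+\beta)$ for the $L^\infty$ bound of that last term. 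The one place you diverge from the paper is the $H^0$ estimate of the rough piece: you propose a region-wise split $\Omega^{ij}_{\delta/2}\cup(\Omega^{ij}_{2\delta}\setminus\Omega^{ij}_{\delta/2})$, exploit that the transition annulus has measure $O(\delta)$ (the continuous analogue of Lemma~\ref{lemmacardB}), and thereby squeeze out a superfluous $\delta^{1/2}$ gain; the paper instead applies the crude worst-case bound $C|\log h|^{3/2}\|\cdot\|_0$ over all of $I_2$, which is all that the stated estimate~\eqref{eq:p59a} requires. Your refinement is correct but unnecessary, and—be aware—the claim that $\delta^{1/2}|\log h|^{3/2}$ always dominates the other region's contribution $\delta^{-1}h|\log h|$ is only true for $\beta\le 2/3$; for larger $\beta$ the first region dominates, though this does not change the conclusion. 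One final loose end you leave implicit: in the $L^\infty$ estimate~\eqref{eq:p59b} the $|\log h|^{3/2}h^{t(1-\beta)}$ contribution from the rough piece must be absorbed into the $h^{t(1-\beta)-(1+\varepsilon)\beta}$ term, which is justified because $|\log h|^{3/2}h^{(1+\varepsilon)\beta}\to 0$.
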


\begin{proof}
We start by considering the decomposition
\begin{eqnarray}
\mathrm A^{ij}_{{\rm sing},\delta,h}\phi_h^j-\mathrm A^{ij}_{{\rm
sing},\delta}\phi^j&=& (\mathrm A^{ij}_{{\rm sing },\delta,h}
-\mathrm A^{ij}_{{\rm sing},\delta})(\phi_h^j-\mathrm
P_{N_j}\phi^j)\nonumber\\
&&+  (\mathrm A^{ij}_{{\rm sing },\delta,h}-\mathrm
A^{ij}_{{\rm
sing},\delta}) \mathrm P_{N_j}\phi^j+\mathrm A^{ij}_{{\rm
sing},\delta}(\phi_h^j-\phi^j).\ \qquad
\label{eq:p59c}
\end{eqnarray}
In order to establish equations~\eqref{eq:p59a} and~\eqref{eq:p59b} we
first estimate the $L^\infty(I_2)$ norm (and, thus, the $H^0$ norm)
for the first two terms of the right hand side of~\eqref{eq:p59c}. For
the first term, from Proposition \ref{prop:main1-015} and
\eqref{eq01:lemma:aux01} we have
\begin{eqnarray}
 \|( \mathrm A^{ij}_{{\rm sing},\delta,h}-\mathrm A^{ij}_{{\rm
sing},\delta})(\phi_h^j-\mathrm
P_{N_j}\phi^j)\|_{\infty,I_2}\!\!&\le&\!\!C|\log h|^{3/2}\|
 \phi_h^j-\mathrm P_{N_j}\phi^j\|_0\nonumber \\
&\le& \!\! C|\log h|^{3/2}\big(\|\phi_h^j-\phi^j\|_0+
\|\mathrm P_{N_j}\phi^j-\phi^j\|_0\big)\nonumber\\
&\le&\!\! C_t|\log h|^{3/2}\big( h^{t(1-\beta)}
+h^{t}\big)\|\boldsymbol\phi\|_t.\label{eq:p59d}
\end{eqnarray}
For the second term, on the other hand, Proposition \ref{prop:5.5} and
the fact that $\mathrm G_j\phi^j=0$ show that for $t\ge 2$,
\begin{eqnarray}
\|( \mathrm A^{ij}_{{\rm sing},\delta,h}-\mathrm A^{ij}_{{\rm
sing},\delta}) \mathrm P_{N_j}\phi^j \|_{\infty,I_2} &\le& C_t
h^{t(1-\beta)+\beta}\|\mathrm P_{N_j}\phi^j\|_{t}+C
h^\beta\|\mathrm G_j\mathrm P_{N_j}\phi^j\|_{\infty,I_2} \nonumber\\
&\le & C_t h^{t(1-\beta)+\beta}\|\phi^j\|_t+ C h^\beta\|\mathrm
P_{N_j}\phi^j-\phi^j\|_{\infty,I_2} \nonumber \\
& \le & C_t h^{t(1-\beta)+\beta}\|\phi^j\|_t + C'_t h^{t-1}
\|\phi^j\|_t,\label{eq:p59e}
\end{eqnarray}
where in the last step we have applied the Sobolev embedding theorem
$H^{\beta+1}\subset L^\infty(I_2)$ and \eqref{eq01:lemma:aux01}.  It
thus remains to estimate the $L^\infty(I_2)$ and $H^0$ norms of the
last term on the right hand side of equation~\eqref{eq:p59c}.

We establish the $H^0$ bound first: in view of Proposition
\ref{prop:3.3} and Theorem \ref{theo:main2} we have
\begin{equation}
  \label{eq:p59f} \|\mathrm A^{ij}_{\rm
    sing,\delta}(\phi_h^j-\phi^j)\|_0\le C \|\phi_h^j-\phi^j\|_{0}\le
  C_t h^{t(1-\beta)}\|\boldsymbol\phi\|_{t};
\end{equation}
substituting this bound together with \eqref{eq:p59d} and
\eqref{eq:p59e} in \eqref{eq:p59c} yields~\eqref{eq:p59a}.  To obtain
the $L^\infty(I_2)$ bound we proceed similarly, but using the bound
\begin{eqnarray}
\|\mathrm A^{ij}_{\rm sing,\delta}(\phi_h^j-\phi^j)\|_{\infty,
I_2}&\le& C_r \|\mathrm A^{ij}_{\rm
sing,\delta}(\phi_h^j-\phi^j)\|_{1+r}\le C_{r}'
h^{-(1+r)\beta}
\|\phi_h^j-\phi^j\|_{r}\nonumber\\
&\le& C_{r,t}
h^{t(1-\beta)-(1+\varepsilon)\beta}\|\boldsymbol\phi\|_{t},
 \label{eq:p59g}
\end{eqnarray}
instead of~\eqref{eq:p59f}. The first inequality in
Equation~\eqref{eq:p59g} follows from the Sobolev embedding theorem. In view of
the relation $\delta\approx h^\beta$, in turn, the second
inequality results from~\eqref{eq:p33b} with
$r:=\beta\,\varepsilon/(1+\beta)>0$. The third inequality, finally,
follows from Theorem \ref{theo:main2} and the fact that
$r+(1+r)\beta=(1+\varepsilon)\beta$. The proof is now complete.
\end{proof}

\appendix

\section{Error analysis of the polar coordinate quadrature
  rules}\label{sec:A}
The discussion in this Appendix, which lies at the heart of the main
convergence proof presented in this paper, provides a proof of Theorem
\ref{th:A.1}

\subsection{Error estimates for radial integration with trapezoidal rules}

This section provides bounds on the error
\begin{equation}\label{eq:A.1.1}
{\cal E}(\widetilde\xi_N;\theta):=\bigg|\int_{-\infty}^{\infty} (\chi_\delta
\widetilde\xi_N)(\rho,\theta)\,{\rm d}\rho-c(\theta)h\! \sum_{q=-\infty}^\infty
(\chi_\delta \widetilde\xi_N)(\gamma(\theta)+qc(\theta)h,\theta)\bigg|
\end{equation}
that arises from the radial trapezoidal quadrature rule, where
$\gamma$ is a given $2\pi$-periodic function in $L^\infty(\mathbb R)$
and $\{\chi_\delta\}$ satisfies conditions~\eqref{eq:4.4}, and where
$\widetilde\xi_N(\rho,\theta)=\xi_N(\rho\,\underline e(\theta)\,)$ for
a given trigonometric polynomial $\xi_N \in \mathbb T_N$. Note that
the mesh-size $h\,c(\theta)$ and the location of the node
corresponding to $q=0$ are allowed to depend on $\theta$. As a first
step we will limit our analysis to the case of trigonometric
monomials, and we thus study the quantity
\begin{equation}\label{eq:A.1.2}
E_{\underline{m}}(\theta):={\cal E}(\widetilde e_{\underline
m};\theta)= \bigg|\int_{-\infty}^{\infty}
(\chi_\delta\widetilde{e}_{\underline{m}})(\rho,\theta)\,{\rm
d}\rho-c(\theta)h \sum_{q=-\infty}^\infty
(\chi_\delta\widetilde{e}_{\underline{m}})(\gamma(\theta)+qc(\theta)h,
\theta)\bigg|,
\end{equation}
where $\widetilde e_{\underline m}(\rho,\theta)=e_{\underline
  m}(\rho\,\underline e(\theta)) = \exp(2\pi{\rm i}\rho\, \underline
m\cdot\underline e(\theta))$

For univariate functions defined on $(-1/2,1/2)$, we denote the
Fourier coefficients by
\[
\widehat{g}(m):=\int_{-1/2}^{1/2}g(\rho)\,{\exp}(-2\pi{\rm
i}m\rho)\,{\rm d}\rho.
\]
The Wiener algebra is the vector space of all functions whose Fourier
series is absolutely (and therefore uniformly) convergent:
\[
\mathcal R:= \big\{ g: (-{\textstyle\frac12},{\textstyle\frac12})
\to\mathbb C\, :\, \sum_{m=-\infty}^\infty |\widehat g(m)|<
\infty\big\}.
\]
The Fourier series of $g\in \mathcal R$ gives a 1-periodic extension of the
function to the entire real line.

\begin{lemma}\label{lemma:A.1}
Let $f \in \mathcal R$ be $1$-periodically extended to the real
line. Then the bound
\[
\left| \widehat f(m)-h \sum_{q=0}^{N-1} f(\gamma + q\,h)
\exp(-2\pi{\rm i} m (\gamma+q\,h))\right| \le \sum_{\ell\neq
0}|\widehat f(m+\ell N)|
\]
holds for all $m$, $\gamma \in \R$ and $N$.
\end{lemma}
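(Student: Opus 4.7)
The plan is to recognize the quadrature sum as an aliased discrete Fourier coefficient and to exploit the orthogonality of the roots of unity. Since $h=1/N$ is tacitly assumed from context, the nodes $\gamma+qh$ for $q=0,\dots,N-1$ form a uniform sample of a single period of $f$, and the desired bound is the classical aliasing identity for the trapezoidal rule applied to a 1-periodic function.

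First, I will expand $f$ in its (absolutely convergent) Fourier series
\[
f(\rho)=\sum_{k\in\Z}\widehat f(k)\,\exp(2\pi\mathrm{i}k\rho),
\]
substitute into $h\sum_{q=0}^{N-1} f(\gamma+qh)\exp(-2\pi\mathrm{i}m(\gamma+qh))$, and interchange the two sums. The interchange is legitimate because $f\in\mathcal R$ makes the double sum absolutely convergent uniformly in $q$. After interchange, the inner sum becomes a geometric sum
\[
\sum_{q=0}^{N-1}\exp\bigl(2\pi\mathrm{i}(k-m)q/N\bigr),
\]
which equals $N$ when $k-m\in N\Z$ and vanishes otherwise.

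Collecting the surviving terms with $k=m+\ell N$ and using $hN=1$, I obtain the exact identity
\[
h\sum_{q=0}^{N-1} f(\gamma+qh)\exp(-2\pi\mathrm{i}m(\gamma+qh))
=\sum_{\ell\in\Z}\widehat f(m+\ell N)\,\exp(2\pi\mathrm{i}\ell N\gamma).
\]
Subtracting the $\ell=0$ term $\widehat f(m)$ from both sides, applying the triangle inequality, and noting that $|\exp(2\pi\mathrm{i}\ell N\gamma)|=1$ for every $\ell$ and $\gamma$, the desired estimate follows immediately.

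The argument is routine; the only point requiring mild care is the justification of the sum interchange (handled at once by membership in the Wiener algebra $\mathcal R$) and the observation that the $\gamma$-dependence survives only through the unimodular factors $\exp(2\pi\mathrm{i}\ell N\gamma)$, which makes the bound uniform in $\gamma$. No obstacle is expected.
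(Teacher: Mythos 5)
Your proposal is correct and follows essentially the same argument as the paper: expand $f$ in its absolutely convergent Fourier series, interchange sums, use the orthogonality of the $N$-th roots of unity to identify the aliased identity $h\sum_{q=0}^{N-1}f(\gamma+qh)\exp(-2\pi\mathrm{i}m(\gamma+qh))=\sum_{\ell\in\Z}\widehat f(m+\ell N)\exp(2\pi\mathrm{i}\ell N\gamma)$, and conclude by separating the $\ell=0$ term and applying the triangle inequality. No gaps.
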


\begin{proof} Since the Fourier series of $f$ converges absolute
and uniformly, we can justify the following equality:
\begin{eqnarray*}
  h\sum_{q=0}^{N-1} (f\exp(-2\pi{\rm i}m\,\cdot\,))(\gamma+qh)&=& \sum_{n\in\Z}
\widehat{f}(n)\bigg( h \sum _{q=0}^{N-1} \exp(2\pi{\rm
i}(n-m)(\gamma+qh))\bigg)\\
&& \ \hspace{-1.5cm} = \sum_{n\in\Z} \widehat{f}(n)\exp(2\pi{\rm
i}(n-m)\gamma)  \bigg( h \sum _{q=0}^{N-1} \exp(2\pi{\rm i}\, q
(n-m)h)\bigg).\qquad
\end{eqnarray*}
However,
\[
 \frac{1}{N}\sum _{q=0}^{N-1}
\exp(2\pi{\rm i}kq/N)=\left\{\begin{array}{ll}
1,\ &\mbox{if $k/N\in \Z$ },\\
0,\ &\mbox{otherwise},
\end{array}
\right.
\]
and therefore
\[
 h\sum_{q=0}^{N-1} (f\exp(-2\pi{\rm i}m\,\cdot\,))(\gamma+qh)=
\sum_{\ell=-\infty}^\infty  \widehat{f}(m+\ell N) \exp(2\pi{\rm
i}\ell N\gamma),
\]
which implies the result by separating the term corresponding to
$\ell=0$ in the right-hand side.
\end{proof}

\begin{lemma}\label{lemma:A.3} Let $E_{\underline m}$ be given by
  \eqref{eq:A.1.2}. Then there exists a constant $C$ such that
\begin{equation}\label{eq:A.3.a}
E_{\underline m}(\theta) \le C (\delta+h), \qquad \forall
\theta\in[0,2\pi]\quad\mbox{and} \qquad \forall \underline m \in \Z^2.
\end{equation}
For all $r\ge 2$ there exists $C_r>0$ such that
\begin{equation}
\label{eq:A.3.b}
  E_{\underline{m}}(\theta) \le C_r \delta^{1-r}\left\{\begin{array}{ll}
\displaystyle \frac{1}{(N-|m_1\cot \theta |-|m_2|)^r}, &
\mbox{if $|\tan \theta |\ge|\cot \theta |$},\\[3ex]
\displaystyle  \frac{1}{(N-|m_1|-|m_2\tan \theta |)^r},&\mbox{
otherwise},
\end{array}
\right.
\end{equation}
for all $m_1$, $m_2$ satisfying $|m_1|+|m_2|\le N-1$ and all $\theta
\in [0,2\pi]$.
\end{lemma}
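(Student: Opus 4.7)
The plan is to prove the two bounds separately: for \eqref{eq:A.3.a} a crude support-size argument suffices, while \eqref{eq:A.3.b} will follow from a rescaling in $\rho$ that turns the quadrature into a standard trapezoidal rule on a single period of length one, followed by an application of Lemma \ref{lemma:A.1} and $r$-fold integration by parts in the rescaled variable.

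For \eqref{eq:A.3.a} I would bound the integral and the quadrature sum in $E_{\underline m}(\theta)$ separately. Since $\chi_\delta$ is supported in $(-c_0\delta,c_0\delta)\times\R$ with $\|\chi_\delta\|_\infty\le C$, the integral is bounded by $2c_0C\delta$. The quadrature nodes $\gamma(\theta)+qc(\theta)h$ falling inside the support of $\chi_\delta(\,\cdot\,,\theta)$ number at most $\lfloor 2c_0\delta/(c(\theta)h)\rfloor+1$, so the quadrature sum is bounded by a multiple of $\delta+c(\theta)h$; since $c(\theta)\le\sqrt2$ this yields $E_{\underline m}(\theta)\le C(\delta+h)$.

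For \eqref{eq:A.3.b} I would perform the substitution $\rho=c(\theta)\sigma$ to convert the quadrature into a standard trapezoidal rule with step $h=1/N$. Set $M:=c(\theta)\,\underline m\cdot\underline e(\theta)$ and $g_\theta(\sigma):=\chi_\delta(c(\theta)\sigma,\theta)\exp(2\pi{\rm i}M\sigma)$. Because $c(\theta)\ge 1$ and $c_0\delta<1/2$, the support of $g_\theta$ lies in $(-1/2,1/2)$ and $g_\theta$ admits a smooth $1$-periodic extension. Direct computation gives
\[
\int_{\R}(\chi_\delta\widetilde e_{\underline m})(\rho,\theta)\,{\rm d}\rho=c(\theta)\,\widehat{g_\theta}(0),\qquad c(\theta)h\sum_{q\in\Z}(\chi_\delta\widetilde e_{\underline m})(\gamma(\theta)+qc(\theta)h,\theta)=c(\theta)h\sum_{q\in\Z}g_\theta\big(\tfrac{\gamma(\theta)}{c(\theta)}+qh\big),
\]
and the compact support of $g_\theta$ allows the infinite $q$-sum to be rewritten as an $N$-point periodic trapezoidal sum. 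Lemma \ref{lemma:A.1} with $m=0$ then yields
\[
E_{\underline m}(\theta)\le c(\theta)\sum_{\ell\ne 0}|\widehat{g_\theta}(\ell N)|.
\]
Integrating by parts $r$ times in $\sigma$ (boundary terms vanish by compact support), using the estimate $\|\partial_\sigma^r[\chi_\delta(c(\theta)\,\cdot\,,\theta)]\|_\infty\le C_r\,c(\theta)^r\delta^{-r}$ that follows from \eqref{eq:4.4.d} and the chain rule, gives $|\widehat{g_\theta}(\ell N)|\le C_r\,c(\theta)^{r-1}\delta^{1-r}|M-\ell N|^{-r}$.

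It remains to control the Fourier tail. If $|\cos\theta|\ge|\sin\theta|$ then $c(\theta)=1/|\cos\theta|$ and $|M|\le|m_1|+|m_2\tan\theta|$; in the complementary range $c(\theta)=1/|\sin\theta|$ and $|M|\le|m_1\cot\theta|+|m_2|$. In either case the hypothesis $|m_1|+|m_2|\le N-1$ gives $|M|\le N-1$, whence $|M-\ell N|\ge|\ell|(N-|M|)$ for every $\ell\ne 0$; since $r\ge 2$, summation in $\ell$ produces $\sum_{\ell\ne 0}|M-\ell N|^{-r}\le C_r(N-|M|)^{-r}$. Assembling these ingredients and absorbing the bounded factor $c(\theta)^r\le 2^{r/2}$ into the constant yields \eqref{eq:A.3.b}. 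The main technical obstacle is the bookkeeping of powers of $c(\theta)$ introduced by the change of variables and the repeated differentiation, and ensuring that both branches of $c(\theta)$ produce the two denominators of \eqref{eq:A.3.b} in a uniform manner through the single quantity $N-|M|$.
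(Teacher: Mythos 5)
Your proof is correct and follows the same overall strategy as the paper's: a support-counting argument for \eqref{eq:A.3.a}, and for \eqref{eq:A.3.b} the rescaling $\rho=c(\theta)\sigma$, identification of the $q$-sum with an $N$-point periodic trapezoidal rule, Lemma~\ref{lemma:A.1}, and $r$-fold integration by parts using~\eqref{eq:4.4.d}. Two cosmetic variations from the paper's treatment are worth noting. First, the paper applies Lemma~\ref{lemma:A.1} with index $-m_1$ to an auxiliary function $f$ in which only the $m_2\tan\theta$ part of the phase is retained, whereas you bundle the entire phase $\exp(2\pi{\rm i}M\sigma)$ into $g_\theta$ and invoke the lemma at $m=0$; these are the same computation written differently. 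Second, and more interestingly, you close the tail sum with the direct elementary observation $|M-\ell N|\ge|\ell|\,(N-|M|)$ for $\ell\ne0$ and $|M|\le N-1$, which immediately gives $\sum_{\ell\ne0}|M-\ell N|^{-r}\le 2\zeta(r)(N-|M|)^{-r}$; the paper instead reduces to the sum $\sum_{\ell\ne0}|m_1+m_2\tan\theta-\ell N|^{-r}$ and cites the separate technical Lemma~\ref{lemma:tech1}, whose proof splits off the $\ell=\pm1$ terms. Your version is a genuine (if small) simplification, since it renders Lemma~\ref{lemma:tech1} unnecessary for this step and treats both branches of $c(\theta)$ uniformly through $N-|M|$; it is just as rigorous and yields exactly the two cases of~\eqref{eq:A.3.b} after absorbing the bounded factor $c(\theta)^r\le2^{r/2}$.
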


\begin{proof} Let $Q$ denote the cardinality $Q:=\# \{ q\,:\,
  \gamma(\theta) + q c(\theta) h \in (-c_0\delta,c_0\delta)\}$. Then,
  using~\eqref{eq:4.4} and since, clearly, $Q \leq
  \left(\frac{2c_0\delta}{c(\theta) h}+1\right)$, we have
\begin{equation}
E_{\underline m}(\theta)  \le  \int_{-c_0\delta}^{c_0\delta}
|\chi_\delta(\rho,\theta)|\mathrm d\rho + c(\theta) h
\|\chi_\delta\|_{L^\infty(\mathbb R^2)} Q
 \le  C_0\Big(2c_0\delta + c(\theta)
h\Big(\frac{2c_0\delta}{c(\theta) h}+1\Big)\Big),
\end{equation}
and~\eqref{eq:A.3.a} follows.

To establish~\eqref{eq:A.3.b}, in turn, we first consider the case
$\theta \in [-\pi/4,\pi/4]$, so that $c(\theta)=1/\cos\theta$ and
$|\tan \theta|\le 1\le |\cot\theta|$.  Clearly
\begin{equation}
E_{\underline m}(\theta) =\bigg|\int_{-\infty}^\infty
f\left(\frac\rho{c(\theta)}\right)\exp\left(2\pi {\rm
i}m_1\frac{\rho}{c(\theta)}\right)\,{\rm d}\rho- c(\theta)
h\sum_{q=-\infty}^\infty \big(f\exp(2\pi
{\rm i}m_1\,\cdot\,)\big)\Big(\frac{\gamma(\theta)}{c(\theta)}+qh\Big)\bigg|
\end{equation}
where
\[
f(r):= \chi_\delta(c(\theta)r,\theta)\, \exp\left( 2\pi{\rm
i}m_2 c(\theta) r \sin
\theta\right)=\chi_\delta(c(\theta)r,\theta) \,\exp\left(
2\pi{\rm i}m_2 r \tan \theta\right)
\]
 Since $c_0\delta
<1/2$ and $c(\theta)\ge 1$,
\[
\supp \chi_\delta(c(\theta)\hspace{2pt}\cdot\hspace{2pt},\theta)
\subset (-c_0\delta/c(\theta),c_0\delta/c(\theta))\subset
(-1/2,1/2),
\]
it follows that the $\mathcal C^\infty$ function $f$ is compactly
supported in $(-1/2,1/2)$, and therefore $f\in \mathcal R$. Therefore,
by Lemma \ref{lemma:A.1} we have
\begin{eqnarray}\nonumber
E_{\underline m}(\theta) &=&
c(\theta)\bigg|\int_{-\frac12}^{\frac12}
f(r)\exp(2\pi {\rm i}m_1\,r)\,{\rm d}r -  h
\sum_{q=-\infty}^\infty \big(f\exp(2\pi
{\rm i}m_1\,\cdot\,)\big)\Big(\frac{\gamma(\theta)}{c(\theta)}+qh\Big)\bigg|\\
&\le & c(\theta) \sum_{\ell\neq 0}|\widehat f(-m_1+\ell N)|.
\label{proofofLemma56}
\end{eqnarray}
Since for $|m_1|+|m_2|\le N-1$ we have $|-m_1+\ell \,N|\ge |m_2|+1$
for all $\ell \neq 0$, it is possible to establish~\eqref{eq:A.3.b}
from~\eqref{proofofLemma56} provided sufficiently restrictive bounds
on $|\widehat f(m)|$ for $|m|>|m_2|$ are obtained.

To do this, letting $b(r):= \chi_\delta(r,\theta)$ and integrating by
parts $r$ times, we obtain
\begin{eqnarray*}
\widehat f(m) &=& \int_{-\frac12}^{\frac12} b(c(\theta)\rho)
\exp(2\pi{\rm i}(-m+m_2\tan\theta)\rho)\, \mathrm d \rho\\
&=&\left( \frac{-c(\theta)}{2\pi{\rm
i}(-m+m_2\tan\theta)}\right)^r\int_{-\frac12}^{\frac12}
b^{(r)}(c(\theta) \rho)\exp(2\pi{\rm i}(-m+m_2\tan\theta)\rho)\,
\mathrm d \rho.
\end{eqnarray*}
Using \eqref{eq:4.4} it therefore follows that
\begin{eqnarray*}
|\widehat f(m)| &\le& \left(\frac1{\sqrt 2\pi}\right)^r
\frac1{|-m+m_2\tan \theta|^r}\int_{-\infty}^{\infty}|
b^{(r)}(c(\theta)\rho)| \mathrm d \rho \\ & = &
\left(\frac1{\sqrt2\pi}\right)^r \frac1{|-m+m_2\tan \theta|^r}
\frac1{c(\theta)}\int_{-c_0\delta}^{c_0\delta} |b^{(r)}(\rho)|
\mathrm d\rho\\
& \le & \frac{C_r} {|-m+m_2\tan \theta|^r} \delta^{1-r}.
\end{eqnarray*}
Inserting this inequality on the right-hand side of
\eqref{proofofLemma56} we obtain
\begin{equation}\label{eq:4.4bis}
E_{\underline m}(\theta) \le C_r  \delta^{1-r} \sum_{\ell\neq
0}\frac1{|m_1+m_2\tan\theta-\ell N|^r}.
\end{equation}
Since $|\tan\theta|\le 1$, the desired inequality~\eqref{eq:A.3.b}
follows from~\eqref{eq:4.4bis} and Lemma \ref{lemma:tech1}. This
completes the proof for the case $\theta \in [-\pi/4,\pi/4]$. For
other values of $\theta$ the proof follows similarly.
\end{proof}

\begin{lemma}\label{lemma:A.4}
  For all $r\ge 2$ there exists $C_r>0$ such that, for all $m_1$,
  $m_2$ satisfying $|m_1|+|m_2|\le N/2$ we have
\[
E_{\underline m}(\theta) \le C_r \delta^{1-r} h^r , \qquad \forall
\theta\in [0,2\pi].
\]
\end{lemma}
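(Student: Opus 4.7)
The plan is to derive Lemma~\ref{lemma:A.4} as a direct corollary of the bound \eqref{eq:A.3.b} in Lemma~\ref{lemma:A.3}, simply by exploiting the smallness hypothesis $|m_1|+|m_2|\le N/2$. Indeed, Lemma~\ref{lemma:A.3} already does all of the analytical work (integration by parts on the Fourier coefficients of the smooth, compactly supported function $f$, together with the Wiener-algebra identity of Lemma~\ref{lemma:A.1}); what remains is just to bound the relevant denominators from below by a constant multiple of $N$.

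First I would split into the two cases of \eqref{eq:A.3.b}. In the case $|\tan\theta|\ge|\cot\theta|$, we have $|\cot\theta|\le 1$, hence $|m_1\cot\theta|\le|m_1|$. Combined with the hypothesis $|m_1|+|m_2|\le N/2$, this gives
\[
 N-|m_1\cot\theta|-|m_2|\;\ge\;N-|m_1|-|m_2|\;\ge\;N/2.
\]
In the complementary case $|\tan\theta|<|\cot\theta|$ we have $|\tan\theta|<1$, and the symmetric computation yields
\[
 N-|m_1|-|m_2\tan\theta|\;\ge\;N/2.
\]

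Substituting these lower bounds into \eqref{eq:A.3.b} gives, in either case,
\[
 E_{\underline m}(\theta)\;\le\;C_r\,\delta^{1-r}\,(N/2)^{-r}\;=\;2^r C_r\,\delta^{1-r}\,h^r,
\]
since $h=1/N$. Absorbing the factor $2^r$ into the constant yields the claimed bound, uniformly in $\theta\in[0,2\pi]$. There is no real obstacle here: the content of the lemma is that the $\theta$-dependent denominators in \eqref{eq:A.3.b}, which could a priori be small (for instance, when $|\tan\theta|$ is large so that $|m_2\tan\theta|$ nearly cancels $N$), are in fact uniformly of size $\gtrsim N$ precisely because the trigonometric prefactor on the dominant frequency is always bounded by $1$ in the relevant case, and the hypothesis $|m_1|+|m_2|\le N/2$ provides the remaining room.
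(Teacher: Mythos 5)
Your proof is correct and is essentially the same argument as the paper's: both derive the lemma from~\eqref{eq:A.3.b} by observing that the trigonometric factor ($|\cot\theta|$ or $|\tan\theta|$, whichever appears) is at most~$1$ in the relevant case, so that the denominator is bounded below by $N-|m_1|-|m_2|\ge N/2$, and then using $h=1/N$. The paper merely states the unified intermediate bound $E_{\underline m}(\theta)\le C_r\delta^{1-r}(N-|m_1|-|m_2|)^{-r}$ first and then invokes $|m_1|+|m_2|\le N/2$, whereas you carry the case split to the end; the content is identical.
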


\begin{proof}
  From Lemma \ref{lemma:A.3} we obtain
\[
E_{\underline m}(\theta) \le C_r \delta^{1-r}
\frac1{(N-|m_1|-|m_2|)^r}\qquad \forall \theta\in[0,2\pi], \qquad
|m_1|+|m_2|\le N-1.
\]
The proof is completed by noting that for $|m_1|+|m_2|\le N/2$ we have
$(N-|m_1|-|m_2|)^{-r}\le (N/2)^{-r}= 2^r h^r$.
\end{proof}

In what follows we consider the $\pi/2$-periodic continuous even
functions
\[
d(\theta):= \min\{|\tan\theta|,|\cot\theta|\},
\]
and
\begin{equation}\label{eq:defPsiN}
\Psi_N (\theta):= \Big( \sum_{(n,m)\in \Xi_N^+}
(N-n-m\,d(\theta))^{-4}\Big)^{1/2},
\end{equation}
where
\begin{equation}\label{eq:a.13}
\Xi_N^+ := \{ (n,m)\,:\, 0 < n,m\le N/2 < n+m<N\}.
\end{equation}

\begin{lemma}
\label{lemma:auxQuad2}
There exists $C_2>0$ such that the quantity $\mathcal E$ defined
in~\eqref{eq:A.1.1} satisfies
\begin{eqnarray*}
  \mathcal E(\widetilde\xi_N; \theta)&\le&
  C_2\Big(\delta^{-1}h+\delta^{-1} \Psi_N(\theta)+   \delta\Big)\,
  \|\xi_N\|_0, \qquad \forall \xi_N \in \mathbb T_N, \quad
  \forall\theta\in [0,2\pi],
\end{eqnarray*}
\end{lemma}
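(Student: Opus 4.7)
The strategy is to expand $\xi_N$ in its Fourier basis, use linearity of the integrand with respect to $\xi_N$ together with Cauchy--Schwarz to reduce matters to an $\ell^2$-sum of the monomial quadrature errors $E_{\underline m}(\theta)$, and then apply the pointwise bounds already established in Lemmas \ref{lemma:A.3} and \ref{lemma:A.4}.

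Concretely, since $\widetilde\xi_N(\rho,\theta)=\sum_{\underline m\in\mathbb Z_N^*}\widehat\xi_N(\underline m)\,\widetilde e_{\underline m}(\rho,\theta)$, the linearity of $\chi_\delta\widetilde\xi_N$ in the Fourier coefficients of $\xi_N$, combined with the triangle inequality and Cauchy--Schwarz, gives
\[
\mathcal E(\widetilde\xi_N;\theta)\le\sum_{\underline m\in\mathbb Z_N^*}|\widehat\xi_N(\underline m)|\,E_{\underline m}(\theta)\le\|\xi_N\|_0\Big(\sum_{\underline m\in\mathbb Z_N^*}E_{\underline m}(\theta)^2\Big)^{1/2},
\]
where Parseval's identity $\|\xi_N\|_0^2=\sum_{\underline m\in\mathbb Z_N^*}|\widehat\xi_N(\underline m)|^2$ has been used. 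It therefore suffices to bound the latter $\ell^2$-sum by a constant multiple of $\delta^{-1}h+\delta^{-1}\Psi_N(\theta)+\delta$.

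To do so I would split $\mathbb Z_N^*$ into three pieces. On the interior set $\{|m_1|+|m_2|\le N/2\}$, Lemma \ref{lemma:A.4} applied with $r=2$ yields $E_{\underline m}(\theta)\le C\delta^{-1}h^2$ uniformly; since this set contains $O(N^2)$ points, its Euclidean contribution is bounded by $C\delta^{-1}h$. On the exterior region $\{N/2<|m_1|+|m_2|\le N-1\}$, the bound \eqref{eq:A.3.b} with $r=2$ applies and produces a pointwise estimate of the form $E_{\underline m}(\theta)\le C\delta^{-1}(N-|m_1|d(\theta)-|m_2|)^{-2}$ in the case $|\tan\theta|\ge|\cot\theta|$, and the symmetric variant otherwise. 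The one remaining lattice point $(-N/2,-N/2)$ (the only element of $\mathbb Z_N^*$ with $|m_1|+|m_2|=N$) is handled by the uniform bound \eqref{eq:A.3.a}, contributing at most $C(\delta+h)$, which is absorbed into the $\delta^{-1}h+\delta$ terms.

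The main obstacle, and the only step that requires real care, is to identify the exterior-region contribution with $\Psi_N(\theta)^2$ in spite of the case-dependent form of \eqref{eq:A.3.b}. Assume without loss of generality that $|\tan\theta|\ge|\cot\theta|$, so $d(\theta)=|\cot\theta|$, and set $m:=|m_1|$, $n:=|m_2|$ so that $d(\theta)$ multiplies $m$, matching the summand in \eqref{eq:defPsiN}. The constraints $|m_1|+|m_2|>N/2$ and $|m_j|\le N/2$ force both $n$ and $m$ to be strictly positive (no axis point of $\mathbb Z_N^*$ can satisfy the lower constraint), so each $(n,m)\in\Xi_N^+$ is reached by exactly four sign choices $(\pm m_1,\pm m_2)$. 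Hence
\[
\sum_{\text{exterior}}E_{\underline m}(\theta)^2\le 4C\delta^{-2}\sum_{(n,m)\in\Xi_N^+}(N-n-m\,d(\theta))^{-4}=4C\delta^{-2}\Psi_N(\theta)^2,
\]
and its square root contributes $C\delta^{-1}\Psi_N(\theta)$. The opposite case $|\tan\theta|<|\cot\theta|$ is handled identically after swapping the roles of $m_1$ and $m_2$. Combining the three region contributions yields the claimed inequality.
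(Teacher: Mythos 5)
Your proof is correct and follows essentially the same strategy as the paper: decompose $\mathbb Z_N^*$ into the interior set $\{|m_1|+|m_2|\le N/2\}$, a middle annulus mapped onto $\Xi_N^+$ via sign choices, and the corner index (the paper's $\Delta_N$), then invoke Lemmas~\ref{lemma:A.4} and~\ref{lemma:A.3} with $r=2$ and equation~\eqref{eq:A.3.a} on the three pieces respectively. The only cosmetic difference is that you apply Cauchy--Schwarz once globally to reduce to the $\ell^2$-norm of the $E_{\underline m}(\theta)$ over $\mathbb Z_N^*$ and then bound that norm region by region, whereas the paper splits the error $\mathcal E$ into $\mathcal E_1+\mathcal E_2+\mathcal E_3$ first and applies Cauchy--Schwarz within each piece; the two organizations yield the same bounds.
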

\begin{proof}
Consider the mutually disjoint sets of indices
\begin{eqnarray*}
\Lambda_{N}&:=&\Big\{\underline{m}=(m_1,m_2)\in\Z^2\::\:
|m_1|+|m_2|\le
N/2\Big\},\\
\Xi_{N}&:=&\Big\{\underline{m}=(m_1,m_2)\in\Z^2\::\:
|m_1|,|m_2|\le N/2<|m_1|+|m_2|< N\Big\}\\
&=& \Big\{ \underline m \in \Z^2\,:\, (|m_1|,|m_2|)\in
\Xi_N^+\Big\},\\
\Delta_N &:=& \Big\{ \underline m \in \Z^2\,:\, m_1, m_2 = \pm
N/2\Big\}.
\end{eqnarray*}
We note that $\Delta_N=\emptyset$ for odd values of $N$ and $\Delta_N$
contains four different elements for even $N$.  Finally the set
$\Z_N^*$ defined in equation~\eqref{znstar} satisfies $\Z_N^* \subset
\Lambda_N \cup \Xi_N \cup \Delta_N$, with strict inclusion for even
values of $N$---since for $N$ even the set $\Z_N^*$ is not symmetric
with respect to the coordinate axes in $\mathbb{Z}^2$ while all the
index sets above are.

Given $\xi_N \in \mathbb T_N$ we have
\begin{equation}\label{eq025:lemma:auxQuad2}
  \mathcal E(\widetilde\xi_N; \theta)\le {\cal E}_1(\xi_N; \theta)+{\cal
E}_2(\xi_N; \theta)+{\cal
    E}_3(\xi_N; \theta),
\end{equation}
where
\[
{\cal E}_1(\xi_N; \theta) = \sum_{\underline{m}\in
\Lambda_N}\!\!\!
|\widehat{\xi}_N(\underline{m})|E_{\underline{m}}(\theta)
\, ,\, {\cal E}_2(\xi_N; \theta) =
\sum_{\underline{m}\in\Xi_{N}}|\widehat{\xi}_N(\underline{m})|E_{\underline
{m}}(\theta)\, ,\,  {\cal
    E}_3(\xi_N; \theta) = \sum_{\underline m \in
\Delta_N}|\widehat{\xi}_N(\underline{m})|E_{\underline{m}}
(\theta)
\]
with the understanding that the last sum vanishes when
$\Delta_N=\emptyset$.

Using Lemma \ref{lemma:A.4} with $r=2$ and the fact that the
cardinality $\# \Lambda_N$ of $\Lambda_N$ satisfies $\# \Lambda_N \le
C N^2= C h^{-2}$ we see that that
\begin{equation}
\mathcal E_1(\xi_N;\theta) \le \max_{\underline m\in
\Lambda_N}E_{\underline m}(\theta) \sum_{\underline m\in
\Lambda_N}|\widehat \xi_N(\underline m)|\le  C \delta^{-1} h \|
\xi_N\|_0. \label{eq03:lemma:auxQuad2}
\end{equation}
From Lemma \ref{lemma:A.3} with $r=2$, in turn, we have
\begin{eqnarray}\nonumber \sum_{\underline m\in \Xi_N} E_{\underline
m}(\theta)^2 & \le & C \delta^{-2} \sum_{\underline m \in \Xi_N}
(N-|m_1|-|m_2d(\theta)|)^{-4}\\
&= &C \delta^{-2} 4\sum_{\underline m \in \Xi_N^+}
(N-|m_1|-|m_2d(\theta)|)^{-4}. \label{eqA}
\end{eqnarray}
(To establish the first inequality in~\eqref{eqA} use the
transformation $(m_1,m_2) \leftrightarrow (m_2,m_1)$, which leaves
$\Xi_N$ invariant, in the cases for which $d(\theta)=|\cot\theta|$.)
Therefore
\begin{equation}\label{eq04:lemma:auxQuad2}
\mathcal E_2(\xi_N;\theta) \le C \delta^{-1}
\Psi_N(\theta)\|\xi_N\|_0.
\end{equation}
Finally, for the remaining term we use Lemma \ref{lemma:A.3} and the
fact that $\# \Delta_N\le 4$, and we thus obtain
\begin{equation}
  {\cal E}_3(\xi_N; \theta)  \le  C(\delta+h)
\Big(\sum_{\underline m\in \Delta_N}
\!\!|\widehat{\xi}_{N}(\underline{m})|^2\Big)^{1/2} \le
C(\delta+h)\|\xi_N\|_{0}. \label{eq05:lemma:auxQuad2}
\end{equation}
The lemma now follows from \eqref{eq025:lemma:auxQuad2} together with
\eqref{eq03:lemma:auxQuad2}, \eqref{eq04:lemma:auxQuad2} and
\eqref{eq05:lemma:auxQuad2}.
\end{proof}

\begin{lemma}\label{lemma:auxQuad3}
For all $r\ge 2$ there exists $C_r>0$ such that
\begin{eqnarray}
\mathcal E(\widetilde\xi_N; \theta)\le  C_r h^{r} \delta^
{1-r}\|\xi_N\|_r, \qquad \forall \xi_N \in \mathbb T_N, \qquad
\forall\theta\in[0,2\pi]. \label{eq01:cor:auxQuad}
\end{eqnarray}
\end{lemma}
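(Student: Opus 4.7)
The plan is to reduce the problem to estimates on trigonometric monomials and then apply the pointwise bounds on $E_{\underline m}(\theta)$ proved in Lemmas \ref{lemma:A.3} and \ref{lemma:A.4}. Writing $\widetilde\xi_N=\sum_{\underline m\in\Z_N^*}\widehat\xi_N(\underline m)\widetilde e_{\underline m}$, the linearity of the quadrature gives
\[
\mathcal E(\widetilde\xi_N;\theta)\le |\widehat\xi_N(\underline 0)|E_{\underline 0}(\theta) +\sum_{\underline m\in\Z_N^*\setminus\{\underline 0\}} |\widehat\xi_N(\underline m)|E_{\underline m}(\theta).
\]
The zero mode is controlled directly by Lemma \ref{lemma:A.4} (which covers $\underline m=\underline 0$) to produce $C_r\delta^{1-r}h^r\|\xi_N\|_r$. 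For the remaining sum I would apply the Cauchy--Schwarz inequality against the weight $(m_1^2+m_2^2)^r$, bounding it by
\[
\|\xi_N\|_r\,\Bigl(\sum_{\underline m\in\Z_N^*\setminus\{\underline 0\}}(m_1^2+m_2^2)^{-r}E_{\underline m}(\theta)^2\Bigr)^{1/2}.
\]
Everything then reduces to showing that this inner sum is bounded by $C_r^2 h^{2r}\delta^{2-2r}$ uniformly in $\theta$.

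To control the inner sum I would re-use the decomposition $\Z_N^*\subseteq\Lambda_N\cup \Xi_N\cup\Delta_N$ introduced in the proof of Lemma \ref{lemma:auxQuad2}, and treat the three contributions separately. On $\Lambda_N\setminus\{\underline 0\}$ the uniform bound $E_{\underline m}(\theta)\le C_r\delta^{1-r}h^r$ from Lemma \ref{lemma:A.4} factors out, leaving the $\theta$-independent convergent series $\sum_{\underline m\ne\underline 0}(m_1^2+m_2^2)^{-r}$, which converges for $r>1$, and in particular for $r\ge 2$. On $\Delta_N$ there are only four indices, all satisfying $m_1^2+m_2^2=N^2/2$; combined with the crude bound \eqref{eq:A.3.a} and the hypotheses $h<\delta$ and $\delta\le 1$ (so that $\delta^2\le\delta^{2-2r}$), these yield a contribution of the required form $C_r h^{2r}\delta^{2-2r}$.

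The main step is the estimate on $\Xi_N$. There I would invoke \eqref{eq:A.3.b} and note that in either branch one of the factors $|\cot\theta|$ or $|\tan\theta|$ does not exceed $1$, so both denominators can be replaced by the smaller quantity $N-|m_1|-|m_2|$, which is strictly positive on $\Xi_N$. This produces the uniform bound $E_{\underline m}(\theta)\le C_r\delta^{1-r}(N-|m_1|-|m_2|)^{-r}$. The inequality $|m_1|+|m_2|>N/2$ on $\Xi_N$ furnishes $m_1^2+m_2^2\ge N^2/8$, which extracts the desired $h^{2r}$. What remains is the lattice sum $\sum_{\underline m\in\Xi_N}(N-|m_1|-|m_2|)^{-2r}$: setting $j:=N-|m_1|-|m_2|$, an elementary count of lattice points on the truncated $\ell^1$-sphere $\{|m_1|+|m_2|=N-j,\ |m_i|\le N/2\}$ yields $O(j)$ elements per level, so the total collapses to $O\bigl(\sum_{j\ge 1}j^{1-2r}\bigr)$, which converges precisely because $r\ge 2$.

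Combining the three contributions with the zero-mode term gives $\mathcal E(\widetilde\xi_N;\theta)\le C_r h^r\delta^{1-r}\|\xi_N\|_r$. The principal obstacle is the $\Xi_N$ contribution: the angular dependence produced by Lemma \ref{lemma:A.3} is controllable only after the case split according to whether $|\tan\theta|\ge|\cot\theta|$ or $|\tan\theta|<|\cot\theta|$, and the threshold $r\ge 2$ in the statement is exactly what is needed so that the resulting lattice-count series $\sum j^{1-2r}$ converges, providing the $\theta$-uniform $h^r\delta^{1-r}$ rate with no logarithmic loss.
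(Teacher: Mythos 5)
Your proof is correct, and it is a genuine (if modest) variant of the paper's argument. Both you and the paper split $\mathbb Z_N^*$ into $\Lambda_N$, $\Xi_N$, $\Delta_N$ and appeal to the same kernel estimates, Lemmas \ref{lemma:A.3} and \ref{lemma:A.4}. The difference is in how the $H^r$ norm is extracted. You apply Cauchy--Schwarz with the Sobolev weight $(m_1^2+m_2^2)^r$ from the start, so $\|\xi_N\|_r$ appears immediately and the remaining task is a $\theta$-uniform bound on the weighted sum $\sum (m_1^2+m_2^2)^{-r}E_{\underline m}(\theta)^2$; the crucial piece on $\Xi_N$ is your observation that $d(\theta)\le 1$ lets one replace $N-|m_1|-|m_2|\,d(\theta)$ by the $\theta$-free $N-|m_1|-|m_2|$, after which a level-set count on $\{|m_1|+|m_2|=N-j\}$ gives $O(j)$ terms per level and a convergent $\sum j^{1-2r}$. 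The paper instead applies Cauchy--Schwarz with constant weight on each index set, extracts $h^r\|\xi_N\|_r$ from the tail inequality \eqref{tail} on $\Xi_N\cup\Delta_N$ and the embedding \eqref{embedding} on $\Lambda_N$, and re-uses for $\Xi_N$ the bound $\Psi_N(\theta)\le 2$ established in Lemma~\ref{lemma:propPsi}. That $\Psi_N$ bound is in essence a fixed-$r$ ($r=2$, angular factor retained) instance of the same lattice count you carry out for general $r$; your version that simply drops the angular factor is slightly more self-contained. The treatment of $\Delta_N$ is also wasteful in the same harmless way in both proofs ($\delta^2\le C\delta^{2-2r}$ since $\delta$ is bounded). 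Both routes deliver the identical rate $h^r\delta^{1-r}\|\xi_N\|_r$.
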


\begin{proof} The proof of this result proceeds through consideration
  of equation~\eqref{eq025:lemma:auxQuad2}.  In view of
  Lemma~\ref{lemma:A.4} we obtain
\[
\mathcal E_1(\xi_N;\theta) \le C_r \delta^{1-r} h^r \sum_{\underline
m\in \Lambda_N} |\widehat\xi_N(\underline m)| \le C'_r\delta^{1-r}
h^r \| \xi_N\|_2,
\]
since for all $\xi \in H^2$ we clearly have
\begin{equation}\label{embedding}
\sum_{\underline m \in \Z^2} |\widehat\xi(\underline m)| \le C \|
\xi\|_2, \qquad C^2 =1+\sum_{\underline{0}\ne
\underline{m}\in\Z^2}\frac{1}{(|m_1|^2+|m_2|^2)^{2}}.
\end{equation}
From \eqref{eq04:lemma:auxQuad2} together with the fact that
$\Psi_N(\theta)\le 2$ (as established in Lemma \ref{lemma:propPsi}
below) and the inequality
\begin{equation}\label{tail}
\Big( \sum_{\underline m \in \Z^2\setminus\Lambda_N} |\widehat\xi
(\underline m)|^2\Big)^{1/2} \le\;\; h^{r} (2\sqrt2)^r\|\xi\|_r\quad
\forall \xi \in H^r\quad\mbox{with}\quad r\ge 0,
\end{equation}
on the other hand, we obtain
\[
\mathcal E_2(\xi_N;\theta) \le C_r \delta^{-1}  \Big(
\sum_{\underline m\in \Xi_N} |\widehat\xi_N(\underline
m)|^2\Big)^{1/2} \le C'_r \delta^{-1} h^r \| \xi_N\|_r.
\]
Finally, using the first inequality in \eqref{eq05:lemma:auxQuad2} and
\eqref{tail} we see that
\[
 {\cal  E}_3(\xi_N; \theta)\le  C\delta \Big( \sum_{\underline m\in \Delta_N}
|\widehat\xi_N(\underline m)|^2\Big)^{1/2} \le C'\delta
\,h^r\|\xi_{N}\|_r.
\]
This finishes the proof of the lemma.
\end{proof}

\subsection{Technical lemmas}
%
following inverse inequalities hold:

\begin{lemma}\label{lemma:tech1}
Let $m,n \in\Z$ be such that $|m|+|n| \le N-1$ and let $r\ge 2$.
Then
\[
\sum_{\ell \neq 0}\frac1{|m+n \xi -\ell N|^r} \le
\frac{C_r}{(N-|m|-|n|\xi)^r} \qquad 0\le \xi \le 1.
\]
\end{lemma}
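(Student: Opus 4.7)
The plan is to reduce the sum to a tail of the Riemann zeta function by bounding each term $|m+n\xi-\ell N|^{-r}$ in terms of $|\ell|^{-r}(N-|m|-|n|\xi)^{-r}$.

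First I would set $a:=m+n\xi$ and observe that, for $0\le \xi\le 1$, the hypothesis $|m|+|n|\le N-1$ gives the crucial strict bound
\[
|a|\;\le\;|m|+|n|\xi\;\le\;|m|+|n|\;\le\;N-1\;<\;N.
\]
In particular $N-|m|-|n|\xi>0$, so the right-hand side of the claimed inequality is finite and positive.

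Next I would prove the elementary inequality
\[
|a-\ell N|\;\ge\;|\ell|\,(N-|a|),\qquad \ell\in\mathbb Z\setminus\{0\}.
\]
For $\ell\ge 1$ the bound $a\le |a|<N\le \ell N$ yields $|a-\ell N|=\ell N-a\ge \ell N-|a|$, and $\ell N-|a|\ge \ell(N-|a|)$ is equivalent to $(\ell-1)|a|\ge 0$, which holds. The case $\ell\le -1$ follows by the same computation applied to $-a$. Combined with $N-|a|\ge N-|m|-|n|\xi$, this gives
\[
|a-\ell N|\;\ge\;|\ell|\,(N-|m|-|n|\xi)\qquad\forall\ell\ne 0.
\]

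Finally I would sum and use $r\ge 2$:
\[
\sum_{\ell\ne 0}\frac{1}{|m+n\xi-\ell N|^r}
\;\le\;\frac{1}{(N-|m|-|n|\xi)^r}\sum_{\ell\ne 0}\frac{1}{|\ell|^r}
\;=\;\frac{2\zeta(r)}{(N-|m|-|n|\xi)^r},
\]
so the lemma holds with $C_r:=2\zeta(r)$, which is finite for every $r\ge 2$ (indeed for every $r>1$). There is no real obstacle here; the only point requiring mild care is ensuring that the strict inequality $|m|+|n|\xi<N$ is actually used, since without it the denominator on the right-hand side could vanish and the stated bound would be vacuous.
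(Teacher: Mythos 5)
Your proof is correct and follows the same basic strategy as the paper: bound each denominator from below to reduce the sum to a tail of a convergent $p$-series. The paper's version treats $\ell=\pm1$ separately (using the sharp denominator $N-|m|-|n|\xi$) and then switches to the cruder bound $\ell N-|m|-|n|\ge(\ell-1)N$ for $|\ell|\ge 2$, arriving at $C_r=2(1+\zeta(r))$; your single inequality $|a-\ell N|\ge|\ell|\,(N-|a|)$ with $a=m+n\xi$ handles all $\ell\ne0$ uniformly and gives the marginally cleaner constant $2\zeta(r)$, but there is no essential difference in the argument.
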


\begin{proof} Noting that $N\ge N-|m|-|n\xi|\ge N-|n|-|m|\ge 1$ we
  obtain the chain of inequalities
\begin{eqnarray*}
\sum_{\ell \neq 0}\frac1{|m+n \xi -\ell N|^r} & \le &
\frac2{(N-|m|-|n|\xi)^r}+2 \sum_{\ell =2}^\infty \frac1{(\ell
N-|m|-|n|)^r}\\
& \le & \frac2{(N-|m|-|n|\xi)^r}+ 2 \sum_{\ell=2}^\infty
\frac1{(\ell-1)^r
N^r}\\
& \le & \frac2{(N-|m|-|n|\xi)^r} \Big( 1+\sum_{\ell=1}^\infty
\frac1{\ell^r}\Big),
\end{eqnarray*}
and the result follows.
\end{proof}

\begin{lemma}\label{lemma:tech2} Let $\Xi_N^+$ be given by \eqref{eq:a.13}. Then
\[
\Phi_N(\xi):=\sum_{(n,m)\in \Xi_N^+}  \frac1{(N-n-m\xi)^4} \le
\min\{ 4, \frac{8}{N^2(1-\xi)^2}\} \qquad \forall N\ge 1, \quad \forall \xi\in
[0,1].
\]
\end{lemma}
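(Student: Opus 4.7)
The plan is to establish both bounds in $\min\{4,\,8/(N^2(1-\xi)^2)\}$ separately and combine. For $\Phi_N(\xi) \le 4$, I would use the trivial monotonicity $m\xi \le m$ (since $\xi \in [0,1]$), so that $N-n-m\xi \ge N-n-m =: k$. For $(n,m) \in \Xi_N^+$, necessarily $k \in \{1,\ldots,\lfloor N/2\rfloor - 1\}$, and a direct enumeration of integer solutions of $n+m = N-k$ with $0 < n,m \le N/2$ yields exactly $k+1$ admissible pairs per $k$. Hence
\[
\Phi_N(\xi) \le \sum_{k=1}^{\infty}\frac{k+1}{k^4} = \zeta(3)+\zeta(4) < 3 < 4.
\]

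For $\Phi_N(\xi) \le 8/(N^2(1-\xi)^2)$, the key observation is the convex-combination identity
\[
N-n-m\xi = (1-\xi)(N-n) + \xi(N-n-m),
\]
combined with $m < N-n$ for $(n,m) \in \Xi_N^+$. This gives $N-n-m\xi \ge (N-n)(1-\xi) \ge (N/2)(1-\xi)$. Pairing this with $N-n-m\xi \ge k \ge 1$ and applying the geometric-mean inequality, I get $(N-n-m\xi)^2 \ge k(N-n)(1-\xi)$, so
\[
\Phi_N(\xi) \le \frac{1}{(1-\xi)^2}\sum_{(n,m)\in\Xi_N^+}\frac{1}{(N-n)^2 k^2}.
\]
Reparametrizing by $(k,j)$ with $j=N-n$, the index $j$ ranges over $\{N/2,\ldots,N/2+k\}$ for each $k$, and telescoping yields $\sum_{j=N/2}^{N/2+k} 1/j^2 \le (k+1)/[(N/2-1)(N/2+k)]$. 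It then remains to sum $\sum_k (k+1)/[k^2 (N/2+k)]$ via partial fractions, which should produce the $O(1/N^2)$ factor with an explicit constant bounded by $8$.

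The main obstacle lies in closing the constant at exactly $8$ in the second bound without incurring a stray $\log N$ factor. A naive application of partial fractions, $1/[k(N/2+k)] = (2/N)[1/k - 1/(N/2+k)]$, produces a harmonic-sum factor $\sim \log N$. Eliminating this logarithm requires finer handling: either exploiting the $(k+1)$ cancellation in the numerator together with the secondary factor $1/(N/2+k)$ to reduce the effective summand to something like $1/(N/2+k)^2$ (whose sum is $\le 2/N$), or splitting the range of $k$ into regimes such as $k \lesssim \sqrt{N}$ and $k > \sqrt{N}$ with distinct treatments in each. This is the most technically delicate step of the proof, and the rest reduces to careful bookkeeping of constants.
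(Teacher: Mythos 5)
Your first bound $\Phi_N(\xi)\le 4$ is correct and proceeds by a route different from the paper's: you freeze $\xi=1$ implicitly by using $N-n-m\xi\ge k:=N-n-m$, enumerate $k+1$ pairs per value of $k$, and arrive at $\zeta(3)+\zeta(4)<3$. The paper instead observes that $\Phi_N$ is increasing in $\xi$, so $\Phi_N(\xi)\le\Phi_N(1)$, and bounds $\Phi_N(1)$ by the same integral comparison it uses for the second bound. Both work; yours is arguably cleaner for this part.

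The second bound is where there is a genuine gap, and you have already put your finger on it without resolving it. The geometric-mean step $(N-n-m\xi)^2\ge(1-\xi)(N-n)\,k$ is too lossy: at $\xi=0$ it amounts to replacing $(N-n)^{-4}$ by $(N-n)^{-2}k^{-2}$, which over-estimates the summand by a factor of $(N-n)/k\sim N/k$. Summing the replacement over $\Xi_N^+$ gives, up to constants,
\[
\sum_{k\ge 1}\frac{1}{k^2}\sum_{j=N/2}^{N/2+k}\frac{1}{j^2}
\;\approx\;\frac{2}{N}\sum_{k\ge 1}\frac{1}{k(N/2+k)}
\;\approx\;\frac{4\log N}{N^2},
\]
which is strictly larger than $8/N^2$ for large $N$. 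The logarithm is therefore not an artifact of ``naive partial fractions'': it is already present in the quantity $\sum\frac{1}{(N-n)^2k^2}$ you are trying to bound, and no bookkeeping over the same sum can remove it. Your two proposed repairs also do not help: the $(k+1)$ in the numerator produces precisely the leading $1/k$ term that carries the logarithm, and splitting at $k\approx\sqrt N$ leaves a $\log N$ on the small-$k$ range regardless of where you split.

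The paper sidesteps this by \emph{not} factorizing. After reparametrizing by $j=n+m$ (equivalently by $k=N-j$), it bounds each summand by the value of $1/(N-n-m\xi)^4$ at the extremal $m$, which equals $\big((N/2)(1-\xi)+\xi(N-j)\big)^{-4}$, and the term count is $N-j+1$. The crucial move is to peel off one power of this denominator using $(N/2)(1-\xi)+\xi(N-j)\ge N-j$, trading the fourth power for a third power and a bounded ratio $(N-j+1)/(N-j)\le 2$. The remaining sum of $((N/2)(1+\xi)-\xi x)^{-3}$ over $x=j$ is then bounded by an integral via the midpoint rule (the integrand is convex), and evaluating the antiderivative produces an explicit cancellation of $\xi$ in the numerator and denominator, yielding $8/(N^2(1-\xi)^2)$ with no logarithm. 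This borrowing of one power of the denominator against the linear growth of the term count is exactly the mechanism your factorized bound throws away, and it is the missing idea you need.
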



\begin{proof}
  Using the change of variables $(j,m)=(n+m,m)$ the summation limits
  become
\[
(N+1)/2 \le j \le N-1,\qquad \qquad j-N/2\le m \le N/2.
\]
We thus have
\begin{eqnarray*}
\Phi_N(\xi) &=& \sum_{\frac{N+1}2 \le j \le N-1} \Big(
\sum_{j-\frac{N}2\le m\le \frac{N}2} \frac1{(N-j+m(1-\xi))^4}\Big)\\
& \le & \sum_{\frac{N+1}2 \le j \le N-1} \Big( \sum_{j-\frac{N}2\le
m\le
\frac{N}2} \frac1{\left(N-j+(j-\frac{N}2)(1-\xi)\right)^4}\Big)\\
& \le & \sum_{\frac{N+1}2 \le j \le N-1} \frac1{\left(
\frac{N}2(\xi+1)-\xi j\right)^3}\frac{N-j+1}{N-j}\\
& \le & 2 \sum_{\frac{N+1}2 \le j \le N-1}
\int_{j-\frac12}^{j+\frac12} \frac{\mathrm d x}{\left(
\frac{N}2(\xi+1)-\xi\,x\right)^3}\le 2
\int_{\frac{N}2}^{N-\frac12}\frac{\mathrm d x}{\left(
\frac{N}2(\xi+1)-\xi\,x\right)^3}=: R_N(\xi),
\end{eqnarray*}
where the second inequality was obtained by using the fact that $\#\{
m:j-N/2\le m \le N/2\} \le N-j+1$ and where the bound that introduces
the integral relies on the fact that, for convex functions, the
midpoint rule underestimates the value of the integral.

To estimate $R_N(\xi)$ we proceed as follows:
\begin{eqnarray*}
R_N(\xi) &=& \frac2{N^2}\int_{\frac12}^{1-\frac1{2N}} \frac{\mathrm
d x}{\left( \frac12 (\xi+1)-\xi \,x\right)^3} = \frac1{N^2\xi}
\left(
\frac12(\xi+1)-\xi\,x\right)^{-2}\bigg|_{x=\frac12}^{x=1-\frac1{2N}}\\
&=&
\frac4{N^2\xi}\bigg(\Big(1-\xi+\frac{\xi}{N}\Big)^{-2}-1\bigg)\le
\frac4{N^2\xi}\big( (1-\xi )^{-2}-1\big)=
\frac4{N^2\xi}\frac{2\xi-\xi^2}{(1-\xi)^2}\\
& \le & \frac{8}{N^2(1-\xi)^2}.
\end{eqnarray*}
This bound is valid even when $\xi=0$ since $R_N(0)=8(N-1)/N^3
<8/N^2$.

Finally, noting that $\Phi_N(\xi)$ is an increasing function of
$\xi\in [0,1]$, we obtain
\[
\Phi_N(\xi)  \le  \Phi_N(1) \le R_N(1) =2
\int_{\frac{N}2}^{N-\frac1{2}}\frac{\mathrm d x}{(N-x)^3}< 2
\int_0^{N-\frac12}\frac{\mathrm d x}{(N-x)^3} < 4
\]
and the result follows.
\end{proof}

\begin{lemma}\label{lemma:propPsi} Let $\Psi_N$ be given by
\eqref{eq:defPsiN}. Then, there exists $C>0$ independent of $N$ and $\Theta$
such that
\[
\Psi_N(\theta) \le 2 \qquad \mbox{and}\qquad k \sum_{p=0}^{\Theta-1}
\Psi_N(p\,k) \le C(k+ h\log (1/k)),
\]
where $h=1/N$ and $k=2\pi/\Theta$.
\end{lemma}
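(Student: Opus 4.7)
The plan is to derive both inequalities from Lemma \ref{lemma:tech2}. Since $d(\theta)\in[0,1]$, substituting $\xi=d(\theta)$ in the estimate $\Phi_N(\xi)\le 4$ gives $\Psi_N(\theta)^{2}=\Phi_N(d(\theta))\le 4$, so the pointwise bound $\Psi_N(\theta)\le 2$ follows immediately.

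For the sum estimate, I would combine the two bounds of Lemma \ref{lemma:tech2} into
\[
\Psi_N(\theta)\le C\min\Big\{1,\;\frac{h}{1-d(\theta)}\Big\}
\]
(recall $h=1/N$), and then split the index set $\{0,\ldots,\Theta-1\}$ according to whether $pk$ lies near or far from one of the four points $\theta^{*}_j:=\pi/4+j\pi/2$ ($j=0,1,2,3$), where the continuous $\pi/2$-periodic function $d$ attains its maximum value $1$. On the ``far'' region (say, at distance at least $\pi/8$ from every $\theta^{*}_j$), $1-d(\theta)$ is bounded below by a positive constant, so $\Psi_N(pk)\le Ch$ there, and the corresponding contribution is at most $k\cdot\Theta\cdot Ch=O(h)$.

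Near each $\theta^{*}_j$, a first-order Taylor expansion of $\tan$ (or $\cot$) at $\pi/4$ gives $1-d(\theta)\ge c_1|\theta-\theta^{*}_j|$ on a fixed neighborhood. Writing $p=p^{*}_j+\ell$, where $p^{*}_j$ is the index closest to $\theta^{*}_j/k$, one obtains $|pk-\theta^{*}_j|\ge |\ell|k/2$ for $\ell\ne 0$, whence
\[
\Psi_N((p^{*}_j+\ell)k)\le C\min\Big\{1,\;\frac{h}{|\ell|k}\Big\}\qquad (\ell\ne 0).
\]
The neighborhood contribution then reduces to
\[
2k+2C\sum_{\ell=1}^{L}\min\{k,\,h/\ell\},\qquad L=O(1/k),
\]
which I would split at the threshold $\ell^{*}:=\lfloor h/k\rfloor$: the ``small $\ell$'' part satisfies $\sum_{\ell\le\ell^{*}}k\le k\ell^{*}+k\le h+k$, while the ``large $\ell$'' part gives $\sum_{\ell>\ell^{*}}h/\ell\le h\log(L/\max\{\ell^{*},1\})=O(h\log(1/k))$. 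Summing the four singular-point contributions with the $O(h)$ ``far'' term yields the desired bound $C(k+h\log(1/k))$.

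The main subtlety lies in making the cutoff argument uniform across the two regimes $h\le k$ and $h>k$. In the first case $\ell^{*}=0$, the ``small $\ell$'' sum is empty, and the full harmonic tail $h\sum_{\ell=1}^{L}1/\ell$ alone yields $O(h\log(1/k))$. In the second case the threshold $\ell^{*}\approx h/k$ is non-trivial, but $k\ell^{*}\le h$ and $h\log(L/\ell^{*})\le h\log(1/h)\le h\log(1/k)$, so the bound remains the same.
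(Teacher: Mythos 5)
Your proof is correct and follows essentially the same strategy as the paper's: the pointwise bound $\Psi_N\le 2$ from $\Phi_N\le 4$, the linear lower bound $1-d(\theta)\gtrsim|\theta-c_\ell|$ near the four maxima of $d$, and a split of the resulting harmonic-type sum over angular grid points. The only cosmetic difference is the location of the cutoff (you split at $\ell^*\approx h/k$ while the paper excises a window of radius $k$, effectively $\ell^*=1$), but both choices yield $O(k+h\log(1/k))$.
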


\begin{proof}
Note that $\Psi_N=\sqrt{\Phi_N \circ d}$, which easily gives the
first inequality as a consequence of Lemma \ref{lemma:tech2}. A
simple computation shows that for every integer $\ell$,
\begin{equation}\label{eq:tech3:1}
1-d(\theta) \ge \frac4{\pi}\big| \theta - c_\ell\big| \qquad \theta
\in \left[c_\ell-\frac\pi4,c_\ell+\frac\pi4\right], \qquad c_\ell:=
\frac{(2\ell+1)\pi}4.
\end{equation}
Consider the sets
\[
\mathcal I_{\ell,k}:=\left[
c_\ell-\frac\pi4,c_\ell+\frac\pi4\right]\setminus
[c_\ell-k,c_\ell+k].
\]
Then, letting $\theta_p=p\,k$, it follows that
\[
\sum_{p=0}^{\Theta -1} \Psi_N(\theta_p) = \sum_{\ell=0}^3
\left(\sum_{\frac{\pi\ell}2 \le \theta_p < \frac{\pi(\ell+1)}2}
\Psi_N(\theta_p)\right)\\
\le  \sum_{\ell=0}^3 \left( 6+\sum_{\theta_p \in \mathcal
I_{\ell,k}} \Psi_N(\theta_p)\right),
\]
since the interval $[c_\ell-k,c_\ell+k]$ contains at most three of
the grid points $\theta_p$ and $\Psi_N \le 2$. Using Lemma
\ref{lemma:tech2} and \eqref{eq:tech3:1} we obtain
\[
\sum_{\theta_p \in \mathcal I_{\ell,k}} \Psi_N(\theta_p)\le
\sum_{\theta_p \in \mathcal I_{\ell,k}} \frac{\sqrt 8}{N
(1-d(\theta_p))} \le \frac{\pi}{N\sqrt2} \sum_{\theta_p \in \mathcal
I_{\ell,k}}\frac1{|\theta_p-c_\ell|}.
\]
By considering all possible arrangements of the uniform grid $\{
\theta_p\ :\ p=0,\dots,\Theta-1\}$ with respect to the double interval
$\mathcal I_{\ell,k}$, it is easy to check
 that for all $\ell$
\[
\sum_{\theta_p \in \mathcal I_{\ell,k}}\frac1{|\theta_p-c_\ell|}\le
2 \sum_{j=1}^{\lceil\Theta/8\rceil} \frac1{j\,k} < \frac2{k} \left(
1+\log\left\lceil \frac\Theta8\right\rceil\right)\le \frac2{k}
\left( 1+\log \frac{\Theta + 7}8\right) \le \frac2{k}(1+\log\Theta).
\]
The proof now results by combining the three previous inequalities
\[
k \sum_{p=0}^{\Theta-1} \Psi_N(\theta_p) \le 24 k +
\frac{4\sqrt2\pi}{N}(1+\log\Theta)
\]
and collecting constants.
\end{proof}

\subsection{Proof of Theorem \ref{th:A.1}}

The first part of the proof does not rely upon the relationships
between $N$ and $\Theta$ assumed in Theorem~\ref{th:A.1}: up to a
point in the proof that is clearly marked, $N$ and $\Theta$ are
arbitrary positive integers, and, thus, $k = 2\pi/\Theta $ and $h=2/N$
are unrelated real constants. {\em The proof does assume throughout
  that $h<\delta$, however.}

Let $\phi\in \mathcal C^\infty(\mathbb R^2)$ satisfy
$\phi(\rho,\hspace{2pt}\theta +2\pi)= \phi(\rho,\hspace{2pt}\theta)$
for all $\rho,\;\theta\in\mathbb{R}$, as well as $\supp
\phi(\cdot,\theta) \subset (-a,a)$ for all $\theta\in\mathbb{R}$. A
classical argument on the error of the trapezoidal rule for
 $\mathcal
{C}^\infty$ 
periodic functions (see for instance \cite[Theorem 9.26]{kress98})
shows that for all positive integers $r$ we have
\begin{eqnarray}\nonumber
\mathcal E_0(\phi) &:=& \left|\int_{-\infty}^\infty \Big(
\int_0^{2\pi} \phi(\rho,\theta)\,\mathrm
d\theta-k\sum_{p=0}^{\Theta-1} \phi(\rho,\theta_p)\Big) \mathrm
d\rho\right|\\
&\le& \int_{-a}^{a} \alpha_r k^r \left|\int_0^{2\pi}
\partial_\theta^r \phi(\rho,\theta)\mathrm d\theta\right| \mathrm
d\rho\le C a k^r \|\partial_\theta^r \phi\|_{L^\infty(\R^2)}.
\label{proof51:0}
\end{eqnarray}
Noting that
\begin{equation}\label{proof51:4}
\mathcal E_{h,k,\gamma}(\chi_\delta\widetilde\xi_N) \le \mathcal
E_0(\chi_\delta\widetilde\xi_N) + k \sum_{p=0}^{\Theta-1} \mathcal
E(\widetilde\xi_N;\theta_p)
\end{equation}
and that $\phi= \chi_\delta\, \widetilde\xi_N$ satisfies the
hypotheses required to apply \eqref{proof51:0} with $a=c_0\delta$, the
proof proceeds by producing bounds on the two terms on the right-hand
side of~\eqref{proof51:0}.

To obtain a bound for the second term on the right-hand side
of~\eqref{proof51:4} we first invoke Lemmas~\ref{lemma:auxQuad2}
and~\ref{lemma:propPsi} to establish that
\begin{eqnarray}\nonumber
k \sum_{p=0}^{\Theta-1} \mathcal E(\widetilde\xi_N;\theta_p) &\le& C
\Big( \delta^{-1} h +\delta^{-1} k
\sum_{p=0}^{\Theta-1}\Psi_N(\theta_p) + \delta\Big)\|\xi_N\|_0\\
& \le & C' \Big( \delta^{-1} h \log(1/k) +\delta^{-1}
k+\delta\Big)\|\xi_N\|_0, \qquad \forall\xi_N \in \mathbb T_N,
\label{proof51:5}
\end{eqnarray}
and we invoke Lemma~\ref{lemma:auxQuad3} to show that for $r\ge 2$ we
have
\begin{equation}\label{proof51:6}
k \sum_{p=0}^{\Theta-1} \mathcal E(\widetilde\xi_N;\theta_p) \le C
h^r \delta^{1-r}\|\xi_N\|_r, \qquad \forall \xi_N \in \mathbb T_N.
\end{equation}

To bound the first term on the right-hand side
of~\eqref{proof51:4}, on the other hand, we first note from the
inverse inequalities \eqref{eq:invIneqSobolev} and
\eqref{eq:invIneqLinfty} that, for all positive integers $\ell$, we
have
\begin{eqnarray*}
\| \partial_\theta^\ell \widetilde\xi_N\|_{L^\infty(\R^2)} & \le &
C_\ell \sum_{|\underline\alpha|\le \ell} \|
\partial_{\underline\alpha}\xi_N\|_{\infty,I_2}\le C_\ell h^{-1}
\sum_{|\underline\alpha|\le
\ell}\|\partial_{\underline\alpha}\xi_N\|_0 \\
&\le& C'_\ell h^{-1}\|\xi_N\|_\ell \le C''_\ell h^{-1-\ell}\|
\xi_N\|_0.
\end{eqnarray*}
(The first inequality follows from an application of the chain rule
and the definition of $\widetilde\xi_N$ in terms of $\xi_N$.)
Therefore
\begin{eqnarray}\nonumber
\| \partial_\theta^r (\chi_\delta
\widetilde\xi_N)\|_{L^\infty(\R^2)} & \le & C \sum_{\ell=0}^r \|
\partial_\theta^\ell
\chi_\delta\|_{L^\infty(\R^2)}\|\partial_\theta^{r-\ell}\widetilde\xi_N\|_{
L^\infty(\R^2)}\\
& \le & C'  \|\xi_N\|_0 \sum_{\ell=0}^{r}\delta^{-\ell} h^{\ell-r-1}
\le C'' h^{-1-r}  \|\xi_N\|_0, \label{proof51:1}
\end{eqnarray}
(the hypothesis $\delta^{-1}h < 1$ was used in the last
inequality). Additionally, using~\eqref{eq:invIneqLinfty} and
$\delta^{-1}h < 1$ we obtain
\begin{equation}\label{proof51:2}
\| \partial_\theta^r (\chi_\delta
\widetilde\xi_N)\|_{L^\infty(\R^2)} \le C h^{-1} \delta^{-r}\|
\xi_N\|_r.
\end{equation}
Inserting these inequalities into~\eqref{proof51:0} and since
$a=c_0\delta$ we obtain
\begin{equation}\label{proof51:3}
\mathcal E_0(\chi_\delta \widetilde\xi_N) \le C k^r \left\{
\begin{array}{l} \delta h^{-1-r}\|\xi_N\|_0,
\\[1.5ex] \delta^{1-r}h^{-1}\|\xi_N\|_r.\end{array}\right.
\end{equation}
{\em Using now the hypothesis $\Theta\approx N^{1+\alpha}$ with
  $\alpha>0$}, it follows that $k \le C h^{1+\alpha}\le C h$, and
therefore, for $r>1/\alpha$, $k^r\le C_\alpha h^{r+1}$. Inserting the latter
estimate in the inequalities~\eqref{proof51:3} we obtain
\begin{equation}\label{proof51:7}
\mathcal E_0(\chi_\delta\,\widetilde\xi_N) \le C_r h^r
\delta^{1-r}\|\xi_N\|_r \qquad \forall \xi_N \in \mathbb T_N,
\end{equation}
for $r=0$ and for all $r>1/\alpha$, respectively. Therefore,
equation~\eqref{proof51:7} holds for all $r\ge 0$ by
interpolation~\cite{SaVa:2002}.

Using~\eqref{proof51:4},~\eqref{proof51:5} and the $r=0$ instance
of~\eqref{proof51:7}, and noting that $\log (1/k) \le C_1 +
C_2\log(1/h) $ equation~\eqref{eq:A1.1}
follows. Equation~\eqref{eq:A1.2}, finally, follows
from~\eqref{proof51:7} and \eqref{proof51:6} (for $r\ge 2$). The proof
is now complete.

\section{A modified version of the Nystr\"om method}\label{sec:B}
\subsection{Hermite interpolation}

As it has been presented and analyzed in the previous sections, the
overall integral equation solver enjoys convergence of super-algebraic
order for smooth data. The algorithm~\cite{377360}, however,
incorporates certain additional interpolation processes for added
computational efficiency, as discussed in what follows.  A convergence
analysis for the modified algorithm is presented in
Section~\ref{sec:6.2}.

One of the most computationally expensive segment of the Nystr\"om
algorithm described in the previous sections concerns evaluation of
the discrete operator $\mathrm A^{ij}_{{\rm sing},\delta,h}\xi_{N_j}$:
the high cost of this operation stems from the evaluation it requires
of the trigonometric polynomial $\xi_{N_j}$ at a large number of
points
\begin{eqnarray}
  &&\big( q\, h_j, z_2+( q\,h_j-z_1)\tan \theta^j_p\big)_{q=0}^{N_j-1}
  ,\quad \mbox{if }
  |\cos \theta^j_p |\le \sqrt{2}/2,\label{eq:firstset}\\
  &&\big( z_1+( q\,h_j-z_2) \cot \theta^j_p,q\, h_j\big)_{q=0}^{N_j-1},\quad
  \mbox{if } |\sin \theta^j_p |<\sqrt{2}/2, \label{eq:secondset}
\end{eqnarray}
for all $p=0,\ldots,\Theta_j-1$ and all $j = 1, \ldots,J$ (see
(\ref{eq:points01}), (\ref{eq:points02}) and Figure
\ref{fig:points03}). The interpolation strategy mentioned at the
beginning of this section accelerates this evaluation process. To
describe the accelerated strategy in a manner that lends itself to
analysis, we consider once again the operator $\mathrm
L^{ij}_{\delta,h}$ in equation~\eqref{eq:newQuad-1}, we introduce the
indicator function
\[
b(\theta):=\left\{\begin{array}{ll} 1, & \mbox{if $|\cos\theta|\ge
      \sqrt2/2$}, \\ 0, & \mbox{otherwise}, \end{array}\right.
\]
and we define $\mathrm L^{ij,2}_{\delta,h}$ as the operator that is
obtained as the angle-dependent weights $c(\theta^j_p)$
in~\eqref{eq:newQuad-1} are substituted by $b(\theta^j_p) \,
c(\theta^j_p)$. Note that $\mathrm L^{ij,2}_{\delta,h}$ contains only
contributions from radial nodes lying on vertical grid lines (see
Figure \ref{fig:points03}); the remaining contributions to $
L^{ij}_{\delta,h}$ are collected in the operator $\mathrm
L^{ij,1}_{\delta,h}:= \mathrm L^{ij}_{\delta,h}-\mathrm
L^{ij,2}_{\delta,h}.$ Accordingly, the operators $\mathrm
A^{ij,1}_{\mathrm{sing},\delta,h}:=(\omega^i\circ\r^i) \mathrm
L^{ij,1}_{\delta,h}$ and $\mathrm
A^{ij,2}_{\mathrm{sing},\delta,h}:=(\omega^i\circ\r^i) \mathrm
L^{ij,2}_{\delta,h}$ provide the decomposition
\begin{equation}\label{decompo}
\mathrm  A^{ij}_{\mathrm{sing},\delta,h} = \mathrm
A^{ij,1}_{\mathrm{sing},\delta,h} +
\mathrm A^{ij,2}_{\mathrm{sing},\delta,h}
\end{equation}
of the operator in equation~\eqref{newKijsing}.

Each one of the operators on the right-hand side of~\eqref{decompo}
lends itself to rapid (high-order) evaluation through application of
univariate Hermite interpolation. To do this, given a positive integer
$d$ and a uniform mesh in the interval $[0,1]$ with mesh-size $k=1/M$,
we consider the Hermite interpolation operator, that is, the operator
$\mathrm H_{k,d}: \mathcal C^{d}[0,1]\to \{ g\in \mathcal
C^d[0,1]\,:\, g|_{[pk,(p+1)k]}\in \mathbb P_{2d+1} \quad \forall p\}$
(where $\mathbb P_{2d+1}$ is the space of polynomials of degree at
most $2d+1$), which satisfies
\[
(\mathrm H_{k,d} f)^{(r)}(p\,k)= f^{(r)}(p\,k), \qquad p=0,\ldots,M,
\qquad r=0,\ldots,d.
\]
The error estimate for this kind of piecewise polynomial
interpolation is well known:
\begin{equation}\label{eq:Hermite}
\| f-\mathrm H_{k,d}f\|_{L^\infty(0,1)}\le C_d k^{2d+2}\|
f^{(2d+2)}\|_{L^\infty(0,1)} \qquad \forall f \in \mathcal
C^{2d+2}[0,1].
\end{equation}

The Hermite interpolation method for approximation of the operator
$\mathrm L^{ij,1}_{\delta,h}\xi$ (resp. $\mathrm
L^{ij,2}_{\delta,h}\xi$) for a given $j$ proceeds by interpolation of
the trigonometric polynomial $\xi$ via an application of the operator
$\mathrm H_{k,d}$, in the variable $u_1$ (resp. $u_2$), on a refined
one-dimensional mesh of mesh-size $k = h_j/m$ ($m > 1$): the
approximated operators thus result as the needed values of $\xi$ at
points of the form \eqref{eq:secondset} (resp. \eqref{eq:firstset})
are replaced by the corresponding values produced by the Hermite
interpolant $\mathrm H_{h_j/m,d}\xi(\cdot,q\,h_j)$ (resp. $\mathrm
H_{h_j/m,d}\xi(q\,h_j,\cdot)$). Note that this interpolation process,
which requires evaluation of $\xi$ and some of its partial derivatives
at points $(p h_j/m,q h_j)$ (resp. $(qh_j,p h_j/m)$), can be
implemented efficiently by means of the Fast Fourier Transform;
see~\cite{377360}.

We express the Hermite-interpolation-based approximation of the
operators $\mathrm L^{ij,1}_{\delta,h}+\mathrm L^{ij,2}_{\delta,h}$ by
means of the operator
\[
\mathrm A^{ij,\,\mathrm{herm}}_{\mathrm{sing}, \delta,h,m,d}:=
(\omega^i\circ\r^i) \Big(\mathrm L^{ij,1}_{\delta,h} (\mathrm
H_{h_j/m,d}\otimes \mathrm I)+\mathrm L^{ij,2}_{\delta,h} (\mathrm I
\otimes \mathrm H_{h_j/m,d})\Big).
\]
The corresponding matrix of operators for given values of $h$, $m$ and
$d$ is denoted by $\mathcal A^{\mathrm{sing}}_{\delta,\mathrm{herm}}$;
note that, for notational simplicity, the dependence on the parameters
$h$, $m$ and $d$ is suppressed in the notation $\mathcal
A^{\mathrm{sing}}_{\delta,\mathrm{herm}}$.  The new
Hermite-interpolation-based discrete set of linear equations is given
by
\begin{equation}
\label{eq:numerSolution2}
 {\mathcal
B}_{\delta,\mathrm{herm}}{\bm{\phi}}_{\mathrm{herm}}:=\left({\textstyle\frac12}{
\cal I}+{\cal Q}_h {\mathcal A}_\delta^{\rm reg}{\cal D}_{h}{\cal
P}_{h}+
 {\cal Q}_h {\mathcal A}^{\rm sing}_{\delta,\mathrm{herm}}
{\cal P}_{h}\right){\bm{\phi}}_{\mathrm{herm}}={\cal Q}_h{\bf U}.
\end{equation}

In what follows we present a modified version of the analysis of
Sections~\ref{sec:3} through~\ref{sec:5}, which takes into account the
additional approximations associated with the Hermite interpolation
process described above.  This treatment thus accounts for all details
of the basic partition-of-unity/polar integration Nystr\"om
algorithm~\cite{377360}, and it establishes, with complete
mathematical rigour, its convergence and high-order accuracy.

\subsection{Modified analysis, acounting for Hermite
  interpolation~\label{sec:6.2}}

Analysis of the approximation (unique solvability, stability,
convergence) for \eqref{eq:numerSolution2} is carried out by
comparison with the discrete solution $\boldsymbol\phi_h$. As we
will show, $m$ will have to grow as $h$ decreases in order to retain
convergence properties of the method.

\begin{proposition}\label{mod:prop2}
Assume that $\delta \approx h^\beta$ with $\beta \in (0,1)$ and that
$m=m(h)$ is chosen so that $m^{-(2d+2)} h^{\beta-1}\to 0$. Then
\[
\lim_{h \to 0}\|\mathcal B_{\delta,h}-\mathcal
B_{\delta,\mathrm{herm}}\|_{\mathcal H^0\to\mathcal H^0}=0.
\]
Therefore, for $h$ small enough the discrete equations
\eqref{eq:numerSolution2} are uniquely solvable and the operators
$\mathcal B_{\delta,\mathrm{herm}}$ have uniformly bounded
inverses.\end{proposition}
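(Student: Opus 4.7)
The plan is to estimate the operator difference $\mathcal B_{\delta,h}-\mathcal B_{\delta,\mathrm{herm}}$ on $\mathcal H^0$ and then obtain invertibility and uniform boundedness of the inverses of $\mathcal B_{\delta,\mathrm{herm}}$ by the same Neumann-series / collective-compactness style perturbation argument used at the end of the proof of Theorem~\ref{th:3.6}, now applied relative to $\mathcal B_{\delta,h}$ (whose invertibility and $h$-uniform boundedness of the inverse are guaranteed by Theorem~\ref{theo:main1}). Since the two discrete operators differ only in their singular parts, the core task is to bound
\[
\bigl\|\mathcal Q_h(\mathcal A^{\mathrm{sing}}_{\delta,h}-\mathcal A^{\mathrm{sing}}_{\delta,\mathrm{herm}})\mathcal P_h\bigr\|_{\mathcal H^0\to \mathcal H^0},
\]
that is, on each patch pair $(i,j)$, the action of $\mathrm L^{ij,\ell}_{\delta,h}$ (for $\ell=1,2$) applied to the Hermite interpolation error of a trigonometric polynomial $\xi_{N_j}\in\mathbb T_{N_j}$, followed by $\mathrm Q_{N_i}$.

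First I would isolate the pointwise Hermite interpolation error. For $\xi_{N_j}\in\mathbb T_{N_j}$, setting $\psi_1:=\xi_{N_j}-(\mathrm H_{h_j/m,d}\otimes \mathrm I)\xi_{N_j}$ and using \eqref{eq:Hermite} in the first variable, freezing the second, together with the inverse inequalities \eqref{eq:invIneqSobolev}--\eqref{eq:invIneqLinfty} (as in the chain of bounds leading to \eqref{proof51:1}), I obtain
\[
\|\psi_1\|_{\infty,I_2}\le C_d (h_j/m)^{2d+2}\|\partial_{u_1}^{2d+2}\xi_{N_j}\|_{\infty,I_2}\le C_d\, m^{-(2d+2)}h^{-1}\|\xi_{N_j}\|_0,
\]
and an analogous bound for $\psi_2:=\xi_{N_j}-(\mathrm I\otimes\mathrm H_{h_j/m,d})\xi_{N_j}$. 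The main step is then an $L^\infty\to L^\infty$ bound
\begin{equation}\label{eq:Linftybound_plan}
\|\mathrm L^{ij,\ell}_{\delta,h}\psi\|_{\infty,I_2}\le C\delta\,\|\psi\|_{\infty,I_2},\qquad \ell=1,2,
\end{equation}
valid uniformly in $h$ and $\delta$, for any continuous $\psi$ on $I_2$. Granted \eqref{eq:Linftybound_plan}, composition with the partition-of-unity factor $\omega^i\circ\r^i$ and with $\mathrm Q_{N_i}$ (Lemma~\ref{lemma:aux01}) yields
\[
\bigl\|\mathrm Q_{N_i}(\mathrm A^{ij}_{\mathrm{sing},\delta,h}-\mathrm A^{ij,\mathrm{herm}}_{\mathrm{sing},\delta,h,m,d})\xi_{N_j}\bigr\|_0\le C_d\,\delta\, m^{-(2d+2)}h^{-1}\|\xi_{N_j}\|_0\le C_d\,m^{-(2d+2)}h^{\beta-1}\|\xi_{N_j}\|_0,
\]
which tends to $0$ by hypothesis; summing over $i,j$ gives the desired norm estimate on $\mathcal H^0$.

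The hard part is establishing \eqref{eq:Linftybound_plan}. The issue is a bookkeeping one: fix $\underline u\in\Omega^{ij}_{\delta/2}$ and recall from Proposition~\ref{lemma:Kijsing} that the integrand $(\rho,\theta)\mapsto |\rho|K^{ij}_{\mathrm{sing},\delta}(\underline u,\underline r^{ji}(\underline u)+\rho\underline e(\theta))\widetilde\omega^j_\delta(\cdots)$ is uniformly bounded and supported in $|\rho|\le c_0\delta$. Consequently, among the $N_j$ radial nodes used for each $\theta^j_p$ only $O(\delta/h_j)+1$ lie in the support; combined with the angular factor $k_j\Theta_j=2\pi$ and the relevant mesh-size $h_j c(\theta^j_p)$, the defining sum \eqref{eq:newQuad-1} contains at most $O(\Theta_j\,\delta/h_j)$ nonzero terms, each of size $O(h_j k_j)$, which gives \eqref{eq:Linftybound_plan}. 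The analog for the continuous operator was effectively used in \eqref{eq09:prop:main1-15}; here the same estimate must be re-derived for the quadrature, which is straightforward but requires the counting argument above.

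Finally, once the norm estimate $\|\mathcal B_{\delta,h}-\mathcal B_{\delta,\mathrm{herm}}\|_{\mathcal H^0\to\mathcal H^0}\le C_d\,m^{-(2d+2)}h^{\beta-1}\to 0$ is in hand, I write $\mathcal B_{\delta,\mathrm{herm}}=\mathcal B_{\delta,h}[\mathcal I-\mathcal B_{\delta,h}^{-1}(\mathcal B_{\delta,h}-\mathcal B_{\delta,\mathrm{herm}})]$; by Theorem~\ref{theo:main1} the norm in brackets is $<1$ for all sufficiently small $h$, so $\mathcal B_{\delta,\mathrm{herm}}$ is invertible with an inverse bounded by a constant independent of $h$ (provided $m=m(h)$ is chosen in accordance with the stated hypothesis). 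This concludes the proof.
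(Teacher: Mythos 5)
Your proof is correct and follows essentially the same route as the paper: Hermite interpolation error bound \eqref{eq:Hermite}, inverse inequalities to pass from $L^\infty$ norms of derivatives to $\|\xi_N\|_0$, the $L^\infty\!\to\!L^\infty$ bound $\|\mathrm L^{ij,\ell}_{\delta,h}\psi\|_{\infty}\le C\delta\|\psi\|_\infty$ (the paper states this as \eqref{eq:6.2.a}, citing the support and boundedness of $\psi_{\underline u,\delta}$ from \eqref{eq:4.4} — your counting argument is exactly the justification), stability of $\mathcal Q_h$ via Lemma~\ref{lemma:aux01}, and a perturbation argument off Theorem~\ref{theo:main1}. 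The only cosmetic difference is that you apply the inverse inequalities to the interpolation error $\psi_\ell$ first and then the $\mathrm L$-bound, whereas the paper applies the $\mathrm L$-bound first and then the inverse inequalities; you also write out the Neumann-series step more explicitly than the paper's one-line reference to Theorem~\ref{theo:main1}.
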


\begin{proof}
Note that the operators $\mathrm L^{ij,1}_{\delta,h}\xi$ and $\mathrm
L^{ij,2}_{\delta,h}\xi$
are weighted trapezoidal rules applied to $\psi_{\underline
u,\delta}(\rho,\theta)\xi( \r^{ji} (\underline u)
+\rho\underline e(\theta))$, where $\psi_{\underline u,\delta}$ is given
by \eqref{eq:4.4.e}. Since $\psi_{\underline u,\delta}$ satisfies conditions
\eqref{eq:4.4}, it follows that
for all $\xi \in \mathcal C^0(\overline I_2)$,
\begin{equation}\label{eq:6.2.a}
\|\mathrm L^{ij,1}_{\delta,h}\xi\|_{\infty,I_2}+\|\mathrm
L^{ij,2}_{\delta,h}\xi\|_{\infty,I_2}\le C \delta
\|\xi\|_{\infty,I_2}.
\end{equation}
By \eqref{eq:6.2.a} and the error bound
for the Hermite interpolation \eqref{eq:Hermite} we can estimate
\begin{eqnarray}
& & \hspace{-2cm} \|(\mathrm A^{ij}_{\mathrm{sing},\delta,h}-\mathrm
A^{ij}_{\mathrm{sing},\delta,\mathrm{herm}}) \xi\|_{\infty,I_2} \nonumber \\
& \le & \|\mathrm L^{ij,1}_{\delta,h}(\xi-(\mathrm
H_{h_j/m,d}\otimes\mathrm I)\xi)\|_{\infty, I_2}+ \|\mathrm
L^{ij,2}_{\delta,h}(\xi-(\mathrm I \otimes \mathrm
H_{h_j/m,d})\xi)\|_{\infty, I_2}\nonumber\\
& \le & C_d \delta h_j^{2d+2} m^{-(2d+2)} \big(
\|\partial_{u_1}^{2d+2}\xi\|_{\infty,I_2}+\|\partial_{u_2}^{2d+2}\xi\|_{\infty,
I_2}\big).
\label{mod:eq1}
\end{eqnarray}
Therefore, using the inverse inequalities \eqref{eq:invIneqSobolev}
and \eqref{eq:invIneqLinfty} in the right-hand side of
\eqref{mod:eq1} it follows that
\[
\|(\mathrm A^{ij}_{\mathrm{sing},\delta,h}-\mathrm
A^{ij}_{\mathrm{sing},\delta,\mathrm{herm}}) \mathrm
P_{N_j}\xi\|_{\infty,I_2}\le
C_d m^{-(2d+2)} h_j^{-1}\delta \|\xi\|_0 \qquad \forall \xi \in H^0.
\]
This inequality and the stability estimate for the interpolation
operator $\mathcal Q_h$ (Lemma \ref{lemma:aux01}) imply that
\begin{eqnarray*}
\|(\mathcal B_{\delta,h}-\mathcal
B_{\delta,\mathrm{herm}})\boldsymbol\xi\|_0 &=& \| \mathcal Q_h
(\mathcal A^{\mathrm{sing}}_{\delta,h}-\mathcal
A^{\mathrm{sing}}_{\delta,\mathrm{herm}}) \mathcal P_h
\boldsymbol\xi\|_0\le  \|(\mathcal
A^{\mathrm{sing}}_{\delta,h}-\mathcal
A^{\mathrm{sing}}_{\delta,\mathrm{herm}}) \mathcal P_h
\boldsymbol\xi\|_{\infty,I_2}\\
& \le & C_d m^{-(2d+2)} \delta\, h^{-1}\|\boldsymbol\xi\|_0 \le
C_{d,\beta} m^{-(2d+2)} h^{\beta-1}\|\boldsymbol\xi\|_0,
\end{eqnarray*}
which proves the first assertion of the statement. Invertibility of
$\mathcal B_{\delta,\mathrm{herm}}$ as well as the uniform bound for
their inverses follow from Theorem \ref{theo:main1}.
\end{proof}

In the last results of this section the constants will depend on
$\beta$ and $d$, without this dependence being made explicit.

\begin{proposition}
Assume that $\delta \approx h^\beta$ with $\beta \in (0,1)$ and that
$m=m(h)$ is chosen so that $m^{-(2d+2)} h^{\beta-1}\to 0$. Let
$\boldsymbol\phi$ and $\boldsymbol\phi_{\mathrm{herm}}$ be the
respective solutions of $\mathcal B_\delta \boldsymbol\phi=\mathbf
U$ and $\mathcal
B_{\delta,\mathrm{herm}}\boldsymbol\phi_{\mathrm{herm}}=\mathbf U$.
Then for every $\varepsilon >0$ and $t\ge s\ge 0$ with $t>1$, there
exists a constant $C_{s,t,\varepsilon}>0$ such that
\begin{equation}\label{mod:eq3}
\|\boldsymbol\phi-\boldsymbol\phi_{\mathrm{herm}}\|_s \le C_{s,t,\varepsilon}
\big(
h^{t(1-\beta)-s}\|\boldsymbol\phi\|_t
+h^{2d+2+\beta-s}m^{-(2d+2)}\|\boldsymbol\phi\|_{2d+3+\varepsilon}\big).
\end{equation}
Consequently if $m(h)$ is chosen so that $m^{-(2d+2)} \le C
h^{1-\beta + r}$ for some $r\ge 0$,
\begin{equation}\label{mod:eq4}
\|\boldsymbol\phi-\boldsymbol\phi_{\mathrm{herm}}\|_0 \le C
h^{2d+3+r}\|\boldsymbol\phi\|_{(2d+3+r)/(1-\beta)}.
\end{equation}
\end{proposition}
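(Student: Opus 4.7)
The plan is to write $\boldsymbol\phi-\boldsymbol\phi_{\mathrm{herm}}=(\boldsymbol\phi-\boldsymbol\phi_h)+(\boldsymbol\phi_h-\boldsymbol\phi_{\mathrm{herm}})$, control the first summand via Theorem \ref{theo:main2}, and do the real work on the second one. Since $\mathcal B_{\delta,h}\boldsymbol\phi_h=\mathcal Q_h\mathbf U=\mathcal B_{\delta,\mathrm{herm}}\boldsymbol\phi_{\mathrm{herm}}$, and since $\mathcal B_{\delta,\mathrm{herm}}$ has a uniformly bounded inverse (Proposition \ref{mod:prop2}), I would first obtain
$$\|\boldsymbol\phi_h-\boldsymbol\phi_{\mathrm{herm}}\|_0\le C\,\|(\mathcal B_{\delta,h}-\mathcal B_{\delta,\mathrm{herm}})\boldsymbol\phi_h\|_0=C\,\|\mathcal Q_h(\mathcal A^{\rm sing}_{\delta,h}-\mathcal A^{\rm sing}_{\delta,\mathrm{herm}})\boldsymbol\phi_h\|_0,$$
using the identity $\mathcal B_{\delta,h}-\mathcal B_{\delta,\mathrm{herm}}=\mathcal Q_h(\mathcal A^{\rm sing}_{\delta,h}-\mathcal A^{\rm sing}_{\delta,\mathrm{herm}})\mathcal P_h$ together with $\mathcal P_h\boldsymbol\phi_h=\boldsymbol\phi_h$. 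The whole argument hinges on estimating this remaining quantity.

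The decisive step is to split $\boldsymbol\phi_h=\mathcal P_h\boldsymbol\phi+(\boldsymbol\phi_h-\mathcal P_h\boldsymbol\phi)$ and treat each piece separately. For the smooth-data piece I would apply the pointwise Hermite bound used to derive \eqref{mod:eq1} with $\xi=\mathrm P_{N_j}\phi^j$ and then control $\|\partial_{u_k}^{2d+2}\mathrm P_{N_j}\phi^j\|_{\infty,I_2}$ via the Sobolev embedding $H^{1+\varepsilon}\subset L^\infty$ and the contractivity $\|\mathrm P_{N_j}\phi^j\|_{2d+3+\varepsilon}\le\|\phi^j\|_{2d+3+\varepsilon}$; combined with Lemma \ref{lemma:aux01}, this yields
$$\|\mathcal Q_h(\mathcal A^{\rm sing}_{\delta,h}-\mathcal A^{\rm sing}_{\delta,\mathrm{herm}})\mathcal P_h\boldsymbol\phi\|_0\le C_{d,\varepsilon}\,h^{2d+2+\beta}m^{-(2d+2)}\|\boldsymbol\phi\|_{2d+3+\varepsilon},$$
precisely the second summand in \eqref{mod:eq3} at $s=0$. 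For the discrete-error piece I would reuse the $H^0$--to--$H^0$ estimate already contained in the proof of Proposition \ref{mod:prop2}, namely $\|\mathcal Q_h(\mathcal A^{\rm sing}_{\delta,h}-\mathcal A^{\rm sing}_{\delta,\mathrm{herm}})\boldsymbol\xi\|_0\le C\,m^{-(2d+2)}h^{\beta-1}\|\boldsymbol\xi\|_0$, applied to $\boldsymbol\xi=\boldsymbol\phi_h-\mathcal P_h\boldsymbol\phi$, and combine it with the bound $\|\boldsymbol\phi_h-\mathcal P_h\boldsymbol\phi\|_0\le C_t\,h^{t(1-\beta)}\|\boldsymbol\phi\|_t$ supplied by Theorem \ref{theo:main2} and \eqref{eq01:lemma:aux01}. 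The hypothesis $m^{-(2d+2)}h^{\beta-1}\to 0$ furnishes a uniform bound $m^{-(2d+2)}h^{\beta-1}\le C$, which converts this contribution into $C_t\,h^{t(1-\beta)}\|\boldsymbol\phi\|_t$, absorbed into the first summand of \eqref{mod:eq3}.

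To pass from $s=0$ to general $s>0$ I would mimic the closing argument of Theorem \ref{theo:main2}: since $\boldsymbol\phi_{\mathrm{herm}}$ is a tuple of trigonometric polynomials, $\mathcal P_h\boldsymbol\phi-\boldsymbol\phi_{\mathrm{herm}}=\mathcal P_h(\boldsymbol\phi-\boldsymbol\phi_{\mathrm{herm}})$ is too, and the inverse inequality \eqref{eq:invIneqSobolev} produces
$$\|\boldsymbol\phi-\boldsymbol\phi_{\mathrm{herm}}\|_s\le\|\boldsymbol\phi-\mathcal P_h\boldsymbol\phi\|_s+Ch^{-s}\|\boldsymbol\phi-\boldsymbol\phi_{\mathrm{herm}}\|_0;$$
\eqref{eq01:lemma:aux01} and the $s=0$ case of \eqref{mod:eq3} then complete the proof of \eqref{mod:eq3}. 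The corollary \eqref{mod:eq4} is obtained at $s=0$ by choosing $t=(2d+3+r)/(1-\beta)$ so that $h^{t(1-\beta)}=h^{2d+3+r}$, picking $\varepsilon>0$ small enough that $2d+3+\varepsilon\le t$ (ensuring $\|\boldsymbol\phi\|_{2d+3+\varepsilon}\le\|\boldsymbol\phi\|_{(2d+3+r)/(1-\beta)}$), and substituting $m^{-(2d+2)}\le Ch^{1-\beta+r}$ into the Hermite term. The main obstacle is that the pointwise Hermite bound \eqref{eq:Hermite} demands control of $(2d+2)$-th derivatives of its argument, so applying \eqref{mod:eq1} directly to $\phi_h^j$ would call for $\|\phi_h^j\|_{2d+3+\varepsilon}$, and any inverse-inequality estimate of that norm in terms of $\|\phi_h^j\|_0$ costs a full factor of $h^{-(2d+3+\varepsilon)}$, swallowing the $h^{2d+2}$ gain provided by Hermite interpolation; the splitting $\boldsymbol\phi_h=\mathcal P_h\boldsymbol\phi+(\boldsymbol\phi_h-\mathcal P_h\boldsymbol\phi)$ sidesteps this by routing the high-order derivatives through $\mathcal P_h\boldsymbol\phi$, which inherits smoothness directly from $\boldsymbol\phi$.
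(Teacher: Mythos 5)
Your proof is correct, and it reaches the same estimate, but it takes a genuinely different route from the paper's.

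The paper telescopes at the level of the exact solution: starting from $\|\boldsymbol\phi-\boldsymbol\phi_{\mathrm{herm}}\|_0\le C\|\mathcal B_{\delta,\mathrm{herm}}(\boldsymbol\phi-\boldsymbol\phi_{\mathrm{herm}})\|_0$, it writes $\mathcal B_{\delta,\mathrm{herm}}\boldsymbol\phi-\mathcal Q_h\mathbf U=(\mathcal B_{\delta,\mathrm{herm}}-\mathcal B_{\delta,h})\boldsymbol\phi+(\mathcal B_{\delta,h}-\mathcal B_\delta)\boldsymbol\phi+(\mathbf U-\mathcal Q_h\mathbf U)$, so the Hermite error estimate \eqref{mod:eq2} is applied only to the \emph{smooth} function $\boldsymbol\phi$, never to $\boldsymbol\phi_h$. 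The obstacle you flag at the end of your write-up (that applying the Hermite bound to $\boldsymbol\phi_h$ would cost $\|\boldsymbol\phi_h\|_{2d+3+\varepsilon}$, and an inverse-inequality estimate of that norm would wipe out the gain) therefore never arises in the paper's argument; it is pre-empted by the choice of telescoping. You instead split $\boldsymbol\phi-\boldsymbol\phi_{\mathrm{herm}}=(\boldsymbol\phi-\boldsymbol\phi_h)+(\boldsymbol\phi_h-\boldsymbol\phi_{\mathrm{herm}})$, reduce the second piece to $\|(\mathcal B_{\delta,\mathrm{herm}}-\mathcal B_{\delta,h})\boldsymbol\phi_h\|_0$, and then must introduce the auxiliary decomposition $\boldsymbol\phi_h=\mathcal P_h\boldsymbol\phi+(\boldsymbol\phi_h-\mathcal P_h\boldsymbol\phi)$ to route the high-order derivatives through $\boldsymbol\phi$ while the rough remainder is absorbed via the $H^0\to H^0$ operator bound (and the uniform bound on $m^{-(2d+2)}h^{\beta-1}$). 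Both routes are sound; the paper's is shorter because the single telescoping identity absorbs the $\|\mathbf U-\mathcal Q_h\mathbf U\|_0$ and $\|(\mathcal B_{\delta,h}-\mathcal B_\delta)\boldsymbol\phi\|_0$ terms (already controlled in Theorem~\ref{theo:main2}) and leaves only one new term, $(\mathcal B_{\delta,\mathrm{herm}}-\mathcal B_{\delta,h})\boldsymbol\phi$, evaluated directly on smooth data. Your version is a perfectly valid, if slightly longer, alternative, and the explicit discussion of the failed naive bound is a useful observation; your handling of the $s>0$ case and the derivation of \eqref{mod:eq4} agree with the paper's.

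One cosmetic point: the proposition as stated writes $\mathcal B_{\delta,\mathrm{herm}}\boldsymbol\phi_{\mathrm{herm}}=\mathbf U$, but the defining relation~\eqref{eq:numerSolution2} and the appearance of $\|\mathbf U-\mathcal Q_h\mathbf U\|_0$ in the paper's proof make clear that the right-hand side is $\mathcal Q_h\mathbf U$; you correctly used $\mathcal Q_h\mathbf U$ throughout.
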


\begin{proof}
Using \eqref{mod:eq1}, Lemma \ref{lemma:aux01} and the Sobolev
embedding theorem (to bound $(2d+2)$-th order partial derivatives by the
$H^{2d+3+\varepsilon}$ norm), we can easily prove that
\begin{equation}\label{mod:eq2}
\|(\mathcal B_{\delta,\mathrm{herm}}-\mathcal
B_{\delta,h})\boldsymbol\xi\|_0 \le C_\varepsilon \delta h^{2d+2}
m^{-(2d+2)} \|\boldsymbol\xi\|_{2d+3+\varepsilon}.
\end{equation}
Proceeding as in the proof of Theorem \ref{theo:main2} (see
\eqref{eq:01:proofTheoMain02} and the bounds thereafter), we can
introduce the discrete operator using Proposition \ref{mod:prop2}
and bound
\begin{eqnarray*}
\|\boldsymbol\phi-\boldsymbol\phi_{\mathrm{herm}}\|_0 &\le& C \|
\mathcal
B_{\delta,\mathrm{herm}}(\boldsymbol\phi-\boldsymbol\phi_{\mathrm{herm}})\|_0
\\
& \le & C \big( \|\mathbf U-\mathcal Q_h \mathbf U\|_0+\|(\mathcal
B_{\delta,h}-\mathcal B_\delta)\boldsymbol\phi\|_0+ \|(\mathcal
B_{\delta,\mathrm{herm}}-\mathcal
B_{\delta,h})\boldsymbol\phi\|_0\big).
\end{eqnarray*}
For the first two terms we use the same estimates as in the proof of
Theorem \ref{theo:main2}, whereas the third term is bounded using
\eqref{mod:eq2}. This gives \eqref{mod:eq3} with $s=0$. For larger
values of $s$ we apply the same argument as in Theorem
\ref{theo:main2} (use inverse inequalities and the properties of
$\mathcal P_h$). The estimate \eqref{mod:eq4} is a straightforward
consequence of \eqref{mod:eq3} and the choices made for $m$.
\end{proof}

\begin{theorem}
Assume that $\delta \approx h^\beta$ with $\beta \in (0,1)$ and that
$m=m(h)$ is chosen so that $m^{-(2d+2)}\le C h^{1-\beta+r}$ for some $r>0$. Let
$\psi$ be the solution of \eqref{eq:intEqn} and $\psi_{\mathrm{herm}}$ be given
by
\[
 \psi_\mathrm{herm}(\r):=\sum_{i\in\mathcal I(\r)} 2\Big(U^i-\sum_{j=1}^J\mathrm
A^{ij}_{\rm reg,\delta}\mathrm D_{N_j}\phi_\mathrm{herm}^j-\sum_{j=1}^J \mathrm
A^{ij,\mathrm{herm}}_{
\mathrm{sing},\delta,h,m,d}\phi_\mathrm{herm}^j\Big)\left((\r^i)^{-1}(\r)\right)
\]
where
$\boldsymbol\phi_{\mathrm{herm}}=(\phi^1_{\mathrm{herm}},\ldots,\phi^J_{\mathrm{
herm}})$is the solution of  $\mathcal
B_{\delta,\mathrm{herm}}\boldsymbol\phi_{\mathrm{herm}}=\mathbf U$.  Then,
denoting
\[
t:=(2d+3+r)/(1-\beta),
\]
we have the error estimate
\begin{equation}\label{eq:hermL2}
\|\psi- \psi_{\mathrm{herm}}\|_{L^2(S)} \le  C_r\big( |\log h|^{\frac32}
h^{t(1-\beta)}+h^{t-1}\big)\|\psi\|_{H^{t}(S)}.
\end{equation}
Finally, for all $\varepsilon>0$
\begin{equation}\label{eq:hermLinf}
\|\psi- \psi_{\mathrm{herm}}\|_{L^\infty(S)} \le  C_{r,\varepsilon}\big(
h^{t(1-\beta)-(1+\varepsilon)\beta}+h^{t-1}
\big)\|\psi\|_ { H^ { t } (S) }.
 \end{equation}
\end{theorem}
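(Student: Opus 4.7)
The plan is to mimic the proof of Theorem \ref{theo:MAIN}: subtracting the reconstruction formula for $\psi$ from that for $\psi_{\mathrm{herm}}$, one finds that for every $\r\in S$
\[
\psi_{\mathrm{herm}}(\r)-\psi(\r)=-2\!\sum_{i\in\mathcal I(\r)}\sum_{j=1}^J\Big[\big(\mathrm A^{ij}_{\rm reg,\delta}\mathrm D_{N_j}\phi^j_{\mathrm{herm}}-\mathrm A^{ij}_{\rm reg,\delta}\phi^j\big)+\big(\mathrm A^{ij,\mathrm{herm}}_{\mathrm{sing},\delta,h,m,d}\phi^j_{\mathrm{herm}}-\mathrm A^{ij}_{\rm sing,\delta}\phi^j\big)\Big]\!\big((\r^i)^{-1}(\r)\big).
\]
It therefore suffices to bound, for each pair $(i,j)$, the regular and singular summands in $L^2(I_2)$ and $L^\infty(I_2)$ by multiples of $\|\boldsymbol\phi\|_t=\|\psi\|_{H^t(S)}$ times the right-hand sides of \eqref{eq:hermL2} and \eqref{eq:hermLinf}.

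For the regular summand I would simply repeat the proof of Proposition \ref{prop:5.8} with $\phi_h^j$ replaced by $\phi^j_{\mathrm{herm}}$ and \eqref{mod:eq4} used in place of Theorem \ref{theo:main2}. Since the choice $t=(2d+3+r)/(1-\beta)$ yields $\|\boldsymbol\phi-\boldsymbol\phi_{\mathrm{herm}}\|_0\le Ch^{t(1-\beta)}\|\boldsymbol\phi\|_t$, the same $O(h^{t(1-\beta)})$ bound in $L^2$ and $O(h^{t(1-\beta)-(1+\varepsilon)\beta})$ bound in $L^\infty$ as in Proposition \ref{prop:5.8} are produced directly.

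For the singular summand I would split
\[
\mathrm A^{ij,\mathrm{herm}}_{\mathrm{sing},\delta,h,m,d}\phi^j_{\mathrm{herm}}-\mathrm A^{ij}_{\rm sing,\delta}\phi^j=\big(\mathrm A^{ij,\mathrm{herm}}_{\mathrm{sing},\delta,h,m,d}-\mathrm A^{ij}_{\mathrm{sing},\delta,h}\big)\phi^j_{\mathrm{herm}}+\big(\mathrm A^{ij}_{\mathrm{sing},\delta,h}\phi^j_{\mathrm{herm}}-\mathrm A^{ij}_{\rm sing,\delta}\phi^j\big).
\]
The second bracket is handled by rerunning the proof of Proposition \ref{prop:5.9} with $\phi_h^j\to\phi^j_{\mathrm{herm}}$ and \eqref{mod:eq4} substituted for Theorem \ref{theo:main2}, producing the terms $|\log h|^{3/2}h^{t(1-\beta)}+h^{t-1}$ in $L^2$ and $h^{t(1-\beta)-(1+\varepsilon)\beta}+h^{t-1}$ in $L^\infty$. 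For the first bracket, the new Hermite-induced discrepancy, I would invoke \eqref{mod:eq1} and further decompose $\phi^j_{\mathrm{herm}}=\mathrm P_{N_j}\phi^j+(\phi^j_{\mathrm{herm}}-\mathrm P_{N_j}\phi^j)$: on the smooth piece $\mathrm P_{N_j}\phi^j$, Sobolev embedding controls the $(2d+2)$-th partial derivatives in $L^\infty$ by $C\|\phi^j\|_{2d+3+\varepsilon}\le C\|\boldsymbol\phi\|_t$; on the trigonometric polynomial $\phi^j_{\mathrm{herm}}-\mathrm P_{N_j}\phi^j\in\mathbb T_{N_j}$, the inverse inequalities \eqref{eq:invIneqSobolev}--\eqref{eq:invIneqLinfty} combined with \eqref{eq01:lemma:aux01} and \eqref{mod:eq4} yield $Ch^{-(2d+3)}(h^t+h^{2d+3+r})\|\boldsymbol\phi\|_t=O(h^r)\|\boldsymbol\phi\|_t$. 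Multiplication by the prefactor $\delta h^{2d+2}m^{-(2d+2)}\le Ch^{2d+3+r}$ in \eqref{mod:eq1} thus yields a Hermite contribution of order $h^{2d+3+r}=h^{t(1-\beta)}$, absorbed into the principal error.

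The main technical obstacle is the rigorous control of this first bracket: a naive inverse inequality applied to $\phi^j_{\mathrm{herm}}$ as a whole yields only $O(h^r)$, falling short of the required $O(h^{2d+3+r})$ by a factor $h^{2d+3}$. The splitting around $\mathrm P_{N_j}\phi^j$ is precisely what recovers this missing factor; the specific value $t=(2d+3+r)/(1-\beta)$ in the statement is then dictated by the need to match $h^{t(1-\beta)}$ with $h^{2d+3+r}$, the common order emerging both from \eqref{mod:eq4} and from the Hermite prefactor under the constraint $m^{-(2d+2)}\le Ch^{1-\beta+r}$. The $L^\infty$ estimate \eqref{eq:hermLinf} then follows by the same reasoning, noting that \eqref{mod:eq1} is already an $L^\infty$ bound.
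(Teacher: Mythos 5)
Your proposal is correct and follows the approach the paper intends. The paper's proof is the single sentence that this result ``can be established by arguments closely analogous to those used in the proof of Theorem \ref{theo:MAIN},'' and your write-up correctly fills in those arguments: you replace $\phi^j_h$ by $\phi^j_{\mathrm{herm}}$ and Theorem~\ref{theo:main2} by \eqref{mod:eq4} throughout the analogues of Propositions~\ref{prop:5.8}--\ref{prop:5.9}, and you handle the extra Hermite discrepancy via \eqref{mod:eq1} after the splitting $\phi^j_{\mathrm{herm}}=\mathrm P_{N_j}\phi^j+(\phi^j_{\mathrm{herm}}-\mathrm P_{N_j}\phi^j)$, which is precisely what avoids losing the factor $h^{-(2d+3)}$ that a naive inverse inequality on $\phi^j_{\mathrm{herm}}$ would incur.
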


\begin{proof} This result can be established by arguments closely
  analogous to those used in the proof of Theorem \ref{theo:MAIN}.
\end{proof}

\paragraph{Acknowledgment}
OB gratefully acknowledges support from AFOSR and NSF. VD is partially
supported by Project MTM2010-21037. FJS is partially supported by the NSF-DMS
1216356 grant.  A portion of this research was carried
out when FJS was visiting professor at the University of Minnesota and
during short visits of FJS and VD to the California Institute of
Technology.

\bibliography{bibHOM}

\end{document}